\documentclass{amsart}

\usepackage{amssymb}
\usepackage{bbm}
\usepackage[backend=biber,style=alphabetic]{biblatex}
\usepackage{hyperref}
\usepackage{ifthen}
\usepackage{subcaption}
\usepackage{tikz}
\usepackage{tikz-cd}
\usepackage{xfrac}

\usetikzlibrary{arrows.meta,calc,decorations.markings,decorations.pathmorphing,decorations.pathreplacing,shapes}
\newcommand{\ArrTip}{Latex[length=4,width=4]}
\tikzset{->-/.style={decoration={markings,mark=at position .5 with {\arrow{\ArrTip}}}, postaction={decorate}}}
\tikzset{->>-/.style={decoration={markings,mark=at position .45 with{\arrow{\ArrTip}},mark=at position .6 with{\arrow{\ArrTip}}},postaction={decorate}}}
\tikzset{->>>-/.style={decoration={markings,mark=at position .4 with{\arrow{\ArrTip}},mark=at position .5 with{\arrow{\ArrTip}},mark=at position .6 with{\arrow{\ArrTip}}},postaction={decorate}}}

\newtheorem{conjecture}{Conjecture}[section]
\newtheorem{corollary}[conjecture]{Corollary}
\newtheorem{definition}[conjecture]{Definition}
\newtheorem{example}[conjecture]{Example}
\newtheorem{lemma}[conjecture]{Lemma}
\newtheorem{proposition}[conjecture]{Proposition}
\newtheorem{remark}[conjecture]{Remark}
\newtheorem{theorem}[conjecture]{Theorem}

\newcommand{\cho}{\chi^{\textnormal{orb}}}
\newcommand{\cht}{\chi^{\textnormal{top}}}
\newcommand{\corner}[1][]{
 \underset{f#1}
 {
  \overset{c#1}{\curvearrowleft}
 }
}
\newcommand{\covers}{\mathbb{X}_n}
\newcommand{\defeq}{\stackrel{\tn{def}}{=}}
\newcommand{\dft}{\textnormal{dft}}
\newcommand{\Fspx}{F_{\textnormal{spx}}}
\newcommand{\Forb}[1][m]{F^{(#1)}_{\textnormal{orb}}}
\newcommand{\Hom}[3][]{\tn{Hom}_{#1}\left(#2\to #3\right)}
\newcommand{\md}{\textnormal{md}}
\newcommand{\iterorb}{{\tiny\begin{matrix}m\in\mathbb{N}\\f\!\in\!\Forb(\Sigma)\end{matrix}}}
\newcommand{\pio}{\pi_1^{\textnormal{orb}}}
\newcommand{\tn}{\textnormal}
\newcommand{\tr}{\operatorname{tr}}
\newcommand{\whS}{{\widehat{\Sigma}}}

\author{Adam Klukowski}
\address{\newline Mathematical Institute\newline University of Oxford}
\email{klukowski@maths.ox.ac.uk/klukowski@mpim-bonn.mpg.de}

\begin{document}

\title[Counting tangles]{Counting tangles in coverings of closed 2-orbifolds via ribbon categories}

\begin{abstract}
We describe what a typical covering of a closed 2-orbifold looks like on a small scale. Specifically, we prove that a uniformly random covering of degree $n$ contains in expectation $\Theta(n^0)$ short closed geodesics, potentially $\Theta(n^0)$ pairs of nearby order-2 cone points, in general $\Theta(n^{\frac{1}{m}})$ cone points of order $m$, and with high probability no small regions with more complicated topology. This generalises many results of Magee and Puder (2023) from orientable surfaces to possibly non-orientable orbifolds, and answers some of the questions raised by Puder and Zimhoni (2024).
\end{abstract}

\maketitle

\section{Introduction}\label{sec:intro}

Non-Euclidean crystallographic (NEC) groups are discrete subgroups of $\tn{Isom}(\mathbb{H}^2)$, and generalise Fuchsian groups by allowing orientation-reversing isometries. For a NEC group $\Gamma$, denote by
\begin{equation*}
\covers\defeq\Hom{\Gamma}{S_n}
\end{equation*}
the set of homomorphisms from it to the symmetric group. It is naturally identified with the set of degree-$n$ coverings of the 2-orbifold $\Sigma=\sfrac{\mathbb{H}^2}{\Gamma}$, which we also denote $\covers$ by abuse of notation. Motivated by applications to spectral geometry, random matrix theory, and strong convergence, \cite{asymptotic_stats_random_covering_surfaces_MageePuder23} studied the small-scale properties of a uniformly random $\phi\in\covers$ in the special case when $\Sigma$ is a non-singular orientable surface. Concrete examples of such properties are: in the dynamical language, the expected number of fixed points of $\phi(\gamma)$ for a given $\gamma\in\Gamma$, or in geometric language, the prevalence of tangles (small subsurfaces with complicated topology) in the covering of $\Sigma$ corresponding to $\phi$. It was shown e.g. that bounded-diameter regions are with high probability either topological disks or annuli. Unfortunately, this knowledge does not immediately transfer to more general NEC groups, since their subgroups are typically far from orientable surfaces. When $\Sigma$ is a non-orientable surface, then no odd-degree covering can be orientable, and Mednykh's formula (Lemma \ref{lmm:Mednykh}) reveals that among the even-degree coverings the proportion of orientable ones is still vanishingly small. In a similar vein, typical coverings of singular 2-orbifolds contain a lot of cone points. This is quantified by Theorem \ref{thm:cone_pt_count} below, proved in Section \ref{sec:main_combi}. In the statement, closed means without boundary (this property corresponds to $\Gamma$ containing no reflections, i.e. non-trivial isometries of $\mathbb{H}^2$ that fix a line pointwise).
\begin{theorem}\label{thm:cone_pt_count}
Fix $m\in\mathbb{N}_+$, and a compact closed 2-orbifold $\Sigma$ with $\cho(\Sigma)<0$. Denote by $N(\whS)$ the number of cone points of order $m$ in a covering $\whS$ of $\Sigma$, and by $m_1,\dots,m_k$ the orders of cone points of $\Sigma$ divisible by $m$. Considered as a random variable over $\covers$ equipped with the uniform measure, we have that
\begin{equation*}
n^{-\frac{1}{m}}N\quad\to\quad\sum_{i=1}^k\frac{m}{m_i}
\end{equation*}
in probability as $n\to\infty$.
\end{theorem}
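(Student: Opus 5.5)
We express $N$ combinatorially and prove the statement by the second moment method. Let $\Gamma$ have generators including elliptic elements $x_1,\dots,x_r$ of orders $m_1,\dots,m_r$; these are the cone points of $\Sigma$. Cone points of a covering $\whS$ corresponding to $\phi\in\covers$ lying over the $i$-th cone point are in bijection with cycles of $\phi(x_i)$. A cycle of length $\ell$ gives a cone point of order $m_i/\ell$. Hence
\begin{equation*}
N(\phi)=\sum_{i:\,m\mid m_i} C_{m_i/m}\bigl(\phi(x_i)\bigr),
\end{equation*}
where $C_\ell(\sigma)$ is the number of $\ell$-cycles of $\sigma$. It therefore suffices to show, for each such $i$ with $\ell=m_i/m$, that $n^{-1/m}C_\ell(\phi(x_i))\to m/m_i=1/\ell$ in probability. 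By Chebyshev this reduces to two estimates:
\begin{equation*}
\mathbb{E}\,C_\ell=(1+o(1))\,\tfrac{1}{\ell}n^{1/m},\qquad \mathbb{E}\,C_\ell^2=(1+o(1))\,(\mathbb{E}\,C_\ell)^2 .
\end{equation*}

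For the moments I will count homomorphisms. By Mednykh's formula (Lemma \ref{lmm:Mednykh}), $|\covers|$ is $n!$ times a sum over irreducible characters $\chi$ of $S_n$ of $(\dim\chi/n!)^{\cho\text{-type exponent}}$ weighted by Frobenius–Schur indicators, times the products $\prod_j |\{g: g^{m_j}=1\}\cap\text{class}|$ encoded by $\chi$. The count of $\phi$ with $\phi(x_i)$ having a prescribed $\ell$-cycle on a given set $S$ of size $\ell$ factorises. Conditioning on that cycle, the remaining data is a homomorphism of $\Gamma$ into $S_{n-\ell}$ in which $x_i$ is replaced by a generator of order dividing $m_i$. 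More precisely, $\phi(x_i)$ restricted to the complement is an element of order dividing $m_i$, and the other relations are unchanged after relabelling. The cleanest route is the standard "cutting" trick: writing the relator as $x_i\cdot w=1$, the number of solutions with $\phi(x_i)=\sigma$ depends only on the conjugacy class of $\sigma$. By character theory it equals
\begin{equation*}
\sum_\chi \frac{\chi(\sigma)}{\dim\chi}\, W_\chi ,
\end{equation*}
where $W_\chi$ collects the contributions of the other generators. The dominant characters as $n\to\infty$ are the trivial and sign-type ones, since $\cho(\Sigma)<0$ gives exponents making the sum over other $\chi$ negligible, via Liebeck–Shalev/Witten zeta bounds. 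For those characters the answer is essentially $|\{\sigma: \sigma^{m_i}=1\}|^{-1}$-uniform. So the key heuristic to make rigorous is that $\phi(x_i)$ is approximately uniformly distributed on $\{\sigma\in S_n:\sigma^{m_i}=1\}$, up to a class-function density uniformly $1+o(1)$ on typical classes.

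Granting this, the expectation reduces to the classical asymptotics for random permutations with $\sigma^{m_i}=1$. Writing $a_n=|\{\sigma\in S_n:\sigma^{M}=1\}|$ with $M=m_i$, the exponential generating function is $\exp\bigl(\sum_{d\mid M} z^d/d\bigr)$. The expected number of $\ell$-cycles is
\begin{equation*}
\binom{n}{\ell}(\ell-1)!\,\frac{a_{n-\ell}}{a_n}=\frac{1}{\ell}\,\frac{n!}{(n-\ell)!}\frac{a_{n-\ell}}{a_n}.
\end{equation*}
Moser–Wyman/saddle-point asymptotics give $a_{n-1}/a_n\sim n^{-1+1/M}$, so this is $\sim\frac1\ell n^{\ell/M}=\frac1\ell n^{1/m}$, as required. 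The second moment is handled the same way: disjoint pairs of cycles contribute $\frac{1}{\ell^2}\frac{n!}{(n-2\ell)!}\frac{a_{n-2\ell}}{a_n}\sim(\mathbb{E}C_\ell)^2$, and the diagonal term is $\mathbb{E}C_\ell=o((\mathbb{E}C_\ell)^2)$ because $n^{1/m}\to\infty$. For $m=1$ the claim is simply that almost all points are regular, which follows similarly.

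The main obstacle is the uniformity step: showing that the ratio between the true number of extensions of a prescribed $\phi(x_i)$ and the "uniform on order-$M$ elements" prediction is $1+o(1)$ simultaneously after removing $\ell$ or $2\ell$ points. I would first try to bound the non-trivial character contributions. This uses $|\chi(\sigma)|/\dim\chi$ small for $\sigma$ with few fixed points, via Larsen–Shalev character bounds, together with the convergence of $\sum_\chi(\dim\chi)^{\cho\text{-exponent}}$ to its one- or two-dimensional terms when $\cho(\Sigma)<0$. For non-orientable $\Sigma$, care is needed with the Frobenius–Schur indicators, which select the characters appearing in Mednykh's formula.
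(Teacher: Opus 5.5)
Your skeleton is the same as the paper's. The cone points of order $m$ over $x_i$ are the $\ell$-cycles of $\phi(x_i)$ with $\ell=m_i/m$; the paper counts them as embeddings of an $\ell$-sided orbigon $T_i$, so that $N_i=\ell\,C_\ell(\phi(x_i))$. Chebyshev with the first two factorial moments then finishes the argument. Your computation in the model where $\phi(x_i)$ is uniform on $m_i$-torsion, via $a_{n-\ell}/a_n$, is also correct.

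However, the step you grant is the whole content of the theorem, and as stated it is not correct.

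\begin{itemize}
\item \textbf{The factorisation does not exist.} Prescribing an $\ell$-cycle of $\phi(x_i)$ on a set $S$ does not leave a homomorphism $\Gamma\to S_{n-\ell}$, because the other generators move $S$.
\item \textbf{The uniformity heuristic fails in the parity case.} The long relation of $\Gamma$ (commutators or squares times the $x_j$) forces $\prod_j\tn{sgn}\,\phi(x_j)=1$. So when $m_i$ is even and all other $m_j$ are odd, $\phi(x_i)$ is asymptotically uniform on \emph{even} $m_i$-torsion elements. Its density is then $\approx1+\tn{sgn}(\sigma)$, not $1+o(1)$, on classes that are equally typical. This deviation is harmless for $C_\ell$, since the sign-twisted average is $O(e^{-cn^{1/m_i}})$ as in Proposition~\ref{prop:tall_tors_bd}, but it must be handled.
\end{itemize}

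The serious gap is your proposed route to uniformity.
\begin{itemize}
\item Larsen--Shalev-type bounds on $|\chi(\sigma)|/\chi(1)$ control the density only on typical classes. There the relative error is $O(n^{\cho(\Sigma)+\varepsilon})$, which can be as weak as $n^{-1/42}$.
\item Since $C_\ell$ can be of order $n$ and you need $\mathbb{E}\,C_\ell$ to within $o(n^{1/m})$, the \emph{true} probability of the atypical classes must be $o(n^{1/m-1})$.
\item Passing to the complement only gives $O(n^{\cho(\Sigma)+\varepsilon})$.
\item Bounding the density directly on atypical classes is not available from what you cite. The trivial bound $|\chi(\sigma)|\le\chi(1)$ leads to $\sum_\chi\chi(1)^{\cho(\Sigma)+1-1/m_i}$, which is superpolynomially large, for instance for the $(2,3,7)$ group.
\item The same problem recurs for the second moment.
\end{itemize}

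The way around this is to avoid pointwise densities altogether.
\begin{itemize}
\item Write $\mathbb{E}\,C_\ell=\tfrac1\ell n^{\underline{\ell}}\cdot\mathbb{P}(c_0\text{ is a cycle of }\phi(x_i))$ for a fixed $\ell$-cycle $c_0$ on $[n-\ell+1,n]$.
\item Evaluate $\sum_{\sigma'\in S_{n-\ell},\,\sigma'^{m_i}=1}\chi_\lambda(c_0\sigma')$ by restricting $\lambda$ to $S_{n-\ell}\times S_\ell$. This turns it into averages $\chi_\mu^{m_i\tn{-tors}}$ over $\mu\subseteq\lambda$.
\item Show that only the trivial $\lambda$ survives, together with the sign $\lambda$ when all $m_j$ are odd.
\item Thick $\lambda$ are handled by Lemma~\ref{lmm:LSG} with a large threshold.
\item Stable $\lambda=\lambda^0[n]$ need the sharper estimate $O(n^{|\lambda^0|\cho(\Sigma)+\varepsilon})$, because crude bounds lose a factor $n^\ell$ there.
\end{itemize}

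This is exactly what the paper does in general, through Theorem~\ref{thm:count_tangles} and the proof of Theorem~\ref{thm:E_combi_emb}, applied to $T_i$ and $T_i\sqcup T_i$ (both with $\md<0$). Your proposal contains no substitute for this step.
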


The goal of this article is to close this gap and characterise the statistics of small-scale phenomena in random coverings of 2-orbifolds. Our main result, Theorem~\ref{thm:alg_tf}, generalises \cite[Theorem 1.3]{asymptotic_stats_random_covering_surfaces_MageePuder23} from surfaces to arbitrary cocompact NEC groups without reflections, and confirms some of the conjectures in \cite[\S7]{local_stats_random_perms_free_prods_PuderZimhoni24} about the scope of the described behaviour. In its statement below, $\tn{fix}_A(\phi)$ denotes the subset of $[n]\defeq\{1,\dots,n\}$ fixed by every element of $A\subseteq\Gamma$ under the action $\phi\colon\Gamma\curvearrowright[n]$, and $\mathbb{E}_n[\bullet]\defeq\mathbb{E}[\bullet\mid\whS\in\covers]$ is a shorthand for the expected value over $\covers$ equipped with the counting (=uniform) measure.
\begin{theorem}\label{thm:alg_tf}
Let $\Gamma$ be a cocompact NEC group without reflections. For a f.g. (finitely generated) subgroup $\Lambda\leq\Gamma$, we have
\begin{equation*}
\mathbb{E}_n[\#\tn{fix}_\Lambda(\phi)]=\#\tn{mog}(\Lambda)\cdot n^{\chi_\tn{max}(\Lambda)}\cdot(1+o(1)),
\end{equation*}
where
\begin{equation*}
\chi_\tn{max}(\Lambda)=\max\{\cho(J)\mid\Lambda\le J\le\Gamma\}
\end{equation*}
is the largest orbifold Euler characteristic of an overgroup of $\Lambda$, and
\begin{equation*}
\tn{mog}(\Lambda)=\{J\le\Gamma\mid J\ge\Lambda,\ \cho(J)=\chi_\tn{max}(\Lambda)\}
\end{equation*}
is the set of ``maximal'' overgroups realising it.
\end{theorem}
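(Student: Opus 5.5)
We follow the strategy of Magee and Puder: reduce the count of fixed points to a sum over "tangle" coverings, and estimate each term through the asymptotics of $|\covers|$ and its relative versions.

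\textbf{Step 1: reduce to a sum over intermediate coverings.} Write
\begin{equation*}
\mathbb{E}_n[\#\tn{fix}_\Lambda(\phi)]=\frac{\#\{(\phi,i)\mid \phi\in\covers,\ \Lambda\le\tn{Stab}_\phi(i)\}}{|\covers|}.
\end{equation*}
A pair $(\phi,i)$ is the same data as a pointed degree-$n$ covering of $\Sigma$ whose point-stabiliser $H=\tn{Stab}_\phi(i)$ contains $\Lambda$. Let $\Lambda\to\Sigma$ be the associated infinite covering and take a compact core $C_\Lambda$ of it, as in Magee--Puder. Every such pointed covering factors through a minimal compact suborbifold $Y$ with $C_\Lambda\subseteq Y$, namely the image of $C_\Lambda$ together with the components it ``pinches''. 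This gives
\begin{equation*}
\mathbb{E}_n[\#\tn{fix}_\Lambda]=\sum_{Y}\frac{\#\{\text{degree-}n\text{ coverings extending }Y\}}{|\covers|}.
\end{equation*}
We would organise the sum by the subgroup $J=\pio(Y)$, so that $\Lambda\le J$, following the resolution/Möbius-inversion scheme of \cite{asymptotic_stats_random_covering_surfaces_MageePuder23}.

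\textbf{Step 2: estimate each term by Euler characteristic.} For a fixed finite sub-orbifold $Y$ (a partial covering), we need
\begin{equation*}
\frac{\#\{\text{coverings of degree }n\text{ containing }Y\}}{|\covers|}\asymp c_Y\, n^{\cho(Y)}.
\end{equation*}
We would prove this with a character expansion. Since $\Gamma$ has no reflections, the Frobenius--Mednykh formula (Lemma~\ref{lmm:Mednykh}) writes $|\covers|$ and its relative versions as sums over irreducible representations of $S_n$. These are weighted by $(\dim\rho/n!)^{\chi}$ factors, Frobenius--Schur indicators for the cross-caps, and the counts of $m$-th roots of elements for the cone points. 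The relative counts, i.e.\ coverings with prescribed boundary monodromy on $\partial Y$, are then given by the same sums with characters evaluated on the boundary permutations. Plugging in the Liebeck--Shalev-type bounds $\sum_\rho(\dim\rho)^{-s}\to 1$ (plus the contribution of the sign character in the non-orientable case), and the known asymptotics for the number of solutions of $x^m=1$ in $S_n$ (which is where the $n^{1/m}$ behaviour of Theorem~\ref{thm:cone_pt_count} comes from), the ratio should behave like $n^{\cho(Y)}$ up to a constant. If $\cho(Y)$ attains the maximum, the constant should be exactly $1$ for each $J\in\tn{mog}(\Lambda)$.

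\textbf{Step 3: summation and the main obstacle.} Terms with $\cho(J)<\chi_\tn{max}(\Lambda)$ contribute $o(n^{\chi_\tn{max}})$, provided we have uniform control: the number of $Y$ with given Euler characteristic must be bounded, and we need an error bound uniform in the boundary data. The terms realising the maximum give $\#\tn{mog}(\Lambda)\,n^{\chi_\tn{max}}$. Finiteness of $\tn{mog}(\Lambda)$, and indeed of overgroups with $\cho$ bounded below, follows because $\cho(J)\le 1$ is bounded and $\Lambda$ is f.g.: such $J$ are generated by $\Lambda$ together with a bounded amount of extra data inside a fixed compact region.

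We expect the main obstacle to be the uniform error term in Step~2 for singular and non-orientable $Y$. Cone points of order $m$ contribute the fluctuating factors $n^{1/m}$, and cross-caps contribute indicator signs, so the character sums no longer reduce cleanly to Witten zeta values. Our first attempt would be to bound the ratio of relative counts by comparing $\#\{x\in S_n\mid x^m=\sigma\}$ with its value at $\sigma=1$, uniformly over $\sigma$ with bounded support. If that comparison fails, we would instead pass to a torsion-free orientable finite-index subgroup of $\Gamma$, where Magee--Puder applies, and transfer the estimates back through the finite-index relationship.
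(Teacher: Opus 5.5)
Your Step~2 fails as stated. For a general finite $Y$, the expected number of embeddings is \emph{not} $\asymp c_Y n^{\cho(Y)}$, because the exponent also depends on the shape of $\partial Y$. Take an annular neighbourhood of a non-backtracking null-homotopic loop, say the boundary of a simplex of $\Sigma$. It has $\cho=0$, yet it lifts at every vertex of every covering, and typically $\Theta(n)$ of these lifts are embeddings. Its $\pio$ also maps to the trivial subgroup, so the dictionary $Y\mapsto J=\pio(Y)$ with $\cho(Y)=\cho(J)$, which you use to organise the sum, breaks down too.

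The correct statement is Theorem~\ref{thm:E_combi_emb}. In general one only gets $O(n^{\cho(Y)+\max\{0,\md(Y)\}+\varepsilon})$. The sharp asymptotics $n^{\cho(Y)}(1+o(1))$, with constant $1$, holds under the convexity condition $\md(Y)<0$. That same condition is what makes $\pio Y\to\Gamma$ injective with $\cho(Y)=\cho(\pio Y)$ (Lemma~\ref{lmm:no_dft_subgp}). It plays the role of ``strongly boundary reduced'' in \cite{asymptotic_stats_random_covering_surfaces_MageePuder23}, and your outline has no counterpart to it.

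The tools you list also cannot detect this boundary effect. Requiring $Y\hookrightarrow\whS$ restricts each edge permutation to a two-sided coset $\alpha S_{n-\mathfrak{e}_e}\beta$ of a Young subgroup; it does not fix conjugacy-class data. So Mednykh-type sums with characters evaluated at boundary monodromies, Witten-zeta bounds and root counts in $S_n$ are not enough. The paper needs the exact formula of Theorem~\ref{thm:count_tangles} and Gelfand--Tsetlin matrix-coefficient decay (Lemma~\ref{lmm:MP_skew_bd}). The balance between dimension growth and that decay is exactly where the defect comes from.

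Steps~1 and~3 also lack the finiteness the summation needs. As you describe it, $Y$ is not an $n$-independent object: in a finite covering every complementary component is finite, so adding ``pinched'' components absorbs the whole component of $\whS$. Moreover, neither the candidate $Y$ of a given Euler characteristic nor the overgroups of $\Lambda$ with $\cho$ bounded below form a finite set. Already for $\Lambda=\langle a\rangle$ in a closed hyperbolic surface group there are infinitely many overgroups $\langle a,g\rangle$, all with $\cho\geqslant-1$.

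What is missing is a \emph{finite} resolution (Proposition~\ref{prop:resolution}) in which every complex either has $\md<0$ or satisfies $\cho+\max\{0,\md\}<\cho(\Gamma)-1$. Producing it means alternating quotients with growing phantom faces over boundary pieces of nonnegative defect. Proving that this terminates needs a genuine potential function: the paper combines a carefully tuned boundary length with combinatorial negative curvature, and cone points of order $\equiv\pm1\bmod 6$ are what make this delicate. With the resolution in hand, the following hold.
\begin{itemize}
\item The bad terms are $O(n^{\cho(\Gamma)-1+\varepsilon})$, which is negligible against the lower bound $n^{\cho(\Gamma)}$ coming from global fixed points.
\item The leading terms match bijectively with $\tn{mog}(\Lambda)$, using the capping bound $\cho(\whS)\leqslant\cho(W)+\max\{0,\md(W)\}$ and its equality case (Proposition~\ref{prop:no_cap}).
\item Finiteness of $\tn{mog}(\Lambda)$ then comes for free from its injection into the resolution.
\end{itemize}

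Finally, your fallback cannot work. The restriction of a uniformly random $\phi\in\Hom{\Gamma}{S_n}$ to a finite-index torsion-free orientable $\Gamma_0$ is not uniformly distributed on $\Hom{\Gamma_0}{S_n}$, so \cite{asymptotic_stats_random_covering_surfaces_MageePuder23} does not transfer. Features such as the $n^{1/m}$ fixed points of torsion elements and the dihedral $\sigma_1(m)$ term in Corollary~\ref{cor:E_fixpt} have no counterpart in the torsion-free orientable setting.
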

Theorem \ref{thm:alg_tf} is proved in Section \ref{ssec:res_main_pf} (together with the finiteness of $\tn{mog}(\Lambda)$ in Remark~\ref{rmk:mog_finite}). Applying it to cyclic subgroups generalises \cite[Theorem 1.2]{asymptotic_stats_random_covering_surfaces_MageePuder23}. For a single element $\gamma\in\Gamma$ we abbreviate $\tn{fix}_\gamma=\tn{fix}_{\{\gamma\}}$, and denote the indicator function by $\mathbbm{1}$.
\begin{corollary}\label{cor:E_fixpt}
For $\gamma\in\Gamma$ of order $m<\infty$ we have
\begin{equation*}
\mathbb{E}_n[\#\tn{fix}_\gamma(\phi)]=n^{\frac{1}{m}}\cdot(1+o(1)).
\end{equation*}
If instead $\gamma\in\Gamma$ is neither torsion nor proper power, then for any $m\in\mathbb{N}_+$ we have
\begin{equation*}
\mathbb{E}_n[\#\tn{fix}_{\gamma^m}(\phi)]=\sigma_0(m)+\mathbbm{1}_{\{\gamma=xy\text{ with }x^2=y^2=1\}}\sigma_1(m)+o(1),
\end{equation*}
where $\sigma_0$ and $\sigma_1$ denote respectively the number and the sum of positive divisors.
\end{corollary}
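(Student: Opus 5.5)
We apply Theorem~\ref{thm:alg_tf} with $\Lambda=\langle\gamma\rangle$ or $\Lambda=\langle\gamma^m\rangle$, using $\tn{fix}_\Lambda=\tn{fix}_{\gamma}$ (resp. $\tn{fix}_{\gamma^m}$). The task is to identify $\chi_\tn{max}(\Lambda)$ and to count $\tn{mog}(\Lambda)$. Throughout we use the standard facts about $\Gamma$ and its subgroups. Every subgroup $J\le\Gamma$ is again an NEC group without reflections, or a virtually cyclic or finite group. Orbifold Euler characteristic is multiplicative in index. Infinite-index finitely generated subgroups of $\Gamma$ are virtually free, hence $\cho(J)\le 0$ for them, with equality exactly when $J$ is infinite virtually cyclic.

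\textbf{Torsion case.} Let $\gamma$ have order $m$. Then $\cho(\langle\gamma\rangle)=1/m$. Any finite overgroup $J$ satisfies $\cho(J)=1/|J|$, which is at most $1/m$, with equality only when $J=\langle\gamma\rangle$. Every infinite overgroup has $\cho(J)\le0<1/m$: if it has finite index this follows from $\cho(\Gamma)<0$, and if it has infinite index from virtual freeness. Hence $\chi_\tn{max}=1/m$ and $\tn{mog}=\{\langle\gamma\rangle\}$, which gives $n^{1/m}(1+o(1))$.

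\textbf{Non-torsion case.} Now $\gamma$ is neither torsion nor a proper power, and $\Lambda=\langle\gamma^m\rangle\cong\mathbb{Z}$, so $\chi_\tn{max}(\Lambda)=0$. The maximal overgroups are exactly the overgroups $J\ge\langle\gamma^m\rangle$ that are virtually cyclic; the finite-index ones are excluded because they have negative characteristic. Every such $J$ lies in the maximal virtually cyclic subgroup $E(\gamma)$, namely the stabiliser of the axis of $\gamma$, or of its fixed line. Since $\Gamma$ has no reflections, $E(\gamma)$ is either $\mathbb{Z}$ or $D_\infty=\mathbb{Z}/2*\mathbb{Z}/2$. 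The second possibility also accounts for glide reflections, whose centraliser structure has to be checked.

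If $E(\gamma)\cong\mathbb{Z}$, then since $\gamma$ is not a proper power, $E(\gamma)=\langle\gamma\rangle$. The overgroups of $\langle\gamma^m\rangle$ are then $\langle\gamma^d\rangle$ for $d\mid m$, giving $\sigma_0(m)$ of them.

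If $E(\gamma)\cong D_\infty=\langle x,y\rangle$, the translation subgroup is $\langle xy\rangle$, and $\gamma$, being a primitive non-torsion element, is a generator of it. This corresponds precisely to $\gamma=xy$ with $x^2=y^2=1$. The overgroups of $\langle\gamma^m\rangle$ inside $D_\infty$ fall into two families. The cyclic ones are $\langle\gamma^d\rangle$ with $d\mid m$, again $\sigma_0(m)$ of them. The dihedral ones are $\langle\gamma^d, x\gamma^j\rangle$ with $d\mid m$ and $j\in\mathbb{Z}/d$, giving $\sum_{d\mid m}d=\sigma_1(m)$ of them. Each of these has $\cho=0$.

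\textbf{Remaining checks.} Two points need care. First, we must verify that $\gamma=xy$ with involutions $x,y$ happens exactly when $E(\gamma)$ is dihedral; for the converse, note that $\langle x,y\rangle$ is then infinite dihedral and contains $\gamma$. Second, we must confirm that orientation-reversing (glide) elements fit the same dichotomy. These are the main obstacles, and they are handled by the hyperbolic-geometry description of elementary subgroups of NEC groups without reflections.
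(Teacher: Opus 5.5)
Your proposal is correct and follows the paper's route: apply Theorem~\ref{thm:alg_tf} to $\langle\gamma\rangle$ resp.\ $\langle\gamma^m\rangle$, note that $\chi_\tn{max}$ is $\tfrac{1}{m}$ resp.\ $0$, and identify $\tn{mog}$ as $\{\langle\gamma\rangle\}$ resp.\ the subgroups of the axis stabiliser $E(\gamma)$ containing $\gamma^m$. These are $\sigma_0(m)$ cyclic ones plus, when $E(\gamma)\cong D_\infty$, $\sigma_1(m)$ dihedral ones, exactly as the paper indicates after the statement. Two small tidy-ups: the virtual-freeness claim should be restricted to \emph{infinite} infinite-index subgroups (finite ones have $\cho=1/|J|>0$), and the glide-reflection check takes one line. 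A half-turn about a point of the axis composed with a glide reflection along that axis is a reflection, so for orientation-reversing $\gamma$ one always has $E(\gamma)=\langle\gamma\rangle$, consistent with $xy$ being orientation-preserving.
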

The term $\sigma_0(m)$ counts cyclic subgroups of the form $\langle\gamma^d\rangle$ with $d|m$, and $\sigma_1(m)$ counts dihedral overgroups that appear for example when $\gamma$ bounds a disk with two cone points of order 2.

\begin{remark}
Our methods also show that for any fixed $\Lambda$ and $I_0$ there is an expansion
\begin{equation*}
\mathbb{E}_n[\#\tn{fix}_\Lambda(\phi)]=\sum_{\tiny\begin{matrix}-1\leqslant i<I_0\\i\in\alpha\cdot\mathbb{Z}\end{matrix}}a_i(\Lambda)n^{-i}+O\left(n^{-I_0}\right)
\end{equation*}
for some coefficients $a_i(\Lambda)\in\mathbb{R}$, where $\alpha$ is the reciprocal of the least common multiple of orders of torsion elements in $\Gamma$.
\end{remark}

The equivalence between homomorphisms to $S_n$ and degree-$n$ coverings translates Theorem~\ref{thm:alg_tf} into more geometrically flavoured Corollary~\ref{cor:geo_tf}. In this language, a fixed point of a f.g. subgroup $\Lambda$ in an action on $[n]$ is equivalent to the corresponding covering containing an immersed ``core'' sub-2-orbifold (possibly with boundary) whose fundamental group is $\Lambda$. When $\Lambda$ is finite cyclic, $\mathbb{Z}$, or infinite dihedral, then the subsurface is, respectively, a disk containing a cone point, an annulus surrounding an essential loop, or a disk with a pair of nearby order-2 cone points. More complicated sub-orbifolds are called tangles.
\begin{definition}
Let $\Sigma$ be a 2-orbifold equipped with a hyperbolic metric, and $L>0$ a fixed length scale. A \emph{geometric $L$-tangle} in $\Sigma$ is a metric $L$-ball whose orbifold fundamental group is virtually \{free non-cyclic\}.
\end{definition}
This allows us to state Corollary~\ref{cor:geo_tf} of Theorem~\ref{thm:alg_tf}. As in Theorem~\ref{thm:alg_tf}, $\covers$ is taken with the uniform measure.
\begin{corollary}\label{cor:geo_tf}
Let $\Sigma$ be a closed compact 2-orbifold equipped with a hyperbolic metric. For any fixed $L>0$ the following hold.
\begin{enumerate}
\item A typical covering of $\Sigma$ is geometrically tangle-free, in the sense that
\begin{equation*}
\mathbb{P}\left(\whS\text{ contains a geometric }L\text{-tangle}\ \middle|\ \whS\in\covers\right)\to0
\end{equation*}
as $n\to\infty$.
\item The expected number of closed geodesics of length at most $L$ (not homotopic to a loop around a cone point) in a uniformly random covering from $\covers$ remains bounded as $n\to\infty$.
\end{enumerate}
\end{corollary}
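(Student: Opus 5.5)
The plan is to deduce both parts from Theorem~\ref{thm:alg_tf} through the dictionary between fixed points of subgroups and immersed sub-orbifolds. Fix $L$ and a basepoint $x_0$ in the interior of a fundamental domain $D$ for $\Gamma$ acting on $\mathbb{H}^2$, and let $R=L+\tn{diam}(D)$. A covering $\whS$ corresponding to $\phi\in\covers$ is the quotient of $\mathbb{H}^2\times[n]$ by $\Gamma$. A metric $L$-ball in $\whS$ centred at a point over $y\in D$ lifts to the ball $B(y,L)\subseteq\mathbb{H}^2$ on the sheet $i\in[n]$. Its orbifold fundamental group is generated by the elements $\gamma$ with $d(y,\gamma y)\le 2L$ that carry the relevant piece back to itself. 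Hence it is the group generated by some subset $A$ of the finite set $S_R=\{\gamma\in\Gamma: d(x_0,\gamma x_0)\le 2R\}$, and the condition is $i\in\tn{fix}_{\langle A\rangle}(\phi)$.

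\textbf{Part (1).} I would first show that if the ball is a geometric $L$-tangle, then $\Lambda=\langle A\rangle$ is virtually free non-cyclic, so every overgroup $J\ge\Lambda$ has $\cho(J)<0$. Indeed, f.g. subgroups of $\Gamma$ with $\cho\ge0$ are finite or virtually cyclic: these are the finite cyclic groups, $\mathbb{Z}$, and the infinite dihedral groups. Since $\Gamma$ has no reflections, no other non-negative-characteristic orbifold groups occur.

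Then, by a union bound over the finitely many subsets $A\subseteq S_R$ generating virtually free non-cyclic groups,
\begin{equation*}
\mathbb{P}(\text{tangle})\le\sum_{A}\mathbb{E}_n[\#\tn{fix}_{\langle A\rangle}(\phi)]=\sum_A\#\tn{mog}(\langle A\rangle)\,n^{\chi_\tn{max}(\langle A\rangle)}(1+o(1)).
\end{equation*}
Each term tends to $0$ because $\chi_\tn{max}<0$.

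One subtlety is that the union bound should count fixed points $i$ rather than individual balls. This is fine, since one $i$ together with the finite set of lifts of centres in $D$ gives finitely many configurations. To make this rigorous, I would discretise the centre $y$ by a finite net in $D$ and enlarge $L$ slightly.

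\textbf{Part (2).} A closed geodesic of length $\le L$ in $\whS$ that is not around a cone point corresponds to a $\Gamma$-conjugacy class of an infinite-order $\gamma$ with translation length $\le L$, together with a fixed point of $\gamma$ on $[n]$, modulo the cyclic rotation ambiguity. There are finitely many such conjugacy classes, and we may take primitive representatives $\gamma_0$ with $\gamma=\gamma_0^m$ and $m$ bounded by $L/\ell_{\min}$. The expected number is then bounded by
\begin{equation*}
\sum_{[\gamma_0],\,m}\mathbb{E}_n[\#\tn{fix}_{\gamma_0^m}(\phi)],
\end{equation*}
and each term is $O(1)$ by Corollary~\ref{cor:E_fixpt}.

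The main obstacle is the geometric-to-algebraic translation in (1). One must identify the orbifold fundamental group of an embedded or immersed ball in $\whS$ with a stabiliser subgroup generated by short elements. One must also check that "virtually free non-cyclic" excludes exactly the groups with $\chi_\tn{max}\ge0$; for that I would invoke the classification of sub-orbifolds of non-negative Euler characteristic.
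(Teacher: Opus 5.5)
Your proposal is correct and follows the paper's route. The paper gives no separate argument: it obtains this corollary by translating Theorem~\ref{thm:alg_tf} (and, for short geodesics, Corollary~\ref{cor:E_fixpt}) through the dictionary between fixed points of f.g.\ subgroups and immersed core sub-orbifolds, using that every overgroup of a non-virtually-cyclic $\Lambda$ has $\cho<0$, which is exactly what you do. The discretisation of centres you mention is unnecessary, since the tangle event is already contained in the finite union over $A\subseteq S_R$ of the events $\tn{fix}_{\langle A\rangle}(\phi)\neq\emptyset$.
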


\begin{remark}\label{rmk:fibre_label}
The choice of $\covers=\Hom{\pio\Sigma}{S_n}$ as our random model leads to a technical subtlety, in that all coverings come equipped with a fibre labelling, i.e. a bijection between the preimages of a basepoint and the set $[n]$. Coverings without this auxiliary data would correspond to homomorphisms $\pio\Sigma\to S_n$ up to conjugation. The subset of $\covers$ consisting of connected coverings (i.e. homomorphisms with transitive image) is in natural bijection with subgroups of $\Gamma$ of index $n$ equipped with a labelling of cosets; index-$n$ subgroups with no coset labelling would correspond to homomorphisms $\pio\Sigma\to S_n$ with transitive image up to conjugation by $S_{n-1}$.
\end{remark}

\subsection{Motivation and related work}

This article builds on \cite{asymptotic_stats_random_covering_surfaces_MageePuder23}. Our Theorem~\ref{thm:alg_tf} is a direct generalisation of one of their main results, \cite[Theorem 1.3]{asymptotic_stats_random_covering_surfaces_MageePuder23}, from (non-singular) orientable surfaces to general closed 2-orbifolds. Independently of this work, another proof (also for orientable surfaces) was given in \cite{word_maps_surface_rels_symm_gps_Cassidy26}. Let us now review a slightly wider context and further motivations.

\subsubsection{Spectral geometry}

Historically, the notion of tangles came from and was closely related to spectral geometry and spectral gaps. It originates in \cite[\S4]{pf_alon_2nd_eval_conj_related_problems_Friedman08}, in the course of the proof of Alon's conjecture \cite[Conjecture 5.1]{evals_expanders_Alon86}. Tangles often appear as a difficulty in applying trace methods (see discussion in \cites[\S2]{pf_alon_2nd_eval_conj_related_problems_Friedman08}[\S2.5]{spectral_gap_random_hyp_surfaces_AnantharamanMonk24}). Consequently, numerous works on spectral gaps of random objects in various settings incorporated a step establishing tangle-freeness in some quantitative sense. Example random models in which research followed this pattern include random $d$-regular graphs \cites{pf_alon_2nd_eval_conj_related_problems_Friedman08}{Ramanujan_prop_edge_universality_random_regular_graphs_HuangMcKenzieYau25}, random coverings of a fixed graph \cite{eigenvals_random_lifts_polys_random_permutation_matrices_BordenaveCollins19}, Weil--Petersson random hyperbolic surfaces \cite{FriedmanRamanujan_fns_random_hyp_geo_application_spectral_gaps_2_AnantharamanMonk25} building on \cite{Moebius_inversion_formula_discard_tangled_hyp_surfaces_AnantharamanMonk24}, and random coverings of a fixed hyperbolic surface \cite{random_cover_of_cpct_hyp_surface_has_relative_spectral_gap_316-eps_MageeNaudPuder22} relying on \cite{asymptotic_stats_random_covering_surfaces_MageePuder23}. We hope that the work presented here can be used in the future to extend the spectral results to random branched coverings of surfaces.

\subsubsection{Ribbon categories}

The first point of divergence of this work from \cite{asymptotic_stats_random_covering_surfaces_MageePuder23} is our approach replacing the standard one-relator presentation of a surface group with more flexible presentations induced by triangulations. As a result, instead of relying on domain-specific tricks, our proof highlights the links with modular functors and lattice TQFTs. Overall, our proof strategy is similar to the computation of lattice topological quantum field theories in \cite{mednykh_formula_lattice_tqfts_Snyder17}. The derivation of Mednykh's formula given there roughly parallels our proof of Theorem~\ref{thm:count_tangles}, specialised to the case of non-singular surface $\Sigma$ and empty immersed complex $T$.

\subsubsection{Residual finiteness, subgroup growth, Benjamini--Schramm convergence}

Our main result, Theorem~\ref{thm:alg_tf}, can be viewed as a refinement of residual finiteness or subgroup growth of NEC groups, or Benjamini--Schramm convergence of branched coverings.

Residual finiteness of a group $\Gamma$ is equivalent to existence for every $x\in\Gamma\setminus\{1\}$ of a homomorphism $\phi\colon\Gamma\to S_n$ that does not vanish on $x$. Corollary~\ref{cor:E_fixpt} implies a quantitative version of this property, namely that for a NEC $\Gamma$ without reflections and $1\neq x\in\Gamma$ we have
\begin{equation*}
\frac{\#\{\phi\colon\Gamma\to S_n\mid\phi(x)\neq1\}}{\#\Hom{\Gamma}{S_n}}\geqslant1-O\left(n^{-\frac{1}{2}}\right)\qquad\text{as}\qquad n\to\infty.
\end{equation*}
In other words, not only some but actually most homomorphisms to $S_n$ do not kill a given $x$ (compare \cite[Corollary 1.5]{asymptotic_stats_random_covering_surfaces_MageePuder23}).

Subgroup growth is the theory of the asymptotic growth rate of the number of index-$n$ subgroups with $n$, and related questions. For an NEC group, this growth rate was determined in \cites[Theorem A]{char_theory_symm_gps_subgp_growth_fuchsian_gps_random_walks_MuellerSchlagePuchta07}[Theorem 1.12(i)]{fuchsian_gps_coverings_Riemann_surfaces_subgp_growth_random_quots_walks_LiebeckShalev04} (the difference in their statements is related to Remark~\ref{rmk:fibre_label}). Theorem~\ref{thm:alg_tf} answers a refinement of this question, namely what is the asymptotic behaviour of the number of index-$n$ subgroups containing a specified f.g. $\Lambda\leq\Gamma$.

The notion of Benjamini--Schramm convergence was introduced in \cite{recurrence_distributions_limits_finite_planar_graphs_BenjaminiSchramm01} for graphs. By close analogy with their definition, a sequence of coverings $(\widehat{\mathcal{M}}_k)_k$ of a Riemannian manifold $\mathcal{M}$ is said to Benjamini--Schramm converge to the universal covering $\widetilde{\mathcal{M}}$ when for arbitrary length scale $L\in\mathbb{R}_+$ we have
\begin{equation*}
\frac{\tn{Vol}\left(\left\{x\in\widehat{\mathcal{M}}_k\mid\text{injective radius at }x\text{ is at least }L\right\}\right)}{\tn{Vol}\left(\widehat{\mathcal{M}}_k\right)}\to1\qquad\text{as}\qquad k\to\infty,
\end{equation*}
where Vol is the Riemannian volume. In our case, Corollary~\ref{cor:geo_tf} and Theorem~\ref{thm:cone_pt_count} imply that large-degree random coverings of hyperbolic orbifolds Benjamini--Schramm converge to the universal covering $\mathbb{H}^2$.

\subsubsection{Geometric group theory of NEC groups}

One of the by-products of this work is an algorithm that, given a word $w$ in the generators of a NEC group $\Gamma$ (cocompact without reflections), computes the exponent in the asymptotics of the expected number of fixed points of $w$ under a random action $\Gamma\curvearrowright[n]$. As demonstrated by Corollary~\ref{cor:E_fixpt}, this exponent is different for the identity and non-trivial group elements, so such a procedure entails solving the word problem in $\Gamma$. This and similar results of Section~\ref{sec:resolutions} mirror Dehn's algorithm for word problem in surface groups \cite{transformationen_kurven_zweiseitigen_flachen_Dehn12} and the theory of combinatorial compact cores in \cite{core_surfaces_MageePuder22}, except again we work with ``simplicial'' instead of standard one-relator presentation.

\subsection{Overview and organisation}

The space $\covers$ is a finite set. In order to perform counting in it, we need to reformulate Theorem~\ref{thm:alg_tf} into a combinatorial statement. We do this in Section~\ref{sec:simplicial}. NEC group $\Gamma$ is represented by a branched $\Delta$-complex (a certain generalisation of simplicial complexes) $\Sigma$, finite-index subgroups by coverings $\whS\to\Sigma$, and the f.g. subgroup $\Lambda\leq\Gamma$ by a cellular immersion $T\looparrowright\Sigma$. We also develop a combinatorial version of the notion of boundary of a surface.

In Section~\ref{sec:main_combi} we state Theorem~\ref{thm:E_combi_emb}, the master result controlling the expected number of embeddings of a given complex in a random covering. This requires the notion of boundary defect of an immersed complex, which we define and then demonstrate that it behaves like a kind of boundary curvature. We introduce resolutions, which partition the set of immersions of the given complex $T$ into sets of embeddings, and state Proposition~\ref{prop:resolution} on existence of resolutions with controlled boundary. Finally we use resolutions to bridge immersions and embeddings and deduce Theorem~\ref{thm:alg_tf} from Theorem~\ref{thm:E_combi_emb}.

The core of this article will rely on large linear algebra. Section~\ref{sec:ribbon_graph} lays out the background on ribbon categories and ribbon graphs, tools that form the bridge between combinatorics and linear algebra, and thanks to which the latter becomes tractable.

In Section~\ref{sec:extensibility} we prove Proposition~\ref{prop:Rg_trace}. Given a group element $g_e\in G$ for every edge $e\in E(\Sigma)$, Proposition~\ref{prop:Rg_trace} gives a criterion for the extendability of the tuple $\vec{g}$ to a homomorphism $\pio\Sigma\to G$, phrased in terms of ribbon graphs from Section~\ref{sec:ribbon_graph}. We apply it to give a proof of Mednykh's formula, by summing the indicator function of the subset $\Hom{\pio\Sigma}{G}$ over the ambient space $\Hom{\pi_1\Sigma^{(1)}}{G}$ of homomorphisms out of the (free) fundamental group of the 1-skeleton. This is a simplified special case of this article's main technical proof in Section~\ref{sec:counting}.

Section~\ref{sec:rep_Sn} describes the background on Okounkov--Vershik approach to representation theory of symmetric groups. It is geared towards applications for calculations in Sections~\ref{sec:counting} and \ref{sec:asymptotics}.

Section~\ref{sec:counting} is the technical core of this article. We prove Theorem~\ref{thm:count_tangles}, a formula for the number of embeddings of a given immersed complex $T$ in all coverings in $\covers$. Taking empty $T=\emptyset$ in Theorem~\ref{thm:count_tangles} recovers Mednykh's formula as a special case. The proof strategy is similar to the approach from Section~\ref{sec:extensibility}, except that the indicator function is summed only over the subset of those $\varphi\in\Hom{\pi_1\Sigma^{(1)}}{S_n}$ for which there is an embedding $T^{(1)}\hookrightarrow\whS^{(1)}_\varphi$ between 1-skeletons of $T$ and the covering $\whS_\varphi$ induced by $\varphi$. This subset admits a simple description as a Cartesian product, and the indicator is accessible from Proposition~\ref{prop:Rg_trace}. Theorem~\ref{thm:count_tangles} follows after simplifications using the ribbon graph calculus from Section~\ref{sec:ribbon_graph}.

The master Theorem~\ref{thm:E_combi_emb} on frequency of embeddings is proved in Section~\ref{sec:asymptotics}, by analysing the asymptotics of the exact formula in Theorem~\ref{thm:count_tangles}. This relies on the understanding of representation theory of $S_n$ from Section~\ref{sec:rep_Sn}.

Section~\ref{sec:resolutions} has a geometric group theory flavour. We describe the algorithm for constructing resolutions with controlled boundary for a given immersed complex, thus proving Proposition~\ref{prop:resolution} used in Section~\ref{sec:main_combi}. The algorithm necessarily entails solving the word problem in NEC groups, and proceeds by descent on an invariant that combines two ingredients. The first one is a measure of the boundary length, which can be shortened whenever the boundary defect from Section~\ref{sec:main_combi} thinks some boundary component looks non-convex. The second one is a combinatorial analogue of accumulating negative curvature.

\subsection*{Acknowledgements} I am grateful to Vladimir Markovi\'c, Dawid Kielak, Ewan Cassidy, Marc Lackenby and Michael Magee for helpful comments.

\subsection{Notation}

The table below collects the notation used.

\smallskip

\noindent
\begin{tabular}{rl}
f.g.&finitely generated\\
$[n]$, $[m,n]$&the set $\{1,\dots,n\}$ and $\{m,m+1,\dots,n\}$ respectively\\
$\mathbbm{1}_X$&indicator function of the set or event $X$\\
$n^{\underline{k}}$&falling factorial $n(n-1)\dots(n-k+1)$\\
$\chi_V,\chi_V(1)$&the character and dimension of representation $V$\\
$\tau_m(G),\chi^{m\tn{-tors}}_V$&number and average character of $m$-torsion elements (Sec~\ref{sssec:sum_overreps})\\
$\covers$&set of (fibre-labelled) degree-$n$ coverings of $\Sigma$, or $\Hom{\!\pio\Sigma\!}{\!S_n\!}$\\
$\mathbb{E}_n$&expectation over $\covers$ with the uniform measure (Section~\ref{sec:intro})\\
$p\colon T\looparrowright\Sigma$&immersed complex (Def~\ref{def:imm_cplx})\\
$\partial T$&boundary of an immersed complex $T$ (Def~\ref{def:bdry})\\
$V,E,F$&vertices, edges, and faces of a branched $\Delta$-complex ($\Sigma$ or $T$)\\
$C$&corners of a branched $\Delta$-complex (Sec~\ref{sec:simplicial})\\
$\Fspx,\Forb$&simplicial, $m$-orbigon faces of a branched $\Delta$-complex (Sec~\ref{sec:simplicial})\\
$E_e,F_f$&set of edges, faces of $T$ mapping to $e\in E(\Sigma),f\in F(\Sigma)$ (Sec~\ref{ssec:imm_cplx})\\
$\mathfrak{v},\mathfrak{e}_e$&cardinality of $V(T),E_e$ (Sec~\ref{ssec:imm_cplx})\\
$\mathfrak{f}_f$&degree of restricted immersion $\bigcup F_f\to f$ (Sec~\ref{ssec:imm_cplx})\\
$H_\iota$&hanging half-edges of $T$ above vertex-edge inclusion $v\!\stackrel{\iota}{\hookrightarrow}\!e$ (Def~\ref{def:bdry})\\
$S_\iota,O_\iota$&exposed, covered sides lifting edge-face inclusion $e\stackrel{\iota}{\hookrightarrow}f$ (Def~\ref{def:bdry})\\
$e_+\corner e_-$&corner $c$ of face $f$, oriented away from edge $e_-$ towards $e_+$ (Sec~\ref{sec:counting})\\
$\vec\gamma$&tuple $(\gamma_i)_{i\in I}$ indexed by implicit set $I$ (often $C(\Sigma),E(\Sigma),F(\Sigma)$)\\
$\lambda\vdash n$, $|\lambda|$&$\lambda$ is a partition of $n=|\lambda|$ (Def~\ref{def:rep_Sn})\\
$\lambda^\vee$&transpose partition or Young diagram (Sec~\ref{sec:rep_Sn})\\
$\tn{Tab}(\lambda)$&set of standard Young (skew-)tableaux of shape $\lambda$ (Def~\ref{def:rep_Sn})\\
$\dft(\mathcal{P})$&defect of a boundary piece $\mathcal{P}$ (Def~\ref{def:dft})\\
$\md(T)$&maximal defect of a complex $T$ (Def~\ref{def:dft})
\end{tabular}
Most of the results in this article are about a finite immersed complex $p\colon T\looparrowright\Sigma$, so for brevity we leave it implicit in the statements. We are working in two dimensions, so we will omit the prefix 2- and write just ``orbifold''.

\section{Setup: NEC groups and 2-dimensional orbifolds}\label{sec:simplicial}

A 2-dimensional orbifold $\Sigma$ is a topological surface $\Sigma^\tn{top}$ with distinguished points $x_i\in\Sigma^\tn{top}$, called \emph{cone points}, each decorated with a positive integer $m_i\in\mathbb{N}_+$ called its \emph{order}. A neighbourhood of a cone point $x_i$ is modelled after the quotient $\sfrac{D^2}{\frac{\mathbb{Z}}{(m_i)}}$ of the disk modulo rotations by multiples of $\tfrac{2\pi}{m_i}$. The \emph{orbifold fundamental group} of $\Sigma$ is
\begin{equation*}
\pio\Sigma\defeq\frac{\pi_1\left(\Sigma^\tn{top}\setminus\{x_i\}_i\right)}{\langle\!\langle \gamma_i^{m_i}\rangle\!\rangle_i},
\end{equation*}
where $\gamma_i$ is the loop winding once around the respective $x_i$. The genus and orientability of $\Sigma$ are the same as those of the underlying non-singular surface $\Sigma^\tn{top}$, and the orbifold Euler characteristic is
\begin{equation*}
\cho(\Sigma)=\chi(\Sigma^\tn{top})-\sum_i\left(1-\tfrac{1}{m_i}\right),
\end{equation*}
where $\chi$ is the usual topological Euler characteristic. A covering map between orbifolds is allowed to be branched at cone points, as long as the order of the image cone point gets multiplied by the branching degree. This way the Galois correspondence between coverings of orbifolds and subgroups of the orbifold fundamental group holds in the branched setting.

There is a correspondence between NEC groups (cocompact without reflections) and (compact closed) hyperbolic orbifolds. An NEC $\Gamma$ gives rise to the quotient orbifold $\sfrac{\mathbb{H}^2}{\Gamma}$ and a hyperbolic metric on it. Conversely, an orbifold $\Sigma$ equipped with a hyperbolic metric has universal covering isometric to the hyperbolic plane, and the deck transformations $\pio\Sigma\curvearrowright\widetilde\Sigma=\mathbb{H}^2$ make the fundamental group into a NEC group. Consequently, it makes sense to define the Euler characteristic of a group as the $\cho$ of its corresponding orbifold (or $-\infty$ if it is not f.g.). If $\Gamma$ contains a reflection, then the image of its axis is a boundary component of $\Sigma$, and all of the boundary arises this way.

\medskip

We base our combinatorial model for orbifolds on $\Delta$-complexes from \cite[\S 2.1]{alg_top_Hatcher02}. They are a mild generalisation of simplicial complexes, where a simplex can be included in another one as a face in multiple distinct ways; for example, one-vertex triangulations are allowed. To model branching we introduce a special 2-cell which we call \emph{orbigon}. An $m$-orbigon is a polygon with $\mathfrak{s}$ sides containing in the middle a cone point annotated with an \emph{order} $o\in\mathbb{N}_+$, such that $\mathfrak{s}o=m$. The allowed coverings of an orbigon are topologically conjugate to the function $z\mapsto z^d$ over the complex unit disk. Explicitly, an $(\mathfrak{s}o)$-orbigon with $\mathfrak{s}$ sides and cone point of order $o$ can be covered by an $(\mathfrak{s}o)$-orbigon with $\mathfrak{s}d$ sides and cone point of order $\tfrac{o}{d}$, for any branching degree $d|o$. See Figure \ref{fig:orbigon_coverings} for the lattice of covering maps between 6-orbigons.

\begin{figure}[h]
\centering
\begin{tikzpicture}
\begin{scope}[circle,inner sep=2]\node[draw]at(0,0){$6$};\node[draw]at(-4,2){$3$};\node[draw]at(-4,-2){$2$};\node[draw]at(-8,0){$1$};\end{scope}
\draw(0,-.5)to[out=0,in=180](1,0)to[out=180,in=0](0,.5)arc(90:270:.5);\filldraw(1,0)circle(1pt);
\draw(-5,2)to[bend left=45](-3,2)to[bend left=45](-5,2);\filldraw(-5,2)circle(1pt)(-3,2)circle(1pt);
\draw(-3,-2)--(-4.5,-1.3)--(-4.5,-2.7)--(-3,-2);\filldraw(-3,-2)circle(1pt)(-4.5,-1.3)circle(1pt)(-4.5,-2.7)circle(1pt);
\draw(-7,0)--(-7.5,.7)--(-8.5,.7)--(-9,0)--(-8.5,-.7)--(-7.5,-.7)--(-7,0);\filldraw(-7,0)circle(1pt)(-7.5,.7)circle(1pt)(-8.5,.7)circle(1pt)(-9,0)circle(1pt)(-8.5,-.7)circle(1pt)(-7.5,-.7)circle(1pt);
\draw[->](-2.5,1.5)--node[fill=white,pos=.5]{$2$}(-1,.8);\draw[->](-2.5,-1.5)--node[fill=white,pos=.5]{$3$}(-1,-.8);\draw[->](-6.7,.8)--node[fill=white,pos=.5]{$3$}(-5.5,1.5);\draw[->](-6.7,-.8)--node[fill=white,pos=.5]{$2$}(-5.5,-1.5);\draw[->](-6.5,0)--node[fill=white,pos=.5]{$6$}(-1.5,0);
\end{tikzpicture}
\caption{The lattice of covering maps between 6-orbigons. The numbers on arrows indicate the covering degrees (deck groups are cyclic and act by rotations).}
\label{fig:orbigon_coverings}
\end{figure}

We will refer to the combinatorial structures built from simplices and orbigons as \emph{branched $\Delta$-complexes}. We call the 0-dimensional cells of a branched $\Delta$-complex $\Sigma$ \emph{vertices} and denote them $V(\Sigma)$, 1-cells are \emph{edges} $E(\Sigma)$, and the standard 2-cells are called \emph{simplices} $\Fspx(\Sigma)$ for short. We denote the set of $m$-orbigons of $\Sigma$ by $\Forb(\Sigma)$, and call orbigons and simplices collectively \emph{faces} $F(\Sigma)=\Fspx(\Sigma)\cup\bigcup_m\Forb(\Sigma)$. All faces have \emph{corners}; formally, they are inclusions of a vertex into a face. We denote the set of corners of a face $f$ by $C(f)$, and the collection of all corners in $\Sigma$ by $C(\Sigma)$. Morphisms between branched $\Delta$-complexes are similar to combinatorial maps between $\Delta$-complexes, except that we allow branching at cone points of orbigons, subject to multiplicativity of the cone point order by the branching degree. In particular, the covering maps between orbigons we just described are valid morphisms of branched $\Delta$-complexes. A morphism is called an \emph{embedding} when it is injective on cells. Some examples of orbifolds triangulated into branched $\Delta$-complexes are shown in Figure \ref{fig:cplx_ex}. The orbifold Euler characteristic can be read off from a branched $\Delta$-complex structure as
\begin{equation}\label{eq:cho_combi}
\cho(\Sigma)=\#V(\Sigma)-\#E(\Sigma)+\#\Fspx(\Sigma)+\sum_\iterorb\frac{\mathfrak{s}_f}{m},
\end{equation}
where $\mathfrak{s}_f$ is the number of sides of $f$.

\begin{figure}[h]
\begin{subfigure}{.5\textwidth}
\centering
\begin{tikzpicture}
\draw[->-](-.5,1)--(.5,1);\draw[->>-](.5,1)--(1,.5);\draw[->-](1,-.5)--(1,.5);\draw[->>-](.5,-1)--(1,-.5);
\begin{scope}[decoration={markings,mark=at position .55 with {\arrow{Latex[fill=white,length=4,width=4]}}}]\draw[postaction={decorate}](.5,-1)--(-.5,-1);\draw[postaction={decorate}](-1,.5)--(-1,-.5);\end{scope}
\begin{scope}[decoration={markings,mark=at position .55 with {\arrow{>}},mark=at position .45 with {\arrow{Latex[fill=white,length=4,width=4]}}}]\draw[postaction={decorate}](-.5,-1)--(-1,-.5);\draw[postaction={decorate}](-.5,1)--(-1,.5);\end{scope}
\draw(-1,-.5)--(.5,-1)--(.5,1)(.5,1)--(1,-.5)(-1,.5)--(.5,-1)(.5,-1)--(-.5,1);
\end{tikzpicture}
\caption{Example one-vertex triangulation of $\Sigma_2$}
\end{subfigure}
\hfill
\begin{subfigure}{.47\textwidth}
\centering
\begin{tikzpicture}
\draw[->-](0,1)--(1,1);\draw[->-](1,0)--(0,0);
\draw[->>-](1,0)--(1,1);\draw[->>-](0,1)--(0,0);
\filldraw(0,1)circle(1pt)(1,0)circle(1pt);\draw[fill=white](0,0)circle(1pt)(1,1)circle(1pt);
\draw(1,0)--(0,1);
\end{tikzpicture}
\caption{A triangulation of $\mathbb{RP}^2$ (with two vertices)}
\label{sfig:rpp}
\end{subfigure}
\begin{subfigure}{\textwidth}
\begin{equation*}
\begin{tikzpicture}[baseline=0]
\draw[red](0,0)to[out=0,in=-135](1.1,.3)(.5,.9)to[out=-135,in=60](0,0);\draw[red,->-](1.1,.3)arc(-45:135:.424);\node[red]at(1.4,.2){$e_1$};
\draw[teal](0,0)to[out=120,in=-45](-.5,.9)(-1.1,.3)to[out=-45,in=180](0,0);\node[teal]at(-1.4,.2){$e_2$};
\draw[teal,decoration={markings,mark=at position .7 with {\arrow{Latex[length=4,width=4]}},mark=at position .5 with {\arrow{Latex[length=4,width=4]}}},postaction={decorate}](-.5,.9)arc(45:225:.424);
\draw[blue,-{Latex[length=4,width=4]}](0,0)to[out=-120,in=90](-.5,-.7)arc(180:270:.4);\node[blue]at(-.7,-.9){$e_3$};
\draw[blue,{Latex[length=4,width=4,reversed]}-](.1,-1.1)arc(-90:0:.4)to[out=90,in=-60](0,0);
\draw[blue,decoration={markings,mark=at position .85 with {\arrow{Latex[length=4,width=4]}}},postaction={decorate}](-.12,-1.1)--(.1,-1.1);
\node[circle,draw,font=\tiny,inner sep=0]at(.8,.6){$m_1$};\node[circle,draw,font=\tiny,inner sep=0]at(-.8,.6){$m_2$};\node[circle,draw,font=\tiny,inner sep=0]at(0,-.7){$m_3$};
\filldraw(0,0)circle(1pt);
\end{tikzpicture}
\qquad\bigcup_\partial\qquad
\begin{tikzpicture}[baseline=0]
\draw[red,->-](1,-.5)--(0,1);\draw[teal,->>-](0,1)--(-1,-.5);\draw[blue,->>>-](-1,-.5)--(1,-.5);
\node[red]at(.8,.3){$e_1$};\node[teal]at(-.8,.3){$e_2$};\node[blue]at(0,-.8){$e_3$};
\filldraw(0,1)circle(1pt)(-1,-.5)circle(1pt)(1,-.5)circle(1pt);
\end{tikzpicture}
\end{equation*}
\caption{A sphere with 3 cone points of orders $m_1,m_2,m_3$, obtained by attaching a simplex to three orbigons along their boundary edges.}
\label{sfig:orbi_S2}
\end{subfigure}
\begin{subfigure}{\textwidth}
\begin{equation*}
\begin{tikzpicture}[baseline=0]
\draw(0,-.8)--(0,.8);\draw[red,->-](1,0)--(0,.8);\draw[teal,->>-](1,0)--(0,-.8);\draw[blue,->>>-](0,.8)--(-1,0);\draw[blue,->>>-](-1,0)--(0,-.8);\node[red]at(.6,.6){$e_1$};\node[teal]at(.6,-.6){$e_2$};\node[blue]at(-.6,.7){$e_3$};\node[blue]at(-.6,-.6){$e_3$};\node at(.2,0){$e_4$};
\end{tikzpicture}
\quad\bigcup_{{\color{red}e_1},{\color{teal}e_2}}\quad
\begin{tikzpicture}[baseline=0]
\begin{scope}[red]\draw(-.5,.7)to[out=-90,in=135](0,0)to[out=45,in=-90](.5,.7);\draw[->-](.5,.7)arc(0:180:.5);\node at(-.7,.45){$e_1$};\end{scope}
\begin{scope}[teal]\draw(-.5,-.7)to[out=90,in=-135](0,0)to[out=-45,in=90](.5,-.7);\draw[->>-](.5,-.7)arc(0:-180:.5);\node at(-.7,-.45){$e_2$};\end{scope}
\node[circle,draw,font=\tiny,inner sep=1pt]at(0,.7){$m_1$};\node[circle,draw,font=\tiny,inner sep=1pt]at(0,-.7){$m_2$};
\end{tikzpicture}
\qquad=\qquad
\begin{tikzpicture}[baseline=0]
\draw(-.05,-.25)to[out=225,in=0](-.3,-.4)arc(270:90:.6)--(.3,.8)arc(90:-90:.6)to[out=180,in=0](-.3,0)arc(270:90:.2)--(.3,.4)arc(90:-90:.2)to[out=180,in=45](.05,-.15);\node at(-.9,-.4){$e_4$};
\draw[blue,->>>-](.3,.8)to[out=200,in=90](-.7,.2);\draw[blue](-.7,.2)arc(180:270:.4)--(-.1,-.2)(.1,-.2)--(.3,-.2)arc(-90:0:.4)to[out=90,in=-50](.3,.8);\filldraw(.3,.8)circle(1pt);
\end{tikzpicture}
\quad\bigcup_{e_4}\quad
\begin{tikzpicture}[baseline=0]
\draw[red,->-](0,0)to[out=10,in=-150](.65,.15)arc(-60:160:.35)to[out=-110,in=70](0,0);\node[circle,draw,font=\tiny,inner sep=0pt]at(.5,.5){$m_1$};
\draw[teal,->>-](0,0)to[out=-10,in=150](.65,-.15)arc(60:-160:.35)to[out=110,in=-70](0,0);\node[circle,draw,font=\tiny,inner sep=0pt]at(.5,-.5){$m_2$};
\draw(-.2,0)ellipse(.2 and 1.3);\filldraw(0,0)circle(1pt);\draw(-.2,-1.3)arc(-90:90:1.3);\node at(-.6,-1.1){$e_4$};
\end{tikzpicture}
\end{equation*}
\caption{A triangulation of $\mathbb{RP}^2$ with two cone points. The two orbigons together with one of the simplices (the one incident to {\color{red}$e_1$} and {\color{teal}$e_2$}) form a topological disk with two cone points, while the other simplex becomes a M\"obius band after identifying the two copies of {\color{blue}$e_3$}.}
\label{sfig:orbi_rp2}
\end{subfigure}
\begin{subfigure}{\textwidth}
\begin{equation*}
\begin{tikzpicture}[baseline=0]
\begin{scope}[inner sep=2pt]
\node(ml)[draw,rectangle]at(-1,0){};\node(bm)[draw,rectangle]at(0,-1){};
\node(tm)[draw,diamond]at(0,1){};\node(mr)[draw,diamond]at(1,0){};
\node(tl)[draw,circle]at(-1,1){};\node(bl)[draw,circle]at(-1,-1){};
\end{scope}
\begin{scope}[inner sep=1pt]\node(tr)[draw,circle split]at(1,1){};\node(br)[draw,circle split]at(1,-1){};\end{scope}
\node[circle,draw,inner sep=1pt,font=\tiny]at(0,0){$1$};
\draw[->-](tl)--(tm);\draw[->>-](tr)--(tm);
\draw[decoration={markings,mark=at position .6 with {\arrow{Latex[fill=white,length=4,width=4]}},mark=at position .45 with {\arrow{Latex[fill=white,length=4,width=4]}}},postaction={decorate}](bl)--(bm);
\draw[decoration={markings,mark=at position .5 with {\arrow{Latex[fill=white,length=4,width=4]}}},postaction={decorate}](br)--(bm);
\draw[decoration={markings,mark=at position .6 with {\arrow{>}},mark=at position .45 with {\arrow{Stealth[fill=white,length=4,width=4]}}},postaction={decorate}](ml)--(tl);
\draw[decoration={markings,mark=at position .5 with {\arrow{Stealth[fill=white,length=4,width=4]}}},postaction={decorate}](ml)--(bl);
\draw[decoration={markings,mark=at position .6 with {\arrow{>}},mark=at position .45 with {\arrow{Stealth[length=4,width=4]}}},postaction={decorate}](mr)--(br);
\draw[decoration={markings,mark=at position .5 with {\arrow{Stealth[length=4,width=4]}}},postaction={decorate}](mr)--(tr);
\draw(ml)--(tm)--(mr)--(bm)--(ml)to[in=150,out=30](mr)to[in=-30,out=-150](ml);
\begin{scope}[font=\small]\node at(-.7,.7){$B$};\node at(.7,.7){$A$};\node at(-.7,-.7){$A$};\node at(.7,-.7){$B$};\end{scope}
\end{tikzpicture}
\quad\bigcup_\partial\quad
\begin{tikzpicture}[baseline=0]
\begin{scope}[inner sep=2pt]
\node(ml)[draw,circle]at(-1,0){};\node(bm)[draw,circle]at(0,-1){};
\node(tr)[draw,diamond]at(1,1){};\node(br)[draw,diamond]at(1,-1){};
\node(tl)[draw,rectangle]at(-1,1){};\node(bl)[draw,rectangle]at(-1,-1){};
\end{scope}
\begin{scope}[inner sep=1pt]\node(tm)[draw,circle split]at(0,1){};\node(mr)[draw,circle split]at(1,0){};\end{scope}
\node[circle,draw,inner sep=1pt,font=\tiny]at(0,0){$1$};
\draw[->-](bm)--(br);\draw[->>-](tm)--(tr);
\draw[decoration={markings,mark=at position .6 with {\arrow{Latex[fill=white,length=4,width=4]}},mark=at position .45 with {\arrow{Latex[fill=white,length=4,width=4]}}},postaction={decorate}](bm)--(bl);
\draw[decoration={markings,mark=at position .5 with {\arrow{Latex[fill=white,length=4,width=4]}}},postaction={decorate}](tm)--(tl);
\draw[decoration={markings,mark=at position .6 with {\arrow{>}},mark=at position .45 with {\arrow{Stealth[fill=white,length=4,width=4]}}},postaction={decorate}](tl)--(ml);
\draw[decoration={markings,mark=at position .5 with {\arrow{Stealth[fill=white,length=4,width=4]}}},postaction={decorate}](bl)--(ml);
\draw[decoration={markings,mark=at position .6 with {\arrow{>}},mark=at position .45 with {\arrow{Stealth[length=4,width=4]}}},postaction={decorate}](br)--(mr);
\draw[decoration={markings,mark=at position .5 with {\arrow{Stealth[length=4,width=4]}}},postaction={decorate}](tr)--(mr);
\draw(tm)--(mr)--(bm)--(ml)--(tm)to[in=60,out=-60](bm)to[in=-120,out=120](tm);
\begin{scope}[font=\small]\node at(-.7,.7){$A$};\node at(.7,.7){$B$};\node at(-.7,-.7){$B$};\node at(.7,-.7){$A$};\end{scope}
\end{tikzpicture}
\quad\longrightarrow\quad
\begin{tikzpicture}[baseline=0]
\node[circle,draw,font=\tiny,inner sep=1pt]at(0,0){$2$};
\draw (0,1)to[out=-100,in=180](0,-.3)to[out=0,in=-80](0,1)to[out=-135,in=135](0,-1)to[out=45,in=-45](0,1);
\draw[->-](0,1)--(-1.2,0);\draw[->-](1.2,0)--(0,-1);\draw[->>-](-1.2,0)--(0,-1);\draw[->>-](0,1)--(1.2,0);
\begin{scope}[font=\small]\node at(-.7,0){$A$};\node at(.7,0){$B$};\end{scope}
\end{tikzpicture}
\end{equation*}
\caption{Quadruple covering of a torus with a cone point of order 2 by a genus 2 surface. Degree-2 branching happens at the designated cone points of orbigons. The mapping between edges induced by the covering preserves the number of arrow tips and their orientation.}
\label{sfig:orbi_covering}
\end{subfigure}
\caption{Example branched $\Delta$-complex structures on orbifolds.}
\label{fig:cplx_ex}
\end{figure}

\medskip

In the rest of the article we adopt the following convenience convention: the orbifolds serving as the base space of coverings (denoted $\Sigma$) will be triangulated with a single vertex and such that every $m$-orbigon has one edge (i.e. is a monogon with a cone point of order $m$). Such a branched $\Delta$-complex structure can be chosen on any orbifold with $\cho<0$. All complexes shown in Figure~\ref{fig:cplx_ex} are of this standard form, except $\mathbb{RP}^2$ in \ref{sfig:rpp} that has two vertices, and the genus two covering space in \ref{sfig:orbi_covering} that has four vertices and two two-edged orbigons. A standard triangulation of an orientable orbifold of genus $g$ with $k$ cone points has $6g-3+2k$ edges and $4g-2+2k$ faces, $4g-2+k$ of which are simplices and $k$ are orbigons. A standard triangulation of a non-orientable orbifold of genus $g$ with $k$ cone points has $3g+2k-3$ edges and $2g+k-2$ simplices (non-orientable genus $g$ surface is a connected sum of $g=2-\chi$ cross-caps, so e.g. the Klein bottle has $g=2$). A branched $\Delta$-complex structure of such a standard form induces a cellular presentation for the fundamental group
\begin{equation}\label{eq:simplicial_pres}
\pio\Sigma=\left\langle e\in E(\Sigma)\ \middle|\ \begin{matrix}W_f(\vec{e})\text{ for simplicial }f\in \Fspx(\Sigma)\\e^m\text{ for }e\text{ bounding }m\text{-orbigon}\end{matrix}\right\rangle,
\end{equation}
where $W_f$ is the three-letter word in edges and their inverses spelled by traversing the perimeter of simplex $f$. Furthermore, with $\#V(\Sigma)=1$ we can treat the labelling of the fibre of $\whS\in\covers$ from Remark~\ref{rmk:fibre_label} as a bijection $\mathcal{L}_\whS\colon V(\whS)\to[n]$.

\subsection{Immersed complexes}\label{ssec:imm_cplx}

We will represent tangles combinatorially by immersed complexes. They extend the idea of Stallings core graphs \cite{topology_finite_graphs_Stallings83} by allowing 2-cells. All immersed complexes in this article will be finite.
\begin{definition}\label{def:imm_cplx}
For a branched $\Delta$-complex structure on an orbifold $\Sigma$, an \emph{immersed complex} is a branched $\Delta$-complex $T$ equipped with an immersion $p\colon T\looparrowright\Sigma$, i.e. a morphism such that the induced mapping on links of vertices is injective.

A \emph{morphism of immersed complexes} is a morphism $q\colon T_1\to T_2$ (of branched $\Delta$-complexes) such that the diagram
\begin{equation*}\begin{tikzcd}
T_1\arrow[rr,"q"]\arrow[rd,"p_1",{Glyph[glyph math command=looparrowleft, swap]}->]&&T_2\arrow[ld,"p_2"',{Glyph[glyph math command=looparrowleft]}->]\\
&\Sigma&
\end{tikzcd}\end{equation*}
commutes.

A morphism of immersed complexes is called an \emph{embedding} when it is injective on vertices (and hence all cells), and a \emph{quotient} if it is surjective on cells.
\end{definition}
An immersed complex can be equivalently regarded as a directed branched $\Delta$-complex labelled with cells of $\Sigma$, such that locally the labels look like in a subcomplex of $\Sigma$. A similar concept appears under the name of tiled surfaces in \cite[Definition 2.1]{asymptotic_stats_random_covering_surfaces_MageePuder23} and sub-cover in \cite[Definition 2.1]{local_stats_random_perms_free_prods_PuderZimhoni24}. The subtle difference is that we do not require the immersed complex to embed in some covering of $\Sigma$, so Definition~\ref{def:imm_cplx} is completely local; for example, a circle spelling $w^2$ for a nullhomotopic but non-backtracking edge path $w$ in $\Sigma$ is a valid immersed complex. Later in Lemma~\ref{lmm:no_dft_subgp} we will give sufficient conditions to ensure that the immersion does come from an embedding into a covering. An example immersed complex is shown in Figure~\ref{sfig:imm_cplx}. There we denote
\begin{itemize}
\item the number of vertices by $\mathfrak{v}=\#V(T)$
\item the set of edges of $T$ above $e\in E(\Sigma)$ by $E_e\defeq p^{-1}(e)\subseteq E(T)$, and its cardinality by $\mathfrak{e}_e\defeq\#E_e$
\item the preimage of $f\in F(\Sigma)$ by $F_f\defeq p^{-1}(f)\subseteq F(T)$, and the degree of the restriction $p|_{F_f}\to f$ of $p$ to it by $\mathfrak{f}_f$. Explicitly,
\begin{equation*}
\mathfrak{f}_f=\begin{cases}\#F_f&\text{if }f\in \Fspx(\Sigma)\\\sum_{f'\in F_f}\mathfrak{s}_{f'}&\text{if }f\in\Forb(\Sigma).\end{cases}
\end{equation*}
\end{itemize}

Algebraically, a (based connected) immersed complex $p\colon T\looparrowright\Sigma$ determines a subgroup $p_*(\pio T)\le\pio\Sigma$. Conversely, for every subgroup $\Lambda\le\pio\Sigma$ there exists a (highly non-unique) (based connected) immersed complex $(p,T)$ such that $p_*(\pio T)=\Lambda$ (and a finite one can be chosen whenever $\Lambda$ is f.g.). A morphism of immersed complexes $q\colon T_1\to T_2$ translates algebraically to the inclusion $p_{1*}(\pio T_1)\le p_{2*}(\pio T_2)$ of the respective fundamental groups. Note that the homomorphism $p_*\colon\pio T\to\pio\Sigma$ is not always injective; again, later in Lemma~\ref{lmm:no_dft_subgp} we will develop some conditions to guarantee this based on the boundary.

\medskip

The immersion $p$ provides each cell of $T$ with its ``model neighbourhood'' in $\widetilde\Sigma$. The cell's actual neighbourhood in $T$ is a subset of it, and we refer to the complement as consisting of ``phantom'' cells. Concretely, every edge of $T$ has two sides, which can be covered by faces or exposed (i.e. facing phantom cells). Furthermore, from each vertex of $T$ emanates a pencil of actual or phantom edges that is a copy of the neighbourhood of the vertex in $\Sigma^{(1)}$. We think of the phantom ones as (initial or terminal) halves of edges hanging at the vertex.
\begin{definition}\label{def:bdry}
Immersion $p$ induces the following structures on $T$.
\begin{itemize}
\item A side of an edge of $T$ is called \emph{covered} if it touches an actual face of $T$. Formally, a covered side is an inclusion of an edge in a face of $T$. We denote the set of covered sides lifting an inclusion $\iota\colon e\hookrightarrow f$ from $\Sigma$ by $O_\iota$.
\item A side that is not covered is called \emph{exposed}. Formally, an exposed side is an edge $e\in E(T)$ together with a choice of inclusion $\iota$ of $p(e)$ in a face of $\Sigma$, such that $\iota$ does not lift from $\Sigma$ to $T$ at $e$. The set of exposed sides above this $\iota$ is denoted $S_\iota$.
\item A (half of) phantom edge is called a \emph{hanging half-edge}. Formally, it is a pair $(v,\iota)$, where $v\in V(T)$ and $\iota\colon p(v)\hookrightarrow e$ for some $e\in E(\Sigma)$, such that $\iota$ does not lift to $T$ at $v$. Set of hanging half-edges with this $\iota$ is denoted $H_\iota$.
\end{itemize}
The \emph{boundary} $\partial T=\bigcup_\iota S_\iota\cup\bigcup_\iota H_\iota$ consists of all exposed sides and hanging half-edges of $T$. Adjacent exposed sides or hanging half-edges (i.e. separated only by a single phantom corner) are said to belong to the same \emph{boundary component}.
\end{definition}
See Figure~\ref{sfig:bdry} for an example of a boundary. Our notion of exposed sides and hanging half-edges is analogous to the thick version of a tiled surface from \cites[Definitions 2.2, 2.3]{asymptotic_stats_random_covering_surfaces_MageePuder23}[\S 3.2]{core_surfaces_MageePuder22}, and we reuse their terminology for consistency. Note that the boundary vertices and edges, considered as a subcomplex of $T$, form a graph that may contain vertices of valence larger than 2 (see Figure~\ref{sfig:oa_slot_example} for an example). However, we think of the boundary as ``pushed slightly off the complex'', and this way all boundary components are topological circles.
\begin{figure}[h]
\begin{subfigure}{\textwidth}
\centering
\begin{tikzpicture}
\fill[lightgray](-3,0)--(-2,0)--(-3,1);\draw[thick,red,->-](-2,0)--(-3,1);\draw[thick,teal,->>-](-3,0)--(-3,1);\draw[thick,blue,->>>-](-3,0)--(-2,0);
\draw[thick,teal,->>-](-4,0)--(-4,1);\draw[thick,blue,->>>-](-4,0)--(-3,0);\draw[thick,blue,->>>-](-4,1)--(-3,1);
\filldraw(-4,0)circle(.04)(-4,1)circle(.04)(-3,0)circle(.04)(-3,1)circle(.04)(-2,0)circle(.04);
\node at(-3,1.5){$T$};
\node at(-1.2,.5){$\looparrowright$};\node at(-1.2,.8){$p$};
\fill[lightgray](0,0)--(0,1)--(1,1)--(1,0);
\draw[thick,red,->-](1,0)--(0,1);\draw[thick,teal,->>-](0,0)--(0,1);\draw[thick,teal,->>-](1,0)--(1,1);\draw[thick,blue,->>>-](0,0)--(1,0);\draw[thick,blue,->>>-](0,1)--(1,1);
\filldraw(0,0)circle(.04)(0,1)circle(.04)(1,0)circle(.04)(1,1)circle(.04);
\node at(.3,.3){$f_1$};\node at(.7,.7){$f_2$};\node at(.5,-.2){\color{blue}$a_2$};\node at (.5,1.2){\color{blue}$a_2$};\node at(-.3,.5){\color{teal}$a_1$};\node at(1.3,.5){\color{teal}$a_1$};\node at(.63,.17){\color{red}$b$};\node at(-.15,-.15){$v$};\node at(1.15,-.15){$v$};\node at(-.15,1.15){$v$};\node at(1.15,1.15){$v$};
\node at(2.1,1.5){$\Sigma$};
\node at(2.1,.5){$=$};
\begin{scope}[thick,teal]\draw(3.8,-.18)to[bend right](4,.2)to[bend right](3.8,.47);
\draw[dashed](3.8,.47)to[out=-160,in=110](3.5,.15);\draw[dashed,->>-](3.5,.15)--(3.8,-.18);\end{scope}
\begin{scope}[thick,red]\draw(4,.2)to[bend right](4.8,.03);\draw(4,.67)to[out=135,in=90](3.4,.6)to[out=-90,in=150](4,.2);
\draw[dashed](4.8,.02)to[bend right](4.74,.4)(4.5,.7)to[out=135,in=45](4,.67);\draw[dashed,-{\ArrTip}](4.5,.7)--(4.74,.4);\end{scope}
\begin{scope}[thick,blue]\draw(4,.2)to[out=0,in=-90](4.8,.6)to[out=90,in=0](4.4,1)(3.6,1)to[out=180,in=90](3.2,.6)to[out=-90,in=160](4,.2);\draw[->>>-](4.4,1)--(3.6,1);\end{scope}
\filldraw(4,.2)circle(.04);
\draw(4,.5)ellipse(1.1 and .7);
\draw(3.5,.6)to[bend right](4.5,.6)(3.6,.55)to[bend left](4.4,.55);
\end{tikzpicture}
\caption{Immersion of $\Delta$-complexes $p\colon T\looparrowright\Sigma$, following Definition~\ref{def:imm_cplx}. The complex $T$ consists of $\mathfrak{v}=5$ vertices, 6 edges (with ${\color{teal}\mathfrak{e}_{a_1}}=2$, ${\color{blue}\mathfrak{e}_{a_2}}=3$, and ${\color{red}\mathfrak{e}_b}=1$), and 1 face (in $F_{f_1}$, so $\mathfrak{f}_{f_1}=1,\mathfrak{f}_{f_2}=0$). The surface $\Sigma$ is a triangulation of a torus with 1 vertex, 3 edges and 2 faces, obtained by adding the diagonal {\color{red}$b$} to the standard square with side identifications.}
\label{sfig:imm_cplx}
\end{subfigure}
\begin{subfigure}{\textwidth}
\centering
\begin{tikzpicture}
\fill[lightgray](0,0)--(1,0)--(0,1);\draw[thick,red,->-](1,0)--(0,1);\draw[thick,teal,->>-](0,0)--(0,1);\draw[thick,blue,->>>-](0,0)--(1,0);
\draw[thick,teal,->>-](-1,0)--(-1,1);\draw[thick,blue,->>>-](-1,0)--(0,0);\draw[thick,blue,->>>-](-1,1)--(0,1);
\begin{scope}[>={Latex[length=4,width=4]},thick,red]\draw[>->](-.7,.7)--(-1.3,1.3);\draw[>->](.3,-.3)--(-.3,.3);\draw[>->](-.7,-.3)--(-1.3,.3);\draw[->](0,1)--+(-.3,.3);\draw[>-](1.3,-.3)--(1,0);\end{scope}
\begin{scope}[>={Latex[length=4,width=4,sep=-2pt]Latex[length=4,width=4]},thick,teal]\draw[>-](-1,-.4)--(-1,0);\draw[->](-1,1)--+(0,.4);\draw[>-](0,-.4)--(0,0);\draw[->](0,1)--+(0,.4);\draw[>->](1,-.4)--(1,.4);\end{scope}
\begin{scope}[>={Latex[length=4,width=4,sep=-2pt]Latex[length=4,width=4,sep=-2pt]Latex[length=4,width=4]},thick,blue]\draw[>-](-1.4,0)--(-1,0);\draw[>-](-1.4,1)--(-1,1);\draw[->](0,1)--+(.4,0);\draw[->](1,0)--+(.4,0);\end{scope}
\filldraw(-1,0)circle(.04)(-1,1)circle(.04)(0,0)circle(.04)(0,1)circle(.04)(1,0)circle(.04);
\draw[dash pattern={on .5pt off 1pt}](-.15,.3)--(-.15,.85)--(-.7,.85)--++(.15,-.15)--++(-.15,-.15)--++(-.15,.15)--(-.85,.15)--(-.3,.15)--++(-.15,.15)--++(.15,.15)--+(.15,-.15);
\draw[dash pattern={on .5pt off 1pt}](-1.15,-.15)--
++(-.4,0)--++(0,.3)--++(.25,0)--++(-.15,.15)--++(.15,.15)--++(.15,-.15)--(-1.15,.85)--
++(-.4,0)--++(0,.3)--++(.25,0)--++(-.15,.15)--++(.15,.15)--++(.15,-.15)--++(0,.25)--++(.3,0)--++(0,-.4)--(-.3,1.15)--
++(-.15,.15)--++(.15,.15)--++(.15,-.15)--++(0,.25)--++(.3,0)--++(0,-.4)--++(.4,0)--++(0,-.3)--++(-.25,0)--(.85,.3)--
++(0,.25)--++(.3,0)--++(0,-.4)--++(.4,0)--++(0,-.3)--++(-.25,0)--++(.15,-.15)--++(-.15,-.15)--++(-.15,.15)--++(0,-.25)--++(-.3,0)--++(0,.4)--(.3,-.15)--
++(.15,-.15)--++(-.15,-.15)--++(-.15,.15)--++(0,-.25)--++(-.3,0)--++(0,.4)--(-.7,-.15)--
++(.15,-.15)--++(-.15,-.15)--++(-.15,.15)--++(0,-.25)--++(-.3,0)--++(0,.4);
\end{tikzpicture}
\caption{Boundary structures of the immersed complex $(p,T)$ following Definition \ref{def:bdry}. Each of the edges adjacent to the face of $T$ has 1 covered and 1 exposed side (so there is 1 covered side in each of $O_{{\color{teal}a_1}\hookrightarrow f_1}$, $O_{{\color{blue}a_2}\hookrightarrow f_1}$, and $O_{{\color{red}b}\hookrightarrow f_1}$). There are two boundary components, marked with dotted lines. The shorter one consists of 4 exposed sides of edges (1 above each of ${\color{teal}a_1}\hookrightarrow f_1$, ${\color{teal}a_1}\hookrightarrow f_2$, ${\color{blue}a_2}\hookrightarrow f_1$, and ${\color{blue}a_2}\hookrightarrow f_2$), and 2 hanging half-edges (1 for the initial and terminal inclusions $v\hookrightarrow{\color{red}b}$). The second boundary component (outer in the picture) consists of 5 exposed sides of edges (2 in $S_{{\color{blue}a_2}\hookrightarrow f_2}$ and 1 in each of $S_{{\color{teal}a_1}\hookrightarrow f_2}$, $S_{{\color{blue}a_2}\hookrightarrow f_1}$, and $S_{{\color{red}b}\hookrightarrow f_2}$), and 16 hanging half-edges (2 in both sets $H_{v\hookrightarrow{\color{blue}a_2}}$, and 3 in each $H_{v\hookrightarrow{\color{teal}a_1}}$ and $H_{v\hookrightarrow{\color{red}b}}$ for the initial and terminal inclusions).}
\label{sfig:bdry}
\end{subfigure}
\caption{Example of a $\Delta$-complex immersion into a surface and the corresponding boundary.}
\label{fig:immersion}
\end{figure}

\section{Master counting theorem}\label{sec:main_combi}

In this section we state our master statistical result, Theorem~\ref{thm:E_combi_emb}, and use it to deduce Theorem~\ref{thm:alg_tf}. Theorem~\ref{thm:E_combi_emb} characterises the expected number of embeddings $T\hookrightarrow\whS$ (in the category of immersed complexes from Definition~\ref{def:imm_cplx}) of a fixed $T$ in a random $\whS\in\covers$. More directly, these are vertex-injective morphisms of (unbased) branched $\Delta$-complexes $T\to\whS$ lifting the immersion $T\looparrowright\Sigma$ into a covering $\whS\to\Sigma$. The proof of Theorem~\ref{thm:E_combi_emb} is completed in Section \ref{ssec:pf_E_combi_emb}, by extracting the asymptotics of the exact count in Theorem~\ref{thm:count_tangles}. The statement of \ref{thm:E_combi_emb} involves maximal defect $\md(T)$, a certain number depending on $\partial T$ in an explicit way; since the full formula is a bit lengthy, we give it after a brief motivating discussion.
\begin{theorem}\label{thm:E_combi_emb}
Let $\Sigma$ be a combinatorial orbifold with $\cho(\Sigma)<0$, and $T\looparrowright\Sigma$ an immersed complex. Then for any $\varepsilon>0$ we have
\begin{equation*}
\mathbb{E}_n[\#\{T\hookrightarrow\whS\}]=O\left(n^{\cho(T)+\max\{0,\md(T)\}+\varepsilon}\right),
\end{equation*}
where $\md(T)$ is the maximal defect from Definition~\ref{def:dft}. Furthermore, if $\md(T)<0$ then
\begin{equation*}
\mathbb{E}_n[\#\{T\hookrightarrow\whS\}]=n^{\cho(T)}(1+o(1)).
\end{equation*}
\end{theorem}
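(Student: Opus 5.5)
We will derive Theorem~\ref{thm:E_combi_emb} from the exact formula of Theorem~\ref{thm:count_tangles}, dividing by $\#\covers$, which Mednykh's formula (Lemma~\ref{lmm:Mednykh}) gives as a sum over irreducible representations of $S_n$. Theorem~\ref{thm:count_tangles} should express $\#\{(\whS,T\hookrightarrow\whS)\}$ as a sum over tuples of irreducible characters. There is one representation per face of $\Sigma$, or equivalently one after simplification, since $\Sigma$ is connected. Each summand is the product of three things: dimension factors $\chi_\lambda(1)^{\cho\text{-like exponent}}$; falling factorials $(n-\mathfrak{v})!$-type counts coming from labelling the vertices of $T$; and ribbon-graph evaluations attached to $\partial T$. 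The latter are products over the boundary pieces $S_\iota$ and $H_\iota$ of normalised characters of $S_{n-\mathfrak{e}_e}$ restricted from $S_n$, or of branching-rule (Okounkov--Vershik) coefficients. The plan is to group the summands by $\lambda\vdash n$ and compare each group with the corresponding Mednykh summand.

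First, the leading term. For $\lambda=(n)$ the trivial representation contributes, and the factor $n^{\underline{\mathfrak{v}}}$ together with $\prod_e (n^{\underline{\mathfrak{e}_e}})^{-1}$ and the face factors (with $n^{-\mathfrak{f}_f/m}$-type weights for orbigons, coming from the torsion counts $\tau_m(S_n)\approx n^{n(1-1/m)}$ normalised) should reproduce $n^{\cho(T)}$ via the combinatorial formula \eqref{eq:cho_combi}. I would check this first on $T=\emptyset$ and a single vertex as sanity cases.

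Second, the other representations. Using the results of Section~\ref{sec:rep_Sn}, I would bound the ratio of the $\lambda$-summand in the $T$-count to the $\lambda$-summand in Mednykh's formula. For $\lambda$ with $|\lambda|-\lambda_1=k$ (or the analogous quantity for the transpose, or the $m$-torsion averages $\chi^{m\text{-tors}}_\lambda$ for orbigons), the boundary characters contribute at most $n^{k\cdot(\text{defect of the boundary piece})}$. This is where $\dft(\mathcal P)$ enters: each boundary piece gains a factor from the extra $\lambda$-dependence, and the exponent is governed by $\md(T)$. Meanwhile the Mednykh weight $\chi_\lambda(1)^{\cho(\Sigma)}\lesssim n^{k\cho(\Sigma)}$ is small, because $\cho(\Sigma)<0$. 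Summing over $\lambda$ via Witten zeta function bounds, $\sum_\lambda\chi_\lambda(1)^{-s}=2+O(n^{-s})$ for $s>0$, with an $n^\varepsilon$ loss to absorb polynomially many boundary pieces and the $\lambda$ with $k$ comparable to $n$, gives the total bound $n^{\cho(T)+\max\{0,\md(T)\}+\varepsilon}$. When $\md(T)<0$, every nontrivial $\lambda$ contributes $o(n^{\cho(T)})$, which yields the asymptotic. The sign representation, and other $\lambda$ with $\lambda^\vee$ small, must be handled symmetrically; for non-orientable $\Sigma$ their Frobenius--Schur indicators enter.

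The main obstacle is the uniform character estimate for the second step. The bound on a normalised restricted character, i.e.\ the ribbon-graph evaluation along a boundary component, must be uniform in all $\lambda\vdash n$, including those with $k\sim n$, and precise enough that its exponent matches the defect exactly. I would first attempt it via Okounkov--Vershik content vectors: write the boundary contributions as sums over standard skew tableaux, bound each content monomial by $n$ or by $k$ according to whether the box lies in the first row, and then track which hanging half-edges and exposed sides force first-row boxes. That bookkeeping is what Definition~\ref{def:dft} encodes.
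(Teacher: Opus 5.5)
Your outer frame is the paper's: divide the exact count of Theorem~\ref{thm:count_tangles} by Mednykh's formula, read off $n^{\cho(T)}$ from the prefactor and the trivial representation, and show everything else is relatively small. The step that carries the content, however, is misconceived.

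The sum in Theorem~\ref{thm:count_tangles} does not collapse to a single $\lambda\vdash n$. Because the edge averages run over $S_{n-\mathfrak{e}_e}$ rather than $S_n$, the summands are indexed by nested tuples $\eta\subseteq\mu_e\subseteq\lambda_f$ with $\eta\vdash n-\mathfrak{v}$, $\mu_e\vdash n-\mathfrak{e}_e$, $\lambda_f\vdash n-\mathfrak{f}_f$. All the information about $\partial T$ sits in the skew shapes $\lambda_f\setminus\mu_e$ and $\mu_e\setminus\eta$, which have boundedly many boxes however large $\eta$ is.

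Your scaling ``the boundary contributes $n^{k\cdot\dft}$ when $k$ boxes lie off the first row'' is therefore not what happens, and it could not give the theorem. The maximal defect $\md(T)$ is unbounded: a boundary component bounding a non-singular disc with $V$ interior vertices in a covering has defect $1-\cho(\Sigma)V$, cf.\ the proof of Proposition~\ref{prop:no_cap}. So whenever $\md(T)>-\cho(\Sigma)$, your putative terms $n^{k(\md(T)+\cho(\Sigma))}$ grow with $k$, and no zeta-type bound can sum them.

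What is true (Proposition~\ref{prop:long_bd}) concerns stable tuples $\eta=\eta^0[n-\mathfrak{v}]$, $\mu_e=\mu^0_e[n-\mathfrak{e}_e]$, $\lambda_f=\lambda^0_f[n-\mathfrak{f}_f]$. Cutting $\tr\mathfrak{P}$ into corner matrix coefficients in the Gelfand--Tsetlin basis, each term has exponent $\dft(\mathcal{P})+\cho(\Sigma)|\eta^0|$, where $\mathcal{P}$ is the boundary piece of labels lying below the first row. The defect enters once, and the $k$-dependence is only through $\cho(\Sigma)|\eta^0|$. The $-\chi(\mathcal{P})$ part of the defect comes from the Magee--Puder decay $|\langle w',\gamma w\rangle|\lesssim n^{-\frac12\#(\tn{bot}(w')\vartriangle\gamma\cdot\tn{bot}(w))}$ (Lemma~\ref{lmm:MP_skew_bd}). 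That is the precise statement your content-vector bookkeeping would have to produce.

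Next, the ``main obstacle'' you name, a defect-sharp estimate uniform over all $\lambda$ including $k\sim n$, is unnecessary. Meanwhile the tool you propose for large $k$ is too weak. The paper splits at a fixed threshold $b_0$ on $\eta$. For thick $\eta\in\Lambda(n-\mathfrak{v},b_0)$ it uses only crude bounds:
$|\tr\mathfrak{P}|=O(1)$ by unitarity (Proposition~\ref{prop:univ_Pg_bd});
$|R|\leqslant n^{K}\chi_\eta(1)^{\cho(\Sigma)+\varepsilon}$, via Lemma~\ref{lmm:tors_avg_char} and $\chi_{\lambda_f}(1)\leqslant n^{\mathfrak{v}-\mathfrak{f}_f}\chi_\eta(1)$;
and polynomially many tuples per $\eta$.
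It then absorbs this polynomial loss with $\sum_{\lambda\in\Lambda(n,b)}\chi_\lambda(1)^{-s}=O(n^{-sb})$ (Lemma~\ref{lmm:LSG}), taking $b_0$ large. The estimate $\sum_\lambda\chi_\lambda(1)^{-s}=2+O(n^{-s})$ cannot absorb any polynomial loss.

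Finally, the near-column tuples are not a Frobenius--Schur matter, since all indicators equal $1$ for $S_n$. If all cone orders are odd, then $\zeta^{S_n}\to2$, and the transpose of the trivial tuple contributes $\prod_{c\in C(\Sigma)}\tn{sgn}(\gamma_c)$ times the leading term. You need the combinatorial identity $\prod_c\tn{sgn}(\gamma_c)=1$ for $T$-matching permutations (Proposition~\ref{prop:prod_sgn_compat}) to get doubling rather than cancellation, and hence the constant $1$ in $n^{\cho(T)}(1+o(1))$. If some cone order is even, you instead need the exponential decay of $\chi^{m\tn{-tors}}$ on near-column diagrams (Proposition~\ref{prop:tall_tors_bd}).
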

Before formally defining $\md(T)$, let us explain why some kind of boundary contribution is necessary (see also \cite[discussion of (1.7) in \S 1.6]{asymptotic_stats_random_covering_surfaces_MageePuder23}). An analogous problem makes sense for graphs. In their setting, if $H\looparrowright G$ is an immersion of graphs with $\chi(G)<0$, then the expected number of embeddings of $H$ in a random degree-$n$ covering of $G$ is an explicit rational function of $n$, and one can see directly that it is asymptotic to $n^{\chi(H)}$ (this appears e.g. in \cites[Lemma 6.4]{meas_preserving_words_primitive_PuderParzanchevski15}[Lemma 3.1]{local_stats_random_perms_free_prods_PuderZimhoni24}[Lemma 6.4]{tanglefree_perms_PW_property_random_covers_KlukowskiMarkovic24}, phrased in various ways). Let us return and try to apply this intuition to surfaces. If for example $T\looparrowright\Sigma$ is a simplicial immersed circle, then $\chi(T)=0$, so naively we would expect $O(1)$ embeddings of $T$ in a random $\whS\in\covers$. However, if $T$ is nullhomotopic then the immersion has $n$ lifts into any $\whS\in\covers$, and it seems plausible that $\Theta(n)$ of them should be embeddings, contrary to our guess. This means that statistics of embeddings detect triviality in the fundamental group. This is exactly the role of defect in Theorem \ref{thm:E_combi_emb}. We are now ready to give its precise definition.
\begin{definition}\label{def:dft}
A \emph{boundary piece} of an immersed complex $T\looparrowright\Sigma$ is a subset of $\partial T$ (considered as the set of exposed sides and hanging half-edges).

Suppose that a boundary piece $\mathcal{P}$ contains
\begin{itemize}
\item $\mathfrak{e}_\tn{spx}(\mathcal{P})$ exposed sides facing phantom simplices
\item $\mathfrak{h}_\tn{spx}(\mathcal{P})$ hanging half-edges that separate two phantom simplices
\item $\mathfrak{e}^{(m)}_\tn{orb}(\mathcal{P})$ exposed sides that face an $m$-orbigon
\item $\mathfrak{h}^{(m)}_\tn{orb}(\mathcal{P})$ hanging half-edges having a phantom simplex on one side and $m$-orbigon on the other
\item and $2\chi(\mathcal{P})$ adjacencies between sides or half-edges in $\mathcal{P}$ and $\partial T\setminus\mathcal{P}$.
\end{itemize}
Then the \emph{defect} of $\mathcal{P}$ is
\begin{equation*}
\dft(\mathcal{P})\defeq\frac{\mathfrak{e}_\tn{spx}(\mathcal{P})}{3}-\frac{\mathfrak{h}_\tn{spx}(\mathcal{P})}{6}+\sum_{m\geqslant 2}\left(\frac{\mathfrak{e}^{(m)}_\tn{orb}(\mathcal{P})}{m}-\left(\tfrac{1}{3}-\tfrac{1}{2m}\right)\mathfrak{h}^{(m)}_\tn{orb}(\mathcal{P})\right)-\chi(\mathcal{P}).
\end{equation*}

The \emph{maximal defect} $\md(T)\defeq\max_{\emptyset\neq\mathcal{P}\subseteq\partial T}\dft(\mathcal{P})$ of $T$ is the largest defect of a nonempty boundary piece (or $-\infty$ if $\partial T=\emptyset$).
\end{definition}
Joining the adjacent sides and half-edges of a boundary piece into contiguous subsets produces a union of segment and circular components. A segment gives rise to two ends while a circle has none, so $\chi(\mathcal{P})$ is equal to the number of segments in the result, or equivalently, to the topological Euler characteristic of $\mathcal{P}$. A notion analogous to boundary piece from Definition~\ref{def:dft} appears under the name of ``piece collection'' in \cite[\S 5.6]{asymptotic_stats_random_covering_surfaces_MageePuder23}.
\begin{example}
All exposed sides of the complex in Figure~\ref{sfig:bdry} are facing phantom simplices. The shorter boundary component consists of 4 exposed sides and 2 hanging half-edges, so (considered as a circular piece) its defect is 1. The longer one has 5 exposed sides and 16 hanging half-edges, so its defect is $\tfrac{5}{3}-\tfrac{16}{6}=-1$. All proper subpieces of the latter (which are topologically disjoint unions of segments) have negative defect, so $\md(T)=1$.
\end{example}

Negative maximal defect can be thought of as a kind of convexity condition. In the special case of non-singular $\Delta$-complexes, positive defect of a piece comes from pairs of adjacent exposed sides separated by zero or one hanging half-edges (i.e. one or two phantom corners respectively). The absence of these two boundary configurations (and hence a condition strengthening $\md\leqslant0$) appears as ``local 3-convexity'' in \cite[Definition 5.1]{combi_nonpositive_curvature_weak_systolicity_Osajda13}, and is proved to imply geodesic convexity of the universal covering $\widetilde{T}$ in $\widetilde\Sigma$ \cite[Lemma 5.2]{combi_nonpositive_curvature_weak_systolicity_Osajda13}. In our setting, convexity-like properties for complexes with $\md<0$ will be guaranteed by Lemma~\ref{lmm:no_dft_subgp}, and such complexes will play a role similar to strongly boundary reduced tiled surfaces from \cite[Definition 2.5]{asymptotic_stats_random_covering_surfaces_MageePuder23}. For another interpretation of the boundary defect, via $\cho$, see Corollary~\ref{cor:cho_V_dft} and Remark~\ref{rmk:inner_dft}.

\medskip

We conclude this section by demonstrating how the master Theorem \ref{thm:E_combi_emb} implies Theorem \ref{thm:cone_pt_count} on frequency of cone points in random coverings.
\begin{proof}[Proof of Theorem \ref{thm:cone_pt_count}]
Let $x_1,\dots,x_k$ be the cone points of orders $m_1,\dots,m_k$ divisible by $m$, and $f_1,\dots,f_k$ the corresponding orbigons of $\Sigma$ (with a single side edge, according to our convenience convention from Section~\ref{sec:simplicial}). Let $T_i$ be an $m_i$-orbigon with $\mathfrak{s}_i\defeq\tfrac{m_i}{m}$ sides and a cone point of order $m$. Make it into an immersed complex by considering it as an $\mathfrak{s}_i$-fold branched covering of $f_i$. A cone point of order $m$ in a covering $\whS$ and mapping to $x_i$ is surrounded by a polygon mapping to $f_i$ and having $\mathfrak{s}_i$ edges. Such a polygon is a copy of $T_i$, so gives rise to $\mathfrak{s}_i$ embeddings of it (that differ by rotation). Therefore
\begin{equation*}
N(\whS)=\sum_{i=1}^k\frac{N_i(\whS)}{\mathfrak{s_i}}\qquad\text{where}\qquad N_i(\whS)\defeq\#\{T_i\hookrightarrow\whS\}.
\end{equation*}

The complex $T_i$ has $\cho(T_i)=\tfrac{1}{m}$, and by inspecting the boundary we can check that $\md(T_i)<0$. Theorem~\ref{thm:E_combi_emb} then tells us that
\begin{equation*}
\mathbb{E}_n[N_i]\sim n^{\frac{1}{m}}.
\end{equation*}
Applying Theorem~\ref{thm:E_combi_emb} to the disjoint union of two copies of $T_i$ counts pairs of embeddings with disjoint images of vertices, so
\begin{equation*}
\mathbb{E}_n[N_i(N_i-\mathfrak{s}_i)]=\mathbb{E}_n[\#\{T_i\sqcup T_i\hookrightarrow\whS\}]\sim n^{\frac{2}{m}}.
\end{equation*}

This means that the expectation of random variables $n^{-\frac{1}{m}}N_i$ converges to 1 and their variance converges to 0 as $n\to\infty$. By Chebyshev's inequality each converges to 1 in probability, so $n^{-\frac{1}{m}}N$ converges to $\sum_{i=1}^k\tfrac{1}{\mathfrak{s}_i}$.
\end{proof}

\subsection{Resolutions and proof of Theorem \ref{thm:alg_tf}}\label{ssec:res_main_pf}

To prove Theorem~\ref{thm:alg_tf} from Section~\ref{sec:intro} we translate it into a combinatorial statement about the frequency of immersions of a given subcomplex in a random covering. Theorem~\ref{thm:E_combi_emb} answers the same question about embeddings. To bridge this difference we introduce resolutions. Every immersion is an embedding of a quotient, and intuitively, resolutions enumerate all the quotients and possible neighbourhoods of their images. They already appear in \cite[Definition 2.8]{asymptotic_stats_random_covering_surfaces_MageePuder23}; in the setting of free groups a similar object is the left derivation $L^X$ of the expected number of fixed points in \cite[Lemma 6.3]{meas_preserving_words_primitive_PuderParzanchevski15} or the finite set of minimal overgroups of a f.g. subgroup in \cite[Theorem 2]{chain_conditions_free_gps_Takahasi51}.
\begin{definition}\label{def:resolution}
A collection $\mathcal{R}=\{q\colon T\looparrowright W\}$ of morphisms of immersed complexes is called a \emph{resolution} of $T$ if every immersion $p\colon T\looparrowright\whS$ into a covering of $\Sigma$ factors uniquely as
\begin{equation*}\begin{tikzcd}
T\arrow[r,{Glyph[glyph math command=looparrowleft, swap]}->,"q"]\arrow[rr,to path={--++(0,-.5)-|(\tikztotarget)\tikztonodes},below,pos=.25,"p"]&W\arrow[r,hook,"\iota"]&\whS
\end{tikzcd}\end{equation*}
with $(q,W)\in\mathcal{R}$ and $\iota$ being an embedding.
\end{definition}
Existence of useful resolutions is guaranteed by the following proposition. Its proof is quite technical, so we defer it to Section~\ref{ssec:construct_resolution}.
\begin{proposition}\label{prop:resolution}
For any constant $K$, there exists a finite resolution $\mathcal{R}$ of $T$, such that all $(q,W)\in\mathcal{R}$ satisfy $\md(W)<0$ or $\cho(W)+\max\{0,\md(W)\}<K$. Moreover, $\mathcal{R}$ can be found algorithmically.
\end{proposition}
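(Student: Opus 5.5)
We construct $\mathcal{R}$ by an iterative refinement procedure, in the spirit of \cite[Definition 2.8]{asymptotic_stats_random_covering_surfaces_MageePuder23}. We start from the trivial resolution $\{\tn{id}\colon T\to T\}$ preceded by the finite quotient step. Every immersion $p\colon T\looparrowright\whS$ factors uniquely through its image, which is a quotient of $T$ embedded in $\whS$. Since $T$ is finite, there are only finitely many quotients $T\twoheadrightarrow W'$ (identifications of vertices, and then of the cells forced by the immersion condition). Hence $\{T\twoheadrightarrow W'\}$ is already a finite resolution.

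We then repeatedly replace any member $(q,W)$ violating the conclusion by finitely many refinements, using two elementary moves.

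\emph{Move 1: growing the boundary.} If $W$ has a boundary piece $\mathcal{P}$ with $\dft(\mathcal{P})\geqslant0$, we choose a phantom cell adjacent to $\mathcal{P}$ (a hanging half-edge or an exposed side). Every embedding $W\hookrightarrow\whS$ extends uniquely across that cell in the covering, because coverings have no boundary. The image of the new cell either is a new cell or is glued to existing cells of $W$. So the set $\{W\hookrightarrow\whS\}$ is partitioned according to the finitely many ways of adding the cell and then taking quotients. This replaces $(q,W)$ by finitely many $(q',W'')$ with $W\to W''$, and the resulting family is again a resolution.

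\emph{Move 2: taking quotients.} This is the identification step already built into Move 1. Uniqueness of the factorisation in Definition~\ref{def:resolution} holds because each refined family is a disjoint partition of the embeddings of the parent.

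Termination requires a descent invariant, in two cases.

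\emph{Case (a): filling phantom cells.} When we fill phantom cells along a piece of nonnegative defect, the filling is chosen following the ``non-convexity'' that the defect detects. For example, we add the face spanning two adjacent exposed sides separated by at most one hanging half-edge, or we complete an orbigon. This decreases a boundary-length measure, e.g. $\#\partial W$ weighted as in $\dft$, while $\cho$ changes in a controlled way. Using the bookkeeping of Corollary~\ref{cor:cho_V_dft}, adding cells along $\mathcal{P}$ should change $\cho(W)+\dft$ by a nonpositive amount.

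\emph{Case (b): new identifications.} When a move instead creates new identifications, $\cho$ strictly drops by a quantum that is at least $\alpha$, the reciprocal lcm of torsion orders, since every Euler characteristic lies in $\alpha\mathbb{Z}$. Thus, on these branches, $\cho(W)$ accumulates negative curvature until $\cho(W)+\max\{0,\md(W)\}<K$. At that point we stop refining. We also need an a priori bound $\md(W)\leqslant C$ in terms of the boundary size, and we need that boundary size cannot grow unboundedly without $\cho$ decreasing.

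The invariant is then a lexicographic pair: first the discretised quantity $\lceil(\cho(W)+\max\{0,\md(W)\}-K)/\alpha\rceil$, and second a boundary-length term. Every move either strictly decreases the first entry, or keeps it and strictly decreases the second. This gives termination by well-foundedness.

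\textbf{Main obstacle.} The hard step is showing that a piece with $\dft\geqslant0$ always admits a boundary-shortening filling that does not increase $\cho+\max\{0,\md\}$. This is the combinatorial Dehn-algorithm content, and it needs a local Gauss--Bonnet argument using that $\Sigma$ has $\cho<0$ with the standard one-vertex triangulation. I would first try the case of a circular piece, comparing $\dft$ with the link-angle sums at boundary vertices. I would then handle segments via the $-\chi(\mathcal{P})$ correction.

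Algorithmicity is then automatic, since every step enumerates finitely many explicit combinatorial extensions and quotients of a finite complex.
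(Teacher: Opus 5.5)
There is a genuine gap. Your setup is fine: starting from all quotients of $T$, refining by forced growth plus quotients, and getting algorithmicity for free all match the paper. But termination is the whole content of this proposition, and the lexicographic descent you propose is not justified, and in places is false.

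\emph{The first entry.} Nothing shows that $\cho(W)+\max\{0,\md(W)\}$ is non-increasing under your moves.
\begin{itemize}
\item Your Case~(b) claim, that new identifications make $\cho$ drop by at least $\alpha$, is wrong. Take $W$ an immersed circle spelling $W_f$ for a simplex $f$ of $\Sigma$. Its inner boundary component is three exposed sides separated by single phantom corners of $f$, so it is a circular piece of defect $1$. The forced move glues in $f$, whose other sides fold onto existing edges. The result is a simplex, so $\cho$ rises from $0$ to $1$. Such Dehn-type moves are unavoidable, since nullhomotopic loops must end as disks.
\item Your Case~(a) heuristic via Corollary~\ref{cor:cho_V_dft} controls $\cho+\dft(\partial W)$ for the \emph{whole} boundary. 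It does not control $\cho+\md$, which is a maximum over pieces.
\item Quotients can create new pieces of positive defect, so $\max\{0,\md\}$ can jump up.
\end{itemize}

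\emph{The second entry.} This is where the real difficulty lies, and you leave it as the ``main obstacle''. Adding an arbitrary phantom cell can lengthen the boundary, so the cells added must be chosen carefully. Even then, a pure boundary length cannot do the job once orbigons of order $\geqslant4$ are present. Filling a long $m$-orbiarc with $l$ sides leaves $\cho$ unchanged and creates $m-l$ new sides. The paper observes, after Proposition~\ref{prop:spackling_defect}, that no orbiarc length satisfying the inequalities needed for the other growth moves can make this step non-increasing when $m\equiv\pm1\bmod(6)$.

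\emph{The missing idea.} Descend not on $\cho+\max\{0,\md\}$ itself, but on a single discrete \emph{upper bound} for it that trades boundary length against accumulated negative curvature. This is the score of Definition~\ref{def:score}: $\ell(\partial W)$, built from the function of Proposition~\ref{prop:orbilen}, plus isolated vertices, minus weighted sums of $\mathfrak{s}_f$ over faces. It has three properties.
\begin{enumerate}
\item It does not increase under quotients of the grown complex in which $W$ still embeds (Proposition~\ref{prop:quot_short_bdry} plus injectivity on faces).
\item It strictly decreases under the growth step at a piece with $\dft\geqslant0$ (Propositions~\ref{prop:spackling_defect} and~\ref{prop:resolve_defect}). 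This uses a slot-by-slot comparison $\Delta\ell_{\check v}\leqslant-6D_{\check v}+\dots$, with the face terms paying for long orbiarcs and $5$-orbigons.
\item It dominates $\cho+\max\{0,\md\}$ (Proposition~\ref{prop:score_bd}). This combines $\md\leqslant\ell/2$ with $\cho\leqslant\ell/2-(\text{face terms})$, which comes from combinatorial Gauss--Bonnet with the angle structure of Proposition~\ref{prop:nice_angles} and uses $\cho(\Sigma)<0$.
\end{enumerate}

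Property (iii) also requires discarding pathological quotients (Definition~\ref{def:nonpato}), which your proposal does not do. Finally, the a priori bound you actually need is that faces cannot accumulate at bounded boundary length without $\cho$ dropping. It is not that the boundary cannot grow without $\cho$ dropping.
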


To interpret the elements of a resolution spat out by Proposition~\ref{prop:resolution} in terms of overgroups we will use two consequences of bounded-above defect. The first one can be thought of as a convexity statement, and ensures that the immersion is $\pi_1$-injective and arises from an embedding into some covering (this promotes an immersed complex to a ``sub-cover'' in the sense of \cite[Definition 2.1]{local_stats_random_perms_free_prods_PuderZimhoni24}).
\begin{lemma}\label{lmm:no_dft_subgp}
If $p\colon\!T\!\looparrowright\!\Sigma$ is a (based connected) immersed complex with $\md(T)<0$, then the following hold.
\begin{enumerate}
\item The complex $T$ embeds (lifting $p$) in $\sfrac{\widetilde\Sigma}{p_*(\pio T)}$ as a deformation retract.
\item The fundamental group $\pio T$ is identified with a subgroup of $\pio\Sigma$, in the sense that the homomorphism $p_*\colon\pio T\to\pio\Sigma$ is injective.
\item Orbifold Euler characteristics of the complex $T$ and group $p_*(\pio T)$ agree.
\end{enumerate}
\end{lemma}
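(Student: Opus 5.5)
The plan is to build, from $T$ with $\md(T)<0$, a larger immersed complex $T_\infty\looparrowright\Sigma$ by successively attaching phantom cells along the boundary. We will show that $T_\infty$ is a covering of $\Sigma$ that deformation retracts onto $T$. Concretely, we add all phantom faces (simplices and orbigons) incident to exposed sides, and all phantom edges at hanging half-edges, only inside a "collar" region. The model is the standard Stallings-type completion: a covering of $\Sigma$ into which $T$ embeds is obtained by gluing copies of the universal-cover neighbourhood outward. The point of $\md<0$ is that this completion can be done \emph{without identifications}, i.e.\ as an infinite "funnel" whose cells never need to be folded together. The result is an immersed complex with empty boundary, hence a covering, and $T$ is a deformation retract of it. Then (1) follows by identifying this covering with $\widetilde\Sigma/p_*(\pio T)$. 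Part (2) follows because a covering's $\pio$ injects into $\pio\Sigma$ and $T\hookrightarrow T_\infty$ is a homotopy equivalence. Part (3) follows because $\cho$ is a homotopy invariant of the deformation retract (the funnel cells contribute zero), or alternatively from the Galois correspondence together with additivity of \eqref{eq:cho_combi}.

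The key steps, in order, are as follows.
\begin{enumerate}
\item Define a "one-layer thickening" $T'\supset T$. Along each boundary component, attach every phantom face adjacent to an exposed side and fill the phantom corners at hanging half-edges by fans of phantom faces. Glue two new cells only when they are forced to coincide by the local model of $\Sigma$ around a single boundary vertex.
\item Show that $T'$ is again an immersed complex, that $T$ is a deformation retract of $T'$ (each added cell collapses onto the old boundary), and that $\cho(T')=\cho(T)$.
\item Show that $\md(T')<0$ again, so that the construction iterates.
\end{enumerate}
The union $T_\infty=\bigcup_k T^{(k)}$ then has empty boundary, so $T_\infty\to\Sigma$ is a covering. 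It is connected and deformation retracts to $T$, so its $\pio$ is $p_*(\pio T)$ and $p_*$ is injective. Hence $T_\infty\cong\widetilde\Sigma/p_*(\pio T)$, which gives (1) and (2). For (3), a finite-index-like argument is unavailable since $T_\infty$ is infinite. Instead I will use that $p_*(\pio T)$ is the orbifold fundamental group of the compact core $T$, whose cells give $\cho(T)$ via \eqref{eq:cho_combi}, and that $\cho$ of an orbifold equals that of any compact core onto which it deformation retracts.

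The main obstacle is step 3: the defect must be preserved or even decrease under thickening. Here the precise weights in Definition~\ref{def:dft} must be justified as discrete curvature. The weights are $\tfrac13$ per exposed simplex side, $-\tfrac16$ per hanging half-edge between simplices, and the orbigon analogues. For a closed boundary component $\mathcal{P}$ of $T$, a Gauss--Bonnet-type computation should show that the new boundary component $\mathcal{P}'$ of $T'$ satisfies $\dft(\mathcal{P}')\le\dft(\mathcal{P})$. The reason is that each exposed side generates a new face contributing fixed curvature, and each hanging half-edge generates a fan whose size is determined by the vertex link of $\Sigma$ (using $\cho(\Sigma)<0$, i.e.\ the link has "angle" exceeding $2\pi$). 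For segment pieces the $-\chi(\mathcal{P})$ term absorbs end effects. The subtle part is ruling out collisions: two new cells from distant parts of $\partial T$ might have to be identified, or the new cells might fold onto $T$. I would handle this by a local argument. Any forced identification creates a short boundary piece $\mathcal{P}$ of $T$, namely two exposed sides separated by at most one phantom corner, or a phantom corner closing around an orbigon, with $\dft(\mathcal{P})\ge0$. That contradicts $\md(T)<0$. So I would first enumerate these local configurations and check their defects, and then carry out the global curvature bookkeeping.
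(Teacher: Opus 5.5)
Your strategy of growing $T$ outward layer by layer into a covering that collapses onto it differs from the paper's. As proposed, though, it has a gap exactly where you locate the difficulty, and the mechanism you suggest for closing it does not work.

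You claim that any forced identification during thickening produces a short boundary piece of nonnegative defect, such as two exposed sides separated by a single phantom corner, or a phantom corner closing an orbigon. These pieces in fact have negative defect. Two simplex-facing exposed sides with one phantom corner between them form a segment piece of defect $\tfrac23-1=-\tfrac13$. A segment $m$-orbiarc of $m-1$ sides has defect $\tfrac{m-1}{m}-1<0$. Such configurations do occur when $\md(T)<0$. Take $\Sigma$ nonsingular of genus at least $2$ with its one-vertex triangulation, and two simplices of $\widetilde\Sigma$ meeting only at a vertex $v$, separated on one side by a single simplex $f$. This $T$ has $\md(T)=-\tfrac13$, yet your first layer must attach $f$, all of whose vertices already lie in $T$.

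So identifications in the thickening are routine and are not excluded by $\md<0$. What must be excluded is that they change the topology: an arc of $\partial T$ wrapping around a region containing interior vertices or a cone point, so that cells grown from far-apart slots meet and $T'$ is not a collar. That is a global phenomenon, invisible to any enumeration of short pieces. Ruling it out needs a Gauss--Bonnet estimate over the enclosed region.

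Step~3 is unsupported for the same reason. Corollary~\ref{cor:cho_V_dft}, $\dft(\partial T)=\cho(\Sigma)\#V(T)-\cho(T)$, controls only the defect of the whole boundary, and applying it to $T'$ already presupposes $\cho(T')=\cho(T)$. Proving $\md(T')<0$ requires bounding every segment sub-piece of $\partial T'$, which your sketch does not do.

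The paper never builds the covering by hand. By Theorem~\ref{thm:E_combi_emb}, $\md(T)<0$ gives $\mathbb{E}_n[\#\{T\hookrightarrow\whS\}]=n^{\cho(T)}(1+o(1))>0$ for large $n$, so $T$ embeds in some finite covering $\whS$. That embedding factors through the lift $T\looparrowright\sfrac{\widetilde\Sigma}{p_*(\pio T)}$, which is therefore vertex-injective. Surjectivity on $\pio$ forces every complementary component to be a funnel or a disk with at most one cone point, so $\cho\bigl(\sfrac{\widetilde\Sigma}{p_*(\pio T)}\bigr)\geqslant\cho(T)$. Proposition~\ref{prop:no_cap} gives the reverse inequality and the deformation retraction. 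It is proved from Corollary~\ref{cor:cho_V_dft} by exactly the global capping estimate your approach lacks, but applied to an embedding that already exists.

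A minor point on (3): $\cho$ is not a homotopy invariant of orbifolds, since a disk with one cone point is contractible. The correct justification is your parenthetical remark that the complement consists of nonsingular funnels.
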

Methods of Section \ref{sec:resolutions} allow to prove a result in the other direction: for any f.g. subgroup $\Lambda\le\pio\Sigma$ one can find a finite immersed complex $p\colon T\looparrowright\Sigma$ with $\md(T)\!<\!0$ and $p_*(\pio T)=\Lambda$. An analogue of Lemma~\ref{lmm:no_dft_subgp} for the standard one-relator cell structure on a surface appears as \cite[Prop 4.3 and Corollary 4.11]{core_surfaces_MageePuder22}.

The second result says that the defect controls how much the topology can be simplified by capping the boundaries. As a basic example, a negative-defect boundary component cannot be closed off with a disk.
\begin{proposition}\label{prop:no_cap}
Let $T\hookrightarrow\whS$ be an embedding of an immersed complex $T$ into a (not necessarily finite) covering $\whS$ of $\Sigma$. Then the following hold.
\begin{enumerate}
\item\label{itm:nocap_general} $\cho(\whS)\leqslant\cho(T)+\max\{0,\md(T)\}$
\item\label{itm:nocap_nodft} If $\md(T)<0$ and $\cho(\whS)=\cho(T)$, then $\whS$ deformation retracts onto $T$.
\end{enumerate}
\end{proposition}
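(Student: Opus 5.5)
The idea is to build $\whS$ (or a large compact part of it) from $T$ by attaching cells, and to track how $\cho$ changes, using the defect as a bookkeeping device. Since $\whS$ may be infinite, I would first reduce to the case where $\whS$ is finite; if it is not, there is nothing to prove in (\ref{itm:nocap_general}) as long as we interpret $\cho(\whS)=-\infty$ for non-f.g. groups. When $\pio\whS$ is f.g., I would replace $\whS$ by a compact core containing the image of $T$. So assume $\whS$ is obtained from $T$ by attaching finitely many cells.

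The key tool is the vertex/corner decomposition of $\cho$ suggested by the forward reference to Corollary~\ref{cor:cho_V_dft}. By \eqref{eq:cho_combi}, $\cho$ is a sum of local contributions. Each simplex can be split as $1/3$ per corner, each $m$-orbigon side as $1/m$, and each edge as $-1/2$ per endpoint. The angle-sum identity for the hexagonal-like link of a vertex in $\Sigma$ makes every full vertex of a covering contribute $0$. Indeed, the link of the single vertex of $\Sigma$ has corners weighted $1/3$ or $1/(2m)$ and half-edges weighted $-1/2$, and its total is $1+\cho$-type zero after normalisation, since $\#V=1$.

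Consequently, $\cho(T)$ equals minus the sum over boundary vertices of the missing contributions. Each missing corner of a simplex contributes $1/3$. Each missing hanging half-edge contributes $-1/2$, and this is redistributed as $-1/6$ after pairing with adjacent corners, which yields exactly the coefficients $\frac{1}{3}$, $-\frac16$, $\frac1m$, $-(\frac13-\frac1{2m})$ appearing in $\dft$. Formally, I would prove an identity of the form
\[
\cho(W)=\cho(T)+\dft(\mathcal{P})-(\text{interior terms}),
\]
where $W$ is obtained by gluing phantom cells along $\mathcal{P}$.

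Next, consider the subcomplex $\whS\setminus T$, meaning the closure of the cells not in $T$. Its frontier with $T$ is a boundary piece $\mathcal{P}\subseteq\partial T$. Then $\cho(\whS)=\cho(T)+\cho(\whS\setminus T)-\cho(\text{overlap})$. The overlap consists of the segments and circles of $\mathcal{P}$, and its Euler characteristic is $\chi(\mathcal{P})$ up to the fractional weights. Using the local decomposition, the contribution of the new cells near $\mathcal{P}$ is exactly $\dft(\mathcal{P})$, and the contribution away from $\mathcal{P}$ is $\le0$ because each component of $\whS\setminus T$ far from $\partial T$ is a piece of a covering of $\Sigma$ with $\cho(\Sigma)<0$. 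This gives $\cho(\whS)\le\cho(T)+\dft(\mathcal{P})$ if $\mathcal{P}\ne\emptyset$, and $\cho(\whS)\le\cho(T)$ if $\mathcal{P}=\emptyset$, which proves (\ref{itm:nocap_general}).

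The main obstacle is making the ``far part is $\le0$'' precise. I would handle it by showing that each component $C$ of $\whS\setminus T$ has $\cho(C)\le\chi(\text{its frontier})$, plus the local defect terms. This would follow by induction, adding cells one at a time in an order where each addition changes $\cho$ by a nonpositive amount, except at frontier vertices where the change is recorded in $\dft$. Disk-like caps are the extremal case.

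For (\ref{itm:nocap_nodft}), equality forces $\mathcal{P}=\emptyset$ whenever $\md(T)<0$, since otherwise the inequality is strict. Hence no cells are attached along $\partial T$, and the cells outside $T$ contribute $0$ only if they form collars, i.e. annuli that deformation retract onto the boundary. From there I would construct the retraction explicitly.
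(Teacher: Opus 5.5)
Your local bookkeeping is the right tool, but it is misstated. A vertex with full link contributes $\cho(\Sigma)$, not $0$: the cells around the unique vertex of $\Sigma$ sum to exactly $\cho(\Sigma)$. The correct identity (Corollary~\ref{cor:cho_V_dft}) is $\cho(T)=\cho(\Sigma)\,\#V(T)-\dft(\partial T)$; flat vertices and exactly the $\dft$ coefficients on the boundary are incompatible.

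With this identity the case of finite $\whS$ is immediate. Every element of $\partial T$ is then covered, and $\cho(\whS)-\cho(T)=\cho(\Sigma)\#V(\whS\setminus T)+\dft(\partial T)$.

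The genuine gap is the infinite case, which is the one the paper needs (e.g.\ $\widetilde\Sigma/J$ in the proof of Theorem~\ref{thm:alg_tf}). Replacing $\whS$ by an arbitrary compact core $K\supseteq T$ fails. The frontier $\mathcal{P}$ of $K\setminus T$ is then a proper sub-piece, and your bound $\cho(K)\le\cho(T)+\dft(\mathcal{P})$ is false in general. For instance, let $T$ already be a deformation retract of $\whS$ with $\md(T)<0$, which is precisely the situation of part (2). Let $K$ be $T$ with one phantom simplex glued along an exposed side through a new vertex. Then $K$ is still a compact core with $\cho(K)=\cho(T)$, while $\mathcal{P}\neq\emptyset$ gives $\cho(T)+\dft(\mathcal{P})\le\cho(T)+\md(T)<\cho(K)$. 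The culprit is the new outer boundary $\partial K$, whose contribution $-\dft(\partial K)$ nothing controls. Your proposed cell-by-cell induction does not address this: single-cell additions can raise $\cho$ (capping a disk adds $1$), and deciding which changes are ``recorded in $\dft$'' is the whole difficulty.

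The missing idea is to split the complement of $T$ by boundedness rather than by choosing a core.
\begin{enumerate}
\item Let $W$ be $T$ together with the bounded components of $\whS\setminus T$.
\item Each unbounded open component is an infinite surface, so it has $\cho\le0$, giving $\cho(\whS)\le\cho(W)$.
\item Each boundary circle of $T$ either lies entirely in $\partial W$ or is entirely covered in $W$. Applying the vertex identity to both $T$ and $W$ gives $\cho(W)-\cho(T)=\cho(\Sigma)\#V(W\setminus T)+\dft(\mathcal{P})$. Here $\mathcal{P}$ is the union of whole boundary circles of $T$ adjacent to bounded components, so $\chi(\mathcal{P})=0$ and $\dft(\mathcal{P})\le\max\{0,\md(T)\}$.
\end{enumerate}

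This also repairs part (2). In a covering every element of $\partial T$ is covered, so ``no cells are attached along $\partial T$'' is impossible unless $\partial T=\emptyset$; the funnels themselves are attached along $\partial T$. What equality together with $\md(T)<0$ actually forces is $\mathcal{P}=\emptyset$, i.e.\ $W=T$. Equality in $\cho(\whS)\le\cho(W)$ then forces every unbounded component to have $\cho=0$, i.e.\ to be a nonsingular funnel, which deformation retracts onto its boundary circle.
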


Proofs of Lemma~\ref{lmm:no_dft_subgp} and Proposition~\ref{prop:no_cap} are deferred to Section~\ref{ssec:pfs_aux_combi}.

\medskip

Now we are ready to deduce Theorem~\ref{thm:alg_tf} about fixed points under random homomorphisms from the master Theorem~\ref{thm:E_combi_emb} about frequency of embeddings in a random covering.
\begin{proof}[Proof of Theorem~\ref{thm:alg_tf}]
We begin by translating groups into $\Delta$-complexes. Pick a triangulation of $\Sigma=\sfrac{\mathbb{H}^2}{\Gamma}$. Recall that there is a natural correspondence between homomorphisms $\Gamma\to S_n$ and degree-$n$ coverings of $\Sigma$. Let $T$ be the (based) result of Stallings-folding a simplicial rose on a finite generating set for $\Lambda$. It comes with an immersion $p\colon T\looparrowright\Sigma$ such that $p_*(\pio T)=\Lambda$. Crucially, an action of $\Lambda$ fixes a number $i\in[n]$ precisely when $p$ lifts to the corresponding covering at the $i$-th point of the fibre. Therefore
\begin{equation*}
\mathbb{E}_n[\#\tn{fix}_\Lambda(\phi)]=\mathbb{E}_n[\#\{T\looparrowright\whS\}].
\end{equation*}
Now we have to understand how often $T$ immerses in a random covering.

Let $\mathcal{R}=\mathcal{R}_\tn{nd}\sqcup\mathcal{R}_\tn{err}$ be a finite resolution for $(p,T)$, where every complex in $\mathcal{R}_\tn{nd}$ has negative maximal defect and every element of $\mathcal{R}_\tn{err}$ satisfies $\cho+\max\{0,\md\}<\cho(\Gamma)-1$. Existence of $\mathcal{R}$ is guaranteed by Proposition~\ref{prop:resolution}. Every immersion of $T$ factors through an embedding of a unique complex from $\mathcal{R}$, so
\begin{equation}\label{eq:emb_partition_immersion}
\mathbb{E}_n[\#\{T\looparrowright\whS\}]=\sum_{W\in\mathcal{R}}\mathbb{E}_n[\#\{W\hookrightarrow\whS\}].
\end{equation}

We need to identify the leading terms in (\ref{eq:emb_partition_immersion}). Applying the master Theorem~\ref{thm:E_combi_emb} to the orbifold $\Sigma$ itself tells us that in expectation there are $\Theta(n^{\cho(\Gamma)})$ ways to write $\whS\in\covers$ as a disjoint union of a copy of $\Sigma$ and a degree-$(n-1)$ covering. Trivially $T$ immerses in the former (dynamically this corresponds to a global fixed point of $\Gamma\curvearrowright[n]$), so the RHS of equation (\ref{eq:emb_partition_immersion}) is at least $n^{\cho(\Gamma)}(1+o(1))$. Theorem~\ref{thm:E_combi_emb} tells us that each summand coming from $\mathcal{R}_\tn{err}$ is $O(n^{\cho(\Gamma)-1+\varepsilon})$, so its contribution is negligible. Hence we see that
\begin{equation*}
\mathbb{E}_n[\#\{T\looparrowright\whS\}]=A\cdot n^B\cdot(1+o(1)),
\end{equation*}
where $B$ is the maximal orbifold Euler characteristic of a complex in $\mathcal{R}_\tn{nd}$ (as we just argued $B\geqslant\cho(\Gamma)$) and $A$ is the number of maximisers.

It remains to identify the constants $A,B$. We begin with the exponent $B$. Consider the function
\begin{alignat*}{4}
f\colon&\qquad\mathcal{R}_\tn{nd}&&\to\{J\le\Gamma\}\\
f\colon&\left(W\stackrel{r}{\looparrowright}\Sigma\right)&&\mapsto r_*(\pio W)
\end{alignat*}
that sends a complex to the image of its fundamental group in $\Gamma$ (which is an overgroup of $\Lambda$). The image of the basepoint of $T$ distinguishes a vertex in each complex $W$, so talking about $\pio W$ and not just its conjugacy class is justified. Lemma~\ref{lmm:no_dft_subgp} tells us that $f$ preserves the orbifold Euler characteristic, so $B\leqslant\chi_\tn{max}(\Lambda)$. For the opposite inequality, take $J\ge\Lambda$ with $\cho(J)=\chi_\tn{max}(\Lambda)$ maximal. Combinatorially $J$ being an overgroup of $\Lambda$ means that $T$ immerses in $\sfrac{\widetilde\Sigma}{J}$. By the definition of resolution, there exists an embedding $W\hookrightarrow\sfrac{\widetilde\Sigma}{J}$ for some $W\in\mathcal{R}$. If we had $W\in\mathcal{R}_\tn{err}$, then item~\ref*{itm:nocap_general} of Proposition~\ref{prop:no_cap} would imply
\begin{equation*}
\chi_\tn{max}(\Lambda)=\cho\left(\sfrac{\widetilde\Sigma}{J}\right)\leqslant\cho(W)+\max\{0,\md(W)\}<\cho(\Gamma)-1,
\end{equation*}
which is impossible. Therefore $W\in\mathcal{R}_\tn{nd}$, so $\md(W)<0$. Again item~\ref*{itm:nocap_general} of Proposition~\ref{prop:no_cap} gives us $\cho(J)\leqslant\cho(W)$. But $\cho(W)\leqslant B$ by the definition of $B$. Putting these inequalities together tells us that $\chi_\tn{max}(\Lambda)\leqslant B$. We proved inequalities both ways, so $B=\chi_\tn{max}(\Lambda)$.

We move on to $A$. To relate it to the number of maximal overgroups, we will prove that $f$ induces a bijection between maximal-$\cho$ complexes and maximal-$\cho$ overgroups. For injectivity, suppose $f(W_1)=f(W_2)=J$. Then both $W_1,W_2$ embed into $\sfrac{\widetilde\Sigma}{J}$ (the same image of the basepoint), so by the uniqueness condition from Definition~\ref{def:resolution} of resolution we must have $W_1=W_2$. For surjectivity, consider an overgroup $J\ge\Lambda$ with $\cho(J)=\chi_\tn{max}(\Lambda)$. Then $T$ immerses in $\sfrac{\widetilde\Sigma}{J}$, so there is $W\in\mathcal{R}$ that embeds in it. Using statement~\ref*{itm:nocap_general} of Proposition~\ref{prop:no_cap} in the same way as in the previous paragraph, we obtain that $W\in\mathcal{R}_\tn{nd}$ and that $\cho(W)=\chi_\tn{max}(\Lambda)=\cho(\sfrac{\widetilde\Sigma}{J})$. Statement~\ref*{itm:nocap_nodft} of Proposition~\ref{prop:no_cap} then implies that $W$ is a deformation retract of $\sfrac{\widetilde\Sigma}{J}$, and hence $f(W)=\pio W=J$. This concludes the proof of bijectivity of $f$ between top-$\cho$ elements, so $A=\#\tn{mog}(\Lambda)$.
\end{proof}
\begin{remark}\label{rmk:mog_finite}
In the proof of Theorem~\ref{thm:alg_tf} we showed that the set $\tn{mog}(\Lambda)$ is finite, because it is in bijection with a subset of the finite set $\mathcal{R}_{\tn{nd}}$.
\end{remark}

\section{Background: ribbon graph calculus}\label{sec:ribbon_graph}

In this section we describe the calculus of ribbon graphs, which is a tool to keep track of large linear algebra. The basic idea is simple: whenever we evaluate a multilinear map by contracting some indices on a bunch of tensors, we write the tensors on ``coupons'' and indicate the contractions by joining them with ``wires''. However, to fit the existing literature we phrase everything in the language of ribbon categories. For a comprehensive treatment of this topic consult \cites{graphical_language_monoidal_categories_Selinger11}{quantum_invariants_knots_3mflds_Turaev16}.

\begin{definition}[ribbon graph]
The building blocks of ribbon graphs are coloured coupons and wires.
\begin{itemize}
\item A \emph{wire} is an oriented segment, equipped with a vector space called \emph{colour}.
\item A \emph{coupon} is a rectangle with distinguished top and bottom base, and some number of wires attached to bases.
\item Given a coupon $C$ with wires coloured $U_1,\dots,U_m$ and $V_1,\dots,V_n$ attached to its bottom and top base respectively, the \emph{colour} of $C$ can be any linear map $U_1^{\varepsilon_1}\otimes\dots\otimes U_m^{\varepsilon_m}\to V_1^{\eta_1}\otimes\dots\otimes V_n^{\eta_n}$, where $\varepsilon_i,\eta_j$ are $+1$ if the corresponding wire is directed up (from bottom to top base of $C$) and $-1$ if it is directed down, and $V^{-1}$ is a convenience notation for the dual $V^*$.
\end{itemize}
A \emph{ribbon graph} is a graph consisting of coloured wires and coupons, such that each end of a wire is either attached to a coupon, to the other end of the same wire, or free and designated as incoming or outgoing.
\end{definition}
Note that incoming/outgoing designation of free ends is not constrained by the ribbon's orientation. Some examples of ribbon graphs are presented in Figure \ref{fig:ribbon_examples}. We mark the bottom base of a coupon with a double edge.
\begin{figure}
\begin{subfigure}{.8\textwidth}
\caption{A wire coloured with a vector space $V$ and oriented from a free incoming end (bottom) to a free outgoing end (top). It represents the operator $\tn{id}_V\colon V\to V$.}
\end{subfigure}
\begin{subfigure}{.19\textwidth}
\centering
\begin{tikzpicture}
\draw[dotted](-1,0)--(1,0)(-1,1)--(1,1);
\node[fill=white](i)at(0,0){$V$};\node[fill=white](o)at(0,1){$V$};\draw[->](i)--(o);
\end{tikzpicture}
\end{subfigure}
\begin{subfigure}{.8\textwidth}
\caption{A wire coloured with a vector space $V$ and oriented from a free outgoing end (top) to a free incoming end (bottom). It represents the operator $\tn{id}_{V^*}\colon V^*\to V^*$.}
\label{sfig:id_dual}
\end{subfigure}
\begin{subfigure}{.19\textwidth}
\centering
\begin{tikzpicture}
\draw[dotted](-1,0)--(1,0)(-1,1)--(1,1);
\node[fill=white](o)at(0,0){$V^{-1}$};\node[fill=white](i)at(0,1){$V^{-1}$};\draw[->](i)--(o);
\end{tikzpicture}
\end{subfigure}
\begin{subfigure}{.8\textwidth}
\caption{A wire coloured $V$ with two incoming and no outgoing ends. It represents the evaluation map $V^*\otimes V\to\mathbb{C},u^*\otimes v\mapsto u^*(v)$.}
\label{sfig:eval}
\end{subfigure}
\begin{subfigure}{.19\textwidth}
\centering
\begin{tikzpicture}
\draw[dotted](-.5,0)--(1.5,0)(-.5,1)--(1.5,1);
\node[fill=white](d)at(.1,0){$V^{-1}$};\node[fill=white](v)at(1,0){$V$};\draw[->](1,.2)arc(0:180:.5);
\end{tikzpicture}
\end{subfigure}
\begin{subfigure}{.8\textwidth}
\caption{A wire coloured $V$ with no incoming and two outgoing ends. It represents the coevaluation map $\mathbb{C}\to V^*\otimes V$ given by scaling the vector $\sum_ie_i^*\otimes e_i$ for any pair of dual bases $\{e_i\}_i,\{e_i^*\}_i$.}
\label{sfig:coeval}
\end{subfigure}
\begin{subfigure}{.19\textwidth}
\centering
\begin{tikzpicture}
\draw[dotted](-.5,0)--(1.5,0)(-.5,-1)--(1.5,-1);
\node[fill=white](d)at(.1,0){$V^{-1}$};\node[fill=white](v)at(1,0){$V$};\draw[->](0,-.2)arc(180:360:.5);
\end{tikzpicture}
\end{subfigure}
\begin{subfigure}[t]{.45\textwidth}
\centering
\begin{tikzpicture}
\draw[dotted](-1,-.8)--(1,-.8)(-1,.8)--(1,.8);
\node[draw](op)at(0,0){$A$};\draw($(op.south east)+(0,.07)$)--($(op.south west)+(0,.07)$);\node[fill=white](u)at(0,-.8){$U$};\node[fill=white](v)at(0,.8){$V$};\draw[->](u)--(op);\draw[->](op)--(v);
\end{tikzpicture}
\caption{A ribbon graph with two wires and a coupon coloured $A$. It represents an operator $A\colon U\to V$.}
\label{sfig:operator}
\end{subfigure}
\hfill
\begin{subfigure}[t]{.45\textwidth}
\centering
\begin{tikzpicture}
\draw[dotted](-1.5,-.5)--(1.5,-.5)(-1.5,.5)--(1.5,.5);
\node[fill=white](vi)at(-.5,-.5){$V$};\node[fill=white](vo)at(.5,.5){$V$};\node[fill=white](ui)at(.5,-.5){$U$};\node[fill=white](uo)at(-.5,.5){$U$};
\draw[->](vi)--(vo);\draw(ui)--(.1,-.1);\draw[->](-.1,.1)--(uo);
\end{tikzpicture}
\caption{A ribbon graph with two wires, each one with one incoming and outgoing free end. It represents the ``braiding'' operator $U\otimes V\to V\otimes U,u\otimes v\mapsto v\otimes u$.}
\label{sfig:braiding}
\end{subfigure}
\begin{subfigure}[t]{.45\textwidth}
\centering
\begin{tikzpicture}
\draw[dotted](-2.5,-.8)--(2,-.8)(-2.5,.8)--(2,.8);
\node at(0,0){ev};\node[fill=white](u)at(1,-.8){$U$};\node[fill=white](hom)at(-.9,-.8){$\Hom{U\!}{\!V}$};\node[fill=white](v)at(0,.8){$V$};
\draw(-1.5,.2)--(1.5,.2)--(1.5,-.2)--(-1.5,-.2)--(-1.5,.2)(-1.5,-.15)--(1.5,-.15);
\draw[->](u)--(1,-.2);\draw[->](hom)--(-.9,-.2);\draw[->](0,.2)--(v);
\end{tikzpicture}
\caption{A coupon with three wires attached. It represents the evaluation operator $\tn{ev}\colon\Hom{U}{V}\otimes U\to V$.}
\end{subfigure}
\hfill
\begin{subfigure}[t]{.45\textwidth}
\centering
\begin{tikzpicture}
\node[draw](op)at(0,0){$A$};\draw($(op.south west)+(0,.07)$)--($(op.south east)+(0,.07)$);\draw[->](op.north)arc(0:180:.3)--($(op.south)-(.6,0)$)arc(-180:0:.3);\node at(-.8,0){$V$};
\end{tikzpicture}
\caption{A ribbon graph with one wire coloured $V$, a coupon coloured with $A\in\tn{End}(V)$, and no free ends. As we will see, it represents the scalar $\tr A$.}
\label{sfig:rib_tr}
\end{subfigure}
\begin{subfigure}{.45\textwidth}
\centering
\begin{tikzpicture}
\draw[dotted](-2,.8)--(2,.8)(-2,-.8)--(2,-.8);
\node at(0,0){id};\node[fill=white](u)at(-1,.8){$U$};\node[fill=white](v)at(1,.8){$V$};\node[fill=white](uv)at(0,-.8){$U\otimes V$};
\draw(-1.5,.2)--(1.5,.2)--(1.5,-.2)--(-1.5,-.2)--(-1.5,.2)(-1.5,-.15)--(1.5,-.15);
\draw[->](uv)--(0,-.2);\draw[->](-1,.2)--(u);\draw[->](1,.2)--(v);
\end{tikzpicture}
\caption{A ribbon graph representing the identity map $\tn{id}\colon U\otimes V\to U\otimes V$.}
\end{subfigure}
\caption{Examples of ribbon graphs, and the operators they represent via Proposition \ref{prop:ribbon_functor}. Here we place incoming free ends of wires on the bottom and outgoing on the top. Double edge of a coupon is the bottom base. Free ends marked with $V^{-1}$ end in an object $(V,-1)$ in the category $\mathbf{Rib}$ from Definition~\ref{def:rib_category}; all others end in an object of the form $(V,1)$.}
\label{fig:ribbon_examples}
\end{figure}

Ribbon graphs can be organised into a monoidal braided category, mirroring composition and tensor product of linear maps.
\begin{definition}\label{def:rib_category}
The category $\mathbf{Rib}$ has the following.
\begin{itemize}
\item The objects are tuples $((V_1,\epsilon_1),\dots,(V_n,\epsilon_n))$ for some finite-dimensional complex vector spaces $V_i$ and $\epsilon_i\in\{\pm1\}$.
\item The morphisms from $((U_i,\epsilon_i))_{i\in[m]}$ to $((V_j,\eta_j))_{j\in[n]}$ are ribbon graphs with $m$ incoming and $n$ outgoing free ends of wires. The wire attached to the $i$-th incoming end is oriented away from the free end and coloured $U_i$ if $\epsilon_i\!=\!1$, or oriented towards the end and coloured $U_i^*$ if $\epsilon_i\!=\!-1$. The wire attached to the $j$-th outgoing end is oriented towards the free end and coloured $V_j$ if $\eta_j\!=\!1$, or oriented away from it and coloured $V_j^*$ if $\eta_j\!=\!-1$.
\end{itemize}
The categorical operations are as follows.
\begin{itemize}
\item Composition of morphisms is by joining the appropriate outgoing and incoming ends of graphs.
\item The monoidal product of objects is concatenation of tuples, monoidal unit being the empty tuple.
\item The monoidal product of morphisms is disjoint union.
\item Braiding is given by the graph in Figure \ref{sfig:braiding}.
\end{itemize}
\end{definition}

The ribbon category is related to linear algebra by Proposition \ref{prop:ribbon_functor} below. It is essentially a special case of \cite[Theorem 2.5 in Chapter I]{quantum_invariants_knots_3mflds_Turaev16} applied to the category $\mathbf{FinVect}_\mathbb{C}$ (finite-dimensional complex vector spaces) with the monoidal structure given by tensor product, braiding $U\otimes V\to V\otimes U$ given by $u\otimes v\mapsto v\otimes u$, trivial twist, and the standard linear duality.
\begin{proposition}\label{prop:ribbon_functor}
There exists a unique monoidal functor $\mathcal{F}\colon\mathbf{Rib}\to\mathbf{FinVect}_\mathbb{C}$ which sends the object $(V,1)$ to $V$ and $(V,-1)$ to $V^*$, the graphs in Figures~\ref{sfig:eval},\ref{sfig:coeval} to evaluation and coevaluation respectively, and which sends every coupon to its colour.
\end{proposition}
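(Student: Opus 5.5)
The plan is to derive this as a special case of the coherence theorem for ribbon categories \cite[Theorem 2.5 in Chapter I]{quantum_invariants_knots_3mflds_Turaev16}, applied to $\mathbf{FinVect}_\mathbb{C}$ with the symmetric braiding, trivial twist and standard duality. Because this target category is symmetric with trivial twist, the crossings and framings of ribbons play no role. So $\mathbf{Rib}$ may be treated as (a quotient of) the free symmetric-with-duals category generated by coupons, and the argument can be organised directly.

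\emph{Uniqueness.} I would first show that every ribbon graph is a composite, under composition and monoidal product, of four kinds of elementary pieces: identity wires (Figures~\ref{fig:ribbon_examples}(a) and \ref{sfig:id_dual}), braidings (Figure~\ref{sfig:braiding}), evaluations and coevaluations (Figures~\ref{sfig:eval}, \ref{sfig:coeval}), and single coupons. To get such a decomposition, isotope the graph in the plane strip so that its height function is Morse. Then slice it into horizontal layers, each containing exactly one critical event: a cap, a cup, a crossing, or a coupon. A monoidal functor that preserves the braiding is determined on each of these pieces by the prescribed data. Hence $\mathcal{F}$ is determined on all morphisms.

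\emph{Existence.} I would define $\mathcal{F}$ explicitly by tensor contraction, which makes well-definedness close to automatic. First tensor together the colours of all coupons, viewing each as an element of $\bigotimes U_i^{\mp}\otimes\bigotimes V_j^{\pm}$. Then contract, along each wire joining two coupon ends, the $V$ factor against the $V^*$ factor; wire orientation fixes which end carries $V$ and which $V^*$. Free ends are left uncontracted. A closed loop with no coupons contributes $\dim V$. This invariant depends only on the combinatorial incidence data of the graph, not on any planar embedding. It is therefore visibly compatible with gluing, i.e.\ composition, and with disjoint union, i.e.\ monoidal product. On the generators it gives the identity, the swap, evaluation, coevaluation and the coupon colour.

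The main obstacle is making the uniqueness step honest, because the definition of $\mathbf{Rib}$ is informal about which isotopies identify ribbon graphs. I would resolve this by working in the quotient in which morphisms are graphs up to combinatorial isomorphism, and by remarking that the contraction formula is constant on these classes. One could instead invoke Turaev's theorem with framed isotopy classes. In that case one must check that the trivial twist and the symmetric braiding make $\mathcal{F}$ factor through the coarser combinatorial equivalence. The key verification is the zigzag identity: composing evaluation with coevaluation gives the identity on $V$, which holds by the dual-basis formula for $\sum_i e_i^*\otimes e_i$. A further consequence to check is that the loop with coupon $A$ in Figure~\ref{sfig:rib_tr} is sent to $\sum_i e_i^*(Ae_i)=\tr A$.
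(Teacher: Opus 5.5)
Your proposal is correct. The paper's only justification is the sentence before the statement, which calls it a special case of Turaev's Theorem I.2.5 for $\mathbf{FinVect}_{\mathbb{C}}$ with the swap braiding, trivial twist and standard duality. That is your opening move too, but you then replace the citation with a self-contained argument.

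Your uniqueness step (slicing a generic drawing into elementary layers) mirrors Turaev's. Your existence step is genuinely more elementary. Turaev checks that the target satisfies the relations among elementary graphs imposed by framed isotopy; you instead define $\mathcal{F}$ as contraction of coupon tensors along wires. That value depends only on incidence data, so compatibility with gluing and disjoint union comes for free.

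The citation is shorter and would survive a nontrivial braiding or twist. However, it leaves to the reader how Turaev's bands, framed isotopy classes and opposite orientation convention match the paper's $\mathbf{Rib}$, whose morphisms are never specified up to any equivalence. Your route settles this by taking graphs up to combinatorial isomorphism. It also produces exactly the description of $\mathcal{F}$ the paper relies on later: Example~\ref{ex:resolution_id}, and cutting wires into dual-basis sums in the proof of Proposition~\ref{prop:univ_Pg_bd}.

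A few details need a line each.
\begin{itemize}
\item The statement only asks for a monoidal functor, while you (like Turaev) assume the crossing goes to the swap. This is in fact forced. A one-input one-output coupon slides through a crossing without changing the abstract graph, so naturality gives $\mathcal{F}(X_{U,V})=\lambda\cdot\mathrm{swap}$ for a single scalar $\lambda$. Since a crossing composed with itself is an identity graph, $\lambda\neq0$. Sliding a nonzero coupon $U\to V_1\otimes V_2$ past a wire gives $\lambda^2=\lambda$, hence $\lambda=1$.
\item Your slices contain caps and cups of both orientations, but only one orientation of each is prescribed. The other is still determined, because as an abstract graph it is the prescribed one composed with a crossing.
\item In your alternative via Turaev, what lets the functor descend from framed isotopy classes to combinatorial classes is that crossing changes and framing changes are invisible, i.e.\ $c_{W,V}\circ c_{V,W}=\mathrm{id}$ and $\theta=\mathrm{id}$. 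The zigzag identity is not the key check there; it is one of the hypotheses that make $\mathbf{FinVect}_{\mathbb{C}}$ a ribbon category in the first place.
\end{itemize}
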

Consequently, from now on when we write a ribbon graph in an equation, we mean the linear operator obtained by applying to it the monoidal functor $\mathcal{F}$ from Proposition \ref{prop:ribbon_functor}. Also, we will not keep track of which free ends are incoming or outgoing, since this does not change the validity of equations. In the ``primitive'' viewpoint mentioned at the beginning of this section, the functor $\mathcal{F}$ just contracts the coupon tensors according to pairs of indices from wires.
\begin{example}
The graph in Figure \ref{sfig:rib_tr} is a concatenation of those in \ref*{sfig:coeval}, \ref*{sfig:id_dual}$\sqcup$\ref*{sfig:operator}, and \ref*{sfig:eval}. This is a graphical way to phrase $\tr A=\sum_ie^*_i(A(e_i))$, where $\{e_i\}_i,\{e^*_i\}_i$ are dual bases for the space on which $A$ acts.
\end{example}
\begin{example}\label{ex:resolution_id}
Let $V$ be a vector space with a pair of dual bases $\left\{e_i\right\}_i,\left\{e_i^*\right\}_i$. We have the equality
\begin{equation*}
\begin{tikzpicture}[baseline=0]
\draw[dotted](0,0)circle(.5);\node[fill=white](i)at(0,-.5){$V$};\node[fill=white](o)at(0,.5){$V$};\draw[->](i)--(o);
\end{tikzpicture}
\ =\ \tn{id}_V\ =\ \sum_ie_i\otimes e_i^*\ =\ \sum_i
\begin{tikzpicture}[baseline=0]
\draw[dotted](0,0)circle(1.3);\node[fill=white](i)at(0,-1.3){$V$};\node[fill=white](o)at(0,1.3){$V$};
\node[draw](d)at(0,-.4){$e_i^*$};\draw($(d.south west)+(0,.07)$)--($(d.south east)+(0,.07)$);
\node[draw](b)at(0,.4){$e_i$};\draw($(b.south west)+(0,.07)$)--($(b.south east)+(0,.07)$);
\draw[->](i)--(d);\draw[->](b)--(o);
\end{tikzpicture}
\end{equation*}
where we identified $\Hom{V}{V}=V\otimes V^*$.
\end{example}
Example \ref{ex:resolution_id} formally shows that the operator represented by a ribbon graph is the contraction of the tensors that are colours of the coupons, according to the pairing of indices provided by wires. Moreover, in this description the monoidal functor $\mathcal{F}$ from Proposition~\ref{prop:ribbon_functor} is manifestly multilinear in the colours of coupons.
\begin{example}\label{ex:ribbon_composition}
The composition of linear operators $A_1\colon U\to V$ and $A_2\colon V\to W$ is expressed in ribbon graph calculus as
\begin{equation*}
\begin{tikzpicture}[baseline=0]
\draw[dotted](0,0)ellipse(1.7 and .8);
\node[fill=white](u)at(-1.7,0){$U$};\node[fill=white](w)at(1.7,0){$W$};\node at(0,.2){$V$};
\node[draw](s)at(-.7,0){$A_1$};\draw($(s.south west)+(.07,0)$)--($(s.north west)+(.07,0)$);
\node[draw](t)at(.7,0){$A_2$};\draw($(t.south west)+(.07,0)$)--($(t.north west)+(.07,0)$);
\draw[->](u)--(s);\draw[->](s)--(t);\draw[->](t)--(w);
\end{tikzpicture}
\qquad=\qquad
\begin{tikzpicture}[baseline=0]
\draw[dotted](0,0)ellipse(1.3 and .8);
\node[fill=white](u)at(-1.3,0){$U$};\node[fill=white](w)at(1.3,0){$W$};
\node[draw](ts)at(0,0){$A_2\circ A_1$};\draw($(ts.south west)+(.07,0)$)--($(ts.north west)+(.07,0)$);
\draw[->](u)--(ts);\draw[->](ts)--(w);
\end{tikzpicture}
\end{equation*}
\end{example}

\begin{remark}
There are two main differences between our convention and the notation in \cite{quantum_invariants_knots_3mflds_Turaev16}.

The first one is that \cite{quantum_invariants_knots_3mflds_Turaev16} uses rectangular bands instead of wires. This is needed e.g. when working with framed links or categories with non-trivial twist, but for our needs it suffices to only keep track of cores of bands.

The second difference is that our convention on direction of wires matches the one used in \cite{graphical_language_monoidal_categories_Selinger11} and is opposite to \cite{quantum_invariants_knots_3mflds_Turaev16}.
\end{remark}

In the rest of this article we will mainly be concerned with graphs that have no free ends. Such a graph is an endomorphism of the empty tuple in the category $\mathbf{Rib}$, so Proposition \ref{prop:ribbon_functor} assigns to it an endomorphism of the base field $\mathbb{C}$, which is just a multiplication by a number we call trace.
\begin{definition}[trace]\label{def:ribbon_trace}
Let $X$ be a ribbon graph with no free ends of wires. The trace $\tr X$ of $X$ is the unique scalar such that $\mathcal{F}(X)=(z\mapsto(\tr X)\cdot z)$ as endomorphisms of $\mathcal{F}(\emptyset)=\mathbb{C}$.
\end{definition}
Note that the ribbon-graph trace of the graph shown in Figure \ref{sfig:rib_tr} in the sense of Definition \ref{def:ribbon_trace} is the same as the trace of the operator $A$ in the sense of linear algebra.

\subsection{Summation over subgroups}\label{ssec:sum_subgp}

In this section we state and prove Lemma \ref{lmm:swap}, which gives a formula for the average action of a subgroup on a tensor product of representations. It will be the key tool to compute sums over two-sided translates of subgroups in Lemma \ref{lmm:Mednykh} and Proposition \ref{prop:count_E}. Recall that when we write a ribbon graph in an equation, we mean the linear operator represented by it via the monoidal functor $\mathcal{F}$ from Proposition \ref{prop:ribbon_functor}. Let us begin with the following rephrasing of Schur's Lemma.
\begin{lemma}\label{lmm:swap_irr}
Let $V_1,V_2$ be two irreducible representations of a finite group $G$. Then, as operators on $V_1\otimes V_2^*$, we have the equality
\begin{equation*}
\frac{1}{\#G}\sum_{g\in G}g.(v_1\otimes v_2^*)=\begin{cases}\chi_V(1)^{-1}\cdot v_2^*(v_1)\cdot\sum_ie_i\otimes e_i^*&\text{ if }V_1\cong V_2\\0&\text{ if }V_1\ncong V_2\end{cases}
\end{equation*}
where in the first case, $\{e_i\}_i$ and $\{e_i^*\}_i$ are a pair of dual bases for $V_1=V_2$.
\end{lemma}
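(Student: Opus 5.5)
The plan is to identify $V_1\otimes V_2^*$ with $\Hom{V_2}{V_1}$ and read the averaging operator as a projection onto $G$-invariants. Under the identification $v_1\otimes v_2^*\mapsto(x\mapsto v_2^*(x)\,v_1)$, the diagonal action $g.(v_1\otimes v_2^*)=gv_1\otimes(v_2^*\circ g^{-1})$ becomes conjugation $A\mapsto gAg^{-1}$. Hence the operator
\begin{equation*}
P\defeq\frac{1}{\#G}\sum_{g\in G}g.(\bullet)
\end{equation*}
sends $A$ to $\frac{1}{\#G}\sum_g gAg^{-1}$. This is an idempotent whose image lies in $\Hom[G]{V_2}{V_1}$, the space of $G$-equivariant maps, and which fixes that space pointwise.

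First, use Schur's Lemma to identify this image. If $V_1\ncong V_2$, then $\Hom[G]{V_2}{V_1}=0$, so $P=0$ and the second case follows. If $V_1\cong V_2$, we may take $V_1=V_2=V$ with the same action, as the statement implicitly does. Then $\Hom[G]{V}{V}=\mathbb{C}\cdot\tn{id}_V$, since we work over $\mathbb{C}$. Consequently $P(A)=c(A)\,\tn{id}_V$ for some scalar $c(A)$.

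Next, determine $c(A)$ by taking traces. Conjugation preserves trace, so $\tr P(A)=\tr A$, and therefore $c(A)\chi_V(1)=\tr A$. For $A=v_1\otimes v_2^*$ we have $\tr A=v_2^*(v_1)$. By Example~\ref{ex:resolution_id}, $\tn{id}_V=\sum_ie_i\otimes e_i^*$. Substituting these gives exactly the stated formula.

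There is no serious obstacle; this is just Schur's Lemma. The only points needing care are the convention for how $G$ acts on $V_2^*$ (the contragredient action), which is what makes the identification with conjugation correct, and the remark that in the isomorphic case an explicit isomorphism has been fixed so that the two spaces can be treated as equal.
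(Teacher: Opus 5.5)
Your proposal is correct and follows the paper's own proof essentially step for step. Both identify $V_1\otimes V_2^*$ with $\Hom[\mathbb{C}]{V_2}{V_1}$, note that the averaged map is $G$-equivariant, apply Schur's Lemma, and fix the scalar by comparing traces ($v_2^*(v_1)$ versus $\chi_V(1)$).
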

By $\chi_V$ we mean the character of $V$, so that $\chi_V(1)=\dim V$. Note that in the first case we implicitly use an intertwiner $V_1\cong V_2$, but the final value is independent of the choice of it. Lemma \ref{lmm:swap_irr} can be conveniently expressed using the ribbon graph calculus (compare Figures \ref{sfig:eval} and \ref{sfig:coeval}).
\begin{equation*}
\begin{tikzpicture}[baseline=.9]
\draw[dotted](0,0)circle(1.6);
\node[fill=white](li)at(-1,-1.2){$V_1$};\node[fill=white](lo)at(-1,1.2){$V_1$};\node[fill=white](ri)at(1,1.2){$V_2$};\node[fill=white](ro)at(1,-1.2){$V_2$};\node at(0,0){$\displaystyle\frac{1}{\#G}\sum_{g\in G}g$};
\draw(-1.4,.6)--(1.4,.6)--(1.4,-.6)--(-1.4,-.6)--(-1.4,.6)(-1.4,-.5)--(1.4,-.5);
\draw[->](li)--(-1,-.6);\draw[->](-1,.6)--(lo);\draw[->](ri)--(1,.6);\draw[->](1,-.6)--(ro);
\end{tikzpicture}
\qquad=\begin{cases}
\chi_V(1)^{-1}\times
\begin{tikzpicture}[baseline=.5]
\draw[dotted](0,0)circle(.86);
\node[fill=white](li)at(-.5,-.7){$V$};\node[fill=white](lo)at(-.5,.7){$V$};\node[fill=white](ri)at(.5,.7){$V$};\node[fill=white](ro)at(.5,-.7){$V$};
\draw[->](li)to[out=90,in=90](ro);\draw[->](ri)to[out=-90,in=-90](lo);
\end{tikzpicture}
&\text{if }V_1=V_2\\\ \\\hfill 0\hfill&\text{otherwise}
\end{cases}
\end{equation*}
\begin{proof}[Proof of Lemma \ref{lmm:swap_irr}]
We can identify $V_1\otimes V_2^*=\Hom[\mathbb{C}]{V_2}{V_1}$, with action $g.\varphi=\rho_1(g)\circ\varphi\circ\rho_2(g^{-1})$. By a direct verification, for any $\mathbb{C}$-linear map $\varphi\colon V_2\to V_1$, the resulting $A\defeq\tfrac{1}{\#G}\sum_gg.\varphi$ intertwines the actions $G\curvearrowright V_2,V_1$, or in other words is $\mathbb{C}[G]$-linear. By Schur's lemma, $A$ must be zero if $V_1\ncong V_2$, and a multiple of identity $\sum_ie_i\otimes e_i^*$ otherwise. In the latter case we determine the scaling factor by comparing traces, which are $v_2^*(v_1)$ for the LHS and $\chi_V(1)$ for the identity.
\end{proof}

Lemma \ref{lmm:swap_irr} allows us to understand averaging over the whole group $G$, and this is already enough to count $\Hom{\pio\Sigma}{G}$. We accomplish this and prove Mednykh's formula (Lemma \ref{lmm:Mednykh}) in Section \ref{ssec:Mednykh}. However, in order to keep track of immersed complexes we will need a more refined statement, that of Lemma \ref{lmm:swap}. Recall that for a representation $V$ of a finite group $G$ and a subgroup $H\leq G$, the restricted representation admits a direct sum decomposition into $H$-isotypic components
\begin{equation}\label{eq:isotypic}
\tn{Res}^G_HV=\bigoplus_{U\in\tn{Irr}(H)}U\otimes\Hom[H]{U}{V}.
\end{equation}
Denote the isotypic projections by $P^V_U\colon V\to U\otimes\Hom[H]{U}{V}$ and the isotypic inclusions by $I^V_U\colon U\otimes\Hom[H]{U}{V}\to V$ (the latter is just evaluation).
\begin{lemma}\label{lmm:swap}
Consider complex representations $V_1,V_2$ of a finite group $G$ and let $H\leq G$ be a subgroup. Then we have the following equality of endomorphisms of $V_1\otimes V_2^*$:
\begin{equation*}
\frac{1}{\#H}\sum_{h\in H}h=\sum_{U\in\tn{Irr}(H)}\chi_U(1)^{-1}\cdot(I^{V_1}_U\otimes (P^{V_2}_U)^*)\circ S_U\circ(P^{V_1}_U\otimes (I^{V_2}_U)^*)
\end{equation*}
where
\begin{align*}
S_U\in&\tn{End}\big(U\otimes\Hom{U}{V_1}\otimes U^*\otimes\Hom{U}{V_2}^*\big)\\
S_U\colon&u_1\otimes\varphi_1\otimes u^*_2\otimes\varphi^*_2\mapsto u^*_2(u_1)\cdot\sum_ie_i\otimes\varphi_1\otimes e^*_i\otimes\varphi^*_2
\end{align*}
for any pair of dual bases $\{e_i\}_i,\{e^*_i\}_i$ of $U,U^*$.

In the notation of ribbon graphs, we have
\begin{equation*}
\begin{tikzpicture}[baseline=1]
\draw[dotted](0,0)circle(1.5);
\node[draw](avg)at(0,0){$\displaystyle\frac{1}{\#H}\sum_{h\in H}h$};\draw($(avg.south east)+(0,.07)$)--($(avg.south west)+(0,.07)$);
\node[fill=white](u1o)at(-.7,1.3){$V_1$};\node[fill=white](u1i)at(-.7,-1.3){$V_1$};\node[fill=white](u2i)at(.7,1.3){$V_2$};\node[fill=white](u2o)at(.7,-1.3){$V_2$};
\draw[->](u1i)--($(avg.south)-(.7,0)$);\draw[->]($(avg.north)-(.7,0)$)--(u1o);\draw[->]($(avg.south)+(.7,0)$)--(u2o);\draw[->](u2i)--($(avg.north)+(.7,0)$);
\end{tikzpicture}
=\sum_{U\in\tn{Irr}(H)}\!\frac{1}{\chi_U(1)}\times
\begin{tikzpicture}[baseline=1]
\draw[dotted](0,0)circle(2.15);
\node[fill=white](v1i)at(-1,-1.9){$V_1$};\node[fill=white](v1o)at(-1,1.9){$V_1$};\node[fill=white](v2o)at(1,-1.9){$V_2$};\node[fill=white](v2i)at(1,1.9){$V_2$};
\node[draw](p1)at(-1,-1){$P^{V_1}_U$};\draw($(p1.south west)+(0,.07)$)--($(p1.south east)+(0,.07)$);
\node[draw](i1)at(-1,1){$I^{V_1}_U$};\draw($(i1.south west)+(0,.07)$)--($(i1.south east)+(0,.07)$);
\node[draw](p2)at(1,1){$P^{V_2}_U$};\draw($(p2.north west)-(0,.07)$)--($(p2.north east)-(0,.07)$);
\node[draw](i2)at(1,-1){$I^{V_2}_U$};\draw($(i2.north west)-(0,.07)$)--($(i2.north east)-(0,.07)$);
\draw[->](v1i)--(p1);\draw[->](i1)--(v1o);\draw[->](i2)--(v2o);\draw[->](v2i)--(p2);
\path(p1.north west)--(p1.north east)coordinate[pos=.25](p1ho)coordinate[pos=.75](p1ro);
\path(i1.south west)--(i1.south east)coordinate[pos=.25](i1hi)coordinate[pos=.75](i1ri);
\path(i2.north west)--(i2.north east)coordinate[pos=.25](i2ro)coordinate[pos=.75](i2ho);
\path(p2.south west)--(p2.south east)coordinate[pos=.25](p2ri)coordinate[pos=.75](p2hi);
\draw[->](p1ho)--(i1hi);\draw[->](p2hi)--(i2ho);
\draw[->](p1ro)to[out=90,in=90](i2ro);\draw[->](p2ri)to[out=-90,in=-90](i1ri);
\node at(0,.6){$U$};\node at(0,-.6){$U$};\node[font=\tiny]at(-1.55,0)[rotate=90]{$\Hom[H]{\!U\!}{\!V_1\!}$};\node[font=\tiny]at(1.55,0)[rotate=-90]{$\Hom[H]{\!U\!}{\!V_2\!}$};
\end{tikzpicture}
\end{equation*}
\end{lemma}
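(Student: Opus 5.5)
The plan is to reduce Lemma~\ref{lmm:swap} to Lemma~\ref{lmm:swap_irr} applied to the group $H$, using the isotypic decompositions (\ref{eq:isotypic}) of both factors. First, restrict $V_1$ and $V_2$ to $H$. This gives
\begin{equation*}
V_1\otimes V_2^*=\bigoplus_{U,U'\in\tn{Irr}(H)}\big(U\otimes\Hom[H]{U}{V_1}\big)\otimes\big(U'\otimes\Hom[H]{U'}{V_2}\big)^*,
\end{equation*}
which is an $H$-equivariant decomposition. The identity of $V_1$ equals $\sum_U I^{V_1}_U\circ P^{V_1}_U$. The identity of $V_2^*$ is the dual $\sum_{U'}(P^{V_2}_{U'})^*\circ(I^{V_2}_{U'})^*$. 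Pre- and post-composing the averaging operator $\frac{1}{\#H}\sum_h h$ with these resolutions of the identity splits it into a sum of blocks indexed by $(U,U')$. Since the averaging operator commutes with the $H$-equivariant projections and inclusions, each block is just the averaging operator acting on the $(U,U')$ summand.

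Next I would analyse one block. The group $H$ acts on $U\otimes M_1\otimes U'^*\otimes M_2^*$ only through the $U\otimes U'^*$ factor, where $M_1=\Hom[H]{U}{V_1}$ and $M_2=\Hom[H]{U'}{V_2}$ are multiplicity spaces carrying the trivial action. So the averaging operator on this block is (average on $U\otimes U'^*$)$\,\otimes\,\tn{id}_{M_1\otimes M_2^*}$. Lemma~\ref{lmm:swap_irr} then says this is zero when $U\ncong U'$. When $U=U'$, it is $\chi_U(1)^{-1}$ times the map $u_1\otimes u_2^*\mapsto u_2^*(u_1)\sum_ie_i\otimes e_i^*$, tensored with the identity on multiplicities. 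That is exactly $\chi_U(1)^{-1}S_U$ once the tensor factors are reordered as in the statement.

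Finally, I would reassemble the blocks. Only the diagonal blocks $U=U'$ survive, and each contributes $\chi_U(1)^{-1}(I^{V_1}_U\otimes(P^{V_2}_U)^*)\circ S_U\circ(P^{V_1}_U\otimes(I^{V_2}_U)^*)$. This gives the claimed formula. The ribbon-graph version is the same computation read through Proposition~\ref{prop:ribbon_functor}. The cap-and-cup on the $U$ wires encodes $S_U$, and the straight $\Hom[H]{U}{V_i}$ wires encode the identity on the multiplicity spaces.

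The main obstacle is bookkeeping rather than substance. One must check that the dual maps $(P^{V_2}_U)^*$ and $(I^{V_2}_U)^*$ are arranged in the correct order, so that the $V_2^*$ factor is the dual of the $V_2$ decomposition, and that the isotypic components of $V_2^*$ under $H$ correspond to $U^*$. One must also be careful that the projections are $H$-equivariant, which holds because isotypic components are canonical. No choice of isomorphism enters, since $S_U$ is basis-independent.
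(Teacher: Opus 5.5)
Your proposal is correct and follows the same route as the paper. You insert the isotypic decompositions $\sum_U I^{V_i}_U\circ P^{V_i}_U=\tn{id}$ on both tensor factors, note that on each $(U_1,U_2)$ block the averaging operator acts only on $U_1\otimes U_2^*$, and then apply Lemma~\ref{lmm:swap_irr} to that middle factor, which kills the off-diagonal blocks and produces $\chi_U(1)^{-1}S_U$ on the diagonal ones. The paper does this in one ribbon-graph step, leaving implicit the equivariance of $P,I$ and the ordering of the dual maps that you spell out.
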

\begin{proof}
We apply the isotypic decomposition, equation (\ref{eq:isotypic}), to $V_1,V_2$ on the LHS. This yields
\begin{equation*}
\begin{tikzpicture}[baseline=1]
\draw[dotted](0,0)circle(1.5);
\node[draw](avg)at(0,0){$\displaystyle\frac{1}{\#H}\sum_{h\in H}h$};\draw($(avg.south east)+(0,.07)$)--($(avg.south west)+(0,.07)$);
\node[fill=white](u1o)at(-.7,1.3){$V_1$};\node[fill=white](u1i)at(-.7,-1.3){$V_1$};\node[fill=white](u2i)at(.7,1.3){$V_2$};\node[fill=white](u2o)at(.7,-1.3){$V_2$};
\draw[->](u1i)--($(avg.south)-(.7,0)$);\draw[->]($(avg.north)-(.7,0)$)--(u1o);\draw[->]($(avg.south)+(.7,0)$)--(u2o);\draw[->](u2i)--($(avg.north)+(.7,0)$);
\end{tikzpicture}
=\sum_{U_1,U_2\in\tn{Irr}(H)}
\begin{tikzpicture}[baseline=1]
\draw[dotted](0,0)circle(2.42);
\node[draw](avg)at(0,0){$\displaystyle\frac{1}{\#H}\sum_{h\in H}h$};\draw($(avg.south east)+(0,.07)$)--($(avg.south west)+(0,.07)$);
\node[draw](u1o)at(-1,1.3){$P_{U_1}^{V_1}$};\draw($(u1o.south west)+(0,.07)$)--($(u1o.south east)+(0,.07)$);
\node[draw](u1i)at(-1,-1.3){$I_{U_1}^{V_1}$};\draw($(u1i.south west)+(0,.07)$)--($(u1i.south east)+(0,.07)$);
\node[draw](u2i)at(1,1.3){$P_{U_2}^{V_2}$};\draw($(u2i.north west)-(0,.07)$)--($(u2i.north east)-(0,.07)$);
\node[draw](u2o)at(1,-1.3){$I_{U_2}^{V_2}$};\draw($(u2o.north west)-(0,.07)$)--($(u2o.north east)-(0,.07)$);
\draw[->]($(u1i.north)+(.3,0)$)--($(avg.south)-(.7,0)$);\draw[->]($(avg.north)-(.7,0)$)--($(u1o.south)+(.3,0)$);\draw[->]($(avg.south)+(.7,0)$)--($(u2o.north)-(.3,0)$);\draw[->]($(u2i.south)-(.3,0)$)--($(avg.north)+(.7,0)$);
\draw[->]($(u1i.north)-(.3,0)$)--($(u1o.south)-(.3,0)$);\draw[->]($(u2i.south)+(.3,0)$)--($(u2o.north)+(.3,0)$);
\node[font=\tiny]at(-.5,.75){$U_1$};\node[font=\tiny]at(.5,.8){$U_2$};\node[font=\tiny]at(-.5,-.8){$U_1$};\node[font=\tiny]at(.5,-.75){$U_2$};\node[font=\tiny,rotate=90]at(-1.55,0){$\Hom[H]{U_1}{V_1}$};\node[font=\tiny,rotate=-90]at(1.5,0){$\Hom[H]{U_2}{V_2}$};
\node[fill=white](v1o)at(-1,2.2){$V_1$};\node[fill=white](v1i)at(-1,-2.2){$V_1$};\node[fill=white](v2o)at(1,-2.2){$V_2$};\node[fill=white](v2i)at(1,2.2){$V_2$};
\draw[->](v1i)--(u1i);\draw[->](u1o)--(v1o);\draw[->](v2i)--(u2i);\draw[->](u2o)--(v2o);
\end{tikzpicture}
\end{equation*}
The middle coupon on the RHS acts on $U_1\otimes U_2^*$. Applying Lemma \ref{lmm:swap_irr} to it gives the desired equality.
\end{proof}

\subsubsection{Clashing orientations, duality, and Frobenius--Schur indicator}\label{sssec:FrobSch}

In this subsection we make preparations to deal with an extra complication specific to non-orientable orbifolds. Let $V_1,V_2$ be irreducible $G$-representations and $B\colon V_1\to V_2^*$ a $G$-equivariant isomorphism. Equivalently, $B(\bullet)(\bullet)$ is a non-degenerate $G$-invariant bilinear map $V_1\times V_2\to\mathbb{C}$. Following almost the same reasoning as in Lemma~\ref{lmm:swap_irr} we can prove the identity below.
\begin{equation}\label{eq:clashing_resolution}
\begin{tikzpicture}[baseline=.9]
\draw[dotted](0,0)circle(1.6);
\node[fill=white](li)at(-1,-1.2){$V_1$};\node[fill=white](lo)at(-1,1.2){$V_1$};\node[fill=white](ri)at(1,1.2){$V_2$};\node[fill=white](ro)at(1,-1.2){$V_2$};\node at(0,0){$\displaystyle\frac{1}{\#G}\sum_{g\in G}g$};
\draw(-1.4,.6)--(1.4,.6)--(1.4,-.6)--(-1.4,-.6)--(-1.4,.6)(-1.4,-.5)--(1.4,-.5);
\draw[->](li)--(-1,-.6);\draw[->](-1,.6)--(lo);\draw[->](1,.6)--(ri);\draw[->](ro)--(1,-.6);
\end{tikzpicture}
\qquad=\chi_V(1)^{-1}\times
\begin{tikzpicture}[baseline=.5]
\draw[dotted](0,0)circle(1.42);
\node[fill=white](li)at(-.9,-1.1){$V_1$};\node[fill=white](lo)at(-.9,1.1){$V_1$};\node[fill=white](ro)at(.9,1.1){$V_2$};\node[fill=white](ri)at(.9,-1.1){$V_2$};
\node[draw](bi)at(0,.5){$B^{-1}$};\draw($(bi.south east)-(.07,0)$)--($(bi.north east)-(.07,0)$);
\node[draw](b)at(0,-.5){$B$};\draw($(b.south west)+(.07,0)$)--($(b.north west)+(.07,0)$);
\draw[->](bi)to[out=180,in=-80](lo);\draw[->](bi)to[out=0,in=-100](ro);\draw[->](li)to[out=80,in=180](b);\draw[->](ri)to[out=100,in=0](b);
\end{tikzpicture}
\end{equation}
It will cause some coupons coloured $B$ to appear in the non-orientable case, and simplifying them will give rise to the Frobenius--Schur indicator defined below.
\begin{definition}\label{def:Frob_Sch}
The \emph{Frobenius--Schur indicator} of $V\in\tn{Irr}(G)$ is defined as
\begin{equation*}
\nu(V)=\begin{cases}\phantom{-}0&\text{if }V\ncong V^*\\\phantom{-}1&\text{if }V\text{ admits a }G\text{-invariant symmetric bilinear form}\\-1&\text{if }V\text{ admits a }G\text{-invariant alternating bilinear form}.\end{cases}
\end{equation*}
\end{definition}
Definition \ref{def:Frob_Sch} for a self-dual $V$ can be summarised in ribbon graph calculus as
\begin{equation}\label{eq:FS_Rad1}
\begin{tikzpicture}[baseline=-5]
\draw[dotted](0,-.2)circle(1);
\node[draw](b)at(0,0){$B$};\draw($(b.south west)+(.07,0)$)--($(b.north west)+(.07,0)$);
\node[fill=white](l)at(-.8,-.8){$V$};\node[fill=white](r)at(.8,-.8){$V$};
\draw[->](l)to[out=80,in=180](b);\draw[->](r)to[out=100,in=0](b);
\end{tikzpicture}
\quad=\quad
\begin{tikzpicture}[baseline=5]
\draw[dotted](0,.1)circle(1.2);
\node[draw](b)at(0,.8){$B$};\draw($(b.south east)-(.07,0)$)--($(b.north east)-(.07,0)$);
\node[fill=white](l)at(-.8,-.8){$V$};\node[fill=white](r)at(.8,-.8){$V$};
\draw(l)to[out=80,in=-160](0,0)to[out=20,in=-90](.6,.5);\draw[->](.6,.5)to[out=90,in=0](b);
\draw(r)to[out=100,in=-20](.25,-.1);\draw(-.25,.1)to[out=160,in=-90](-.6,.5);\draw[->](-.6,.5)to[out=90,in=180](b);
\end{tikzpicture}
\quad=\quad\nu(V)\times
\begin{tikzpicture}[baseline=-5]
\draw[dotted](0,-.2)circle(1);
\node[draw](b)at(0,0){$B$};\draw($(b.south east)-(.07,0)$)--($(b.north east)-(.07,0)$);
\node[fill=white](l)at(-.8,-.8){$V$};\node[fill=white](r)at(.8,-.8){$V$};
\draw[->](l)to[out=80,in=180](b);\draw[->](r)to[out=100,in=0](b);
\end{tikzpicture}
\end{equation}

\subsubsection{Summation over representations of an overgroup}\label{sssec:sum_overreps}

In the upcoming reasoning we will need the following lemma. Its proof is deferred to Section \ref{ssec:pf_change_sum_tors}. For a finite group $G$, denote by
\begin{align*}
\tau_m(G)\defeq&\#\{x\in G\mid x^m=1\}\\
\chi^{m\tn{-tors}}\defeq&\frac{1}{\tau_m(G)}\sum_{x^m=1}\chi(x)
\end{align*}
the number of elements of order dividing $m$ and the average value of a character $\chi$ on them respectively. Then
\begin{lemma}\label{lmm:change_sum_tors}
Let $G$ be a finite group, $U$ an irreducible representation of a subgroup $H\leq G$, and let $x\in G$ commute with $H$. Then for every $m\in\mathbb{N}$ we have
\begin{equation*}
\tau_m(G)\sum_{V\in\tn{Irr}(G)}\chi^{m\tn{-tors}}_V\cdot\tr\left(x\curvearrowright\Hom[H]{U}{V}\right)=[G:H]\sum_{\tiny\begin{matrix}y\in H\\y^m=x^m\end{matrix}}\chi_U(y).
\end{equation*}
\end{lemma}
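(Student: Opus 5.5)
The plan is to expand everything into characters of $G$ and $H$ and collapse the sum over $V\in\tn{Irr}(G)$ using column orthogonality of the character table.

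\textbf{Step 1: the torsion factor.} By definition,
\begin{equation*}
\tau_m(G)\chi^{m\tn{-tors}}_V=\sum_{z\in G,\ z^m=1}\chi_V(z).
\end{equation*}

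\textbf{Step 2: the trace on $\Hom[H]{U}{V}$.} Since $x$ commutes with $H$, post-composition with $\rho_V(x)$ is $H$-equivariant, so it is an operator on $\Hom[H]{U}{V}$; this is how I read the action in the statement. Identify $\Hom[H]{U}{V}=(U^*\otimes V)^H$. The averaging operator $\frac{1}{\#H}\sum_{h\in H}h$ is a projection onto this subspace, and it commutes with $1\otimes x$. Hence
\begin{equation*}
\tr\left(x\curvearrowright\Hom[H]{U}{V}\right)=\frac{1}{\#H}\sum_{h\in H}\chi_U(h^{-1})\,\chi_V(xh).
\end{equation*}

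\textbf{Step 3: substitute and swap the sums.} Inserting Steps 1 and 2, the left-hand side becomes
\begin{equation*}
\frac{1}{\#H}\sum_{h\in H}\chi_U(h^{-1})\sum_{z^m=1}\ \sum_{V\in\tn{Irr}(G)}\chi_V(z)\,\chi_V(xh).
\end{equation*}

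\textbf{Step 4: column orthogonality.} Fix $g\in G$ and write $\chi_V(z)=\overline{\chi_V(z^{-1})}$. Column orthogonality gives that $\sum_V\chi_V(z)\chi_V(g)$ equals $\#C_G(g)$ if $z^{-1}$ is conjugate to $g$, and $0$ otherwise. Summing over $z$ with $z^m=1$ therefore counts the elements $z^{-1}$ of the conjugacy class of $g$, weighted by $\#C_G(g)$. The condition $z^m=1$ is invariant under conjugation and inversion, so either every element of the class qualifies or none does. This yields
\begin{equation*}
\sum_{z^m=1}\ \sum_{V}\chi_V(z)\,\chi_V(g)=\#C_G(g)\cdot\#\tn{cl}(g)\cdot\mathbbm{1}_{g^m=1}=\#G\cdot\mathbbm{1}_{g^m=1}.
\end{equation*}

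\textbf{Step 5: conclude.} Apply Step 4 with $g=xh$. The left-hand side equals
\begin{equation*}
[G:H]\sum_{h\in H,\ (xh)^m=1}\chi_U(h^{-1}).
\end{equation*}
Because $x$ commutes with $h$, we have $(xh)^m=x^mh^m$. So the condition $(xh)^m=1$ is equivalent to $h^{-m}=x^m$. Substituting $y=h^{-1}$, which runs over $H$ with $y^m=x^m$, gives exactly the right-hand side.

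No step is a serious obstacle. The only point needing care is Step 2: one must check that the action of $x$ commutes with the projection onto $H$-invariants, which holds because $x$ centralises $H$, and one must use the correct dual character $\chi_{U^*}(h)=\chi_U(h^{-1})$.
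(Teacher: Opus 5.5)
Your proof is correct and is essentially the paper's argument: both expand the left-hand side into a double character sum, identify $\tr\left(x\curvearrowright\Hom[H]{U}{V}\right)$ with $\frac{1}{\#H}\sum_{h\in H}\chi_U(h^{-1})\chi_V(xh)$, and collapse the sum over $\tn{Irr}(G)$ by orthogonality of characters of $G$. The differences are only in bookkeeping. The paper first proves the identity for a single conjugacy class $C$ and then sums over the classes of $m$-torsion elements. It gets your Step 4 from the regular character together with Schur's lemma applied to the central element $\sum_{z\in C}z$, rather than quoting column orthogonality. It derives the trace formula from the isotypic decomposition and orthonormality of characters of $H$, rather than from the projector onto $(U^*\otimes V)^H$.
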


\section{Extensibility of coverings from the 1-skeleton}\label{sec:extensibility}

In this section we investigate which coverings of the 1-skeleton $\Sigma^{(1)}$ extend across the faces to $\Sigma$, or equivalently which group homomorphisms $\pi_1\Sigma^{(1)}\to G$ from the free group on generators descend to $\pio\Sigma$. Concretely, we prove Proposition~\ref{prop:Rg_trace}, which can detect when this happens using the framework of ribbon categories and representation theory. As an application we give a proof of Mednykh's formula in Lemma~\ref{lmm:Mednykh}. This section does not yet depend on immersed subcomplexes.

The key objects, $\mathfrak{R}$-graphs, translate the combinatorics of $\Sigma$ into ribbon categories. Traces of $\mathfrak{R}$-graphs (Definition~\ref{def:ribbon_trace}) will appear in subsequent counting formulas, leading to the expression for the number of embeddings in Theorem~\ref{thm:count_tangles}.
\begin{definition}[$\mathfrak{R}$-graph]\label{def:R_graph}
Take the orbifold $\Sigma$ with a branched $\Delta$-complex structure. If it is not orientable, then fix some auxiliary orientation of each face. Suppose that each edge $e\in E(\Sigma)$ is decorated with $x_e\in\mathbb{C}[G]$, and each face $f\in F(\Sigma)$ with a $G$-representation $\lambda_f$. The \emph{$\mathfrak{R}$-graph} $\mathfrak{R}\left(\vec{\lambda},\vec{x}\right)$ is a ribbon graph obtained from $\Sigma$ by the following procedure.
\begin{enumerate}
\item\label{sdef:rg_corner} At each corner $c$ of a simplicial or orbigon face $f$, add a wire traversing $c$. Direct it according to the orientation of $f$ and colour with $\lambda_f$.
\item\label{sdef:rg_edge} At an edge $e\in E(\Sigma)$, add a coupon with the bottom base facing the initial end of $e$ and the top base facing the terminal end. Colour the coupon with $x_e$ (considered as an endomorphism of $\lambda_{f_1}\otimes\lambda_{f_2}$ or their duals, where $f_1,f_2$ are the faces adjacent to $e$).
\item\label{sdef:rg_spx_conn} For each edge separating two simplices, connect the four adjacent ends of corner wires (added in step~\ref*{sdef:rg_corner}) to the closest bases of the edge's coupon (from step~\ref*{sdef:rg_edge}).
\item\label{sdef:rg_orbi_conn} For each edge $e$ bordering a simplex $f_\tn{spx}$ and an orbigon $f_\tn{orb}$, connect the ends of $f_\tn{orb}$'s wire to the closest bases of $e$'s coupon, making a loop. Next, connect the bases of $e$'s coupon to the closest ends of the wires in the corners of $f_\tn{spx}$ adjacent to $e$.
\end{enumerate}
\end{definition}

The local picture from Definition \ref{def:R_graph} at edges is shown in Figure \ref{sfig:Rg_edge}. The part coming from a simplex and orbigon sharing an edge, described in step (\ref*{sdef:rg_orbi_conn}), is shown in Figure~\ref{sfig:Rg_orb}. Each simplex of $\Sigma$ gives rise to a path in the $\mathfrak{R}$-graph crossing three wires and three coupons, depicted in Figure \ref{sfig:Rg_spx}.

\begin{figure}[h]
\begin{subfigure}{\textwidth}
\begin{equation*}
\begin{tikzpicture}[baseline=0]
\draw[->-](0,-1)--(0,1);
\draw(-.5,1.1)--(0,1)--++(.5,.1);\draw(-.5,-1.1)--(0,-1)--++(.5,-.1);
\node at(-1,0){$f_L$};\node at(1,0){$f_R$};\node at(-.3,0){$e$};
\draw[->](-.95,-.3)arc(-90:90:.3);\draw[->](.95,.3)arc(90:270:.3);
\end{tikzpicture}
\qquad\boldsymbol{\longmapsto}\qquad
\begin{tikzpicture}[baseline=0]
\draw[dotted](0,0)circle(1.2);
\node(perm)at(0,0){$x_e$};\node[fill=white](lo)at(-.5,1){$\lambda_L$};\node[fill=white](li)at(-.5,-1){$\lambda_L$};\node[fill=white](ri)at(.5,1){$\lambda_R$};\node[fill=white](ro)at(.5,-1){$\lambda_R$};
\draw(-.7,.3)--(.7,.3)--(.7,-.3)--(-.7,-.3)--(-.7,.3)(-.7,-.23)--(.7,-.23);
\draw[->](li)--(-.5,-.3);\draw[->](-.5,.3)--(lo);\draw[->](ri)--(.5,.3);\draw[->](.5,-.3)--(ro);
\end{tikzpicture}
\end{equation*}
\caption{A part of an $\mathfrak{R}$-graph coming from $e\in E(\Sigma)$ incident to faces $f_L,f_R$. Orientation of $e$ matches the orientation of $f_L$ and is opposite to $f_R$. The coupon is coloured with $x_e$, considered as an endomorphism of $\lambda_L\otimes\lambda^*_R$. This is the result of steps (\ref*{sdef:rg_edge},\ref*{sdef:rg_spx_conn}) of Definition \ref{def:R_graph}. If $\Sigma$ is non-orientable, then at some edges the orientation of $f_L,f_R$ will both agree or disagree with that of $e$; then the wires $\lambda_L,\lambda_R$ on the RHS will have the same direction, and $x_e$ will act on $\lambda_L\otimes\lambda_R$ or $\lambda_L^*\otimes\lambda_R^*$.}
\label{sfig:Rg_edge}
\begin{subfigure}{\textwidth}
\begin{equation*}
\begin{tikzpicture}[baseline=0]
\node[circle,draw,font=\tiny]at(0,0){$m$};\node at(0,-.7){$e$};
\node at(0,.8){$f_o$};\draw[->](-.2,.97)arc(-225:45:.3);
\draw[->-](-1,0)arc(-180:0:1);\draw(0,1.5)to[in=90,out=-135](-1,0);\draw(0,1.5)to[in=90,out=-45](1,0);
\draw(-2,-.5)arc(180:0:2);
\node at(-1.45,.1){$f_s$};\draw[->](-1.65,.27)arc(-225:45:.3);
\node at(1.45,.1){$f_s$};\draw[->](1.25,.27)arc(-225:45:.3);
\end{tikzpicture}
\qquad\boldsymbol{\longmapsto}\qquad
\begin{tikzpicture}[baseline=0]
\draw[dotted](0,0)circle(1.5);
\node[draw](e)at(0,0){$x_e$};\draw($(e.south west)+(.07,0)$)--($(e.north west)+(.07,0)$);
\draw($(e.north east)-(0,.1)$)arc(-90:90:.3);\draw($(e.north west)+(0,.5)$)--($(e.north east)+(0,.5)$);\draw[->]($(e.north west)+(0,.5)$)arc(90:270:.3);
\node at(0,1){$\lambda_o$};\node[fill=white](o)at(-1.2,-.9){$\lambda_s$};\node[fill=white](i)at(1.2,-.9){$\lambda_s$};
\draw[->]($(e.south west)+(0,.1)$)to[out=180,in=60](o);\draw[->](i)to[out=120,in=0]($(e.south east)+(0,.1)$);
\end{tikzpicture}
\end{equation*}
\caption{A part of an $\mathfrak{R}$-graph coming from $m$-orbigon $f_o$ of $\Sigma$ bordering simplex $f_s$ across its edge $e=\partial f_o$. This is the result of steps (\ref*{sdef:rg_edge},\ref*{sdef:rg_orbi_conn}) of Definition \ref{def:R_graph}.}
\label{sfig:Rg_orb}
\end{subfigure}
\end{subfigure}
\begin{subfigure}{\textwidth}
\begin{equation*}
\begin{tikzpicture}[baseline=0]
\draw[->-](-1,-.7)--(0,1.3);\draw[->-](0,1.3)--(1,-.7);\draw[->-](-1,-.7)--(1,-.7);
\node at(0,0){$f$};\node at(-.8,.4){$e_1$};\node at(.8,.4){$e_2$};\node at(0,-1){$e_3$};
\draw[->](.17,-.15)arc(-45:225:.25);
\end{tikzpicture}
\qquad\boldsymbol{\longmapsto}\qquad
\begin{tikzpicture}[baseline=0]
\node[draw](x1)at(3,.5){$x_1$};\draw($(x1.south east)+(0,.07)$)--($(x1.south west)+(0,.07)$);
\node[draw](x2)at(5,.5){$x_2$};\draw($(x2.north east)-(0,.07)$)--($(x2.north west)-(0,.07)$);
\node[draw](x3)at(4,-.5){$x_3$};\draw($(x3.south west)+(.07,0)$)--($(x3.north west)+(.07,0)$);
\draw[->]($(x2.north)-(.15,0)$)to[out=90,in=90]($(x1.north)+(.15,0)$);\draw[->]($(x3.east)+(0,.15)$)to[out=0,in=-90]($(x2.south)-(.15,0)$);\draw[->]($(x1.south)+(.2,0)$)to[out=-90,in=180]($(x3.west)+(0,.15)$);
\node at(4,1.5){$\lambda_f$};\node at(4.5,0){$\lambda_f$};\node at(3.5,0){$\lambda_f$};
\node(f1o)at(2.3,1.5){$\lambda_1$};\node(f1i)at(2,-.4){$\lambda_1$};\draw[->]($(x1.north)-(.15,0)$)to[out=90,in=-30](f1o);\draw[->](f1i)to[bend right]($(x1.south)-(.15,0)$);
\node(f2i)at(5.7,1.5){$\lambda_2$};\node(f2o)at(6,-.5){$\lambda_2$};\draw[->]($(x2.south)+(.15,0)$)to[bend right](f2o);\draw[->](f2i)to[in=90,out=-150]($(x2.north)+(.15,0)$);
\node(f3o)at(2.9,-1.2){$\lambda_3$};\node(f3i)at(5.1,-1.2){$\lambda_3$};\draw[->](f3i)to[out=120,in=0]($(x3.east)-(0,.1)$);\draw[->]($(x3.west)-(0,.1)$)to[out=180,in=60](f3o);
\end{tikzpicture}
\end{equation*}
\caption{A part of an $\mathfrak{R}$-graph coming from a simplex $f$ of $\Sigma$. Across $e_1,e_2,e_3$ from $f$ lie faces $f_1,f_2,f_3$ respectively.}
\label{sfig:Rg_spx}
\end{subfigure}
\caption{Local pictures of an $\mathfrak{R}$-graph. We dropped a level of subscripts on $\lambda_{f_i}$ and $x_{e_i}$ for decluttering.}
\end{figure}

The $\mathfrak{R}$-graphs produced from example orbifolds are shown in Figure~\ref{fig:Rg_example}.
\begin{figure}[h]
\begin{subfigure}{\textwidth}
\centering
\begin{tikzpicture}
\node[draw,diamond,red](e1)at(1,1){$x_{e_1}$};\draw[red]($(e1.south)+(-.07,.07)$)--($(e1.east)+(-.07,.07)$);\draw[<-,red]($(e1.north)-(.1,.1)$)arc(225:45:.3)--($(e1.east)+(.32,.32)$)arc(45:-135:.3);\node at(1.9,1.9){\color{red}$\lambda_1$};
\node[draw,diamond,teal](e2)at(-1,1){$x_{e_2}$};\draw[teal]($(e2.east)-(.07,.07)$)--($(e2.north)-(.07,.07)$);\draw[<-,teal]($(e2.west)+(.1,-.1)$)arc(-45:-225:.3)--($(e2.north)+(-.32,.32)$)arc(135:-45:.3);\node[teal]at(-1.9,1.9){$\lambda_2$};
\node[draw,inner sep=6pt,blue](e3)at(0,-.5){$x_{e_3}$};\draw[blue]($(e3.south west)+(.07,0)$)--($(e3.north west)+(.07,0)$);\draw[<-,blue]($(e3.east)-(0,.15)$)arc(90:-90:.3)--($(e3.west)-(0,.75)$)arc(270:90:.3);\node[blue]at(-1,-1.1){$\lambda_3$};
\draw[->]($(e3.east)+(0,.15)$)to[out=0,in=-45]($(e1.south)+(.1,.1)$);\draw[->]($(e1.west)+(.1,.1)$)to[out=135,in=45]($(e2.east)+(-.1,.1)$);\draw[->]($(e2.south)+(-.1,.1)$)to[out=-135,in=180]($(e3.west)+(0,.15)$);
\node at(0,1.6){$\lambda_4$};\node at(-1.2,-.3){$\lambda_4$};\node at(1.3,-.3){$\lambda_4$};
\end{tikzpicture}
\caption{An $\mathfrak{R}$-graph associated to $S^2$ with three cone points from Figure \ref{sfig:orbi_S2}. We denoted the orbigons by ${\color{red}f_1},{\color{teal}f_2},{\color{blue}f_3}$ and the simplex by $f_4$. Note that the simplex $f_4$ is drawn in \ref{sfig:orbi_S2} ``from the back''.}
\end{subfigure}
\begin{subfigure}{\textwidth}
\centering
\begin{tikzpicture}
\node[draw,diamond,red](e1)at(2,.8){$x_{e_1}$};\draw[red]($(e1.south)+(-.07,.07)$)--($(e1.east)+(-.07,.07)$);\draw[->,red]($(e1.east)-(.1,.1)$)arc(-135:45:.3)--($(e1.north)+(.32,.32)$)arc(45:225:.3);\node[red]at(3.4,1){$\lambda_1$};
\node[draw,diamond,teal](e2)at(2,-.8){$x_{e_2}$};\draw[teal]($(e2.north)-(.07,.07)$)--($(e2.east)-(.07,.07)$);\draw[->,teal]($(e2.south)+(-.1,.1)$)arc(-225:-45:.3)--($(e2.east)+(.32,-.32)$)arc(-45:135:.3);\node[teal]at(3.3,-.8){$\lambda_2$};
\node[draw,diamond,blue](e3)at(-2,.8){$x_{e_3}$};\draw[blue]($(e3.east)-(.07,.07)$)--($(e3.north)-(.07,.07)$);\draw[blue]($(e3.south)+(-.1,.1)$)to[out=-135,in=0](-2.55,.1);\draw[->,blue](-2.75,.1)to[out=180,in=225]($(e3.north)+(-.25,.25)$)arc(135:-45:.25);\node[blue]at(-3,1.3){$\lambda_4$};
\node[draw](e4)at(0,0){$x_{e_4}$};
\draw[->]($(e1.west)+(.1,.1)$)to[out=135,in=90]($(e4.north)+(.2,0)$);\draw[->]($(e4.south)+(.2,0)$)to[out=-90,in=-135]($(e2.west)+(.1,-.1)$);\draw[->]($(e4.north)-(.2,0)$)to[out=90,in=45]($(e3.east)+(-.1,.1)$);\draw[->]($(e2.north)+(.1,-.1)$)to[out=45,in=-45]($(e1.south)+(.1,.1)$);
\draw[->]($(e3.west)+(.1,-.1)$)to[out=-135,in=135](-2.2,-.7)to[out=-45,in=-90]($(e4.south)-(.2,0)$);
\node at(-1.5,-.7){$\lambda_4$};\node at(-.5,1.3){$\lambda_4$};\node at(.5,1.3){$\lambda_3$};\node at(.5,-1.2){$\lambda_3$};\node at(2.5,0){$\lambda_3$};
\end{tikzpicture}
\caption{An $\mathfrak{R}$-graph obtained from the triangulation of $\mathbb{RP}^2$ with two cone points from Figure~\ref{sfig:orbi_rp2}. We denoted the two orbigons by ${\color{red}f_1},{\color{teal}f_2}$, the simplex sharing edges with them by $f_3$, and the other simplex (one forming the M\"obius band) by $f_4$. The orientation of $e_4$ was not specified, so we did not choose the bottom base of its coupon. The wire going from $x_{e_3}$ to itself comes from the corner of $f_4$ adjacent to ${\color{blue}e_3}$ on both sides. Note that both sides of ${\color{blue}e_3}$ are facing $f_4$ but with opposite orientations, so all wires at the coupon ${\color{blue}x_{e_3}}$ point in the same direction (as on the LHS of (\ref{eq:clashing_resolution})).}
\end{subfigure}
\begin{subfigure}{\textwidth}
\centering
\begin{tikzpicture}
\node[draw,diamond,red](b)at(0,0){$x_b$};\draw[red]($(b.south)+(-.07,.07)$)--($(b.east)+(-.07,.07)$);
\node[draw,teal](a1)at(1.5,.5){$x_{a_1}$};\draw[teal]($(a1.south west)+(0,.07)$)--($(a1.south east)+(0,.07)$);
\node[draw,blue](a2)at(.5,1.5){$x_{a_2}$};\draw[blue]($(a2.south west)+(.07,0)$)--($(a2.north west)+(.07,0)$);
\draw[->]($(b.east)-(.1,.1)$)to[out=-45,in=-90]($(a1.south)-(.2,0)$);\draw[->]($(a1.north)-(.2,0)$)to[out=90,in=0]($(a2.east)-(0,.15)$);\draw[->]($(a2.west)-(0,.15)$)to[out=180,in=135]($(b.north)-(.1,.1)$);
\node(xi)at(.4,-1.2){$X$};\node(xo)at(1.3,2.2){$X$};\draw[->](xi)to[out=90,in=-45]($(b.south)+(.1,.1)$);\draw[->]($(a2.east)+(0,.15)$)to[out=0,in=-90](xo);
\node(yi)at(2.5,1.2){$Y$};\node(yo)at(-1.5,.5){$Y$};\draw[->](yi)to[out=180,in=90]($(a1.north)+(.2,0)$);\draw[->]($(b.west)+(.1,.1)$)to[out=135,in=0](yo);
\node(zi)at(-.5,2.2){$Z$};\node(zo)at(2.5,-.2){$Z$};\draw[->](zi)to[out=-90,in=180]($(a2.west)+(0,.15)$);\draw[->]($(a1.south)+(.2,0)$)to[out=-90,in=180](zo);
\node at(.1,.9){$\lambda_2$};\node at(.8,.1){$\lambda_2$};\node at(1,1){$\lambda_2$};\node at(-1,.8){$\lambda_1$};\node at(.6,-.7){$\lambda_1$};\node at(2,-.4){$\lambda_1$};\node at(2,1.4){$\lambda_1$};\node at(-.5,1.6){$\lambda_1$};\node at(1.5,1.6){$\lambda_1$};
\end{tikzpicture}
\caption{An $\mathfrak{R}$-graph associated to the $\Delta$-complex structure on a torus from Figure~\ref{sfig:imm_cplx}. Pairs of ends labelled $X,Y,Z$ should be joined (so in the $\mathfrak{R}$-graph are three wires coloured with $\lambda_{f_1}$ and no free ends of wires).}
\end{subfigure}
\caption{Examples of $\mathfrak{R}$-graphs. Graph connectivity is determined by the branched $\Delta$-complex structure of $\Sigma$, but colours of wires and coupons depend on the extra data. For decluttering we abbreviated $\lambda_{f_i}$ by $\lambda_i$.}
\label{fig:Rg_example}
\end{figure}

We are ready for the key result of this section.
\begin{proposition}\label{prop:Rg_trace}
Pick an element $x_e\in G$ of a finite group $G$ for each edge $e\in E(\Sigma)$, and consider them as living in $\mathbb{C}[G]$. Then
\begin{align*}
\sum_{\lambda_f\in\tn{Irr}(G)}&\prod_{f\in\Fspx(\Sigma)}\chi_{\lambda_f}(1)\cdot\prod_\iterorb\chi_{\lambda_f}^{m\tn{-tors}}\cdot\tr\mathfrak{R}\Big(\vec\lambda,\vec{x}\Big)=\\
&=\frac{\#G^{\#F(\Sigma)}}{\prod_{i=1}^k\tau_{m_i}(G)}\cdot\mathbbm{1}_{\{\vec{x}\text{ extends to a homomorphism }\pio\Sigma\to G\}},
\end{align*}
where $m_1,\dots,m_k$ are the orders of cone points of $\Sigma$, and $\tau_m,\chi^{m\tn{-tors}}$ are the number and average character of $m$-torsion elements from Section~\ref{sssec:sum_overreps}.
\end{proposition}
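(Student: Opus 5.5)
The plan is to evaluate the left-hand side face by face, using the character-theoretic delta function on each face, so that the $\mathfrak{R}$-graph trace becomes a product over faces of characters of boundary words. The basic identity is the class-function expansion of the delta function:
\begin{equation*}
\sum_{\lambda\in\tn{Irr}(G)}\chi_\lambda(1)\chi_\lambda(g)=\#G\cdot\mathbbm{1}_{g=1}.
\end{equation*}
For torsion, the orthogonality relations give
\begin{equation*}
\sum_{\lambda}\chi_\lambda^{m\tn{-tors}}\overline{\chi_\lambda(g)}=\tfrac{1}{\tau_m(G)}\sum_{y^m=1}\sum_\lambda\chi_\lambda(y)\overline{\chi_\lambda(g)}=\tfrac{\#G}{\tau_m(G)}\mathbbm{1}_{g^m=1},
\end{equation*}
since $\sum_\lambda\chi_\lambda(y)\overline{\chi_\lambda(g)}=\#C_G(g)\mathbbm{1}_{y\sim g}$ and exactly $\#G/\#C_G(g)$ elements $y$ are conjugate to $g$. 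Because $g^m=1$ is conjugation invariant, the complex conjugate (equivalently $g\mapsto g^{-1}$) is harmless. So it suffices to show that $\tr\mathfrak{R}(\vec\lambda,\vec x)$ factorises as $\prod_f\chi_{\lambda_f}(W_f(\vec x))$, up to inverses. Here $W_f$ is the boundary word of a simplex, and for an $m$-orbigon bounded by $e$ it is $x_e$ itself.

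The key step is to understand the tensor structure of $\mathfrak{R}$. Each coupon is coloured by a group element $x_e$, which acts on $\lambda_{f_1}\otimes\lambda_{f_2}$ (or on duals) as $\rho_{f_1}(x_e)\otimes\rho_{f_2}(x_e)^{\pm}$. It is therefore a tensor product of one operator on each wire strand, and the coupon splits into two independent coupons, one on each side of $e$. After this splitting the ribbon graph decomposes into disjoint closed loops, one per face, as Figures~\ref{sfig:Rg_spx} and \ref{sfig:Rg_orb} make visible.
\begin{itemize}
\item For a simplex $f$, the loop passes through three operators $\rho_{\lambda_f}(x_{e_i})^{\pm1}$ in the cyclic order around $f$. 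Its trace is $\chi_{\lambda_f}(W_f(\vec x))$, with the exponents dictated by whether $e_i$ is oriented along $f$.
\item For an orbigon, the loop is a single wire through one coupon, giving $\chi_{\lambda_f}(x_e^{\pm1})$.
\end{itemize}
A dual wire, oriented against the coupon direction, contributes the transpose of $\rho(x)^{-1}$. Going around the loop backwards therefore turns the contribution into the inverse, and the trace of the loop is the character of the word or of its inverse. In the non-orientable case the auxiliary orientations only change which side carries a dual. Since $\sum_\lambda\chi_\lambda(1)\chi_\lambda(g^{-1})$ is also the delta function, these signs do not matter. The one point to check is that a coupon with both wires in the same direction (the ``clashing'' case) still factorises as a tensor product. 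It does, because $x_e$ is a single group element and not an average, so no $B$ coupons arise yet.

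Multiplying out, the left-hand side becomes a product over faces. Simplices contribute $\prod_{f}\#G\,\mathbbm{1}_{W_f(\vec x)=1}$, and orbigons contribute $\prod_i\tfrac{\#G}{\tau_{m_i}(G)}\mathbbm{1}_{x_{e_i}^{m_i}=1}$. By the presentation (\ref{eq:simplicial_pres}), the product of indicators is exactly the indicator that $\vec x$ extends to a homomorphism $\pio\Sigma\to G$. The prefactor is $\#G^{\#F(\Sigma)}/\prod_i\tau_{m_i}(G)$, as claimed.

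The main obstacle is purely bookkeeping. Each corner wire must really join the factors on the correct side of each coupon, so that the loops are the face boundaries with the correct cyclic order and exponents. This is especially delicate at edges of the Möbius-type and at orbigon edges, where a single coupon carries both strands of one face. I would verify this by reading off the local pictures in Definition~\ref{def:R_graph} wire by wire.
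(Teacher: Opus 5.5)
Your proposal is correct and follows the paper's proof. You split each edge coupon into single-strand coupons (the paper's identity (\ref{eq:prod_action_rg})), read off one loop per face with trace $\chi_{\lambda_f}(W_f(\vec x))$ or $\chi_{\lambda_f}(x_{\partial f})$, and evaluate each face sum as a delta function, exactly as the paper does. The only difference is cosmetic: you derive the orbigon identity directly from the second orthogonality relation, whereas the paper obtains it as the case $H=\{1\}$, $U$ trivial of Lemma~\ref{lmm:change_sum_tors}.
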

\begin{proof}
We begin by computing the trace of an $\mathfrak{R}$-graph. At each edge $e$ bordering faces $f_L,f_R$, where orientation of $e$ agrees with $f_L$ and is opposite to $f_R$, we can rewrite the coupons as
\begin{equation}\label{eq:prod_action_rg}
\begin{tikzpicture}[baseline=0]
\draw[dotted](0,0)circle(1.1);
\node at(0,0){$x_e$};\node[fill=white](lim)at(-.5,-1){$\lambda_L$};\node[fill=white](lom)at(-.5,1){$\lambda_L$};\node[fill=white](rom)at(.5,-1){$\lambda_R$};\node[fill=white](rim)at(.5,1){$\lambda_R$};
\draw(-.7,.3)--(.7,.3)--(.7,-.3)--(-.7,-.3)--(-.7,.3)(-.7,-.23)--(.7,-.23);
\draw[->](lim)--(-.5,-.3);\draw[->](-.5,.3)--(lom);\draw[->](rim)--(.5,.3);\draw[->](.5,-.3)--(rom);
\end{tikzpicture}
\qquad=\qquad
\begin{tikzpicture}[baseline=0]
\draw[dotted](.5,0)circle(1.1);
\node[fill=white](lis)at(0,-1){$\lambda_L$};\node[draw](opl)at(0,0){$x_e$};\draw($(opl.south west)+(0,.07)$)--($(opl.south east)+(0,.07)$);\node[fill=white](los)at(0,1){$\lambda_L$};
\node[fill=white](ris)at(1,1){$\lambda_R$};\node[draw](opr)at(1,0){$x_e$};\draw($(opr.south west)+(0,.07)$)--($(opr.south east)+(0,.07)$);\node[fill=white](ros)at(1,-1){$\lambda_R$};
\draw[->](lis)--(opl);\draw[->](opl)--(los);\draw[->](ris)--(opr);\draw[->](opr)--(ros);
\end{tikzpicture}
\qquad=\qquad
\begin{tikzpicture}[baseline=0]
\draw[dotted](.5,0)circle(1.1);
\node[fill=white](lis)at(0,-1){$\lambda_L$};\node[draw](opl)at(0,0){$x_e$};\draw($(opl.south west)+(0,.07)$)--($(opl.south east)+(0,.07)$);\node[fill=white](los)at(0,1){$\lambda_L$};
\node[fill=white](ris)at(1,1){$\lambda_R$};\node[draw](opr)at(1,0){$x^{-1}_e$};\draw($(opr.north west)-(0,.07)$)--($(opr.north east)-(0,.07)$);\node[fill=white](ros)at(1,-1){$\lambda_R$};
\draw[->](lis)--(opl);\draw[->](opl)--(los);\draw[->](ris)--(opr);\draw[->](opr)--(ros);
\end{tikzpicture}
\end{equation}
Equation (\ref{eq:prod_action_rg}) is nothing more than the definition of group action on a tensor product and dual representation, written in ribbon graphs. Analogous identities (with swapping top and bottom bases on either none or both coupons) apply with other combinations of orientations of $f_L,f_R$ at $e$.

We apply the identity (\ref{eq:prod_action_rg}) to $\tr\mathfrak{R}(\vec\lambda,\vec{x})$. This splits each edge coupon into two coupons, each with one incoming and one outgoing wire. Crucially, the resulting graph becomes a disjoint union of loops in 1-to-1 correspondence with faces of $\Sigma$. By Proposition \ref{prop:ribbon_functor}, trace of a disjoint union is the product of traces, so $\tr\mathfrak{R}(\vec\lambda,\vec{x})$ equals the product of traces of the resulting loops. A loop coming from a simplicial face $f$ has three coupons, which spell the relator $W_f$ from presentation (\ref{eq:simplicial_pres}). Therefore, the trace of this loop is $\chi_{\lambda_f}\left(W_f(\vec{x})\right)$. A loop coming from an $m$-orbigon $f$ has a single coupon coloured with $x_e$ coming from the incident edge $e$, so its trace is simply $\chi_{\lambda_f}(x_e)$. This gives us the key identity
\begin{equation}\label{eq:pure_Rgraph_trace}
\tr\mathfrak{R}\left(\vec\lambda,\vec{x}\right)=\prod{f\in\Fspx(\Sigma)}\chi_{\lambda_f}\left(W_f(\vec{x})\right)\cdot\prod_\iterorb\chi_{\lambda_f}(x_{\partial f}),
\end{equation}
where by $\partial f$ we mean the unique edge of orbigon $f$.

Summing equation (\ref{eq:pure_Rgraph_trace}) over $\vec\lambda$ with appropriate weight gives
\begin{align}\begin{split}\label{eq:summed_to_reg}
\sum_{\lambda_f\in\tn{Irr}(G)}&\prod_{f\in\Fspx(\Sigma)}\chi_{\lambda_f}(1)\cdot\hspace*{-10pt}\prod_\iterorb\chi_{\lambda_f}^{m\tn{-tors}}\cdot\tr\mathfrak{R}\left(\vec\lambda,\vec{x}\right)=\\
=&\prod_{f \in\Fspx(\Sigma)}\left(\sum_\lambda\chi_\lambda(1)\chi_\lambda\left(W_f(\vec{x})\right)\right)\cdot\hspace*{-10pt}\prod_\iterorb\left(\sum_\lambda\chi_\lambda^{m\tn{-tors}}\chi_\lambda(x_{\partial f})\right)
\end{split}\end{align}

We can recognise the sum involving a simplicial face $f$ as the trace of $W_f(\vec{x})$ in the regular representation of $G$. There, the identity of $G$ has trace $\#G$ and other elements are traceless, so
\begin{equation}\label{eq:reg_from_trRg}
\sum_\lambda\chi_\lambda(1)\chi_\lambda\left(W_f(\vec{x})\right)=\#G\cdot\mathbbm{1}_{[W_f(\vec{x})=1]}.
\end{equation}
The factors coming from orbigons can be computed using Lemma \ref{lmm:change_sum_tors}, by setting $H=\{1\}$ and $U$ to be the trivial representation, giving
\begin{equation}\label{eq:trRg_tors}
\sum_\lambda\chi_\lambda^{m\tn{-tors}}\chi_\lambda(x_e)=\frac{\#G}{\tau_m(G)}\cdot\mathbbm{1}_{[x_e^m=1]}.
\end{equation}

Equations (\ref{eq:reg_from_trRg}, \ref{eq:trRg_tors}) tell us that the RHS of equation (\ref{eq:summed_to_reg}) is $\tfrac{\#G^{\#F(\Sigma)}}{\prod_i\tau_{m_i}(G)}$, provided the words $W_f$ for every simplex $f$ evaluate to identity on $\vec{x}$, and $x_e^m=1$ for each edge $e$ incident to an $m$-orbigon. Otherwise, RHS of (\ref{eq:summed_to_reg}) evaluates to zero. But this condition is precisely saying that $\vec{x}$ satisfies all the relations of presentation (\ref{eq:simplicial_pres}), which is equivalent to $\vec{x}$ extending from a function between sets $E(\Sigma)\to G$ to a group homomorphism $\pio\Sigma\to G$.
\end{proof}

\subsection{Mednykh's formula and lattice TQFTs}\label{ssec:Mednykh}

Mednykh's formula \cite{determination_number_nonequiv_coverings_cpct_Riemann_surf_Mednykh78} is an expression for the number of homomorphisms out of a NEC group in terms of the representation theory of the target group. A quick proof of it follows by putting together our Proposition~\ref{prop:Rg_trace} and Lemma~\ref{lmm:swap_irr}. This strategy is similar to the approach in \cite{mednykh_formula_lattice_tqfts_Snyder17}, and as explained there, we are secretly computing the lattice topological quantum field theory invariant of $\Sigma$ attached to the group algebra $\mathbb{C}[G]$.
\begin{lemma}[Mednykh's formula]\label{lmm:Mednykh}
Let $\Sigma$ be an orbifold with $\cho<0$ and cone points of orders $m_1,\dots,m_k$, and let $G$ be a finite group. Then
\begin{equation*}
\#\Hom{\pio\Sigma}{G}=\#G^{1-\cht(\Sigma)}\prod_{i=1}^k\tau_{m_i}(G)\cdot\zeta^G,
\end{equation*}
where
\begin{equation*}
\zeta^G\defeq\sum_{V\in\tn{Irr}(G)}\chi_V(1)^{\cht(\Sigma)-k}\prod_{i=1}^k\chi_V^{m_i\tn{-tors}}\times\begin{cases}1&\text{if }\Sigma\text{ orientable}\\\nu(V)^g&\text{if }\Sigma\text{ non-orientable,}\end{cases}
\end{equation*}
$\nu(V)$ is the Frobenius--Schur indicator of $V$ from Definition~\ref{def:Frob_Sch}, and in the non-orientable case $g=2-\cht(\Sigma)$ is the genus of $\Sigma$.
\end{lemma}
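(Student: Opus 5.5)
Sum the identity of Proposition~\ref{prop:Rg_trace} over all tuples $\vec{x}\in G^{E(\Sigma)}$. Since every $\vec{x}$ satisfying the relations of presentation (\ref{eq:simplicial_pres}) corresponds to exactly one homomorphism, the right-hand side gives
\begin{equation*}
\frac{\#G^{\#F(\Sigma)}}{\prod_i\tau_{m_i}(G)}\cdot\#\Hom{\pio\Sigma}{G}.
\end{equation*}
On the left-hand side we exchange the sums. By multilinearity of the functor $\mathcal{F}$ in the coupon colours, $\sum_{\vec{x}}\tr\mathfrak{R}(\vec\lambda,\vec{x})=\tr\mathfrak{R}(\vec\lambda,\vec{s})$, where $s_e=\sum_{g\in G}g=\#G\cdot\tfrac{1}{\#G}\sum_g g$ for every edge. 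This gives $\#G^{\#E(\Sigma)}$ times the trace of the $\mathfrak{R}$-graph with every coupon replaced by the averaging operator. The task is then to evaluate this trace for fixed $\vec\lambda$.

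At each edge joining faces with compatible orientations, apply Lemma~\ref{lmm:swap_irr}. The averaging coupon vanishes unless $\lambda_{f_L}\cong\lambda_{f_R}$, and otherwise it equals $\chi_V(1)^{-1}$ times the cup--cap pair.

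At edges where the two adjacent face orientations clash (present only in the non-orientable case), use (\ref{eq:clashing_resolution}) instead. This forces $\lambda_{f_L}\cong\lambda_{f_R}^*$ and inserts a $B$, $B^{-1}$ pair of coupons.

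At an orbigon edge the orbigon wire forms a loop on the same coupon. The lemma again forces $\lambda_{f_o}\cong\lambda_{f_s}$ up to duality, contributes $\chi_V(1)^{-1}$, and leaves one small closed loop of trace $\chi_V(1)$ from the orbigon side.

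Since $\Sigma$ is connected, all faces end up carrying a single irreducible $V$ (up to the dualities fixed by the auxiliary orientations). After every edge is resolved, the graph becomes a disjoint union of closed loops, one around each vertex of $\Sigma$ (here just one, by the convenience convention), possibly decorated by $B$ coupons.

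Next collect the factors. There is a factor $\chi_V(1)^{-\#E}$ from the edges, and a factor $\chi_V(1)$ for each vertex loop and for each orbigon loop. The weights of Proposition~\ref{prop:Rg_trace} contribute $\chi_V(1)^{\#\Fspx}$ and $\prod_i\chi_V^{m_i\tn{-tors}}$. Using $\#V-\#E+\#\Fspx+k=\cht(\Sigma)$ (each orbigon contributes one topological face), the total power of $\chi_V(1)$ comes out as $\cht(\Sigma)-k$, as claimed. The power of $\#G$ is $\#G^{\#E-\#F}$, and after multiplying by $\prod\tau_{m_i}/\#G^{\#F}$... rather, dividing out gives $\#G^{\#E-\#F}\prod\tau_{m_i}$. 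With $\#V=1$ we have $\#E-\#F=1-\cht(\Sigma)$, which matches the stated formula.

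The main obstacle is the non-orientable case. There one must show that the $B$ coupons left on the vertex loop simplify to exactly $\nu(V)^g$ times the plain loop. In particular, the sum over $\vec\lambda$ must be restricted to self-dual $V$, which the vanishing of $\nu$ encodes automatically. The plan is as follows:
\begin{enumerate}
\item Choose face orientations so that clashes occur along a minimal set of edges, e.g.\ one per cross-cap in a standard polygon model. Changing the auxiliary orientations alters neither side, so this choice is free.
\item Note that each clash inserts a $B\otimes B^{-1}$ pair whose wires cross with a half-twist.
\item Pull each $B$ past its partner using (\ref{eq:FS_Rad1}). Each cross-cap then yields one factor $\nu(V)$, and the remaining $B\circ B^{-1}$ cancel.
\end{enumerate}
Checking that the crossing pattern at each cross-cap is exactly the one appearing in (\ref{eq:FS_Rad1}), rather than a twist-free one yielding no sign, is the delicate bookkeeping step. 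I would verify it on the $\mathbb{RP}^2$ triangulation of Figure~\ref{fig:Rg_example}, and then argue that it holds in general.
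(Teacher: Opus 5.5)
Your route is the paper's: sum Proposition~\ref{prop:Rg_trace} over $\vec x$, average every edge coupon, resolve edges with Lemma~\ref{lmm:swap_irr} or (\ref{eq:clashing_resolution}), and extract $\nu(V)^g$ via (\ref{eq:FS_Rad1}) in the $2g$-gon model. However, your evaluation of the resolved graph at orbigon edges is wrong, and the exponent of $\chi_V(1)$ you announce does not follow from your own tally.

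At an orbigon edge the coupon acts on $\lambda_{f_o}\otimes\lambda_{f_s}^*$ (or $\lambda_{f_o}^*\otimes\lambda_{f_s}$), with the orbigon wire entering one base and leaving the other. After averaging, it is $\chi_V(1)^{-1}$ times the cup--cap that joins each end of the orbigon wire to the adjacent end of the \emph{simplex} wire. It never joins the two ends of the orbigon wire to each other. So the orbigon wire is spliced into the simplex wire as a detour and becomes part of the single loop around the vertex. Your own remark that one loop per vertex remains already implies this, since the orbigon's only corner lies at that vertex. No separate loop of trace $\chi_V(1)$ is created.

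With your extra factor per orbigon, the exponent would be $-\#E(\Sigma)+\#V(\Sigma)+k+\#\Fspx(\Sigma)=\cht(\Sigma)$, not $\cht(\Sigma)-k$. Your final step is therefore a non sequitur, and taken literally it yields a false formula. The correct count is $\chi_V(1)^{-\#E(\Sigma)}$ from the edges, a single $\chi_V(1)$ from the vertex loop, and $\chi_V(1)^{\#\Fspx(\Sigma)}$ from the weights. This gives $\chi_V(1)^{1-\#E(\Sigma)+\#\Fspx(\Sigma)}=\chi_V(1)^{\cht(\Sigma)-k}$, as in (\ref{eq:pf_M_tr_samelambda})--(\ref{eq:pf_M_zeta}).

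As a cross-check: before averaging, the orbigon loop has trace $\chi_{\lambda_{f_o}}(x_e^{\pm1})$ while the simplex wire carries the inverse element. Averaging over $x_e$ gives, by Schur orthogonality, $\chi_V(1)^{-1}$ times the identity on the simplex wire (or zero). So an orbigon edge contributes net $\chi_V(1)^{-1}$, like every other edge.

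In the non-orientable case your plan again coincides with the paper's, and the step you defer can be settled directly rather than by example. In the $2g$-gon model, the coupons $B$ and $B^{-1}$ created at the two ends of a clashing edge $a_i$ are joined on the vertex loop by the wire through the polygon corner between the two consecutive copies of $a_i$. That wire is a chain of corner wires glued by cup--caps at non-clashing edges, hence an identity. Because the two copies are the two sides of $a_i$, it runs from the terminal end of $a_i$ on one side to its initial end on the \emph{other} side. It therefore links the two coupons through slots belonging to opposite sides of $a_i$.

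Write the copairing as $\sum_j e_j\otimes e_j'$ with $B(e_j,e_l')=\delta_{jl}$. For self-dual $V$ the composite is then $v\mapsto\sum_j B(v,e_j)e_j'=\nu(V)v$, rather than $\sum_j B(e_j,v)e_j'=v$. This is the one application of (\ref{eq:FS_Rad1}) per cross-cap. After all $g$ of them, only the plain vertex loop of trace $\chi_V(1)$ remains.
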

Recall from Section~\ref{sec:simplicial} that the exponent $1-\cht(\Sigma)$ equals $2g-1$ if $\Sigma$ is orientable and $g-1$ if it is not. In the literature the role of the constant $\zeta^G$ from Lemma~\ref{lmm:Mednykh} is usually played by the Witten zeta function $s\mapsto\sum_{V\in\tn{Irr}(G)}\chi_V(1)^{-s}$.
\begin{proof}[Proof of Lemma \ref{lmm:Mednykh}]
Sum both sides of Proposition \ref{prop:Rg_trace} over all possible tuples $\vec{x}\in G^{E(\Sigma)}$. On the RHS we obtain
\begin{equation}\label{eq:pf_Mednykh_rhs}
\frac{\#G^{\#F(\Sigma)}}{\prod_{i=1}^k\tau_{m_i}(G)}\cdot\#\Hom{\pio\Sigma}{G}.
\end{equation}

The LHS becomes
\begin{align}\begin{split}\label{eq:pf_Mednykh_lhs}
\#G^{\#E(\Sigma)}\cdot\sum_{\lambda_f\in\tn{Irr}(G)}&\prod_{f\in\Fspx(\Sigma)}\chi_{\lambda_f}(1)\cdot\prod_\iterorb\chi_{\lambda_f}^{m\tn{-tors}}\times\\
&\times\tr\mathfrak{R}\left(\vec\lambda,\left(\tfrac{1}{\#G}\sum_{x\in G}x\right)_{e\in E(\Sigma)}\right),
\end{split}\end{align}
i.e. the coupon of an $\mathfrak{R}$-graph coming from each edge gets decorated with the average over the whole group. The prefactor $\#G^{\#E(\Sigma)}$ came from replacing sums with averages.

In the orientable case, Lemma \ref{lmm:swap_irr} allows us to compute the edge coupons. We see that whenever $\lambda_{f_1}\neq\lambda_{f_2}$ for any two neighbouring faces $f_1,f_2\in F(\Sigma)$, the corresponding coupon is coloured with the zero operator, so the $\mathfrak{R}$-graph vanishes. Otherwise, if $\lambda_f=\lambda$ is the same for all faces, then applying the Lemma~\ref{lmm:swap_irr} to $\mathfrak{R}$ will ``cut the coupons along the edges'', and the resulting graph will be a loop winding once around the vertex. Formally,
\begin{equation}\label{eq:pf_M_tr_samelambda}
\tr\mathfrak{R}\left((\lambda)_f,\left(\tfrac{1}{\#G}\sum_{x\in G}x\right)_{e\in E(\Sigma)}\right)=\chi_\lambda(1)^{-\#E(\Sigma)}\tr\left(\begin{tikzpicture}[baseline=0]\node[font=\tiny,inner sep=1pt](l)at(0,0){$\lambda$};\draw[->](l.south)arc(-180:0:.1)--($(l.north)+(.2,.05)$)arc(0:180:.1)--(l.north);\end{tikzpicture}\right)=\chi_\lambda(1)^{1-\#E(\Sigma)}.
\end{equation}

Substituting (\ref{eq:pf_M_tr_samelambda}) into the expression (\ref{eq:pf_Mednykh_lhs}), we see that it evaluates to
\begin{equation}\label{eq:pf_M_zeta}
\#G^{\#E(\Sigma)}\cdot\sum_{\lambda\in\tn{Irr}(G)}\chi_\lambda(1)^{\#\Fspx(\Sigma)+1-\#E(\Sigma)}\cdot\prod_\iterorb\chi_\lambda^{m\tn{-tors}}.
\end{equation}

Equating (\ref{eq:pf_Mednykh_rhs}) to (\ref{eq:pf_M_zeta}), and noting that $\#E(\Sigma)-\#F(\Sigma)=1-\cht(\Sigma)$ (recall $\#V(\Sigma)=1$ by the convenience convention from Section~\ref{sec:simplicial}) completes the proof in the orientable case.

It remains to finish the non-orientable case. We can still apply Lemma \ref{lmm:swap_irr} at edges where the neighbouring faces induce opposite orientations, but at edges where faces induce the same orientation we need to use equation (\ref{eq:clashing_resolution}) instead. After applying these two identities to the edge coupons, graphs coloured with non-self-dual $V$ vanish, and from the self-dual ones we again obtain a loop around the vertex. However, this time it contains copies of the configuration
\begin{equation*}
\begin{tikzpicture}
\node(i)at(-1,0){$V$};\node(o)at(3,0){$V$};\node at(.7,.2){$V$};
\node[draw](b)at(0,0){$B$};\draw($(b.south east)-(.07,0)$)--($(b.north east)-(.07,0)$);
\node[draw](bi)at(1.5,0){$B^{-1}$};\draw($(bi.south west)+(.07,0)$)--($(bi.north west)+(.07,0)$);
\draw[->](i)--(b);\draw[->](bi)--(b);\draw[->](bi)--(o);
\end{tikzpicture}
\end{equation*}
where $B\colon V\to V^*$ is the $G$-equivariant linear isomorphism. If we start with the triangulated $2g$-gon model for $\Sigma$, and orbigons inherit the orientation from their unique neighbouring simplices, then this configuration appears $g$ times. Equation (\ref{eq:FS_Rad1}) allows us to swap bottom and top bases in one of the coupons, accumulating a factor of $\nu(V)$, and then $B,B^{-1}$ cancel (as in Example~\ref{ex:ribbon_composition}). Formally,
\begin{equation*}
\begin{tikzpicture}[baseline=-10]
\node(b)[draw]at(0,0){$B$};\draw($(b.south east)-(.07,0)$)--($(b.north east)-(.07,0)$);
\node(i)[draw]at(1.1,0){$B^{-1}$};\draw($(i.south west)+(.07,0)$)--($(i.north west)+(.07,0)$);
\node at(.5,.2){$V$};\node at(2.1,.5){$g$};\node at(.5,-.8){$V$};
\draw[->](-.4,0)--(b);\draw[->](i)--(b);\draw[->](i)--(1.8,0);\draw[->](2,0)arc(90:-90:.3)--(-.6,-.6)arc(270:90:.3);
\draw[thick](-.4,.5)arc(90:180:.1)--(-.5,-.4)arc(180:270:.1);\draw[thick](1.8,.5)arc(90:0:.1)--(1.9,-.4)arc(0:-90:.1);
\end{tikzpicture}
\quad=\quad
\nu(V)^g\times
\begin{tikzpicture}[baseline=-10]
\node(b)[draw]at(0,0){$B$};\draw($(b.south west)+(.07,0)$)--($(b.north west)+(.07,0)$);
\node(i)[draw]at(1.1,0){$B^{-1}$};\draw($(i.south west)+(.07,0)$)--($(i.north west)+(.07,0)$);
\node at(.5,.2){$V$};\node at(2.1,.5){$g$};\node at(.5,-.8){$V$};
\draw[->](-.4,0)--(b);\draw[->](i)--(b);\draw[->](i)--(1.8,0);\draw[->](2,0)arc(90:-90:.3)--(-.6,-.6)arc(270:90:.3);
\draw[thick](-.4,.5)arc(90:180:.1)--(-.5,-.4)arc(180:270:.1);\draw[thick](1.8,.5)arc(90:0:.1)--(1.9,-.4)arc(0:-90:.1);
\end{tikzpicture}
\quad=\quad
\nu(V)^g\times
\begin{tikzpicture}[baseline=-10]
\node at(0,.2){$V$};\node at(.5,.5){$g$};\node at(0,-.8){$V$};
\draw[->](-.2,0)--(.2,0);\draw[->](.4,0)arc(90:-90:.3)--(-.4,-.6)arc(270:90:.3);
\draw[thick](-.2,.5)arc(90:180:.1)--(-.3,-.4)arc(180:270:.1);\draw[thick](.2,.5)arc(90:0:.1)--(.3,-.4)arc(0:-90:.1);
\end{tikzpicture}
\end{equation*}
This brings us back to having a single loop coloured $V$, as in equation (\ref{eq:pf_M_tr_samelambda}), and the rest of the proof proceeds as in the orientable case.
\end{proof}

\section{Background: representation theory of $S_n$}\label{sec:rep_Sn}

In the upcoming Sections~\ref{sec:counting} and \ref{sec:asymptotics} we will be performing certain calculations in irreducible representations of symmetric groups. Here we give a brief description of the necessary theory. We follow the approach to the representation theory of $S_n$ from \cite{new_approach_rep_theory_sym_gps_OkounkovVershik96}, where proofs and more details can be found. The basic objects are defined below.
\begin{definition}\label{def:rep_Sn}
A \emph{partition} of $n$ is a tuple $\lambda=(\lambda_1,\dots,\lambda_l)$ of positive integers satisfying $\lambda_1\geqslant\dots\geqslant\lambda_l$ and $\sum_{i=1}^l\lambda_i=n$. This is denoted by $\lambda\vdash n$.

For partitions $\mu=(\mu_1,\dots,\mu_k)\vdash m$ and $\lambda=(\lambda_1,\dots,\lambda_l)\vdash n$, we write $\mu\subseteq\lambda$ when $k\leqslant l$ and $\mu_i\leqslant\lambda_i$ for all $1\leqslant i\leqslant k$.

Partition $\lambda$ is pictorially represented by a \emph{Young diagram} $\tn{YD}_\lambda$. It consists of $l$ rows of boxes, $i$-th of which has length $\lambda_i$. Note that $\mu\subseteq\lambda$ if and only if $\tn{YD}_\mu\subseteq\tn{YD}_\lambda$, in the sense that every box of $\tn{YD}_\mu$ is contained in $\tn{YD}_\lambda$.

A \emph{Young skew-diagram} $\tn{YD}_{\lambda\setminus\mu}$ is the collection of those boxes of $\tn{YD}_\lambda$ which do not belong to $\tn{YD}_\mu$. Formally, it is a pair of partitions with $\mu\subseteq\lambda$.

A \emph{standard Young tableau} of shape $\tn{YD}_\lambda$ for $\lambda\vdash n$ is a bijection between the boxes of $\tn{YD}_\lambda$ and the set $[n]$, such that the numbers are strictly increasing in each row and down each column. A \emph{Young skew-tableau} of shape $\tn{YD}_{\lambda\setminus\mu}$ with $\mu\vdash m,\lambda\vdash n$ is a filling of boxes of $\tn{YD}_{\lambda\setminus\mu}$ with numbers from $[n]\setminus[m]$ (each appearing exactly once) that increases along rows and down columns. The set of Young tableaux of shape $\tn{YD}_\lambda$ (respectively, skew-tableaux of shape $\tn{YD}_{\lambda\setminus\mu}$) is denoted $\tn{Tab}(\lambda)$ (respectively $\tn{Tab}(\lambda\setminus\mu)$).

A skew-tableau $\tau_2\in\tn{Tab}(\lambda\setminus\mu)$ of shape $\tn{YD}_{\lambda\setminus\mu}$ can be stacked on a tableau $\tau_1\in\tn{Tab}(\mu)$ of shape $\tn{YD}_\mu$, giving a tableau of shape $\tn{YD}_\lambda$. We denote the result by $\tau_1\sqcup\tau_2\in\tn{Tab}(\lambda)$.

Given $i\in[n]$ and a Young tableau $\tau\in\tn{Tab}(\lambda)$ with $\lambda\vdash n$, the \emph{content} $c_\tau(i)$ is the difference between row and column coordinates of the box of $\tau$ containing $i$.
\end{definition}

These objects allow us to build an explicit model for the representation theory of $S_n$. The facts below are derived in \cite{new_approach_rep_theory_sym_gps_OkounkovVershik96}.
\begin{proposition}\label{prop:repSn_OV}
For a partition $\lambda\vdash n$, denote by $V_\lambda\defeq\mathbb{C}\cdot\tn{Tab}(\lambda)$ the linear span of standard Young tableaux of shape $\tn{YD}_\lambda$, and by $v_\tau\in V_\lambda$ the basis vector corresponding to $\tau\in\tn{Tab}(\lambda)$. Then $V_\lambda$ admits a linear action of $S_n$ that on transpositions of consecutive numbers is given by the formula
\begin{equation*}
(i,i+1).v_\tau=\begin{cases}\phantom{-}v_\tau&\text{if }i,i+1\text{ in the same row}\\-v_\tau&\text{if }i,i+1\text{ in the same column}\\rv_\tau+\sqrt{1-r^2}\:v_{\tau'}&\text{otherwise},\end{cases}
\end{equation*}
where $r=\frac{1}{c_\tau(i+1)-c_\tau(i)}$ and $\tau'$ is $\tau$ with $i,i+1$ swapped.

Furthermore, the assignment $\lambda\mapsto V_\lambda$ is a bijection $\{\lambda\vdash n\}\to\tn{Irr}(S_n)$. In other words, $\{V_\lambda\}_{\lambda\vdash n}$ forms a complete set of representatives for isomorphism classes of irreducible $S_n$-representations.

Similarly, let $V_{\lambda\setminus\mu}\defeq\mathbb{C}\cdot\tn{Tab}(\lambda\setminus\mu)=\langle v_\tau\mid\tau\in\tn{Tab}(\lambda\setminus\mu)\rangle$ be the linear span of Young skew-tableaux of shape $\tn{YD}_{\lambda\setminus\mu}$. Then, for $\mu\vdash m$ and $\lambda\vdash n$ we have
\begin{equation*}
\Hom[S_m]{V_\mu}{V_\lambda}=\begin{cases}V_{\lambda\setminus\mu}&\text{if }\mu\subseteq\lambda\\0&\text{if }\mu\nsubseteq\lambda.\end{cases}
\end{equation*}
In the case $\mu\subseteq\lambda$, evaluation of homomorphisms is by stacking Young skew-tableaux. Formally, we have $v_{\tau_2}(v_{\tau_1})=v_{\tau_1\sqcup\tau_2}\in V_\lambda$ for a basis vector $v_{\tau_1}\in V_\mu$ with $\tau_1\in\tn{Tab}(\mu)$ and a homomorphism $v_{\tau_2}\in V_{\lambda\setminus\mu}$ with $\tau_2\in\tn{Tab}(\lambda\setminus\mu)$.
\end{proposition}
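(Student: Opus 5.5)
This is the Okounkov--Vershik model, and the plan is to reconstruct it from the Gelfand--Tsetlin basis rather than verify the formulas directly. The chain $S_1\subset S_2\subset\dots\subset S_n$ has simple branching: restricting an irreducible $S_k$-module to $S_{k-1}$ is multiplicity-free. To prove this, I would show that the centraliser $Z(\mathbb{C}[S_k],\mathbb{C}[S_{k-1}])$ is commutative. The anti-involution $g\mapsto g^{-1}$ fixes this centraliser pointwise: every element of it is a combination of $S_{k-1}$-conjugacy classes in $S_k$, and each such class is closed under inversion. An algebra fixed pointwise by an anti-involution is commutative, and this gives the simple branching.

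Simple branching means each irreducible $V$ decomposes canonically into lines indexed by chains $V^{(1)}\subset\dots\subset V^{(n)}=V$ of irreducibles. These lines are the joint eigenspaces of the Gelfand--Tsetlin algebra, which is generated by the Jucys--Murphy elements $X_k=\sum_{i<k}(i,k)$. Each such line is spanned by a basis vector $v_T$ with a weight $\alpha(T)=(a_1,\dots,a_n)$, where $X_kv_T=a_kv_T$.

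Next I would classify the possible weights. The key input is the degenerate affine Hecke relations $s_iX_i-X_{i+1}s_i=-1$, together with $s_i$ commuting with $X_j$ for $j\neq i,i+1$. From these, the subspace spanned by $v_T$ and $s_iv_T$ is invariant under $\langle s_i,X_i,X_{i+1}\rangle$, and a $2\times2$ computation gives three cases. If $a_{i+1}=a_i\pm1$, then $s_iv_T=\pm v_T$. If $a_{i+1}\neq a_i\pm1$, then $s_iv_T$ has a component along a new weight vector whose weight has $a_i$ and $a_{i+1}$ swapped. The value $a_{i+1}=a_i$ is impossible.

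An induction on $n$ then identifies the set of achievable weights with the content vectors of standard tableaux. Weights are connected by admissible transpositions, and content vectors are connected by the same moves; both sets contain the row tableau; and there are no more weights than tableaux. This gives the bijection between $\mathrm{Irr}(S_n)$ and Young diagrams. Taking $v_{\tau'}$ proportional to the swapped component and choosing the normalisation that makes the action unitary yields the stated formula with $r=1/(c_\tau(i+1)-c_\tau(i))$.

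The hard step is the counting that closes the induction. One direction is that every spectrum vector is a content vector; the other is that every content vector occurs. Both should follow from the fact that any two standard tableaux of the same shape are connected by admissible transpositions, and that this connectedness determines equivalence classes of weights exactly. The count $\sum_\lambda(\#\mathrm{Tab}\,\lambda)^2=n!$ provides a cross-check.

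For the skew statement, simple branching applied iteratively along $S_m\subset S_n$ shows that the $V_\mu$-isotypic component of $V_\lambda$ is spanned by the $v_{\tau}$ whose restriction to $[m]$ has shape $\mu$. Such a component exists only if $\mu\subseteq\lambda$. These vectors factor as $\tau_1\sqcup\tau_2$, and this factorisation identifies $\mathrm{Hom}_{S_m}(V_\mu,V_\lambda)$ with the span of the skew tableaux $\tau_2$, with evaluation given by stacking. That the map is $S_m$-equivariant follows because the formulas for $(i,i+1)$ with $i<m$ involve only the entries of $\tau_1$.
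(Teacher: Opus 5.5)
The paper gives no proof of this proposition; it cites Okounkov--Vershik, and your outline follows that argument. Your simple-branching argument, the $2\times2$ lemma and the skew-tableau paragraph are fine. The gap is in the step you flag as hard.

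Connectedness of standard tableaux of a fixed shape under admissible transpositions gives only one of the two inclusions. Combined with your $2\times2$ lemma, it shows that each irreducible's set of weights is a union of full shape classes, \emph{provided} you already know that every weight is a content vector. Then $\#\mathrm{Irr}(S_n)=p(n)$, the number of partitions of $n$, forces ``weights $=$ content vectors'' and ``same irreducible $=$ same shape''. This is the count that closes the argument, not $\sum_\lambda(\#\mathrm{Tab}\,\lambda)^2=n!$.

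Nothing in your plan proves that every weight is a content vector, and the relations you list cannot do it. The degenerate Hecke relation and $s_iX_j=X_js_i$ for $j\neq i,i+1$ involve only one $s_i$ at a time. They are consistent with the vector $(0,1,0)$ for $S_3$, with $s_1v=v$ and $s_2v=-v$, which is not a content vector. For the same reason, ``weights are connected by admissible transpositions'' is an output of the argument, not an input.

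The missing ingredient is the braid relation $s_is_{i+1}s_i=s_{i+1}s_is_{i+1}$. On a weight vector with $(a_i,a_{i+1},a_{i+2})=(a,a\pm1,a)$ you get $s_iv=\pm v$ and $s_{i+1}v=\mp v$, so the two sides act on $v$ by opposite signs; such patterns cannot occur. Using this together with $a_1=0$, $a_i\neq a_{i+1}$ and the $2\times2$ lemma, you show two things about every weight:
\begin{enumerate}
\item $\{a_q-1,a_q+1\}\cap\{a_1,\dots,a_{q-1}\}\neq\emptyset$ for $q\geqslant2$. Otherwise $a_q\neq\pm1$, yet admissible swaps move $a_q$ leftwards into position $2$, where every weight has entry $\pm1$.
\item $a_p=a_q$ with $p<q$ forces $\{a_q-1,a_q+1\}\subseteq\{a_{p+1},\dots,a_{q-1}\}$. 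A violation with $q-p$ minimal reduces by admissible swaps to two adjacent equal entries or to $(a,a\pm1,a)$.
\end{enumerate}
These conditions characterise content vectors of standard tableaux, and only then does the counting go through.

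A smaller point: unitarity fixes each $v_\tau$ only up to a phase. You also need to check that the phases can be chosen simultaneously for all $i$ so that every off-diagonal coefficient equals $+\sqrt{1-r^2}$. For example, fix them along admissible paths from the row tableau and use the commutation and braid relations to show the choice is path-independent.
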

The basis $\{v_\tau\mid\tau\in\tn{Tab}(\lambda)\}$ for $V_\lambda$ is called the \emph{Gelfand--Tsetlin basis}. From Proposition~\ref{prop:repSn_OV} we can see some further properties.
\begin{enumerate}
\item The diagrams consisting of a single row $\tn{YD}_{(n)}$ and a single column $\tn{YD}_{(1,\dots,1)}$ admit a unique standard Young tableau. Transpositions act trivially on the corresponding basis vector of $V_{(n)}$, so this is the trivial representation. Transpositions act on the vector spanning $V_{(1,\dots,1)}$ by $-1$, so this is the sign representation.
\item Let $\tn{YD}_{\lambda^\vee}$ be the result of swapping rows and columns of $\tn{YD}_\lambda$ (reflection in the diagonal), and $\lambda^\vee$ the corresponding \emph{transpose} partition. We claim that transposing the diagram corresponds to tensoring with the sign representation, that is $V_{\lambda^\vee}=\tn{sgn}\otimes V_\lambda=V_{(1,\dots,1)}\otimes V_\lambda$. Moreover, the intertwiner is given by
\begin{equation*}
\varphi\colon v_\tau\mapsto\tn{sgn}(\tau\circ\tau_0^{-1})\cdot v_{\tau^\vee}
\end{equation*}
for a reference $\tau_0\colon\{\text{boxes of }\tn{YD}_\lambda\}\to[n]$. Indeed, from the formula for action from Proposition~\ref{prop:repSn_OV} we see that conjugation by $\varphi$ multiplies the matrix of each $(i,i+1)$ by -1. This also applies to skew-diagrams, in the sense that $\tn{sgn}\otimes V_{\lambda\setminus\mu}\cong V_{(\lambda\setminus\mu)^\vee}$ considered as $S_{[m+1,n]}$-representations for $\mu\vdash m,\lambda\vdash n$.
\item Matrices of transpositions have real entries, so every representation of $S_n$ is defined over $\mathbb{R}$. Moreover, $V_\lambda$ admits a symmetric invariant bilinear form
\begin{minipage}{\textwidth}\begin{equation}\label{eq:repSn_bilinear_form}
B(v_{\tau_1},v_{\tau_2})=\begin{cases}1&\text{if }\tau_1=\tau_2\\0&\text{if }\tau_1\neq\tau_2\end{cases}
\end{equation}\end{minipage}
for $\tau_1,\tau_2\in\tn{Tab}(\lambda)$. These two statements are a way of saying that irreducible representations of $S_n$ have the Frobenius--Schur indicator (Definition~\ref{def:Frob_Sch}) equal 1.
\item If $S_m\times S_{[m+1,n]}$ is realised as subgroup of $S_n$ in the standard way as the setwise stabiliser of $[m]\subseteq[n]$, then restricted representations decompose as
\begin{minipage}{\textwidth}\begin{equation}\label{eq:branching_rule}
\tn{Res}_{S_m\times S_{[m+1,n]}}^{S_n}V_\lambda=\bigoplus_{\tiny\begin{matrix}\mu\vdash m\\\mu\subseteq\lambda\end{matrix}}V_\mu\otimes V_{\lambda\setminus\mu}.
\end{equation}\end{minipage}
The Cartesian product of groups and tensor product of representations are compatible, in the sense that $S_m$ affects the factor $V_\mu$ while $S_{[m+1,n]}$ moves $V_{\lambda\setminus\mu}$. In the notation from Section~\ref{ssec:sum_subgp}, for $\tau_1\in\tn{Tab}(\mu),\tau_2\in\tn{Tab}(\lambda\setminus\mu)$ we have $I_{V_\mu}^{V_\lambda}(v_{\tau_1}\otimes v_{\tau_2})=v_{\tau_1\sqcup\tau_2}$ and $P_{V_\mu}^{V_\lambda}(v_{\tau_1\sqcup\tau_2})=v_{\tau_1}\otimes v_{\tau_2}$; if the boxes of $\tau\in\tn{Tab}(\lambda)$ occupied by $[m]$ form a subdiagram of shape other than $\mu$ then $P_{V_\mu}^{V_\lambda}(v_\tau)=0$.
\end{enumerate}

The dimension of an irreducible representation $V_\lambda$ is given by the well-known hook length formula
\begin{equation}\label{eq:hooklen}
\chi_{V_\lambda}(1)=\#\tn{Tab}(\lambda)=\frac{n!}{\prod_{b\in\tn{YD}_\lambda}h(b)},
\end{equation}
where $h(b)$ is the number of boxes of $\tn{YD}_\lambda$ below (in the same column) or to the right (in the same row) of the box $b$ (inclusive).

\medskip

Let us walk through the theory in the simplest nontrivial example.
\begin{example}
The Young diagram of the partition $(n-1,1)\vdash n$ consists of a row of 1 box under a row of $n-1$ boxes. Pictorially
\begin{equation*}
\tn{YD}_{(n-1,1)}\qquad=\qquad
\begin{tikzpicture}[baseline=0]
\draw[decorate,decoration={brace},thick](-.8,.9)--(3.2,.9);
\draw(0,.8)--(0,-.8)--(-.8,-.8)--(-.8,.8)--(1,.8)(-.8,0)--(1,0)(.8,0)--(.8,.8)(2.2,.8)--(3.2,.8)--(3.2,0)--(2.2,0)(2.4,0)--(2.4,.8);
\node at(1.6,.4){$\dots$};\node[font=\tiny]at(1.2,1.2){$n-1$ boxes};
\end{tikzpicture}
\end{equation*}

The standard Young tableaux of shape $\tn{YD}_{(n-1,1)}$ are given by $\tn{Tab}((n-1,1))=\{\tau_i\mid2\leqslant i\leqslant n\}$, where
\begin{equation*}
\tau_i\qquad\defeq\qquad
\begin{tikzpicture}[baseline=0]
\draw[decorate,decoration={brace},thick](-.8,.9)--(1.56,.9);\node[font=\tiny]at(.38,1.2){$i-1$ boxes};
\draw[decorate,decoration={brace},thick](1.64,.9)--(3.2,.9);\node[font=\tiny]at(2.42,1.2){$n-i$ boxes};
\draw(0,.8)--(0,-.8)--(-.8,-.8)--(-.8,.8)--(3.2,.8)--(3.2,0)--(-.8,0)(.8,0)--(.8,.8)(1.6,0)--(1.6,.8)(2.4,0)--(2.4,.8);
\node at(-.4,.4){1};\node at(-.4,-.4){$i$};\node at(.4,.4){$\dots$};\node at(1.2,.4){$i\!-\!1$};\node at(2,.4){$i\!+\!1$};\node at(2.8,.4){$\dots$};
\end{tikzpicture}
\end{equation*}
The contents in $\tau_i$ are $c(1)=0$, $c(i)=-1$, $c(i-1)=i-2$, and $c(i+1)=i-1$.

The group $S_n$ acts on $\mathbb{C}^n$ by permuting the axes, i.e. $\sigma.e_i=e_{\sigma(i)}$ for $\sigma\in S_n$ and a basis vector $e_i\in\mathbb{C}^n$. The \emph{standard representation} of $S_n$ is the subspace $\tn{std}_n\defeq\{\vec{u}\in\mathbb{C}^n\mid\sum_iu_i=0\}\leq\mathbb{C}^n$. We claim that $V_{(n-1,1)}$ and $\tn{std}_n$ are isomorphic as $S_n$-representations. Indeed, one can check that
\begin{equation}\label{eq:std_rep_models}
v_{\tau_i}\quad\mapsto\quad\sqrt{\frac{i-1}{i}}\left(e_i-\frac{1}{i-1}\sum_{1\leqslant j<i}e_j\right)
\end{equation}
is equivariant under transpositions, so defines (nonzero) intertwiner $V_{(n\!-\!1,1)}\!\to\!\tn{std}_n$.

Let $\tau',\tau''$ be the skew-tableaux of shapes $\tn{YD}_{(n-1,1)\setminus(n-1)}$ and $\tn{YD}_{(n-1,1)\setminus(n-2,1)}$
\begin{equation*}
\tau'\ \defeq\
\begin{tikzpicture}[baseline=0]
\draw(0,0)--(0,-.8)--(-.8,-.8)--(-.8,0)--(0,0);\node at(-.4,-.4){$n$};
\draw[dashed](-.8,0)--(-.8,.8)--(1,.8)(0,.8)--(0,0)--(1,0)(.8,0)--(.8,.8)(2.2,.8)--(3.2,.8)--(3.2,0)--(2.2,0)(2.4,0)--(2.4,.8);\node at(1.6,.4){$\dots$};
\draw[decorate,decoration={brace},thick](-.8,.9)--(3.2,.9);\node[font=\tiny]at(1.2,1.2){$n-1$ phantom boxes};
\end{tikzpicture}
\qquad\qquad\tau''\ \defeq\
\begin{tikzpicture}[baseline=0]
\draw[dashed](.2,.8)--(-.8,.8)--(-.8,-.8)--(0,-.8)--(0,.8)(-.8,0)--(.2,0)(1.4,.8)--(2.4,.8)(1.4,0)--(2.4,.0)(1.6,0)--(1.6,.8);\node at(.8,.4){$\dots$};
\draw(2.4,0)--(3.2,0)--(3.2,.8)--(2.4,.8)--(2.4,0);\node at(2.8,.4){$n$};
\draw[decorate,decoration={brace},thick](-.8,.9)--(2.4,.9);\node[font=\tiny]at(.8,1.2){$n-2$ phantom boxes};
\end{tikzpicture}
\end{equation*}
They are unique, so $V_{(n-1,1)\setminus(n-1)}=\langle v_{\tau'}\rangle$ and $V_{(n-1,1)\setminus(n-2,1)}=\langle v_{\tau''}\rangle$ are 1-dimensional.

Standard Young tableaux of shape $\tn{YD}_{(n-1,1)}$ can be partitioned according to the position of $n$ (either in the sole box of the second row or at the end of first row) as
\begin{equation*}
\tn{Tab}\big((n-1,1)\big)=\left\{\tau\sqcup\tau'\ \middle|\ \tau\in\tn{Tab}\big((n-1)\big)\right\}\sqcup\left\{\tau\sqcup\tau''\ \middle|\ \tau\in\tn{Tab}\big((n-2,1)\big)\right\}.
\end{equation*}
Taking linear spans of both sides we obtain an $S_{n-1}$-invariant decomposition
\begin{equation}\label{eq:std_rep_decomp}
V_{(n-1,1)}=V_{(n-1)}\otimes V_{(n-1,1)\setminus(n-1)}\oplus V_{(n-2,1)}\otimes V_{(n-1,1)\setminus(n-2,1)}
\end{equation}
as linear spaces. This is the same as the decomposition of restricted representation
\begin{equation*}
\tn{Res}^{S_n}_{S_{n-1}}\tn{std}_n=\mathbb{C}\oplus\tn{std}_{n-1}
\end{equation*}
into irreducibles. Indeed, we said earlier that $V_{(n-1)}$ is the trivial representation, and the isomorphism (\ref{eq:std_rep_models}) sends $v_{\tau\sqcup\tau'}$ for the unique $\tau\in\tn{Tab}((n-1))$ to the $S_{n-1}$-invariant vector $\sqrt{\tfrac{n-1}{n}}\left(e_n-\tfrac{1}{n-1}\sum_{1\leqslant i<n}e_i\right)$. Similarly, we said already that $V_{(n-2,1)}$ is our model for $\tn{std}_{n-1}$. Accordingly, the isomorphism from equation (\ref{eq:std_rep_models}) sends the second summand $\left\langle v_{\tau\sqcup\tau''}\ \middle|\ \tau\in\tn{Tab}\big((n-2,1)\big)\right\rangle$ in decomposition (\ref{eq:std_rep_decomp}) to the subspace $\tn{std}_n\cap\langle e_1,\dots,e_{n-1}\rangle$, which is the copy of $\tn{std}_{n-1}$ in $\tn{std}_n$.
\end{example}

For decluttering, from now on a partition $\lambda$, Young diagram $\tn{YD}_\lambda$, and representation $V_\lambda$ will all be denoted $\lambda$ by abuse of notation. Similarly, a skew-diagram $\tn{YD}_{\lambda\setminus\mu}$ and the space of homomorphisms $V_{\lambda\setminus\mu}$ will be denoted simply by $\lambda\setminus\mu$. We will denote the Gelfand--Tsetlin basis by $\{v\in\tn{Tab}(\lambda)\}$, and write $u\sqcup v$ both for the stacked tableaux and its basis vector. We will sometimes denote the number $n$ by $|\lambda|$, if the emphasis is on the partition $\lambda\vdash n$.

\section{A formula for the number of embeddings}\label{sec:counting}

This section forms the core of this article, and is dedicated to proving Theorem~\ref{thm:count_tangles}, a formula for the number of embeddings of a fixed immersed complex into all coverings of $\Sigma$ of a given degree. The strategy is to combine Proposition~\ref{prop:Rg_trace} with Lemma~\ref{lmm:swap}, similarly to the proof of Mednykh's formula (Lemma~\ref{lmm:Mednykh}) in Section~\ref{ssec:Mednykh}. The main difference is that instead of summing over all elements of $S_n$ at each edge, this time the summation will be restricted based on the combinatorics of the immersion $T\looparrowright\Sigma$.

To state Theorem~\ref{thm:count_tangles} we need two ingredients. The first one, $\mathfrak{P}$-graphs, translate the combinatorial data of the triangulation and immersion into ribbon categories. They evolve from $\mathfrak{R}$-graphs (Definition~\ref{def:R_graph}) after performing the summation at edge coupons. Their traces enter the subsequent counting formulas, and there play a role analogous to $\Upsilon_n$ in \cite[Proposition 5.8]{asymptotic_stats_random_covering_surfaces_MageePuder23}. Below and in Figure~\ref{fig:Pg}, the operators $P,I$ are as discussed around equation (\ref{eq:isotypic}) in Section~\ref{ssec:sum_subgp}.
\begin{definition}[$\mathfrak{P}$-graph]\label{def:P_graph}
Take an orbifold $\Sigma$ with a standard branched $\Delta$-complex structure and with the following decorations.
\begin{itemize}
\item Each edge $e\in E(\Sigma)$ is equipped with a Young diagram or skew-diagram $\mu_e$.
\item Each face $f\in F(\Sigma)$ is equipped with a diagram or skew-diagram $\lambda_f$.
\item Each corner $c$ of a face $f$ has an assigned permutation $\gamma_c$, whose support is suitable to act on $\lambda_f$.
\end{itemize}
The \emph{$\mathfrak{P}$-graph} $\mathfrak{P}(\vec\lambda,\vec\mu,\vec\gamma)$ is the ribbon graph obtained from $\Sigma$ by the following procedure.
\begin{enumerate}
\item\label{sdef:Pg_corner} For each corner $c$ of a face $f$, add a coupon coloured $\gamma_c$ with one incoming and one outgoing wire coloured $\lambda_f$. Let the direction from bottom to top base of the coupon, as well as the orientation of the wires, agree with the orientation of $f$. This is the same as the configuration from Figure~\ref{sfig:operator} placed at $c$.
\item For each edge whose neighbouring faces induce opposite orientations on it, add four coupons and eight wires with orientations and colours as in Figure \ref{sfig:Pg_edge}. This is the same configuration as on the RHS of Lemma \ref{lmm:swap} expressed in the notation of ribbon graphs.
\item\label{sdef:Pg_e_no} If $\Sigma$ is not orientable, then at some edges the orientations induced by the neighbouring faces will agree. If $e$ is such an edge, then replace it with a configuration of six coupons and ten wires shown in Figure \ref{sfig:Pg_e_no}. This is the same as the RHS of Lemma \ref{lmm:swap} adjusted with identity (\ref{eq:clashing_resolution}).
\item Connect the free ends of wires in adjacent edge and corner pieces.
\end{enumerate}
\end{definition}
Each simplicial face $f$ of $\Sigma$ gives rise to a cycle in the $\mathfrak{P}$-graph that traverses six wires of colour $\lambda_f$, three coupons at the corners of $f$, three coupons of type $P^{\lambda_f}_\mu$ and three of type $I^{\lambda_f}_\mu$, and three wires of colours $\lambda_f\setminus\mu_e$. This configuration is shown in Figure \ref{sfig:Pg_spx}. An orbigon $f$ incident to edge $e$ gives rise to a loop traversing two wires coloured $\lambda_f$ and one coloured $\lambda_f\setminus\mu_e$, and three coupons coloured $\gamma_c,P_{\mu_e}^{\lambda_f},I_{\mu_e}^{\lambda_f}$, as shown in Figure~\ref{sfig:Pg_orb}.
\begin{figure}
\begin{subfigure}{\textwidth}
\begin{equation*}
\begin{tikzpicture}[baseline=0]
\draw[->-](0,-1)--(0,1);
\draw(-.5,1.1)--(0,1)--++(.5,.1);\draw(-.5,-1.1)--(0,-1)--++(.5,-.1);
\node at(-1,0){$f_L$};\node at(1,0){$f_R$};\node at(-.3,0){$e$};
\draw[->](-.95,-.3)arc(-90:90:.3);\draw[->](.95,.3)arc(90:270:.3);
\end{tikzpicture}
\qquad\boldsymbol{\longmapsto}\qquad
\begin{tikzpicture}[baseline=0]
\draw[dotted](0,0)circle(2.08);
\node[draw](il)at(-.7,.9){$I_\mu^{\lambda_L}$};\draw($(il.south east)+(0,.07)$)--($(il.south west)+(0,.07)$);
\node[draw](pl)at(-.7,-.9){$P_\mu^{\lambda_L}$};\draw($(pl.south east)+(0,.07)$)--($(pl.south west)+(0,.07)$);
\node[draw](pr)at(.7,.9){$P_\mu^{\lambda_R}$};\draw($(pr.north east)-(0,.07)$)--($(pr.north west)-(0,.07)$);
\node[draw](ir)at(.7,-.9){$I_\mu^{\lambda_R}$};\draw($(ir.north east)-(0,.07)$)--($(ir.north west)-(0,.07)$);
\draw[->]($(pl.north)-(.2,0)$)--($(il.south)-(.2,0)$);\draw[->]($(pr.south)+(.2,0)$)--($(ir.north)+(.2,0)$);\draw[->]($(pr.south)-(.2,0)$)to[out=-90,in=-90]($(il.south)+(.2,0)$);\draw[->]($(pl.north)+(.2,0)$)to[out=90,in=90]($(ir.north)-(.2,0)$);
\node[rotate=90]at(-1.2,0){$\lambda_L\setminus\mu$};\node[rotate=-90]at(1.2,0){$\lambda_R\setminus\mu$};\node at(0,.5){$\mu$};\node at(0,-.5){$\mu$};
\node[fill=white](li)at(-1.2,-1.7){$\lambda_L$};\node[fill=white](lo)at(-1.2,1.7){$\lambda_L$};\node[fill=white](ri)at(1.2,1.7){$\lambda_R$};\node[fill=white](ro)at(1.2,-1.7){$\lambda_R$};
\draw[->](li)to[out=30,in=-90](pl);\draw[->](il)to[out=90,in=-30](lo);\draw[->](ri)to[out=210,in=90](pr);\draw[->](ir)to[out=-90,in=150](ro);
\end{tikzpicture}
\end{equation*}
\caption{A part of a $\mathfrak{P}$-graph coming from an edge $e$ of $\Sigma$. The four wires coloured $\lambda_L,\lambda_R$ connect to corner pieces added in step (\ref*{sdef:Pg_corner}) of Definition \ref{def:P_graph}.}
\label{sfig:Pg_edge}
\end{subfigure}
\begin{subfigure}{\textwidth}
\begin{equation*}
\begin{tikzpicture}[baseline=0]
\draw[->-](0,-1)--(0,1);
\draw(-.5,1.1)--(0,1)--++(.5,.1);\draw(-.5,-1.1)--(0,-1)--++(.5,-.1);
\node at(-1,0){$f_L$};\node at(1,0){$f_R$};\node at(-.3,0){$e$};
\draw[->](-.95,-.3)arc(-90:90:.3);\draw[->](.95,-.3)arc(270:90:.3);
\end{tikzpicture}
\qquad\boldsymbol{\longmapsto}\qquad
\begin{tikzpicture}[baseline=0]
\draw[dotted](0,0)ellipse(2.5 and 2.08);
\node[draw](il)at(-1,.9){$I_\mu^{\lambda_L}$};\draw($(il.south east)+(0,.07)$)--($(il.south west)+(0,.07)$);
\node[draw](pl)at(-1,-.9){$P_\mu^{\lambda_L}$};\draw($(pl.south east)+(0,.07)$)--($(pl.south west)+(0,.07)$);
\node[draw](pr)at(1,-.9){$P_\mu^{\lambda_R}$};\draw($(pr.south east)+(0,.07)$)--($(pr.south west)+(0,.07)$);
\node[draw](ir)at(1,.9){$I_\mu^{\lambda_R}$};\draw($(ir.south east)+(0,.07)$)--($(ir.south west)+(0,.07)$);
\node[draw](b)at(0,-.3){$B$};\draw($(b.south west)+(.07,0)$)--($(b.north west)+(.07,0)$);
\node[draw](bi)at(0,.3){$B^{-1}$};\draw($(bi.south east)-(.07,0)$)--($(bi.north east)-(.07,0)$);
\draw[->]($(pl.north)-(.2,0)$)--($(il.south)-(.2,0)$);\draw[->]($(pr.north)+(.2,0)$)--($(ir.south)+(.2,0)$);
\draw[->]($(pl.north)+(.2,0)$)to[out=90,in=180](b.west);\draw[->]($(pr.north)-(.2,0)$)to[out=90,in=0](b.east);\draw[->](bi.west)to[out=180,in=-90]($(il.south)+(.2,0)$);\draw[->](bi.east)to[out=0,in=-90]($(ir.south)-(.2,0)$);
\node[rotate=90]at(-1.5,0){$\lambda_L\setminus\mu$};\node[rotate=90]at(1.5,0){$\lambda_R\setminus\mu$};\node[font=\small]at(-.6,-.2){$\mu$};\node[font=\small]at(-.8,.2){$\mu$};\node[font=\small]at(.6,-.2){$\mu$};\node[font=\small]at(.8,.2){$\mu$};
\node[fill=white](li)at(-1.5,-1.7){$\lambda_L$};\node[fill=white](lo)at(-1.5,1.7){$\lambda_L$};\node[fill=white](ri)at(1.5,-1.7){$\lambda_R$};\node[fill=white](ro)at(1.5,1.7){$\lambda_R$};
\draw[->](li)to[out=30,in=-90](pl);\draw[->](il)to[out=90,in=-30](lo);\draw[->](ri)to[out=150,in=-90](pr);\draw[->](ir)to[out=90,in=210](ro);
\end{tikzpicture}
\end{equation*}
\caption{A part of a $\mathfrak{P}$-graph coming from an edge $e$ on which both neighbouring simplicial faces $f_L,f_R$ induce the same orientation. This situation occurs when $\Sigma$ is not orientable and step (\ref*{sdef:Pg_e_no}) of Definition \ref{def:P_graph} is executed. Here $B$ is the $S_{|\mu|}$-equivariant map $\mu\to\mu^*$ from Section~\ref{sssec:FrobSch}, or zero if $\mu$ is not self-dual.}
\label{sfig:Pg_e_no}
\end{subfigure}
\begin{subfigure}{\textwidth}
\begin{equation*}
\begin{tikzpicture}[baseline=0]
\draw[->-](-1.5,-1)--(0,1.5);\draw[->-](0,1.5)--(1.5,-1);\draw[->-](-1.5,-1)--(1.5,-1);
\node at(0,-.1){$f$};\node at(-1.1,.4){$e_1$};\node at(1.1,.4){$e_2$};\node at(0,-1.3){$e_3$};
\draw[->](.17,-.25)arc(-45:225:.25);
\node at(-1.1,-.8){$c_2$};\node at(1.1,-.8){$c_1$};\node at(0,1.1){$c_3$};
\end{tikzpicture}
\qquad\boldsymbol{\longmapsto}\qquad
\begin{tikzpicture}[baseline=0]
\draw[dotted](0,.1)ellipse(3.4 and 2.35);
\node[diamond,draw,inner sep=2pt](c2)at(-2.1,-1.1){$\gamma_2$};\draw($(c2.north)+(.07,-.07)$)--($(c2.west)+(.07,-.07)$);
\node[draw](c3)at(0,1.7){$\gamma_3$};\draw($(c3.north east)-(.07,0)$)--($(c3.south east)-(.07,0)$);
\node[diamond,draw,inner sep=2pt](c1)at(2.2,-1){$\gamma_1$};\draw($(c1.west)+(.07,.07)$)--($(c1.south)+(.07,.07)$);
\node[diamond,draw,inner sep=2pt](p1)at(-2.3,0){$P_{\mu_1}^\lambda$};\draw($(p1.north)-(.07,.07)$)--($(p1.east)-(.07,.07)$);
\node[diamond,draw,inner sep=2pt](i1)at(-1,1.3){$I_{\mu_1}^\lambda$};\draw($(i1.north)-(.07,.07)$)--($(i1.east)-(.07,.07)$);
\node[diamond,draw,inner sep=2pt](p2)at(1,1.3){$P_{\mu_2}^\lambda$};\draw($(p2.east)+(-.07,.07)$)--($(p2.south)+(-.07,.07)$);
\node[diamond,draw,inner sep=2pt](i2)at(2.3,0){$I_{\mu_2}^\lambda$};\draw($(i2.east)+(-.07,.07)$)--($(i2.south)+(-.07,.07)$);
\node[draw](i3)at(-1,-1.4){$I_{\mu_3}^\lambda$};\draw($(i3.south west)+(.07,0)$)--($(i3.north west)+(.07,0)$);
\node[draw](p3)at(1,-1.4){$P_{\mu_3}^\lambda$};\draw($(p3.south west)+(.07,0)$)--($(p3.north west)+(.07,0)$);
\draw[->](p1)to[out=-135,in=135](c2);\draw[->](c3)to[out=180,in=45](i1);\draw[->](p2)to[out=135,in=0](c3);\draw[->](c1)to[out=45,in=-45](i2);\draw[->](p3)to[out=0,in=-135](c1);\draw[->](c2)to[out=-45,in=180](i3);
\draw[->]($(i1.south west)+(.2,-.2)$)--($(p1.north east)+(.2,-.2)$);\draw[->]($(i2.north west)-(.2,.2)$)--($(p2.south east)-(.2,.2)$);\draw[->]($(i3.east)+(0,.2)$)--($(p3.west)+(0,.2)$);
\node[rotate=45]at(-1.25,.35){$\lambda\setminus\mu_1$};\node[rotate=-45]at(1.35,.25){$\lambda\setminus\mu_2$};\node at(0,-.9){$\lambda\setminus\mu_3$};
\node[fill=white](1i)at($(p1)+(-.5,1.3)$){$\mu_1$};\node[fill=white](1o)at($(i1)+(-1.3,.5)$){$\mu_1$};\node[fill=white](2i)at($(p2)+(1.3,.5)$){$\mu_2$};\node[fill=white](2o)at($(i2)+(.5,1.3)$){$\mu_2$};\node[fill=white](3o)at(-.4,-2.2){$\mu_3$};\node[fill=white](3i)at(.4,-2.2){$\mu_3$};
\draw[->](1i)to[out=-45,in=45]($(p1.north east)+(-.2,.2)$);\draw[->]($(i1.south west)+(-.2,.2)$)to[out=-135,in=-45](1o);\draw[->](2i)to[out=-135,in=-45]($(p2.south east)+(.2,.2)$);\draw[->]($(i2.north west)+(.2,.2)$)to[out=135,in=-135](2o);\draw[->](3i)to[out=90,in=180]($(p3.west)-(0,.2)$);\draw[->]($(i3.east)-(0,.2)$)to[out=0,in=90](3o);
\node at(-2.7,-.7){$\lambda$};\node at(-1.7,-1.6){$\lambda$};\node at(2.7,-.7){$\lambda$};\node at(1.7,-1.6){$\lambda$};\node at(-.5,1.9){$\lambda$};\node at(.6,1.9){$\lambda$};
\end{tikzpicture}
\end{equation*}
\caption{A part of a $\mathfrak{P}$-graph coming from a simplicial face $f$ of $\Sigma$ adjacent to edges $e_1,e_2,e_3$ and with corners $c_1,c_2,c_3$.}
\label{sfig:Pg_spx}
\end{subfigure}
\begin{subfigure}{\textwidth}
\begin{equation*}
\begin{tikzpicture}[baseline=0]
\node[circle,draw,font=\tiny]at(0,0){$m$};\node at(0,.7){$c$};\node at(0,-.5){$e$};\node at(-.5,.1){$f$};
\draw[->-](-.7,0)arc(-180:0:.7);\draw(-.7,0)to[out=90,in=-105](0,1.1);\draw(.7,0)to[out=90,in=-75](0,1.1);
\end{tikzpicture}
\qquad\boldsymbol{\longmapsto}\qquad
\begin{tikzpicture}[baseline=0]
\draw[dotted](0,.4)ellipse(2 and 1.2);
\node[draw](p)at(-1,0){$P^\lambda_\mu$};\draw($(p.south west)+(.07,0)$)--($(p.north west)+(.07,0)$);
\node[draw](i)at(1,0){$I^\lambda_\mu$};\draw($(i.south west)+(.07,0)$)--($(i.north west)+(.07,0)$);
\node[draw](c)at(0,1.2){$\gamma_c$};\draw($(c.south east)-(.07,0)$)--($(c.north east)-(.07,0)$);
\node[fill=white](eo)at(-.4,-.8){$\mu$};\node[fill=white](ei)at(.4,-.8){$\mu$};
\draw[->]($(p.east)+(0,.15)$)--($(i.west)+(0,.15)$);\draw[->]($(p.east)-(0,.15)$)to[out=0,in=90](eo);\draw[->](ei)to[out=90,in=180]($(i.west)-(0,.15)$);
\draw[->](i.east)to[out=0,in=0]($(c.east)+(.5,0)$)--(c.east);\draw[->](c.west)--($(c.west)-(.5,0)$)to[out=180,in=180](p.west);
\node at(0,.4){$\lambda\setminus\mu$};\node at(-.8,1){$\lambda$};\node at(.8,1){$\lambda$};
\end{tikzpicture}
\end{equation*}
\caption{A part of a $\mathfrak{P}$-graph coming from $f\in\Forb(\Sigma)$ with a corner $c$ and edge $e=\partial f$.}
\label{sfig:Pg_orb}
\end{subfigure}
\caption{Local pictures of a $\mathfrak{P}$-graph. We dropped a level of subscripts on $\lambda,\mu,\gamma$ for decluttering.}
\label{fig:Pg}
\end{figure}

The second ingredient for Theorem~\ref{thm:count_tangles} is the notion of $T$-matching. It plays a role similar to $\sigma_b^-(\sigma_a^+)^{-1}$ and the other permutations in \cite[Lemma 5.2.P4 in \S5.3]{asymptotic_stats_random_covering_surfaces_MageePuder23}. On a high level, under a chosen numbering of exposed sides and hanging half-edges, a $T$-matching tuple of corner permutations encodes the combinatorial adjacencies occurring in $\partial T$ in the numeric labels. To make the discussion more specific, denote by $e_+\corner e_-$ the configuration of an oriented corner $c$ in a face $f$ pointing away from (a side of) an edge $e_-$ towards $e_+$. Above every $e_+\corner e_-$ in $\Sigma$ lies a set of (actual or phantom) corners of $T$. Adjacency in $\partial T$ induces a pairing between exposed sides $S_{e_\pm\hookrightarrow f}$ or hanging half-edges $H_{e_\pm\hookrightarrow f}$ (with the notation from Section~\ref{ssec:imm_cplx}) above $e_+$ and $e_-$. It can be regarded as a ``corner-traversal'' function
\begin{equation}\label{eq:corner_traverse}
\mathcal{T}_c\colon H_{v\hookrightarrow e_-}\sqcup S_{e_-\hookrightarrow f}\to H_{v\hookrightarrow e_+}\sqcup S_{e_+\hookrightarrow f}.
\end{equation}
See Figure~\ref{sfig:match_funs} for such $\mathcal{T}_c$ from an example complex. Working with permutations will require choosing numberings $\vec{\mathcal{J}}\colon \vec{H},\vec{S}\to[n]$ of exposed sides and hanging half-edges. Then, the $T$-matching permutations are the conjugates of $\vec{\mathcal{T}}$ by the appropriate $\vec{\mathcal{J}}$. Equivalently, this is the tuple $(\gamma_c)_{c\in C(\Sigma)}$ that makes the following diagrams commute.
\begin{equation*}\begin{tikzcd}
H_{v\hookrightarrow e_-}\sqcup S_{e_-\hookrightarrow f}\arrow[r,"\mathcal{T}_c"]\arrow[d,"\mathcal{J}_{v\hookrightarrow e_-}\sqcup\mathcal{J}_{e_-\hookrightarrow f}"']&H_{v\hookrightarrow e_+}\sqcup S_{e_+\hookrightarrow f}\arrow[d,"\mathcal{J}_{v\hookrightarrow e_+}\sqcup\mathcal{J}_{e_+\hookrightarrow f}"]\\
{[n]}\arrow[r,"\gamma_c\sqcup\tn{id}_{\{\text{unused numbers}\}}"]&{[n]}
\end{tikzcd}\end{equation*}
We now give the formal definition. Recall the notation $\mathfrak{v}=\#V(T),\mathfrak{e}_e=\#E_e,\mathfrak{f}_f$ for the number of vertices, labelled edges, and labelled faces of $T$ (orbigons counted with multiplicity) from Section~\ref{ssec:imm_cplx} and Figure~\ref{fig:immersion}.
\begin{definition}\label{def:T_compat}
Suppose that for some $n\in\mathbb{N}$ we have chosen bijective numberings
\begin{align*}
\mathcal{J}_\iota\colon&H_\iota\to[n-\mathfrak{v}+1,n-\mathfrak{e}_e]\qquad\text{for each vertex-edge incidence }\iota\colon v\hookrightarrow e\\
\mathcal{J}_\iota\colon&S_\iota\to[n-\mathfrak{e}_e+1,n-\mathfrak{f}_f]\qquad\text{for each edge-face incidence }\iota\colon e\hookrightarrow f.
\end{align*}
We say a tuple of permutations $\left(\gamma_c\in S_{[n-\mathfrak{v}+1,n-\mathfrak{f}_{f(c)}]}\right)_{c\in C(\Sigma)}$ is \emph{$T$-matching $\vec{\mathcal{J}}$} if
\begin{equation*}
\gamma_c\colon\mathcal{J}_-(e_-')\mapsto\mathcal{J}_+(e_+')\quad\text{at every phantom }e'_+\corner[']e'_-\text{ of }T\text{ above }e_+\corner e_-\in C(\Sigma).
\end{equation*}
We say \emph{$T$-matching} for short when the numbering $\vec{\mathcal{J}}$ is implicit.

We say $\vec\gamma$ is \emph{bilaterally $T$-matching} if $\gamma_c$ are permutations of $[n-\mathfrak{v}+1,n]$, numberings of sides of edges $\mathcal{J}_{e\hookrightarrow f}$ is replaced with numbering of full edges themselves $\mathcal{J}_e\colon E_e\to[n-\mathfrak{e}_e+1,n]$, and an identical compatibility condition holds (including at actual corners of $T$).
\end{definition}
Bilateral $T$-matching will only appear in Propositions~\ref{prop:count_E},\ref{prop:count_F} that are intermediate steps in the proof of Theorem~\ref{thm:count_tangles}. The difference is that bilateral includes covered sides $\vec{O}$ and requires the same numeric label on both sides of an edge. Note that any numbering uniquely determines a tuple of permutations $T$-matching it. For an example see Figure~\ref{fig:T_match}.
\begin{figure}[h]
\begin{subfigure}{\textwidth}
\begin{equation*}
\begin{tikzpicture}[baseline=-15]
\fill[lightgray](0,0)--(0,-.4)arc(270:180:.4)--(0,0)(0,-1)--(0,-1.4)arc(270:180:.4)--(0,-1)(-1,0)--(-1,-.4)arc(270:180:.4)--(-1,0)(-2,0)--(-2,-.4)arc(270:180:.4)--(-2,0);
\draw[thick,teal,->>-](0,-1)--(0,0);\draw[thick,blue,->>>-](-1,0)--(0,0);\draw[thick,blue,->>>-](-2,0)--(-1,0);
\begin{scope}[>={Latex[length=4,width=4]},thick,red]\draw[>->](.3,-.3)--(-.3,.3);\draw[>->](.3,-1.3)--(-.3,-.7);\draw[>->](-.7,-.3)--(-1.3,.3);\draw[>->](-1.7,-.3)--(-2.3,.3);\end{scope}
\begin{scope}[>={Latex[length=4,width=4,sep=-2pt]Latex[length=4,width=4]},thick,teal,font=\tiny]\draw[>-](0,-1.45)--(0,-1);\draw[->](0,0)--(0,.4);\draw[>->](-1,-.45)--(-1,.45);\draw[>->](-2,-.45)--(-2,.45);
\node at(-2,-.6){$W_1$};\node at(-1,-.6){$W_2$};\node at(0,-1.6){$W_3$};\node at(-.15,-.5){$X$};\end{scope}
\begin{scope}[>={Latex[length=4,width=4,sep=-2pt]Latex[length=4,width=4,sep=-2pt]Latex[length=4,width=4]},thick,blue,font=\tiny]\draw[>-](-2.45,0)--(-2,0);\draw[->](0,0)--(.45,0);\draw[>->](-.45,-1)--(.45,-1);
\node at(-2.6,0){$Y_1$};\node at(-.6,-1){$Y_2$};\node at(-1.5,-.18){$Z_1$};\node at(-.5,-.18){$Z_2$};\end{scope}
\filldraw(0,0)circle(.04)(0,-1)circle(.04)(-1,0)circle(.04)(-2,0)circle(.04);
\end{tikzpicture}
\quad=T\looparrowright\Sigma=\quad
\begin{tikzpicture}[baseline=10]
\fill[lightgray](0,0)--(0,1)--(1,1)--(1,0);
\draw[thick,red,->-](1,0)to[bend left=10](0,1);\draw[thick,teal,->>-](0,0)--(0,1);\draw[thick,teal,->>-](1,0)--(1,1);\draw[thick,blue,->>>-](0,0)--(1,0);\draw[thick,blue,->>>-](0,1)--(1,1);
\filldraw(0,0)circle(.04)(0,1)circle(.04)(1,0)circle(.04)(1,1)circle(.04);
\node at(.85,.85){$c$};\draw[->](.87,.67)arc(270:180:.2);\node[font=\tiny]at(.6,.55){$f_2$};
\node at(.5,-.2){\color{blue}$a_2$};\node at (.5,1.2){\color{blue}$a_2$};\node at(-.3,.5){\color{teal}$a_1$};\node at(1.3,.5){\color{teal}$a_1$};
\end{tikzpicture}
\end{equation*}
\caption{A complex $T$ (with hanging half-edges drawn) immersed into torus $\Sigma$ with the $\Delta$-complex structure from Figure~\ref{sfig:imm_cplx}. The corner $c\in C(f_2)$ of the face $f_2\in F(\Sigma)$ lifts to 4 phantom corners in $T$ (shaded grey). They separate exposed sides $S_{{\color{teal}a_1}\hookrightarrow f_2}=\{X\}$ and hanging half-edges $H_{v\hookrightarrow{\color{teal}a_1}}=\{W_1,W_2,W_3\}$ from exposed sides $S_{{\color{blue}a_2}\hookrightarrow f_2}=\{Z_1,Z_2\}$ and hanging half-edges $H_{v\hookrightarrow{\color{blue}a_2}}=\{Y_1,Y_2\}$.}
\end{subfigure}
\begin{subfigure}{\textwidth}
\begin{equation*}
\mathcal{T}_c\colon\begin{cases}{\color{teal}W_1}&\mapsto{\color{blue}Y_1}\\{\color{teal}W_2}&\mapsto{\color{blue}Z_1}\\{\color{teal}W_3}&\mapsto{\color{blue}Y_2}\\{\color{teal}X}&\mapsto{\color{blue}Z_2}\end{cases}\qquad
\begin{matrix}\mathcal{J}_{v\hookrightarrow{\color{teal}a_1}}\colon\begin{cases}W_1\mapsto6\\W_2\mapsto7\\W_3\mapsto8\end{cases}\\\mathcal{J}_{{\color{teal}a_1}\hookrightarrow f_2}\colon X\mapsto9\end{matrix}\qquad
\begin{matrix}\mathcal{J}_{v\hookrightarrow{\color{blue}a_2}}\colon&\begin{cases}Y_1\mapsto6\\Y_2\mapsto7\end{cases}\\\mathcal{J}_{{\color{blue}a_2}\hookrightarrow f_2}\colon&\begin{cases}Z_1\mapsto8\\Z_2\mapsto9\end{cases}\end{matrix}
\end{equation*}
\begin{minipage}{.4\textwidth}
\begin{equation*}
\gamma_c\colon\begin{cases}{\color{teal}6}\mapsto{\color{blue}6}\\{\color{teal}7}\mapsto{\color{blue}8}\\{\color{teal}8}\mapsto{\color{blue}7}\\{\color{teal}9}\mapsto{\color{blue}9}\end{cases}\in S_{[6,9]}
\end{equation*}
\end{minipage}\begin{minipage}{.6\textwidth}
\centering
\begin{tikzpicture}
\fill[lightgray](0,0)--(0,-.4)arc(270:180:.4)--(0,0)(0,-1)--(0,-1.4)arc(270:180:.4)--(0,-1)(-1,0)--(-1,-.4)arc(270:180:.4)--(-1,0)(-2,0)--(-2,-.4)arc(270:180:.4)--(-2,0);
\draw[thick,teal,->>-](0,-1)--(0,0);\draw[thick,blue,->>>-](-1,0)--(0,0);\draw[thick,blue,->>>-](-2,0)--(-1,0);
\begin{scope}[>={Latex[length=4,width=4]},thick,red]\draw[>->](.3,-.3)--(-.3,.3);\draw[>->](.3,-1.3)--(-.3,-.7);\draw[>->](-.7,-.3)--(-1.3,.3);\draw[>->](-1.7,-.3)--(-2.3,.3);\end{scope}
\begin{scope}[>={Latex[length=4,width=4,sep=-2pt]Latex[length=4,width=4]},thick,teal,font=\tiny,inner sep=1pt]\draw[>-](0,-1.45)--(0,-1);\draw[->](0,0)--(0,.4);\draw[>->](-1,-.45)--(-1,.45);\draw[>->](-2,-.45)--(-2,.45);
\node(w1)at(-2,-.6){$6$};\node(w2)at(-1,-.6){$7$};\node(w3)at(0,-1.6){$8$};\node(x)at(-.15,-.5){$9$};\end{scope}
\begin{scope}[>={Latex[length=4,width=4,sep=-2pt]Latex[length=4,width=4,sep=-2pt]Latex[length=4,width=4]},thick,blue,font=\tiny,inner sep=1pt]\draw[>-](-2.45,0)--(-2,0);\draw[->](0,0)--(.45,0);\draw[>->](-.45,-1)--(.45,-1);
\node(y1)at(-2.6,0){$6$};\node(y2)at(-.6,-1){$7$};\node(z1)at(-1.5,-.18){$8$};\node(z2)at(-.5,-.18){$9$};\end{scope}
\filldraw(0,0)circle(.04)(0,-1)circle(.04)(-1,0)circle(.04)(-2,0)circle(.04);
\draw[->](w1)to[bend left](y1);\draw[->](w2)to[bend left](z1);\draw[->](w3)to[bend left](y2);\draw[->](x)to[bend left](z2);
\end{tikzpicture}
\end{minipage}
\caption{We take the face $f_2$ of $c$ to be oriented anticlockwise, so the induced orientation of $c$ points away from ${\color{teal}a_1}$ towards ${\color{blue}a_2}$. The resulting function $\mathcal{T}_c$ from equation (\ref{eq:corner_traverse}) is given above. We have chosen example numberings $\vec{\mathcal{J}}$ of exposed sides and hanging half-edges, in the format from Definition~\ref{def:T_compat} with $n=9$ (we have $\mathfrak{v}=4,{\color{teal}\mathfrak{e}_{a_1}}=1,{\color{blue}\mathfrak{e}_{a_2}}=2,\mathfrak{f}_{f_1}=\mathfrak{f}_{f_2}=0$). They conjugate $\mathcal{T}_c$ to the permutation $\gamma_c$ above that is $T$-matching $\vec{\mathcal{J}}$. The numeric labels placed next to their respective exposed sides and hanging half-edges are drawn in the picture above.}
\label{sfig:match_funs}
\end{subfigure}
\caption{A $T$-matching permutation for an example immersed complex $T$, corner $c$ of $\Sigma$, and numbering of exposed sides and hanging half-edges $\vec{\mathcal{J}}$, following Definition~\ref{def:T_compat}. The inclusions $v\hookrightarrow{\color{teal}a_1}$ and $v\hookrightarrow{\color{blue}a_2}$ in the discussion are the terminal ones.}
\label{fig:T_match}
\end{figure}

We are ready to state the main theorem of this section, and the key technical result of this work. Later in Section~\ref{sec:asymptotics}, the analysis of its asymptotics will yield the master Theorem~\ref{thm:E_combi_emb} on the expected number of embeddings.
\begin{theorem}\label{thm:count_tangles}
Let $n\in\mathbb{N}$ with $n\geqslant\mathfrak{v}$, and $\vec\gamma$ be a tuple of $T$-matching permutations. The number of all embeddings of $T$ in degree-$n$ coverings of $\Sigma$ is
\begin{equation*}
\#\left\{(\whS,\iota)\Bigg|\begin{matrix}\whS\in\covers\\\iota\colon T\hookrightarrow\whS\end{matrix}\right\}=Q(n)\times\sum_{(\eta,\vec\mu,\vec\lambda)\in\mathcal{Y}(n)}R(\eta,\vec\mu,\vec\lambda)\cdot\tr\mathfrak{P}\left(\vec\lambda\setminus\eta,\vec\mu\setminus\eta,\vec\gamma\right)\\
\end{equation*}
where
\begin{align*}
Q(n)&=(n!)^{1-\cht(\Sigma)}\frac{n^{\underline{\mathfrak{v}}}\cdot\prod_{f\in\Fspx(\Sigma)}n^{\underline{\mathfrak{f}_f}}\cdot\prod_\iterorb(n^{\underline{\mathfrak{f}_f}}\cdot\tau_m(S_{n-\mathfrak{f}_f}))}{\prod_{e\in E(\Sigma)}n^{\underline{\mathfrak{e}_e}}}\\
R(\eta,\vec\mu,\vec\lambda)&=\frac{\chi_\eta(1)\cdot\prod_{f\in\Fspx(\Sigma)}\chi_{\lambda_f}(1)\cdot\prod_\iterorb\chi_{\lambda_f}^{m\tn{-tors}}}{\prod_{e\in E(\Sigma)}\chi_{\mu_e}(1)}\\
\mathcal{Y}(n)&=\left\{\left(\eta,\vec\mu,\vec\lambda\right)\ \middle|\ \begin{matrix}\lambda_f\vdash n-\mathfrak{f}_f,\mu_e\vdash n-\mathfrak{e}_e,\eta\vdash n-\mathfrak{v}\\\forall_{e\hookrightarrow f}\ \eta\subseteq\mu_e\subseteq\lambda_f\end{matrix}\right\}.
\end{align*}
\end{theorem}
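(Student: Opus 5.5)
The plan is to sum the identity of Proposition~\ref{prop:Rg_trace} (with $G=S_n$) over those edge tuples $\vec{x}\in S_n^{E(\Sigma)}$ for which the covering of the 1-skeleton $\whS^{(1)}_{\vec x}$ admits an embedding of $T^{(1)}$. Then we correct for the face-level conditions and the choice of vertex labels.

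First we use the freedom of relabelling. Since $\covers$ is invariant under conjugation and the count is symmetric, we fix the images of the vertices of $T$ to be the labels $[n-\mathfrak{v}+1,n]$ via a fixed bijection. This costs a factor $n^{\underline{\mathfrak{v}}}$ and accounts for it in $Q(n)$. Given this, an embedding of $T^{(1)}$ prescribes $x_e$ on the $\mathfrak{e}_e$ points corresponding to edges of $E_e$. The set of admissible $x_e$ is then a two-sided coset $a_e\,S_{n-\mathfrak{e}_e}\,b_e$, where we identify $S_{n-\mathfrak{e}_e}$ with permutations of the unused labels, after relabelling via $\mathcal{J}_e$. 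So the edge-admissible set is a Cartesian product of coset translates, as promised in the overview. Dividing by its size gives the factors $1/n^{\underline{\mathfrak{e}_e}}$, up to $(n!)$ powers.

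Next we average the edge coupon $x_e$ over $a_eS_{n-\mathfrak e_e}b_e$. Absorbing $a_e,b_e$ into adjacent corner coupons, we apply Lemma~\ref{lmm:swap} with $H=S_{n-\mathfrak{e}_e}$, or its clashing-orientation variant (\ref{eq:clashing_resolution}) at non-orientable edges. This replaces each edge coupon by the $P,I$ configuration of Figure~\ref{sfig:Pg_edge} or \ref{sfig:Pg_e_no}, with weight $\chi_{\mu_e}(1)^{-1}$, $\mu_e\vdash n-\mathfrak e_e$, and forces $\mu_e\subseteq\lambda_f$ by Proposition~\ref{prop:repSn_OV}. We call this Proposition~\ref{prop:count_E}, an intermediate count with bilaterally $T$-matching corner permutations. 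The leftover permutations $b_{e_-}a_{e_+}^{-1}$ at each corner are exactly the bilaterally $T$-matching $\gamma_c$.

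The face condition is more delicate. An embedding of $T$, rather than of $T^{(1)}$, additionally requires that faces of $T$ are realised. For a simplex this is automatic once the boundary word closes. However, we must also count only one embedding per face configuration and remove the over-counting of covered sides. The intended mechanism is to pass from bilateral matching to one-sided matching by further averaging over $S_{n-\mathfrak f_f}$ inside each face $\lambda_f\vdash n-\mathfrak{f}_f$. Here we split labels into covered sides (fixed) and exposed ones, and use Lemma~\ref{lmm:change_sum_tors} for orbigons to produce $\tau_m(S_{n-\mathfrak f_f})$ and $\chi^{m\text{-tors}}_{\lambda_f}$. This is Proposition~\ref{prop:count_F}.

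Finally, we isolate the vertex subgroup $S_{n-\mathfrak v}$. All $\gamma_c$ fix $[1,n-\mathfrak v]$, so each coupon commutes with $S_{n-\mathfrak v}$. Decomposing every wire by the branching rule (\ref{eq:branching_rule}) over $\eta\vdash n-\mathfrak v$, and noting that these components must agree globally around the single vertex, exactly as in the Mednykh computation of Lemma~\ref{lmm:Mednykh}, the $\eta$-parts contract to a loop giving $\chi_\eta(1)$. This leaves skew shapes $\lambda_f\setminus\eta$ and $\mu_e\setminus\eta$ and yields $\tr\mathfrak P(\vec\lambda\setminus\eta,\vec\mu\setminus\eta,\vec\gamma)$. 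The orientation factors $\nu=1$ vanish since Frobenius--Schur indicators of $S_n$ are $1$, so only $B$ from (\ref{eq:repSn_bilinear_form}) appears.

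We expect the face step to be the main obstacle: converting bilateral to one-sided matching while tracking exactly which labels are summed over. The bookkeeping of the powers of $n!$ and falling factorials must then reproduce $Q(n)$, using $\#E-\#F=1-\cht(\Sigma)$.
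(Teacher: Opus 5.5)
Your reduction to a fixed vertex labelling, the description of the admissible $x_e$ as a two-sided translate of $S_{n-\mathfrak{e}_e}$, the application of Lemma~\ref{lmm:swap} at the edges (Proposition~\ref{prop:count_E}), and the splitting-off of the $\eta$-loop all follow the paper. The gap is the face step, which is the actual content of Proposition~\ref{prop:count_F}. You leave it as an intended mechanism, and you partly misdescribe it.

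There is no over-counting to remove. The covering is genuine, so an embedding of $T^{(1)}$ extends uniquely over the faces of $T$. Hence the formula of Proposition~\ref{prop:count_E} (faces coloured $\lambda'_f\vdash n$, bilaterally $T$-matching $\vec\gamma'$) already equals $\#\covers(T,\mathcal{I})$ exactly. What remains is a purely algebraic identity that rewrites this as a sum over $\lambda_f\vdash n-\mathfrak{f}_f$ with one-sided $T$-matching $\vec\gamma$. The factors $n^{\underline{\mathfrak{f}_f}}$ come out of a resummation. No further averaging over $S_{n-\mathfrak{f}_f}$ takes place, since all group sums were already performed at the edges.

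The missing ingredients are the following.

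(a) Relabel the sides lifting each incidence $\iota\colon e\hookrightarrow f$ so that covered sides get labels in $[n-\mathfrak{f}_f+1,n]$ and exposed ones get labels in $[n-\mathfrak{e}_e+1,n-\mathfrak{f}_f]$. The conversion $\delta_\iota\in S_{[n-\mathfrak{e}_e+1,n]}$ commutes with $S_{[n-\mathfrak{v}+1,n-\mathfrak{e}_e]}$. So the coupons $\delta_\iota^{\pm1}$ inserted at the two corners adjacent to $\iota$ slide past $P,I$ onto the wire $\lambda'_f\setminus\mu_e$ and cancel. The trace is therefore unchanged, while each corner permutation becomes $\gamma_c\sqcup\beta_c$, with $\beta_c\in S_{[n-\mathfrak{f}_f+1,n]}$ the \emph{next covered side} map.

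(b) Restrict $\lambda'_f\setminus\eta$ to $S_{[n-\mathfrak{v}+1,n-\mathfrak{f}_f]}\times S_{[n-\mathfrak{f}_f+1,n]}$. Since $P,I$ are block-diagonal, each face splits off a loop coloured $\lambda'_f\setminus\lambda_f$ carrying $W_f(\vec\beta)$ or $\beta_c$.

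(c) Observe that $W_f(\vec\beta)=1$ and $\beta_c^m=1$. Following covered sides across corners just travels around the faces of $T$, and for orbigons the number of sides divides $m$.

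(d) Only now apply Lemma~\ref{lmm:change_sum_tors} with $G=S_n$, $H=S_{n-\mathfrak{f}_f}$, $U=\lambda_f$. This is needed for simplices (with $x=1$) as well as orbigons (with $x=\beta_c$, which commutes with $H$). It gives $\sum_{\lambda'\vdash n}\chi_{\lambda'}(1)\dim(\lambda'\setminus\lambda_f)=n^{\underline{\mathfrak{f}_f}}\chi_{\lambda_f}(1)$ and $\tau_m(S_n)\sum_{\lambda'\vdash n}\chi^{m\textnormal{-tors}}_{\lambda'}\chi_{\lambda'\setminus\lambda_f}(\beta_c)=n^{\underline{\mathfrak{f}_f}}\tau_m(S_{n-\mathfrak{f}_f})\chi^{m\textnormal{-tors}}_{\lambda_f}$.

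Without (a)--(c), the $\lambda'_f$-dependence is never isolated in the trace of a permutation commuting with $S_{n-\mathfrak{f}_f}$, so Lemma~\ref{lmm:change_sum_tors} has nothing to act on. It is (c) specifically that lets the face relations collapse those traces to dimensions and $m$-torsion averages.
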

Note that Theorem~\ref{thm:count_tangles} specialises to Mednykh's formula in Lemma~\ref{lmm:Mednykh} when we substitute the empty immersed complex as $T$. In general, there is still the prefactor $Q$ that assumes the role of earlier $\#G^{1-\chi^\tn{top}(\Sigma)}$ and drives the asymptotics; an expression resembling the earlier zeta constant, except now the summation is over certain tuples $\mathcal{Y}$ instead of $\tn{Irr}(G)$; in it $R$ replacing the product of $\chi_V$'s. The new feature is the appearance of $\mathfrak{P}$-graphs, which remember the data of $(p,T)$.

The proof of Theorem \ref{thm:count_tangles} will be split into two smaller results, Propositions \ref{prop:count_E} and \ref{prop:count_F}. Morally, the former counts edges of $T$ and the latter counts faces. To state them we need one last bit of extra notation. Recall from Remark~\ref{rmk:fibre_label} that the coverings $\whS\in\covers$ come equipped with a fibre labelling $\mathcal{L}_{\whS}\colon V(\whS)\to[n]$. For a labelling $\mathcal{I}\colon V(T)\hookrightarrow[n]$ of vertices of $T$, denote by
\begin{equation}\label{eq:vertex_labelling_pullback}
\covers(T,\mathcal{I})\defeq\left\{(\widehat{\Sigma},\iota)\Bigg|\begin{matrix}\widehat{\Sigma}\in\covers,\iota\colon T\hookrightarrow\whS\text{ such that}\\\mathcal{L}_{\whS}\circ\iota=\mathcal{I}\text{ as functions }V(T)\to[n]\end{matrix}\right\}
\end{equation}
the set of embeddings of $T$ into coverings such that the pullback of $\mathcal{L}$ through the immersion agrees with $\mathcal{I}$.
\begin{proposition}\label{prop:count_E}
Fix $n\in\mathbb{N},n\geqslant\mathfrak{v}$ and a vertex labelling $\mathcal{I}\colon V(T)\to[n-\mathfrak{v}+1,n]$. Then for any bilaterally $T$-matching tuple of permutations $\vec\gamma'$ we have
\begin{equation*}
\#\covers(T,\mathcal{I})=\frac{\prod_{i=1}^k\tau_{m_i}(S_n)\prod_{e\in E(\Sigma)}(n-\mathfrak{e}_e)!}{(n!)^{\#F(\Sigma)}}\times\hspace*{-10pt}\sum_{\tiny\begin{matrix}\lambda'_f\vdash n\\\mu_e\vdash n-\mathfrak{e}_e\end{matrix}}R(\emptyset,\vec\mu,\vec\lambda')\tr\mathfrak{P}\left(\vec\lambda',\vec\mu,\vec\gamma'\right),
\end{equation*}
where $m_1,\dots,m_k$ are the orders of cone points of $\Sigma$.
\end{proposition}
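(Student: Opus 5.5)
The plan is to count $\covers(T,\mathcal{I})$ by summing the indicator of Proposition~\ref{prop:Rg_trace} over only those edge tuples $\vec{x}\in S_n^{E(\Sigma)}$ that are compatible with the embedding, and then to evaluate the restricted averages at the edge coupons using Lemma~\ref{lmm:swap}.

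\textbf{Step 1: describe the compatible edge tuples.} Identify a covering $\whS\in\covers$ with the tuple $\vec{x}=(\phi(e))_{e\in E(\Sigma)}$. An embedding $\iota$ with $\mathcal{L}_\whS\circ\iota=\mathcal{I}$ is unique if it exists, because $\iota$ is determined on vertices. It exists if and only if, for every edge $\tilde e\in E_e$ from $u$ to $w$, we have $x_e(\mathcal{I}(u))=\mathcal{I}(w)$. Faces then lift automatically, since $\whS$ is a genuine covering.

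So we must sum over $\vec{x}$ with each $x_e$ lying in a prescribed set $A_e\subseteq S_n$ determined by $\mathfrak{e}_e$ assigned values. Using the bilateral numbering $\mathcal{J}_e\colon E_e\to[n-\mathfrak{e}_e+1,n]$, I would write $A_e$ as a two-sided translate
\[
A_e=\sigma_e^{+}\,S_{n-\mathfrak{e}_e}\,(\sigma_e^{-})^{-1},
\]
where $\sigma_e^{\pm}$ send $\mathcal{J}_e(\tilde e)$ to $\mathcal{I}$ of the terminal or initial vertex of $\tilde e$. Thus $\#A_e=(n-\mathfrak{e}_e)!$.

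\textbf{Step 2: average each edge coupon.} Sum Proposition~\ref{prop:Rg_trace} over $\vec x\in\prod_e A_e$. The right-hand side is $\tfrac{(n!)^{\#F(\Sigma)}}{\prod_i\tau_{m_i}(S_n)}\cdot\#\covers(T,\mathcal{I})$. On the left-hand side, by multilinearity each edge coupon becomes
\[
(n-\mathfrak{e}_e)!\cdot\sigma^+_e\Big(\tfrac{1}{\#H}\textstyle\sum_{h\in H}h\Big)(\sigma^-_e)^{-1},\qquad H=S_{n-\mathfrak{e}_e}.
\]
Via the splitting (\ref{eq:prod_action_rg}), the translates $\sigma^\pm_e$ act separately on $\lambda_L$ and $\lambda_R$. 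I then apply Lemma~\ref{lmm:swap} to the middle average, or its variant with (\ref{eq:clashing_resolution}) at edges where both faces induce the same orientation. This produces the $I,P$ coupons of Figures~\ref{sfig:Pg_edge} and \ref{sfig:Pg_e_no}, a sum over $\mu_e\vdash n-\mathfrak{e}_e$, and a factor $\chi_{\mu_e}(1)^{-1}$ matching $R(\emptyset,\vec\mu,\vec\lambda')$. Since $\mathrm{Hom}_{S_{n-\mathfrak{e}_e}}(\mu_e,\lambda')=\lambda'\setminus\mu_e$ by Proposition~\ref{prop:repSn_OV}, the wire colours come out right.

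\textbf{Step 3: collect the translates into the $\gamma'_c$.} The leftover coupons $\sigma^{\pm}_e$ sit next to the corners. Around a corner $e_+\corner e_-$ of face $f$, the loop of $\lambda_f$ passes $(\sigma^{+}_{e_+})^{\pm1}$ and then $(\sigma^{-}_{e_-})^{\mp1}$, with signs depending on orientations. Merging them by Example~\ref{ex:ribbon_composition} gives a single corner coupon, whose colour is a product such as $(\sigma^{\epsilon}_{e_+})^{-1}\sigma^{\delta}_{e_-}$. This product maps $\mathcal{J}_{e_-}(\tilde e_-)\mapsto\mathcal{J}_{e_+}(\tilde e_+)$ whenever the two edges share a vertex of $T$ at a lift of the corner, which is exactly bilateral $T$-matching.

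\textbf{Step 4: independence of the choice, which I expect to be the main obstacle.} It remains to show that the trace is the same for any bilaterally $T$-matching $\vec\gamma'$. Two such tuples differ by elements of the subgroups acting only on unused labels, and these can be absorbed because $P$ and $I$ are $S_{n-\mathfrak{e}_e}$-equivariant. A relabelling of the vertices by a common permutation conjugates everything and cancels around each loop. The delicate part is bookkeeping the orientations and signs, especially at non-orientable edges with $B,B^{-1}$, where I would use that $B$ in (\ref{eq:repSn_bilinear_form}) is $S_n$-invariant so that translates pass through it. Collecting the constants $(n-\mathfrak{e}_e)!$ and $\#G^{\#F}/\prod_i\tau_{m_i}$ then yields the stated formula.
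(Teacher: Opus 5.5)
Your proposal follows the paper's proof essentially step for step. The paper also sums Proposition~\ref{prop:Rg_trace} over the two-sided cosets $P_e=\alpha^{(t)}_e S_{n-\mathfrak{e}_e}(\alpha^{(i)}_e)^{-1}$, merges the translates into corner coupons, and applies Lemma~\ref{lmm:swap} at each edge (with (\ref{eq:clashing_resolution}) at clashing edges).

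The one real difference is that the paper also specifies $\alpha^{(i)}_e,\alpha^{(t)}_e\in S_{[n-\mathfrak{v}+1,n]}$ on the hanging half-edge labels, via $\mathcal{J}_\iota(h')\mapsto\mathcal{I}(v')$ with $\vec{\mathcal{J}}$ the numbering witnessing $\vec\gamma'$. The merged corner coupons are then bilaterally $T$-matching $\vec{\mathcal{J}}$, hence equal to $\vec\gamma'$ by uniqueness, and your Step~4 becomes unnecessary. Because your $\sigma^\pm_e$ are specified only on edge labels, the Step~3 claim ``exactly bilateral $T$-matching'' is not literally true. The leftover discrepancy lies in $S_{n-\mathfrak{e}_e}$ and must be cancelled as you sketch: push it through the $P$ and $I$ coupons onto the $\mu_e$ wires crossing each edge, and through the invariant form $B$ at clashing edges.
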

Proposition \ref{prop:count_E} is proved in Section~\ref{ssec:pf_E}. It is a milestone towards Theorem \ref{thm:count_tangles}. However, it still differs from the target in three aspects: first, the count of $F(T)$ remains obscured; second, at each $f\in F(\Sigma)$ the summation is over partitions of size $n$ instead of $n-\mathfrak{f}_f$; third, $\vec\gamma'$ are bilaterally $T$-matching instead of $T$-matching. These issues are interrelated, and addressed by Proposition \ref{prop:count_F}.
\begin{proposition}\label{prop:count_F}
Let $\vec\gamma'$ be a bilaterally $T$-matching tuple of permutations, $\vec\gamma$ be $T$-matching, and suppose the witnessing numberings agree on hanging half-edges. Then for any tuple of representations $\mu_e\vdash n-\mathfrak{e}_e$ we have the equality
\begin{align*}
\sum_{\lambda'_f\vdash n}R(\emptyset,\vec\emptyset,\vec\lambda')\cdot\tr\mathfrak{P}(\vec\lambda',\vec\mu,\vec\gamma')=&\prod_{f\in\Fspx(\Sigma)}n^{\underline{\mathfrak{f}_f}}\prod_\iterorb\frac{n^{\underline{\mathfrak{f}_f}}\tau_m(S_{n-\mathfrak{f}_f})}{\tau_m(S_n)}\times\\
&\times\sum_{\tiny\begin{matrix}\eta\in M(\vec\mu)\\\lambda_f\in N_f(\vec\mu)\end{matrix}}R(\eta,\vec\emptyset,\vec\lambda)\cdot\tr\mathfrak{P}(\vec\lambda\setminus\eta,\vec\mu\setminus\eta,\vec\gamma).
\end{align*}
where
\begin{align*}
M(\vec\mu)&=\{\eta\vdash n-\mathfrak{v}\ |\ \forall_e\ \eta\subseteq\mu_e\}\\
N_f(\vec\mu)&=\{\lambda\vdash n-\mathfrak{f}_f\ |\ \forall_{e\hookrightarrow f}\ \lambda\supseteq\mu_e\}.
\end{align*}
\end{proposition}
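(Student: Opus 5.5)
The identity is local to each face of $\Sigma$, so I would prove it one face at a time. By construction the $\mathfrak{P}$-graph splits along the edge configurations: every wire coloured $\lambda'_f$ lies in the cycle belonging to the face $f$ (Figures~\ref{sfig:Pg_spx}, \ref{sfig:Pg_orb}). These cycles are joined to each other only through wires coloured $\mu_e$, or through the $B$-coupons of Figure~\ref{sfig:Pg_e_no}. Fix all the $\mu_e$ and the edge wires. Then $\tr\mathfrak{P}(\vec\lambda',\vec\mu,\vec\gamma')$ is a contraction of local tensors $A_f(\lambda'_f)$, one for each face, and each $A_f$ depends only on $\lambda'_f$, the corner permutations $\gamma'_c$ of $f$, and the $\mu_e$ on its sides. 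The weight $R(\emptyset,\vec\emptyset,\vec\lambda')$ also factorises over faces. So it suffices to establish, for each face $f$, an identity of tensors with free ends in $\mu_e$ and $\lambda'_f\setminus\mu_e$:
\[
\sum_{\lambda'\vdash n}w_f(\lambda')\,A_f(\lambda')=c_f\sum_{\lambda\in N_f(\vec\mu)}w_f(\lambda)\,A_f^{\mathrm{red}}(\lambda).
\]
Here $w_f$ is $\chi(1)$ for a simplex and $\chi^{m\tn{-tors}}$ for an $m$-orbigon, $A_f^{\mathrm{red}}$ is the corresponding piece with $\gamma$ in place of $\gamma'$, and $c_f$ is $n^{\underline{\mathfrak{f}_f}}$ or $n^{\underline{\mathfrak{f}_f}}\tau_m(S_{n-\mathfrak{f}_f})/\tau_m(S_n)$. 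After that, the edge-level reduction from $\lambda\setminus\mu_e$ wires to the $\eta$-skew wires has to be handled.

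\textbf{Simplex.} The numbers in $[n-\mathfrak{f}_f+1,n]$ label the covered corners of the faces in $F_f$. By bilateral $T$-matching, on these labels the three corner permutations compose like the face relator, so their product fixes each such label, while on the remaining labels they act as $\gamma_c$. I would write $\gamma'_c=\gamma_c\cdot\delta_c$ with $\delta_c$ supported on the face labels, and then apply the branching rule (\ref{eq:branching_rule}) to $S_{n-\mathfrak{f}_f}\times S_{[n-\mathfrak{f}_f+1,n]}$, which decomposes $\lambda'$ into $\lambda\otimes(\lambda'\setminus\lambda)$. The $\lambda'\setminus\lambda$ factor then closes into a single trace. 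The sum $\sum_{\lambda'}\chi_{\lambda'}(1)\,\tr_{\lambda'\setminus\lambda}(\cdots)$ is an instance of Lemma~\ref{lmm:change_sum_tors} with $m=1$ (regular-representation type). It should yield $[S_n:S_{n-\mathfrak{f}_f}]\,\chi_\lambda(1)=n^{\underline{\mathfrak{f}_f}}\chi_\lambda(1)$, once an identity element of the face labels has been isolated. The constraint $\mu_e\subseteq\lambda$ appears because $P^{\lambda}_{\mu_e}$ vanishes otherwise; this explains $N_f(\vec\mu)$.

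\textbf{Orbigon.} The same argument goes through with Lemma~\ref{lmm:change_sum_tors} at the orbigon's order $m$, taking $H=S_{n-\mathfrak{f}_f}$ and $x$ the part of $\gamma'$ acting on the face labels, which commutes with $H$. Here $x^m=1$, because the faces in $F_f$ wrap around the cone point; this uses the branching degrees of the orbigon preimages. The factor $\tau_m(S_{n-\mathfrak{f}_f})/\tau_m(S_n)$ comes from converting between $\chi^{m\tn{-tors}}$ for $S_n$ and for $S_{n-\mathfrak{f}_f}$.

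\textbf{Vertex and edge labels ($\eta$).} The numbers in $[n-\mathfrak{v}+1,n-\mathfrak{e}_e]$ on edge wires $\mu_e$ correspond to hanging half-edges, and the $T$-matching and bilateral numberings agree on them. I would decompose each $\mu_e$ under $S_{n-\mathfrak{v}}$ into $\eta_e\otimes(\mu_e\setminus\eta_e)$. Following a vertex link shows that the $\eta$-components around the single vertex of $\Sigma$ are forced to coincide into one common $\eta$. In $\Sigma$ the wires of all edges meet at that vertex. The permutations fix $[1,n-\mathfrak{v}]$ pointwise, so the $\eta$-parts contract to a loop with trace $\chi_\eta(1)$, just as in (\ref{eq:pf_M_tr_samelambda}), and the $\mu$-dimensions are redistributed. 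This is what produces $\eta\in M(\vec\mu)$, the weight $\chi_\eta(1)$ inside $R(\eta,\vec\emptyset,\vec\lambda)$, and the skew labels $\lambda\setminus\eta$, $\mu\setminus\eta$. In non-orientable cases the $B$-coupons should pass through, using Frobenius--Schur indicator $1$ and the diagonal form (\ref{eq:repSn_bilinear_form}), which respects Gelfand--Tsetlin stacking.

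\textbf{Main obstacle.} I expect the hardest step to be the simplex computation. One has to check that the corner permutations restricted to the face labels really reduce, after the branching decomposition, to an operator on $\lambda'\setminus\lambda$ whose weighted sum over $\lambda'$ collapses by Lemma~\ref{lmm:change_sum_tors}. This needs a careful choice of numbering $\mathcal{J}$, so that each face of $T$ uses one common label at its three covered sides. It also needs a consistent orientation of the $P$/$I$ coupons relative to the corner coupons. I would first verify the whole argument on a single simplex with $\mathfrak{f}_f=1$ before doing the general bookkeeping.
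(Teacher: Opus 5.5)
You split off $\eta$ and resum with Lemma~\ref{lmm:change_sum_tors} the same way the paper does, using $W_f(\vec\beta)=1$ for simplices and $\beta_c^m=1$ for orbigons. The gap is in how you pass from $\vec\gamma'$ to $\vec\gamma$. You assume that, for the bilateral numbering, $\gamma'_c$ acts as $\gamma_c$ on $[n-\mathfrak{v}+1,n-\mathfrak{f}_f]$ and otherwise only permutes the ``face labels'' $[n-\mathfrak{f}_f+1,n]$, so that $\gamma'_c=\gamma_c\cdot\delta_c$. This is false in general. A bilateral numbering $\mathcal{J}'_e\colon E_e\to[n-\mathfrak{e}_e+1,n]$ gives one label to a whole edge, shared by both its sides. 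The split you need, however, is side-dependent: covered sides facing $f$ must carry labels in $[n-\mathfrak{f}_f+1,n]$, and exposed sides facing $f$ must carry their $\mathcal{J}$-labels.

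For instance, take $E_e=\{e',e''\}$, where $e'$ is covered only on the side facing $f_L$ and $e''$ only on the side facing $f_R\neq f_L$, with no other faces over $f_L,f_R$. Then $\mathfrak{f}_{f_L}=\mathfrak{f}_{f_R}=1$, and both $e'$ and $e''$ would need the label $n$. Your proposed remedy fails for the same reason. Making each face of $T$ use one common label on its three covered sides forces labels to propagate across shared edges, and they collide within any connected region of $T$. In any case, the proposition is claimed for an arbitrary bilateral $\vec\gamma'$, so a special choice would itself need an independence argument.

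The missing idea is the paper's second step: let labels differ on the two sides of an edge, and show the trace does not notice.
\begin{enumerate}
\item Choose labellings $\mathcal{K}_\iota\colon O_\iota\to[n-\mathfrak{f}_f+1,n]$ of covered sides.
\item For each incidence $\iota\colon e\hookrightarrow f$, set $\delta_\iota=(\mathcal{J}_\iota\sqcup\mathcal{K}_\iota)\circ(\mathcal{J}'_e)^{-1}\in S_{[n-\mathfrak{e}_e+1,n]}$.
\item Replace $\gamma'_c$ by $\gamma''_c=\delta_{e_+\hookrightarrow f}\circ\gamma'_c\circ\delta^{-1}_{e_-\hookrightarrow f}$.
\end{enumerate}
Since $\delta_\iota$ is supported on $[n-\mathfrak{e}_e+1,n]$, it commutes with the permutations of smaller labels that define the isotypic decomposition along $\mu_e$. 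It therefore acts only on the multiplicity factor $\lambda'_f\setminus\mu_e$. So $\delta_\iota$ and $\delta_\iota^{-1}$ slide through $P^{\lambda'_f}_{\mu_e}$ and $I^{\lambda'_f}_{\mu_e}$ onto the wire parallel to $e$ and cancel, leaving the trace unchanged.

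Only for $\vec\gamma''$ does one get $\gamma''_c=\gamma_c\sqcup\beta_c$ with $\beta_c\in S_{[n-\mathfrak{f}_f+1,n]}$, by uniqueness of matching permutations. With $\gamma''$ in place of $\gamma'$, your face-by-face computation goes through. The sliding argument stays inside each face cycle, so it fits your face-local framework.
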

Proposition \ref{prop:count_F} is proved in Section~\ref{ssec:pf_F}.

\medskip

Propositions \ref{prop:count_E} and \ref{prop:count_F} allow us to complete the proof of Theorem \ref{thm:count_tangles}.
\begin{proof}[Proof of Theorem \ref{thm:count_tangles}, assuming Propositions~\ref{prop:count_E},\ref{prop:count_F}]
We begin by partitioning the set of embeddings according to the induced vertex labelling of $T$. Each (covering, embedding) pair pulls back a unique vertex labelling, so
\begin{equation*}
\left\{(\whS,\iota)\ \middle|\ \whS\in\covers,\iota\colon T\hookrightarrow\whS\right\}=\bigsqcup_{\mathcal{I}\colon V(T)\hookrightarrow[n]}\covers(T,\mathcal{I}),
\end{equation*}
where $\covers(T,\mathcal{I})$ are the sets defined in equation (\ref{eq:vertex_labelling_pullback}). The action of $S_n$ by relabelling the fibre provides bijections between the sets $\covers(T,\mathcal{I})$ for different $\mathcal{I}$'s, so they all have equal sizes. Therefore, fixing $\mathcal{I}\colon V(T)\to[n-\mathfrak{v}+1,n]$ for the rest of the proof, we obtain
\begin{equation}\label{eq:summation_labellings}
\#\{(\whS,\iota)\mid\whS\in\covers,\iota\colon T\hookrightarrow\whS\}=n^{\underline{\mathfrak{v}}}\cdot\#\covers(T,\mathcal{I}),
\end{equation}
where the prefactor is the number of vertex labellings of $T$ with $[n]$.

The tuple $\vec\gamma$ is $T$-matching some numbering of exposed sides and hanging half-edges. Extend the numbering of hanging half-edges to whole edges, and denote by $\gamma'_c\in S_{[n-\mathfrak{v}+1,n]}$ the unique permutations bilaterally $T$-matching the result. Proposition \ref{prop:count_E} tells us that
\begin{align}\begin{split}\label{eq:labelled_embs}
\#\covers(T,\mathcal{I})=&\frac{(n!)^{\#E(\Sigma)-\#F(\Sigma)}\prod_{i=1}^k\tau_{m_i}(S_n)}{\prod_{e\in E(\Sigma)}n^{\underline{\mathfrak{e}_e}}}\times\\
&\times\sum_{\tiny\begin{matrix}\lambda'_f\vdash n\\\mu_e\vdash n-\mathfrak{e}_e\end{matrix}}R(\emptyset,\vec\mu,\vec\lambda')\tr\mathfrak{P}\left(\vec\lambda',\vec\mu,\vec\gamma'\right).
\end{split}\end{align}

Proposition \ref{prop:count_F} is equivalent, after dividing both sides by $\prod_e\chi_{\mu_e}(1)$, to
\begin{align}\begin{split}\label{eq:pull_F_factor}
\sum_{\lambda'_f\vdash n}R(\emptyset,\vec\mu,\vec\lambda')\cdot\tr\mathfrak{P}(\vec\lambda',\vec\mu,\vec\gamma')=&\hspace*{-5pt}\prod_{f\in\Fspx(\Sigma)}n^{\underline{\mathfrak{f}_f}}\prod_\iterorb\hspace*{-5pt}\frac{n^{\underline{\mathfrak{f}_f}}\tau_m(S_{n-\mathfrak{f}_f})}{\tau_m(S_n)}\times\\
&\times\hspace*{-10pt}\sum_{\tiny\begin{matrix}\eta\in M(\vec\mu)\\\lambda_f\in N_f(\vec\mu)\end{matrix}}R(\eta,\vec\mu,\vec\lambda)\cdot\tr\mathfrak{P}(\vec\lambda\setminus\eta,\vec\mu\setminus\eta,\vec\gamma).
\end{split}\end{align}

Putting equations (\ref{eq:summation_labellings}, \ref{eq:labelled_embs}, \ref{eq:pull_F_factor}) together gives us a formula for the size of $\{T\hookrightarrow\whS\}$. Remembering that $\#E(\Sigma)-\#F(\Sigma)=1-\chi^\tn{top}(\Sigma)$ and recognising that the summation is over $\mathcal{Y}(n)$ completes the proof of Theorem \ref{thm:count_tangles}.
\end{proof}

\subsection{Proof of Proposition \ref{prop:count_E} and the count of $E(T)$}\label{ssec:pf_E}

The proof of Proposition~\ref{prop:count_E} builds on the approach to Mednykh's formula~\ref{lmm:Mednykh} from Section~\ref{ssec:Mednykh}. Proposition~\ref{prop:Rg_trace} expresses the indicator function of $\Hom{\pio\Sigma}{S_n}$ inside the ambient $(S_n)^{E(\Sigma)}$ in terms of $\mathfrak{R}$-graphs, and we count embeddings by summing it over the appropriate tuples of edge permutations. The main difference now is that adding the immersed complex $T$ to the picture restricts the permutations at an edge $e$ to a certain subset $P_e$. Rewriting $P_e$ in terms of the standard subgroup gives rise to bilaterally $T$-matching corner permutations. Computation of the resulting ribbon graphs using Lemma~\ref{lmm:swap} and simplification then yields Proposition~\ref{prop:count_E}. For decluttering we drop primes from $\lambda$ and $\gamma$ in the proof below.
\begin{proof}[Proof of Proposition \ref{prop:count_E}]
First we need to put $\covers(T,\mathcal{I})$ into a suitable form. As mentioned in Section~\ref{sec:intro}, monodromy provides a correspondence between degree-$n$ coverings of $\Sigma$ (carrying a labelling $\mathcal{L}$ of the fibre with $[n]$ according to Remark~\ref{rmk:fibre_label}) and homomorphisms $\pio\Sigma\to S_n$. We claim that it induces a bijection
\begin{align}\begin{split}\label{eq:cover_hom_restricted}
\covers(T,\mathcal{I})\leftrightarrow&\{\varphi\colon\pio\Sigma\to S_n\mid\varphi(e)\in P_e\text{ for every }e\in E(\Sigma)\}\\
\text{where}\qquad P_e\defeq&\{\sigma\in S_n\mid\sigma\colon\mathcal{I}(i(e'))\mapsto\mathcal{I}(t(e'))\text{ for every }e'\in E_e\},
\end{split}\end{align}
and $i,t$ denote the initial and terminal vertices respectively. Indeed, consider $e'\in E_e(T)$ joining $v_-,v_+\in V(T)$. If $(\whS,\iota)\in\covers(T,\mathcal{I})$ has monodromy $\varphi$, then by definition (\ref{eq:vertex_labelling_pullback}) the embedding $\iota$ must send $v_\pm\mapsto\mathcal{L}_\whS^{-1}\circ\mathcal{I}(v_\pm)$, and since these image vertices are connected by the lift $\iota(e')$ of $e$, the monodromy $\varphi(e)\in S_n$ of $e$ sends the initial label $\mathcal{I}(v_-)$ to the terminal $\mathcal{I}(v_+)$. Conversely, if $\varphi\in\Hom{\pio\Sigma}{S_n}$ satisfies $\varphi(e)\colon\mathcal{I}(v_-)\mapsto\mathcal{I}(v_+)$, then the corresponding covering $\whS$ contains a lift of $e$ connecting the two vertices $\mathcal{L}_\whS^{-1}\circ\mathcal{I}(v_\pm)$, so the mapping $\mathcal{L}_\whS^{-1}\circ\mathcal{I}$ between vertices can be extended to $V(T)\cup\{e'\}\to\whS$. Taking the conjunction over all $E(T)$ proves that (\ref{eq:cover_hom_restricted}) is a bijection (note any embedding $T^{(1)}\hookrightarrow\whS$ of the 1-skeleton into a covering extends over the faces of $T$). As in Section~\ref{ssec:Mednykh}, we consider $\Hom{\pio\Sigma}{S_n}$ as a subset of the ambient $\Hom{\pi_1\Sigma^{(1)}}{S_n}$, which is identified with $(S_n)^{E(\Sigma)}$ since $E(\Sigma)$ freely generates the fundamental group of the 1-skeleton. In the ambient space the conditions $\varphi(e)\in P_e$ at different edges are independent and (\ref{eq:cover_hom_restricted}) becomes
\begin{equation*}
\covers(T,\mathcal{I})\quad\leftrightarrow\quad\prod_{e\in E(\Sigma)}P_e\cap\Hom{\pio\Sigma}{S_n}\quad\subseteq\quad(S_n)^{E(\Sigma)}.
\end{equation*}

The above intersection is suitable for our methods. It can be counted as
\begin{equation}\label{eq:double_counting}
\#\covers(T,\mathcal{I})=\sum_{\sigma_e\in P_e}\mathbbm{1}_{\{\vec\sigma\tn{ extends to homomorphism }\pio\Sigma\to S_n\}}.
\end{equation}
Using Proposition \ref{prop:Rg_trace}, we can rewrite the indicator functions from equation (\ref{eq:double_counting}) in terms of $\mathfrak{R}$-graphs. Then, remembering that the trace is multilinear in coupon colours, the RHS of (\ref{eq:double_counting}) becomes
\begin{align}\begin{split}\label{eq:sum_Rgraphs}
\sum_{\sigma_e\in P_e}&\mathbbm{1}_{\{\vec\sigma\tn{ extends to hom }\pio\Sigma\to S_n\}}=\frac{\prod_{i=1}^k\tau_{m_i}(S_n)}{(n!)^{\#F(\Sigma)}}\times\\
&\times\sum_{\lambda_f\vdash n}\prod_{f\in\Fspx(\Sigma)}\chi_{\lambda_f}(1)\hspace*{-5pt}\prod_\iterorb\hspace*{-10pt}\chi_{\lambda_f}^{m\tn{-tors}}\cdot\tr\mathfrak{R}\left(\vec\lambda,\left(\sum_{\sigma\in P_e}\sigma\right)_{e\in E(\Sigma)}\right),
\end{split}\end{align}
where $\mathfrak{R}$ is the $\mathfrak{R}$-graph constructed in Definition \ref{def:R_graph}, and we identified the set $\tn{Irr}(S_n)$ with $\{\lambda\vdash n\}$ according to Proposition~\ref{prop:repSn_OV} from Section~\ref{sec:rep_Sn}.

We will now reexpress the two-sided translate $P_e\subseteq S_n$ in terms of a standard subgroup. Let
\begin{alignat*}{4}
\mathcal{J}_e\colon&E_e\to[n-\mathfrak{e}_e+1,n]&&\text{for }e\in E(\Sigma)\\
\mathcal{J}_\iota\colon&H_\iota\to[n-\mathfrak{v}+1,n-\mathfrak{e}_e]\qquad&&\text{for incidence }\iota\colon v\hookrightarrow e\text{ in }\Sigma
\end{alignat*}
be the numbering of edges and half-edges that witnesses the bilateral $T$-matching of $\vec\gamma$ ($\mathcal{J}_e$ can be thought of as numbering midpoints of edges). Denote by $\alpha^{(i)}_e,\alpha^{(t)}_e\in S_{[n-\mathfrak{v}+1,n]}$ the permutations that send the $\mathcal{J}$-label of an edge or pair of half-edges to the $\mathcal{I}$-label of its initial and terminal vertex respectively. In symbols
\begin{align}\begin{split}\label{eq:alpha_beta}
\alpha^{(i)}_e\colon\mathcal{J}_e(e')\mapsto&\mathcal{I}(i(e'))\quad\text{for }e'\in E_e\\
\alpha^{(i)}_e\colon\mathcal{J}_\iota(h')\mapsto&\mathcal{I}(v')\hspace*{20pt}\begin{matrix}\text{for }h'\in H_\iota\text{ anchored at }v'\in V(T)\\\text{with }\iota\colon v\hookrightarrow e\text{ the initial incidence,}\end{matrix}
\end{split}\end{align}
and analogously for terminal ends $t$. Then it is straightforward to check that
\begin{equation}\label{eq:bicoset}
P_e=\alpha^{(t)}_e\cdot S_{n-\mathfrak{e}_e}\cdot(\alpha^{(i)}_e)^{-1}.
\end{equation}
For now, let us focus on the orientable case, so that all edge coupons of the $\mathfrak{R}$-graph look like in Figure \ref{sfig:Rg_edge}. Equation (\ref{eq:bicoset}) allows us to rewrite these edge coupons as
\begin{equation}\label{eq:standardise_summation}
\begin{tikzpicture}[baseline=0]
\draw[dotted](0,0)circle(1.8);
\node at(0,0){$\displaystyle\sum_{\sigma\in P_e}\sigma$};\node[fill=white](lli)at(-1,-1.5){$\lambda_L$};\node[fill=white](llo)at(-1,1.5){$\lambda_L$};\node[fill=white](lri)at(1,1.5){$\lambda_R$};\node[fill=white](lro)at(1,-1.5){$\lambda_R$};
\draw(-1.3,.6)--(1.3,.6)--(1.3,-.6)--(-1.3,-.6)--(-1.3,.6)(-1.3,-.5)--(1.3,-.5);
\draw[->](lli)--(-1,-.6);\draw[->](-1,.6)--(llo);\draw[->](lri)--(1,.6);\draw[->](1,-.6)--(lro);
\end{tikzpicture}
\qquad=\qquad
\begin{tikzpicture}[baseline=0]
\draw[dotted](0,0)circle(2.42);
\node at(0,0){$\displaystyle\sum_{\sigma\in S_{n-\mathfrak{e}_e}}\!\!\sigma$};\node[draw](bo)at(-1,1.3){$\alpha^{(t)}_e$};\node[draw](bi)at(1,1.3){$(\alpha^{(t)}_e)^{-1}$};\node[draw](ai)at(-1,-1.3){$(\alpha^{(i)}_e)^{-1}$};\node[draw](ao)at(1,-1.3){$\alpha^{(i)}_e$};\node[fill=white](li)at(-1,-2.2){$\lambda_L$};\node[fill=white](lo)at(-1,2.2){$\lambda_L$};\node[fill=white](ri)at(1,2.2){$\lambda_R$};\node[fill=white](ro)at(1,-2.2){$\lambda_R$};
\draw(-1.2,.6)--(1.2,.6)--(1.2,-.6)--(-1.2,-.6)--(-1.2,.6)(-1.2,-.5)--(1.2,-.5);
\draw[->](li)--(ai);\draw[->](ai)--(-1,-.6);\draw[->](-1,.6)--(bo);\draw[->](bo)--(lo);
\draw[->](ri)--(bi);\draw[->](bi)--(1,.6);\draw[->](1,-.6)--(ao);\draw[->](ao)--(ro);
\draw($(ai.south east)+(0,.07)$)--($(ai.south west)+(0,.07)$);\draw($(bo.south east)+(0,.07)$)--($(bo.south west)+(0,.07)$);\draw($(ao.north east)-(0,.07)$)--($(ao.north west)-(0,.07)$);\draw($(bi.north east)-(0,.07)$)--($(bi.north west)-(0,.07)$);
\node[font=\tiny]at(-.75,.8){$\lambda_L$};\node[font=\tiny]at(-.8,-.8){$\lambda_L$};\node[font=\tiny]at(1.2,.8){$\lambda_R$};\node[font=\tiny]at(1.2,-.8){$\lambda_R$};
\end{tikzpicture}\end{equation}
where on the right wires the inverse on $\alpha^{(t)}_e$ and the lack of it on $\alpha^{(i)}_e$ come from swapping the top and bottom bases of their coupons.

After this substitution two coupons appear on each corner wire, coming from the two edges adjacent to the corner. We join them into a single coupon, whose colour is the composition of the two permutations, as in Example~\ref{ex:ribbon_composition}. This operation is illustrated in detail in Figure \ref{fig:join_at_corner}.
\begin{figure}[h]
\begin{subfigure}{\textwidth}
\centering
\begin{tikzpicture}
\draw[->-](.7,-.3)--(0,1);\draw[->-](0,1)--(-.8,-.3);
\node at(0,.5){$c$};\node at(-.95,-.1){$e_+$};\node at(.9,-.1){$e_-$};
\node at(0,-.2){$f$};\draw[->](.3,-.2)arc(0:180:.3);
\draw(-.7,1.1)--(0,1)--(.7,1.1);
\node at(-.8,.65){$f_+$};\draw[->](-.8,.35)arc(-90:90:.3);
\node at(.8,.65){$f_-$};\draw[->](.8,.95)arc(90:270:.3);
\end{tikzpicture}
\caption{Local picture of $e_+\corner e_-$ in $\Sigma$ where corner permutations arise. Edges $e_\pm$ meet at a corner $c$ of a face $f$, and separate $f$ from faces $f_\pm$ respectively. Note that this is one of four possibilities of orientations of $e_\pm$.}
\end{subfigure}
\begin{subfigure}{\textwidth}
\begin{equation*}
\begin{tikzpicture}[baseline=0]
\draw[dotted](0,.3)ellipse(2.5 and 1.6);
\node[draw,inner sep=2](s2)at(-1.1,0){$\displaystyle\sum_{\sigma\in S_{n-\mathfrak{e}_+}}\!\!\sigma$};\draw($(s2.north west)-(0,.07)$)--($(s2.north east)-(0,.07)$);
\node[draw](a2i)at(-.7,1.2){$(\alpha_+)^{-1}$};\draw($(a2i.north west)-(0,.07)$)--($(a2i.north east)-(0,.07)$);
\draw[->](a2i.south)--($(s2.north)+(.4,0)$);
\node[fill=white](a2o)at(-2.1,1){$\lambda_+$};\node[fill=white,inner sep=2](b2i)at(-1.9,-.8){$\lambda_+$};\node[fill=white](b2o)at(-.7,-1.15){$\lambda$};
\draw[->]($(s2.north)-(.3,0)$)to[out=90,in=0](a2o);\draw[->](b2i)to[out=0,in=-90]($(s2.south)-(.3,0)$);\draw[->]($(s2.south)+(.4,0)$)--(b2o);
\node[draw,inner sep=2](s1)at(1.1,0){$\displaystyle\sum_{\sigma\in S_{n-\mathfrak{e}_-}}\!\!\sigma$};\draw($(s1.south east)+(0,.07)$)--($(s1.south west)+(0,.07)$);
\node[draw](b2o)at(.7,1.2){$\alpha_-$};\draw($(b2o.south west)+(0,.07)$)--($(b2o.south east)+(0,.07)$);
\draw[->]($(s1.north)-(.4,0)$)--(b2o.south);
\node[fill=white](b1i)at(2.2,1){$\lambda_-$};\node[fill=white](a1i)at(.7,-1.15){$\lambda$};\node[fill=white,inner sep=2](a1o)at(1.9,-.8){$\lambda_-$};
\draw[->](b1i)to[out=180,in=90]($(s1.north)+(.3,0)$);\draw[->](a1i)--($(s1.south)-(.4,0)$);\draw[->]($(s1.south)+(.3,0)$)to[out=-90,in=180](a1o);
\draw[->](b2o.north)to[out=90,in=0](0,1.8)to[out=180,in=90](a2i.north);
\node at(-.5,.7){$\lambda$};\node at(.5,.8){$\lambda$};\node at(.2,1.6){$\lambda$};
\end{tikzpicture}
\quad=\quad
\begin{tikzpicture}[baseline=0]
\draw[dotted](0,.2)ellipse(2.5 and 1.55);
\node[draw,inner sep=2](s2)at(-1.3,0){$\displaystyle\sum_{\sigma\in S_{n-\mathfrak{e}_+}}\!\!\sigma$};\draw($(s2.north west)-(0,.07)$)--($(s2.north east)-(0,.07)$);
\node[fill=white](a2o)at(-2.1,.9){$\lambda_+$};\node[fill=white,inner sep=2](b2i)at(-1.9,-.8){$\lambda_+$};\node[fill=white](b2o)at(-1,-1.15){$\lambda$};
\draw[->]($(s2.north)-(.2,0)$)to[out=90,in=0](a2o);\draw[->](b2i)to[out=0,in=-90]($(s2.south)-(.2,0)$);\draw[->]($(s2.south)+(.3,0)$)--(b2o);
\node[draw,inner sep=2](s1)at(1.3,0){$\displaystyle\sum_{\sigma\in S_{n-\mathfrak{e}_-}}\!\!\sigma$};\draw($(s1.south east)+(0,.07)$)--($(s1.south west)+(0,.07)$);
\node[fill=white](b1i)at(2.2,.9){$\lambda_-$};\node[fill=white](a1i)at(1,-1.15){$\lambda$};\node[fill=white,inner sep=2](a1o)at(2,-.8){$\lambda_-$};
\draw[->](b1i)to[out=180,in=90]($(s1.north)+(.2,0)$);\draw[->](a1i)--($(s1.south)-(.3,0)$);\draw[->]($(s1.south)+(.2,0)$)to[out=-90,in=180](a1o);
\node[draw,inner sep=2](g)at(0,1.1){$\begin{matrix}(\alpha_+)^{-1}\alpha_-\\=\gamma_c\end{matrix}$};\draw($(g.south east)-(.07,0)$)--($(g.north east)-(.07,0)$);
\draw[->]($(s1.north)-(.3,0)$)to[out=90,in=0](g.east);\draw[->](g.west)to[out=180,in=90]($(s2.north)+(.3,0)$);
\node at(-1.1,1.2){$\lambda$};\node at(1.1,1.2){$\lambda$};
\end{tikzpicture}
\end{equation*}
\caption{The equality allowing to move the permutation coupons from edges to corners. It is an application of the identity from Example~\ref{ex:ribbon_composition}. For decluttering we dropped a level of subscripts in $\alpha,\mathfrak{e},\lambda$, and omitted the superscripts in $\alpha^{(i)}_{e_+},\alpha^{(t)}_{e_-}$.}
\end{subfigure}
\caption{Relocation of the permutation coupons appearing in equation (\ref{eq:standardise_summation}) from edge sides to corners. If edge orientations at the corner differ then $\alpha^{(i)}$ may have to be replaced with $\alpha^{(t)}$ or vice versa.}
\label{fig:join_at_corner}
\end{figure}
By construction (\ref{eq:alpha_beta}) of $\alpha_e^{(i)},\alpha_e^{(t)}$, the resulting tuple of corner permutations is bilaterally $T$-matching $\vec{\mathcal{J}}$. Hence it must equal $\vec\gamma$.

After the substitution (\ref{eq:standardise_summation}) the colour of the coupon on edge $e$ is a sum over the standard subgroup $S_{n-\mathfrak{e}_e}$. Lemma \ref{lmm:swap} tells us how to perform this summation. Applying its main equality to each edge coupon turns $\mathfrak{R}$-graphs into $\mathfrak{P}$-graphs
\begin{align}\begin{split}\label{eq:summation_edges}
\tr\mathfrak{R}\left(\vec\lambda,\left(\sum_{\sigma\in P_e}\sigma\right)_e\right)=&\prod_{e\in E(\Sigma)}(n-\mathfrak{e}_e)!\times\\
&\times\sum_{\mu_e\vdash n-\mathfrak{e}_e}\quad\prod_{e\in E(\Sigma)}\chi_{\mu_e}(1)^{-1}\cdot\tr\mathfrak{P}\left(\vec\lambda,\vec\mu,\vec\gamma\right),
\end{split}\end{align}
where the prefactor comes from replacing sums with averages.

Combining equations (\ref{eq:double_counting}, \ref{eq:sum_Rgraphs}, \ref{eq:summation_edges}) proves the main identity of Proposition \ref{prop:count_E} when $\Sigma$ is orientable.

The case of non-orientable $\Sigma$ requires some small adjustments. At some edges $e$ the orientations of neighbouring faces will induce the same direction of $e$, as drawn on the left of Figure~\ref{sfig:Pg_e_no}. There, at the stage of equation (\ref{eq:standardise_summation}) we will either not swap the top and bottom bases of $\alpha^{(i)}_e,\alpha^{(t)}_e$ (if the orientation of $e$ agrees with the one induced by the faces, as in \ref{sfig:Pg_e_no}), or swap them on all 4 coupons (otherwise). Furthermore, at those edges we cannot apply Lemma \ref{lmm:swap} as-is to obtain equation (\ref{eq:summation_edges}), but instead we have to modify it according to equation (\ref{eq:clashing_resolution}). This produces the extra coupons coloured with $B\colon\mu\to\mu^*$ from Section~\ref{sssec:FrobSch}, in the configuration shown on the right of Figure \ref{sfig:Pg_e_no}. These two are the only differences from the orientable case, and with the two corrections the proof carries over to the non-orientable $\Sigma$.
\end{proof}

\subsection{Proof of Proposition \ref{prop:count_F} and the count of $F(T)$}\label{ssec:pf_F}

Proof of Proposition \ref{prop:count_F} consists of four steps. In the first warmup step we notice that representations of $S_{n-\mathfrak{v}}$ can be factored out. Next steps concern faces. In the second we modify the corner permutations, so that in their domains the numeric labels of edges land in $[n-\mathfrak{v}+1,n-\mathfrak{e}_e]$ and of faces in $[n-\mathfrak{f}_f+1,n]$. This changes bilateral $T$-matching to $T$-matching. Then, as in the first step, certain linear operators factor as tensor products, and we use it in the third step to ``split off'' a wire coloured $\lambda'_f\setminus\lambda_f$ from each wire circling around a face $f$. The fourth step is a change of summation, and gives rise to a factor $\prod_{f\in\Fspx(T)}n^{\underline{\mathfrak{f}_f}}$ and another one for $\Forb$.
\begin{proof}[Proof of Proposition \ref{prop:count_F}]
In the first step we examine the restrictions of representations to setwise stabilisers of $[n-\mathfrak{v}]$. Corner permutations $\vec\gamma'$ are supported on $[n-\mathfrak{v}+1,n]$, so their action preserves the decomposition of face representations
\begin{equation*}
\tn{Res}^{S_n}_{S_{n-\mathfrak{v}}\times S_{[n-\mathfrak{v}+1,n]}}\lambda'_f=\bigoplus_{\eta\vdash n-\mathfrak{v}}\eta\otimes(\lambda'_f\setminus\eta).
\end{equation*}
Edge-crossing representations similarly decompose as
\begin{equation*}
\tn{Res}^{S_{n-\mathfrak{e}_e}}_{S_{n-\mathfrak{v}}\times S_{[n-\mathfrak{v}+1,n-\mathfrak{e}_e]}}\mu_e=\bigoplus_{\eta\vdash n-\mathfrak{v}}\eta\otimes(\mu_e\setminus\eta).
\end{equation*}
The operators $P_{\mu_e}^{\lambda'_f},I_{\mu_e}^{\lambda'_f}$ are block-diagonal under these decompositions, so
\begin{equation}\label{eq:separate_eta}
\tr\mathfrak{P}(\vec\lambda',\vec\mu,\vec\gamma')=\sum_{\eta\vdash n-\mathfrak{v}}\tr\mathfrak{P}\left(\eta\otimes(\vec\lambda'\setminus\eta),\eta\otimes(\vec\mu\setminus\eta),\vec\gamma'\right),
\end{equation}
where we use a shorthand $\eta\otimes(\vec\mu\setminus\eta)=\left(\eta\otimes(\mu_e\setminus\eta)\right)_{e\in E(\Sigma)}$, and similarly for faces. Both the actions of $\vec\gamma'$ and the blocks of operators $P_{\mu_e}^{\lambda'_f},I_{\mu_e}^{\lambda'_f}$ are of the form $\tn{id}_\eta\otimes(\text{some linear map between }\lambda'_f\setminus\eta\text{ or }\mu_e\setminus\eta)$ that ``does nothing on $\eta$''. Therefore, in the orientable case, from each $\mathfrak{P}$-graph we can ``split off'' a wire coloured $\eta$ and winding around the vertex, namely
\begin{equation}\label{eq:factor_eta}
\mathfrak{P}\left(\eta\otimes(\vec\lambda'\setminus\eta),\eta\otimes(\vec\mu\setminus\eta),\vec\gamma'\right)=\mathfrak{P}(\vec\lambda'\setminus\eta,\vec\mu\setminus\eta,\vec\gamma')\sqcup\begin{tikzpicture}[baseline=-2]\node[font=\tiny](l)at(0,0){$\eta$};\draw[->]($(l.north)-(0,.05)$)arc(180:0:.1)--($(l.south)+(.2,.15)$);\draw($(l.south)+(0,.05)$)arc(-180:0:.1)--++(0,.1);\end{tikzpicture}.
\end{equation}
Trace of the loop coloured $\eta$ with no coupons is $\dim\eta=\chi_\eta(1)$, so putting together (\ref{eq:separate_eta}) and the trace of (\ref{eq:factor_eta}) gives
\begin{align}\begin{split}\label{eq:split_v}
\tr\mathfrak{P}(\vec\lambda',\vec\mu,\vec\gamma')=\sum_{\eta\vdash n-\mathfrak{v}}\chi_\eta(1)\tr\mathfrak{P}(\vec\lambda'\setminus\eta,\vec\mu\setminus\eta,\vec\gamma').
\end{split}\end{align}
Notice that only the summands with $\eta\!\in\!M(\vec\mu)$ contribute, since if $\eta\!\nsubseteq\!\mu_e$ for some $e$ then the subspace $\mu_e\!\setminus\!\eta$ is zero-dimensional and the corresponding $\mathfrak{P}$-graph vanishes. Equation (\ref{eq:split_v}) parallels \cite[equation (5.17)]{asymptotic_stats_random_covering_surfaces_MageePuder23}, where the phrasing of this idea is that none of the linear operators moves numbers $[n\!-\!\mathfrak{v}]$ of a Young tableau.

In the non-orientable case, we again obtain a loop coloured $\eta$ winding around the vertex, but this time it contains some coupons coloured $B^{\pm 1}$ coming from the edges looking like in Figure \ref{sfig:Pg_e_no} (equation (\ref{eq:repSn_bilinear_form}) shows that the bilinear form $B$ factors compatibly with the tensor product). Since all representations of the symmetric group have Frobenius--Schur indicator $\nu(\eta)=1$ (see Section~\ref{sec:rep_Sn}), we can freely swap top and bottom bases and then cancel these coupons. After this extra operation we arrive back at equation (\ref{eq:split_v}).

Now we move on to faces. The second step is to relate the corner permutations $\vec\gamma,\vec\gamma'$ respectively $T$-matching the numbering $\vec{\mathcal{J}}$ and bilaterally $T$-matching $\vec{\mathcal{J}}'$. For each edge-face incidence $\iota\colon e\hookrightarrow f$ of $\Sigma$, pick an auxiliary labelling of covered sides $\mathcal{K}_\iota\colon O_\iota\to[n-\mathfrak{f}_f+1,n]$. Then $\mathcal{J}\sqcup\mathcal{K}$ is a labelling of all sides of whole edges (both exposed and covered, but possibly differing between the two sides of an edge). Set
\begin{equation*}
\delta_\iota\defeq(\mathcal{J}_\iota\sqcup\mathcal{K}_\iota)\circ(\mathcal{J}'_e)^{-1}\in S_{[n-\mathfrak{e}_e+1,n]}.
\end{equation*}
In words, $\delta_\iota$ converts $\mathcal{J}'$-labels of exposed and covered sides in $S_\iota\cup O_\iota$ to their $(\mathcal{J}\sqcup\mathcal{K})$-labels, and leaves invariant the labels of hanging half-edges. For a corner $e_+\corner e_-$ define the permutation
\begin{equation*}
\gamma''_c\defeq\delta_{e_+\hookrightarrow f}\circ\gamma'_c\circ\delta_{e_-\hookrightarrow f}^{-1}\in S_{[n-\mathfrak{v}+1,n]}.
\end{equation*}

\begin{figure}[h]
\begin{subfigure}{\textwidth}
\centering
\begin{tikzpicture}
\node at(.2,0){$f$};\node at(-1,0){$e$};\node at(-.5,.5){$c_-$};\node at(-.5,-.5){$c_+$};
\draw(.3,.7)--(-.8,.7)--(-.8,-.7)--(.3,-.7);
\draw[->](.2,.3)arc(90:270:.3);
\end{tikzpicture}
\caption{Incidence $\iota\colon e\hookrightarrow f$ and its two adjacent corners $c_\pm$.}
\label{sfig:e_f_c_incidence}
\end{subfigure}
\begin{subfigure}{\textwidth}
\begin{equation*}
\begin{tikzpicture}[baseline=-1]
\node[draw](p)at(0,.8){$P$};\draw($(p.north west)-(0,.07)$)--($(p.north east)-(0,.07)$);
\node[draw](i)at(0,-.8){$I$};\draw($(i.north east)-(0,.07)$)--($(i.north west)-(0,.07)$);
\node(mo)at(-1,.4){$\mu\!\setminus\!\eta$};\node(mi)at(-1,-.3){$\mu\!\setminus\!\eta$};
\node[draw](cm)at(1.2,1.5){$\delta\gamma_-$};\draw($(cm.north east)-(.07,0)$)--($(cm.south east)-(.07,0)$);
\node[draw](cp)at(1.2,-1.5){$\gamma_+\delta^{-1}$};\draw($(cp.south west)+(.07,0)$)--($(cp.north west)+(.07,0)$);
\node(li)at(2.5,1.5){$\lambda'\!\setminus\!\eta$};\node(lo)at(2.5,-1.5){$\lambda'\!\setminus\!\eta$};
\draw[->]($(p.south)+(.15,0)$)--($(i.north)+(.15,0)$);\draw[->]($(p.south)-(.15,0)$)to[out=-90,in=0](mo);\draw[->](mi)to[out=0,in=90]($(i.north)-(.15,0)$);\draw[->](cm)to[out=180,in=90](p);\draw[->](i)to[out=-90,in=180](cp);\draw[->](li)--(cm);\draw[->](cp)--(lo);
\node at(0,1.7){$\lambda'\!\setminus\!\eta$};\node at(0,-1.6){$\lambda'\!\setminus\!\eta$};\node[rotate=-90]at(.4,0){$\lambda'\!\setminus\!\mu$};
\end{tikzpicture}
\qquad=\qquad
\begin{tikzpicture}[baseline=-1]
\node[draw](dp)at(.15,.4){$\delta$};\draw($(dp.north west)-(0,.07)$)--($(dp.north east)-(0,.07)$);
\node[draw](dm)at(.15,-.4){$\delta^{-1}$};\draw($(dm.north west)-(0,.07)$)--($(dm.north east)-(0,.07)$);
\node[draw](p)at(0,1.2){$P$};\draw($(p.north west)-(0,.07)$)--($(p.north east)-(0,.07)$);
\node[draw](i)at(0,-1.2){$I$};\draw($(i.north east)-(0,.07)$)--($(i.north west)-(0,.07)$);
\node[draw](cm)at(1.2,1.7){$\gamma_-$};\draw($(cm.north east)-(.07,0)$)--($(cm.south east)-(.07,0)$);
\node[draw](cp)at(1.2,-1.7){$\gamma_+$};\draw($(cp.south west)+(.07,0)$)--($(cp.north west)+(.07,0)$);
\node(mo)at(-1,.8){$\mu\!\setminus\!\eta$};\node(mi)at(-1,-.8){$\mu\!\setminus\!\eta$};\node(li)at(2.5,1.7){$\lambda'\!\setminus\!\eta$};\node(lo)at(2.5,-1.7){$\lambda'\!\setminus\!\eta$};
\draw[->]($(p.south)+(.15,0)$)--(dp);\draw[->](dp)--(dm);\draw[->](dm)--($(i.north)+(.15,0)$);\draw[->]($(p.south)-(.15,0)$)to[out=-90,in=0](mo);\draw[->](mi)to[out=0,in=90]($(i.north)-(.15,0)$);\draw[->](cm)to[out=180,in=90](p);\draw[->](i)to[out=-90,in=180](cp);\draw[->](li)--(cm);\draw[->](cp)--(lo);
\node at(-.3,1.75){$\lambda'\!\setminus\!\eta$};\node at(-.3,-1.7){$\lambda'\!\setminus\!\eta$};\node[font=\tiny]at(.55,-.85){$\lambda'\!\setminus\!\mu$};\node[font=\tiny]at(.55,0){$\lambda'\!\setminus\!\mu$};\node[font=\tiny]at(.55,.8){$\lambda'\!\setminus\!\mu$};
\end{tikzpicture}
\end{equation*}
\caption{Relocating $\delta^{\pm 1}_\iota$. For decluttering we dropped a level of subscripts on $\delta,\gamma,\mu,\lambda',P,I$.}
\label{sfig:slide_delta}
\end{subfigure}
\caption{Equalities between ribbon graphs with $\vec\gamma''$ and $\vec\gamma'$ proving (\ref{eq:tr_not_affected}).}
\label{fig:tr_not_affected}
\end{figure}

We claim that the tuples $\vec\gamma''$ and $\vec\gamma'$ give the same trace, i.e.
\begin{equation}\label{eq:tr_not_affected}
\tr\mathfrak{P}\left(\vec\lambda'\setminus\eta,\vec\mu\setminus\eta,\vec\gamma''\right)=\tr\mathfrak{P}\left(\vec\lambda'\setminus\eta,\vec\mu\setminus\eta,\vec\gamma'\right).
\end{equation}
To see this, consider an incidence $\iota\colon e\hookrightarrow f$ with $f\in\Fspx(\Sigma)$, and let $c_\pm$ be the two adjacent corners, with $f$ oriented away from $c_-$ towards $c_+$, as in Figure \ref{sfig:e_f_c_incidence}. The corresponding part of the $\mathfrak{P}$-graph is shown on the LHS of the equation in Figure \ref{sfig:slide_delta}. Since $\delta_\iota$ commutes with $S_{[n-\mathfrak{v}+1,n-\mathfrak{e}_e]}$, we can slide the coupons coloured $\delta^{\pm 1}_\iota$ past $P,I$ to the middle of the wire parallel to $e$, where they cancel. This equality is shown in Figure \ref{sfig:slide_delta}. Similar identity holds when $f$ is an orbigon, except that we relocate $\delta^{\pm 1}$ from both sides of the same corner coupon (see Figure \ref{sfig:Pg_orb} for the relevant part of the $\mathfrak{P}$-graph). Applying this equality at every edge-face incidence proves equation (\ref{eq:tr_not_affected}).

Let us examine the permutations $\vec\gamma''$ closer. By construction they are bilaterally $T$-matching $\mathcal{J}\sqcup\mathcal{K}$ (this numbering may differ on two sides of an edge, but the compatibility condition from Definition~\ref{def:T_compat} holds including covered sides). Another tuple satisfying this condition is $\vec\gamma$ extended by permutations ``$T$-matching the covered sides''. Thus by uniqueness we must have
\begin{equation}\label{eq:gamma_juxtaposition}
\gamma''_c=\gamma_c\sqcup\beta_c,
\end{equation}
where $\beta_c\in S_{[n-\mathfrak{f}_{f(c)}+1,n]}$ describes ``the next covered side across the corner'', i.e. is the function $\mathcal{T}_c$ from (\ref{eq:corner_traverse}) for covered sides $\vec{O}$ and conjugated by $\vec{\mathcal{K}}$.

In the third step we factor the graphs, similarly as in equation (\ref{eq:split_v}). We have decompositions
\begin{align*}
\tn{Res}_{S_{[n-\mathfrak{v}+1,n-\mathfrak{f}_f]}\times S_{[n-\mathfrak{f}_f+1,n]}}^{S_{[n-\mathfrak{v}+1,n]}}\lambda_f'\setminus\eta&=\bigoplus_{\lambda_f\vdash n-\mathfrak{f}_f}(\lambda_f\setminus\eta)\otimes(\lambda'_f\setminus\lambda_f)\\
\lambda_f'\setminus\mu_e&=\bigoplus_{\lambda_f\vdash n-\mathfrak{f}_f}(\lambda_f\setminus\mu_e)\otimes(\lambda'_f\setminus\lambda_f).
\end{align*}
Again, $\gamma_c\sqcup\beta_c$ act as $\gamma_c\otimes\beta_c$ under the first one, and operators $P,I$ are block-diagonal (blocks indexed by $\lambda_f$). Therefore we can split off wires coloured $\lambda'_f\setminus\lambda_f$ together with coupons coloured $\beta_c$ (we already saw splitting coupons in equation (\ref{eq:prod_action_rg})). Each simplex $f$ gives rise a loop with three coupons spelling the relator $W_f(\vec\beta)$ from presentation (\ref{eq:simplicial_pres}), and each orbigon $f$ with corner $c$ produces a loop with a single coupon coloured $\beta_c$. Therefore
\begin{align}\begin{split}\label{eq:splitoff_face}
\mathfrak{P}\left(\vec\lambda'\setminus\eta,\vec\mu\setminus\eta,\overrightarrow{\gamma\sqcup\beta}\right)=&\sum_{\lambda_f\vdash n-\mathfrak{f}_f}\mathfrak{P}\left(\vec\lambda\setminus\eta,\vec\mu\setminus\eta,\vec\gamma\right)\sqcup\\
&\sqcup\bigsqcup_{f\in\Fspx(\Sigma)}\begin{tikzpicture}[baseline=0]\node[font=\tiny,draw,inner sep=1pt](l)at(0,0){$W_f\!(\vec\beta)$};\draw(l.north)arc(180:0:.25)--($(l.south)+(.5,0)$);\draw[->]($(l.south)+(.5,0)$)arc(0:-180:.25);\node[font=\tiny]at(.3,.6){$\lambda'_f\!\!\setminus\!\!\lambda_f$};\end{tikzpicture}\sqcup\bigsqcup_\iterorb\begin{tikzpicture}[baseline=0]\node[font=\tiny,draw,inner sep=1pt](l)at(0,0){$\beta_{c(f)}$};\draw(l.north)arc(180:0:.25)--($(l.south)+(.5,0)$);\draw[->]($(l.south)+(.5,0)$)arc(0:-180:.25);\node[font=\tiny]at(.3,.6){$\lambda'_f\!\!\setminus\!\!\lambda_f$};\end{tikzpicture}.
\end{split}\end{align}
Since the permutations $\vec\beta$ represent ``the next numeric label across the corner'', applying the word $W_f(\vec\beta)$ for $f\in\Fspx(\Sigma)$ letter-by-letter just cycles the numeric labels of covered sides around the interior of each simplex in $F_f$, so
\begin{equation}\label{eq:W_covered_id}
W_f(\vec\beta)=1.
\end{equation}
Therefore the trace of a loop in (\ref{eq:splitoff_face}) coming from a simplex $f$ is the dimension of $\lambda'_f\setminus\lambda_f$, and the trace of (\ref{eq:splitoff_face}) equals
\begin{align*}
\tr\mathfrak{P}\left(\vec\lambda'\setminus\eta,\vec\mu\setminus\eta,\overrightarrow{\gamma\sqcup\beta}\right)=&\sum_{\lambda_f\vdash n-\mathfrak{f}_f}\tr\mathfrak{P}\left(\vec\lambda\setminus\eta,\vec\mu\setminus\eta,\vec\gamma\right)\times\\
&\times\prod_{f\in\Fspx(\Sigma)}\chi_{\lambda'_f\setminus\lambda_f}(1)\prod_\iterorb\chi_{\lambda'_f\setminus\lambda_f}(\beta_{c(f)}).
\end{align*}
Summing this over all $\lambda'_f$ with suitable weights gives
\begin{align}\begin{split}\label{eq:resummed_f}
\sum_{\lambda'_f\vdash n}\prod_{f\in\Fspx(\Sigma)}\chi_{\lambda'_f}(1)&\prod_\iterorb\chi_{\lambda'_f}^{m\tn{-tors}}\cdot\tr\mathfrak{P}\left(\vec\lambda'\setminus\eta,\vec\mu\setminus\eta,\overrightarrow{\gamma\sqcup\beta}\right)=\\
=&\sum_{\lambda_f\vdash n-\mathfrak{f}_f}\tr\mathfrak{P}(\vec\lambda\setminus\eta,\vec\mu\setminus\eta,\vec\gamma)\times\\
&\times\prod_{f\in\Fspx}\left(\sum_{\lambda'\vdash n}\chi_{\lambda'}(1)\cdot\tn{dim}(\lambda'\setminus\lambda_f)\right)\times\\
&\times\prod_\iterorb\left(\sum_{\lambda'\vdash n}\chi_{\lambda'_f}^{m\tn{-tors}}\chi_{\lambda'\setminus\lambda_f}(\beta_{c(f)})\right).
\end{split}\end{align}
Again, any summand where $\lambda_f\notin N_f(\vec\mu)$ at some $f$ vanishes, because the corresponding space $\lambda_f\setminus\mu_e$ is zero-dimensional. We can recognise that the product of $\chi_{\lambda'_f}(1)$ and $\chi_{\lambda'_f}^{m\tn{-tors}}$ on the LHS is $R(\emptyset,\vec\emptyset,\vec\lambda')$.

Summations in the last two lines can be performed with Lemma \ref{lmm:change_sum_tors}. Applying it with $G=S_n,H=S_{n-\mathfrak{f}_f},U=\lambda_f,x=1,m=1$ tells us
\begin{equation*}
\sum_{\lambda'\vdash n}\chi_{\lambda'}(1)\cdot\dim(\lambda'_f\setminus\lambda)=n^{\underline{\mathfrak{f}_f}}\cdot\chi_\lambda(1).
\end{equation*}
Similarly as we argued in equation (\ref{eq:W_covered_id}), the cycles of $\beta_{c(f)}$ for $f\in\Forb(\Sigma)$ reflect the circles of covered sides facing the interiors of faces in $F_f$, so $\beta_c^m=1$. Therefore, applying Lemma~\ref{lmm:change_sum_tors} with $G=S_n,H=S_{n-\mathfrak{f}_f},U=\lambda_f,x=1$ gives
\begin{equation*}
\tau_m(S_n)\sum_{\lambda'\vdash n}\chi_{\lambda'}^{m\tn{-tors}}\chi_{\lambda'\setminus\lambda}(\beta)=n^{\underline{\mathfrak{f}_f}}\cdot\tau_m(S_{n-\mathfrak{f}_f})\cdot\chi_\lambda^{m\tn{-tors}}.
\end{equation*}
These identities allow us to rewrite the equation (\ref{eq:resummed_f}) as
\begin{align*}
\sum_{\lambda'_f\vdash n}R(\emptyset,\vec\emptyset,\vec\lambda')\tr\mathfrak{P}\left(\vec\lambda'\!\setminus\!\eta,\vec\mu\!\setminus\!\eta,\overrightarrow{\gamma\!\sqcup\!\beta}\right)=&\hspace*{-5pt}\prod_{f\in\Fspx(\Sigma)}n^{\underline{\mathfrak{f}_f}}\cdot\hspace*{-5pt}\prod_\iterorb\frac{n^{\underline{\mathfrak{f}_f}}\tau_m(S_{n-\mathfrak{f}_f})}{\tau_m(S_n)}\times\\
&\times\hspace*{-5pt}\sum_{\lambda_f\in N_f(\vec\mu)}\hspace*{-5pt}R(\emptyset,\vec\emptyset,\vec\lambda)\tr\mathfrak{P}\left(\vec\lambda\setminus\eta,\vec\mu\setminus\eta,\vec\gamma\right).
\end{align*}
Putting this equality together with (\ref{eq:split_v}) and remembering (\ref{eq:tr_not_affected}, \ref{eq:gamma_juxtaposition}) and the definition of $R$ from Theorem~\ref{thm:count_tangles}, we obtain the desired equality of Proposition \ref{prop:count_F}.
\end{proof}

\section{Asymptotics}\label{sec:asymptotics}

\subsection{Auxiliary results}

Here we collect some facts and estimates that will be useful for the asymptotic analysis of Theorem~\ref{thm:count_tangles} and deducing the master Theorem~\ref{thm:E_combi_emb} in Section~\ref{ssec:pf_E_combi_emb}. The first two results concern torsion in symmetric groups. We write $\sim$ to mean the ratio converging to 1.
\begin{lemma}\label{lmm:Sn_tors_growth}
For every $d,m\in\mathbb{N}_+$ we have $\frac{\tau_m(S_{n+d})}{\tau_m(S_n)}\sim n^{d(1-m^{-1})}$ as $n\to\infty$.
\end{lemma}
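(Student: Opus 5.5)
The plan is to reduce to $d=1$ and then read off the growth of $\tau_m(S_n)$ from its exponential generating function by a saddle-point (Hayman) estimate.

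\emph{Reduction to $d=1$.} We have
\begin{equation*}
\frac{\tau_m(S_{n+d})}{\tau_m(S_n)}=\prod_{j=0}^{d-1}\frac{\tau_m(S_{n+j+1})}{\tau_m(S_{n+j})}.
\end{equation*}
Suppose $\tau_m(S_{N+1})/\tau_m(S_N)\sim N^{1-m^{-1}}$. For fixed $d$ and $0\leqslant j<d$ we have $(n+j)^{1-m^{-1}}\sim n^{1-m^{-1}}$, so the product is $\sim n^{d(1-m^{-1})}$. It therefore suffices to treat $d=1$.

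\emph{Generating function.} An element of $S_n$ has order dividing $m$ exactly when all its cycle lengths divide $m$. The exponential formula then gives
\begin{equation*}
\sum_{n\geqslant0}\tau_m(S_n)\frac{x^n}{n!}=\exp\big(P(x)\big),\qquad P(x)\defeq\sum_{k\mid m}\frac{x^k}{k}.
\end{equation*}
Write $a_n\defeq\tau_m(S_n)/n!$, so that
\begin{equation*}
\frac{\tau_m(S_{n+1})}{\tau_m(S_n)}=(n+1)\frac{a_{n+1}}{a_n}.
\end{equation*}
The polynomial $P$ has nonnegative coefficients and contains the monomial $x$, so it is aperiodic. By Hayman's theorem $e^{P}$ is admissible, and
\begin{equation*}
a_n\sim\frac{e^{P(r_n)}}{r_n^n\sqrt{2\pi b(r_n)}},
\end{equation*}
where $r_n$ is the unique positive solution of $r_nP'(r_n)=n$ and $b(r)=rP'(r)+r^2P''(r)$. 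Since $rP'(r)=\sum_{k\mid m}r^k=r^m(1+o(1))$, we get $r_n\sim n^{1/m}$ and $b(r_n)\sim mn$.

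\emph{The ratio.} Taking the quotient of the two Hayman formulas,
\begin{equation*}
\frac{a_{n+1}}{a_n}\sim\frac{1}{r_{n+1}}\cdot\exp\Big(\big[P(r_{n+1})-n\log r_{n+1}\big]-\big[P(r_n)-n\log r_n\big]\Big)\cdot\sqrt{\frac{b(r_n)}{b(r_{n+1})}}.
\end{equation*}
\begin{itemize}
\item The square-root factor tends to $1$, because $b(r_n)\sim mn$.
\item The exponent is the increment of $\phi(t)\defeq P(e^t)-nt$ between $\log r_n$ and $\log r_{n+1}$. This increment is the main point to check. The defining equation of $r_n$ says exactly that $\phi'(\log r_n)=0$, so the increment is second order. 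Its size is about $\phi''\cdot(\Delta\log r)^2$, where $\phi''\approx b(r)\sim mn$ and $\Delta\log r=\log(r_{n+1}/r_n)\sim\frac{1}{mn}$, the latter from differentiating $r^mP'$-type asymptotics implicitly. Hence the exponent is $O(1/n)\to0$.
\end{itemize}
Consequently $a_{n+1}/a_n\sim 1/r_{n+1}\sim n^{-1/m}$. Multiplying by $n+1$ gives
\begin{equation*}
\frac{\tau_m(S_{n+1})}{\tau_m(S_n)}\sim n^{1-1/m},
\end{equation*}
which is the case $d=1$.

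An elementary alternative to Hayman's theorem, if one prefers to avoid it, is the recursion $\tau_m(S_{n+1})=\sum_{k\mid m}n^{\underline{k-1}}\tau_m(S_{n+1-k})$, obtained by looking at the cycle containing $n+1$. Its dominant term $k=m$ suggests $\rho_n^m\approx n^{m-1}$ for $\rho_n\defeq\tau_m(S_{n+1})/\tau_m(S_n)$. The hard part there is first showing that $\rho_n$ varies regularly enough, e.g.\ that it is eventually monotone, to make this heuristic rigorous. I would not take this route.
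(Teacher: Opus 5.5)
Your proof is correct. It differs from the paper only in being self-contained. The paper gives no argument of its own: it declares the lemma an immediate consequence of M\"uller's Theorem~6 in \S5.2(c) of \cite{finite_gp_actions_asymptotic_expansion_ePz_Mueller97}. That result is a complete asymptotic expansion of $\tau_m(S_n)$, from which the ratio asymptotics can be read off.

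M\"uller's expansion is itself obtained by a saddle-point analysis of $e^{P(z)}$, so the underlying mechanism is the same. You extract only what this lemma needs:
\begin{itemize}
\item the leading-order Hayman formula, where admissibility holds because $e^{P}$ has positive coefficients, thanks to the monomial $x$ in $P$;
\item the observation that the exponent $\phi(t)=P(e^t)-nt$ is stationary at $\log r_n$.
\end{itemize}

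The stationarity step is the one that genuinely needs care. The quantities $P(r_n)$ and $n\log r_n$ are each of order $n$, so substituting $r_n\sim n^{1/m}$ into the two Hayman formulas separately would not control the ratio. You identified this correctly.

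You can make that step rigorous without the second-order heuristic. The derivative $\phi'(t)=a(e^t)-n$ is increasing, with value $0$ at $\log r_n$ and $1$ at $\log r_{n+1}$. Hence $0\leqslant\phi(\log r_{n+1})-\phi(\log r_n)\leqslant\log(r_{n+1}/r_n)$, and the right-hand side tends to $0$ because $r_n\sim n^{1/m}$.

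Your route gives a self-contained proof of exactly the first-order statement used in Section~\ref{sec:asymptotics}. The citation additionally gives the lower-order terms of the expansion, which this lemma does not need.
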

\begin{lemma}\label{lmm:tors_avg_char}
Let $m\in\mathbb{N}_+$ and pick $\varepsilon>0$. For sufficiently large $n\in\mathbb{N}$, and $\lambda\vdash n$, we have $|\chi_\lambda^{m\tn{-tors}}|\leqslant\chi_\lambda(1)^{\frac{1}{m}+\varepsilon}$.
\end{lemma}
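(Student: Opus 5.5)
The plan is to bound the average $\chi_\lambda^{m\tn{-tors}}=\tau_m(S_n)^{-1}\sum_{x^m=1}\chi_\lambda(x)$ by sorting the $m$-torsion elements according to cycle type and using pointwise character bounds of Larsen--Shalev type. An element $x$ with $x^m=1$ has all cycle lengths dividing $m$. Its type is recorded by the numbers $a_d(x)$ of $d$-cycles, $d\mid m$. We have $\sum_d d\,a_d=n$, and the most important statistic is the number of fixed points $a_1$.

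The first step is a class-size computation refining Lemma~\ref{lmm:Sn_tors_growth}. Via the exponential generating function $\exp\bigl(\sum_{d\mid m}z^d/d\bigr)$ and a saddle-point estimate, a typical $m$-torsion element has $a_d\approx n^{d/m}$ for each $d\mid m$. The proportion of $m$-torsion elements with $a_1\geqslant K$ should decay roughly like $\exp\bigl(-cK\log(K/n^{1/m})\bigr)$ once $K\gg n^{1/m}$, and similarly for large $a_d$ with $1<d<m$. I would also include the trivial estimate that the class of a fixed type has proportion $\prod_d (n^{d/m})^{a_d}/(d^{a_d}a_d!)$ up to a normalisation $\sim e^{O(n^{1-1/m})}$. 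The goal is an inequality of the form
\begin{equation*}
\frac{\#\{x^m=1,\ a_1(x)=k\}}{\tau_m(S_n)}\leqslant \exp\bigl(-k\log(k/n^{1/m})+O(k+n^{1/m}\log n)\bigr).
\end{equation*}

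The second step is the pointwise bound. Here I would invoke Larsen--Shalev's character bound $|\chi_\lambda(\sigma)|\leqslant\chi_\lambda(1)^{E(\sigma)+\varepsilon}$, valid uniformly in $\lambda$ for $n$ large. The exponent $E(\sigma)$ is determined by the growth of cycle counts: if $\sigma$ has at most $n^{\alpha_d}$ cycles of each length $d$, then $E(\sigma)=\max_d\alpha_d/d$ roughly. For a typical $m$-torsion element this gives $E\leqslant\frac1m+o(1)$, hence $|\chi_\lambda(x)|\leqslant\chi_\lambda(1)^{1/m+\varepsilon}$ on the bulk of the torsion elements. For elements with $a_d\geqslant n^{d/m+\delta}$ for some $d$, I would use either the cruder Larsen--Shalev exponent or simply $|\chi_\lambda(x)|\leqslant\chi_\lambda(1)$, and absorb these elements using the superexponential decay from the first step. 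This absorption is only possible when $\chi_\lambda(1)\leqslant\exp(n^{1-\delta'})$ or so. When $\chi_\lambda(1)$ is larger I would instead use $E(x)\leqslant1-\delta''$ for elements with fewer than $n^{1-\delta}$ fixed points, together with the fact that elements with more fixed points are exponentially rare.

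The main obstacle is uniformity in $\lambda$ when $\chi_\lambda(1)$ is small, say polynomial in $n$ (for instance $\lambda$ close to $(n)$ or $(1^n)$). In that range the factor $\chi_\lambda(1)^\varepsilon$ gives almost no room, while the tail of torsion elements with many fixed points contributes a genuinely polynomial amount. For this regime I would abandon Larsen--Shalev and compute directly. Write $\lambda=(n-k,\bar\lambda)$ with $k$ bounded, or the transpose, which only changes the sign by $\tn{sgn}$. Then $\chi_\lambda(x)$ is a polynomial of degree $k$ in the cycle counts $a_1,\dots,a_k$ of $x$, with leading term $\chi_{\bar\lambda}(1)\binom{a_1}{k}$. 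Averaging over torsion elements using the moments $\mathbb{E}[a_1^{\,j}]\sim n^{j/m}$ from step one gives $|\chi^{m\tn{-tors}}_\lambda|=O(n^{k/m})$. Since $\chi_\lambda(1)\asymp n^k$, this is at most $\chi_\lambda(1)^{1/m+\varepsilon}$ for large $n$. The remaining task, which I expect to be the most delicate part, is to interpolate between this polynomial regime and the Larsen--Shalev regime for $k$ growing slowly with $n$. There I would rely on a uniform version of the polynomial bound: Roichman-type estimates $|\chi_\lambda(x)|/\chi_\lambda(1)\leqslant(\max(\lambda_1,\lambda_1^\vee)/n)^{c\,\tn{supp}(x)}$ should serve to glue the two regimes together.
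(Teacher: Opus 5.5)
The plan does not close, and the difficulty sits at the opposite end from where you put it: small dimensions are easy, large dimensions are where your tools fail.

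\textbf{The small-dimension regime is fine.} For bounded $k$, character polynomials of weighted degree $k$ and the moment bounds $\mathbb{E}\bigl[\prod_d a_d^{j_d}\bigr]=O\bigl(n^{\sum_d dj_d/m}\bigr)$ do give $O(n^{k/m})$. For transposed shapes, bound $|\tn{sgn}(x)\chi_\lambda(x)|\leqslant|\chi_\lambda(x)|$ termwise; the average does not just change by a global sign. In fact Larsen--Shalev plus your tail estimate already handle, roughly, every $\lambda$ with $\log\chi_\lambda(1)\lesssim n^{1/m+\delta}\log n$. That, and not $\chi_\lambda(1)\leqslant\exp(n^{1-\delta'})$, is the real absorption threshold.

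\textbf{Beyond that threshold the termwise bound fails.} Since $|\chi_\lambda(x)|\leqslant\chi_\lambda(1)^{E(x)+\varepsilon}$ is uniform in $\lambda$, its exponent is dictated by small characters. For example, $\chi_{(n-1,1)}(x)=a_1(x)-1$ essentially forces $E(x)\geqslant\log a_1(x)/\log n$. This is all your plan knows about atypical elements.
\begin{itemize}
\item Torsion elements with $a_1\approx n^\beta$, $\tfrac1m<\beta<1$, make up a proportion about $\exp\bigl(-(\beta-\tfrac1m)n^\beta\log n\bigr)$.
\item For a near-square $\lambda$, where $\log\chi_\lambda(1)\asymp n\log n$, the termwise bound on them exceeds the target by $\chi_\lambda(1)^{\beta-1/m}=\exp(\Theta(n\log n))$. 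Their rarity cannot absorb this.
\end{itemize}
Your large-dimension fallback has the same defect.
\begin{itemize}
\item Using $E(x)\leqslant1-\delta''$ loses $\chi_\lambda(1)^{1-1/m-\delta''-\varepsilon}$ against a rarity of only about $\exp(-\delta n^{1/m+\delta}\log n)$.
\item The trivial bound on elements with $a_1\geqslant n^{1-\delta}$ loses $\chi_\lambda(1)^{1-1/m-\varepsilon}$ against $\exp(-\Theta(n^{1-\delta}\log n))$.
\end{itemize}

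\textbf{Roichman-type bounds cannot glue the regimes.} They save a factor $e^{-O(\mathrm{supp}(x))}=e^{-O(n)}$, which is $\chi_\lambda(1)^{-o(1)}$ for near-square $\lambda$. Already for $\lambda=(n-k,\bar\lambda)$ with $k\to\infty$ slowly, they give about $\chi_\lambda(1)e^{-ck}$ rather than $\chi_\lambda(1)^{1/m+\varepsilon}$.

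\textbf{What is missing.} You need a character estimate in which the price of fixed points (and of surplus short cycles) is a factor depending on $a_1$ but not growing like a power of $\chi_\lambda(1)$. One possible starting point is
\begin{equation*}
\chi_\lambda(x)=\sum_{\mu\vdash a_1}\chi_\mu(1)\,\chi_{\lambda\setminus\mu}(y),
\end{equation*}
with $y$ the fixed-point-free part of $x$. You would then need skew-shape bounds at $y$ of the form $\chi_{\lambda\setminus\mu}(1)^{1/m+\varepsilon}$ times such factors, plus a careful treatment of the intermediate range of dimensions. That is a substantial piece of character theory in its own right. The paper does not reprove it: it deduces the lemma directly from Theorem~B(ii) of M\"uller--Schlage-Puchta, a character estimate aimed at exactly these sums over $m$-torsion elements of $S_n$.
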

Lemma~\ref{lmm:Sn_tors_growth} is an immediate consequence of \cite[Theorem 6 in \S5.2(c)]{finite_gp_actions_asymptotic_expansion_ePz_Mueller97}, while Lemma \ref{lmm:tors_avg_char} follows readily from \cite[Theorem B.(ii)]{char_theory_symm_gps_subgp_growth_fuchsian_gps_random_walks_MuellerSchlagePuchta07}.

A distinction between stable and thick Young diagrams will be important in the upcoming Section~\ref{ssec:pf_E_combi_emb}. The stable diagrams are of the form
\begin{align*}
\lambda[n]\defeq&(n-m,\lambda_1,\dots,\lambda_k)\vdash n\quad\text{for}\quad\lambda=(\lambda_1,\dots,\lambda_k)\vdash m,n\geqslant m+\lambda_1,
\end{align*}
where $\lambda$ is considered small in a precise sense depending on the context. Pictorially, $\lambda[n]$ is obtained by stacking $\lambda$ under a single row, of length such that the total number of boxes is $n$ (we follow the convention in \cite{partition_alg_Kronecker_coeffs_BowmanVissherOrellana15} rather than \cite{asymptotic_stats_random_covering_surfaces_MageePuder23}). The set of thick diagrams is
\begin{equation*}
\Lambda(n,b)\defeq\left\{\lambda\vdash n\Bigg|\begin{matrix}\lambda\text{ has at least }b\text{ boxes outside the first row}\\\text{and at least }b\text{ boxes outside the first column}\end{matrix}\right\},
\end{equation*}
where $b$ is thought of as large. Note that, for $n\geqslant 2b$, the set of all diagrams can be partitioned into thick, stable, and transposed stable diagrams as
\begin{equation}\label{eq:stable_thick}
\{\lambda\vdash n\}=\Lambda(n,b)\sqcup\left\{\lambda[n]\mid\lambda\vdash k\text{ for }k<b\right\}\sqcup\left\{\lambda[n]^\vee\mid\lambda\vdash k\text{ for }k<b\right\}.
\end{equation}
With a fixed $b$ and growing parameter $n$, the stable part is a union of finitely many families of diagrams with growing first rows.

We will use the following properties of stable and thick diagrams. Directly from the hook length formula (\ref{eq:hooklen}) we obtain that
\begin{equation}\label{eq:growing_row_dim}
\dim\lambda[n]\sim\frac{\dim\lambda}{|\lambda|!}\cdot n^{|\lambda|}.
\end{equation}
The following proposition will control transposes of stable Young diagrams.
\begin{proposition}\label{prop:tall_tors_bd}
Fix a partition $\lambda=(\lambda_1,\dots,\lambda_k)$ and an even $m\in\mathbb{N}_+$. Consider partitions $(\lambda^\vee[n])^\vee=(1+\lambda_1,\dots,1+\lambda_k,1^{n-k-|\lambda|})$ of $n$ obtained by attaching $\lambda$ to a single column of boxes. There exists $c>0$ such that
\begin{equation*}
\chi_{(\lambda^\vee[n])^\vee}^{m\tn{-tors}}=O\left(\exp\left(-cn^{\frac{1}{m}}\right)\right)
\end{equation*}
as $n\to\infty$.
\end{proposition}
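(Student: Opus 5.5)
The plan is to reduce the statement to a comparison of two generating functions: the signed and the unsigned count of $m$-torsion elements, each weighted by a character polynomial. Then I would estimate the ratio of their coefficients with a saddle-point argument.

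First, I would use the fact from Section~\ref{sec:rep_Sn} that transposition of diagrams corresponds to tensoring with the sign: $(\lambda^\vee[n])^\vee\cong\tn{sgn}\otimes\lambda^\vee[n]$. Hence
\begin{equation*}
\chi_{(\lambda^\vee[n])^\vee}^{m\tn{-tors}}=\frac{1}{\tau_m(S_n)}\sum_{x^m=1}\tn{sgn}(x)\,\chi_{\lambda^\vee[n]}(x).
\end{equation*}
Since $\lambda^\vee[n]$ is a stable diagram, its character is given by a character polynomial. That is, $\chi_{\lambda^\vee[n]}(x)=p(c_1(x),c_2(x),\dots)$ for a fixed polynomial $p$ in the cycle counts $c_d(x)$, independent of $n$ once $n$ is large, and $p$ can be written as a finite combination of products of falling factorials $c_d^{\underline{j}}$. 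For $x^m=1$ only $c_d$ with $d\mid m$ are nonzero.

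Second, I would use the cycle-index exponential formula
\begin{equation*}
\sum_{n}\frac{z^n}{n!}\sum_{\substack{x\in S_n\\x^m=1}}\tn{sgn}(x)\prod_{d\mid m}t_d^{c_d(x)}=\exp\Big(\sum_{d\mid m}(-1)^{d-1}\frac{t_dz^d}{d}\Big).
\end{equation*}
Applying $\partial_{t_d}^j$ at $t=1$ turns falling factorials of $c_d$ into multiplication by $\big((-1)^{d-1}z^d/d\big)^j$. So the numerator equals $n!\,[z^n]\,Q(z)\exp(P_-(z))$, where $Q$ is a fixed polynomial and $P_-(z)=\sum_{d\mid m}(-1)^{d-1}z^d/d$. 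The denominator is $\tau_m(S_n)=n!\,[z^n]\exp(P_+(z))$ with $P_+(z)=\sum_{d\mid m}z^d/d$. Because $m$ is even, the leading term of $P_-$ is $-z^m/m$.

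Third, I would carry out the saddle-point comparison. For $\exp(P_+)$ the dominant saddle is real, at $r\approx n^{1/m}$, giving
\begin{equation*}
[z^n]e^{P_+}\asymp r^{-n}\exp\Big(\frac{n}{m}+\sum_{d<m,\,d\mid m}\frac{r^d}{d}\Big)n^{-1/2}.
\end{equation*}
For $\exp(P_-)$ the saddles lie near $z^m=-n$, i.e. at $z=re^{i\pi(2j+1)/m}$. There the modulus of $e^{-z^m/m}z^{-n}$ is exactly the same as in the unsigned case. The lower-order terms, however, contribute $\tn{Re}\big((-1)^{d-1}r^de^{i\pi d(2j+1)/m}\big)/d\le r^d/d$. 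In particular the $d=1$ term loses $(1-\cos(\pi/m))\,r$. Thus the signed coefficient is at most a polynomial factor, coming from $Q$ and the Gaussian widths, times $\exp(-c\,n^{1/m})$ relative to the unsigned one, for any $c<1-\cos(\pi/m)$. Lemma~\ref{lmm:Sn_tors_growth}, or Müller's asymptotics underlying it, pins down the denominator precisely. The polynomial factors are then absorbed by shrinking $c$.

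The main obstacle is making the saddle-point estimate for $\exp(P_-)$ rigorous. Hayman admissibility does not apply, because the leading coefficient is negative. My first approach is a direct contour argument on the circle $|z|=r$ with $r^m=n$: bound $|e^{P_-(z)}|$ uniformly off small arcs around the $m$ rotated saddles, and on those arcs do a local Gaussian expansion. The key point is that on this whole circle $\tn{Re}\,P_-(z)\le n/m+\sum_{d<m}r^d/d-c'r$, except near points where $z^m$ is positive real. Those points are far from where $\tn{Re}(-z^m/m)$ is maximal, so there the leading term itself already gives exponential savings of order $n$. A crude bound on the circle, without locating the saddles exactly, should therefore suffice for the upper bound claimed.
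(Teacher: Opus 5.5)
Your argument is correct, and it takes a genuinely different route from the paper's.

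\textbf{The paper's route.} The paper never writes down generating functions for general $\lambda$. It embeds $(\lambda^\vee[n])^\vee$ in $V_n=\tn{Ind}^{S_n}_{S_{n-|\lambda|}\times S_{|\lambda|}}(\tn{sgn}\otimes\lambda)$. By Frobenius reciprocity and the branching rule, $V_n$ is $(\lambda^\vee[n])^\vee$ plus boundedly many $((\lambda')^\vee[n])^\vee$ with $|\lambda'|<|\lambda|$. The induced-character formula then factors $\chi^{m\tn{-tors}}_{V_n}$ as $\binom{n}{|\lambda|}\frac{\tau_m(S_{n-|\lambda|})}{\tau_m(S_n)}\tau_m(S_{|\lambda|})\chi^{m\tn{-tors}}_\lambda$ times the sign average over $m$-torsion in $S_{n-|\lambda|}$. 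The first factor is polynomially bounded by Lemma~\ref{lmm:Sn_tors_growth}. Induction on $|\lambda|$ therefore reduces everything to the sign case, whose $\exp(-cn^{1/m})$ decay is quoted from Liebeck--Shalev.

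\textbf{Your route and what each buys.} You use the character polynomial of the stable diagram $\lambda^\vee[n]$, the signed cycle-index exponential generating function, and a Cauchy estimate. This treats all $\lambda$ at once, with no induction and no Pieri-type bookkeeping. It also reproves the quoted base case (your case $Q=1$) and gives an explicit admissible constant, any $c<1-\cos(\pi/m)$. The price is importing character-polynomial theory and saddle-point asymptotics for $\tau_m(S_n)$. The paper instead stays inside the representation theory of Section~\ref{sec:rep_Sn} plus one external lemma.

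\textbf{Two points of execution.}

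First, as you suspect, no local Gaussian analysis is needed; $|[z^n]Qe^{P_-}|\le r^{-n}\max_{|z|=r}|Qe^{P_-}|$ suffices. On $|z|=r$, write $z=re^{i\theta}$ and fix a small $\delta>0$.
\begin{itemize}
\item If $|\theta|<\pi/m-\delta$, then $\cos(m\theta)>-\cos(m\delta)$, so the term $-z^m/m$ alone loses order $n$ against $P_+(r)$.
\item Otherwise $\cos\theta\le\cos(\pi/m-\delta)$, so the $d=1$ term loses at least $(1-\cos(\pi/m-\delta))r$. Every other term has real part at most $r^d/d$.
\end{itemize}

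Second, Lemma~\ref{lmm:Sn_tors_growth} only controls the ratios $\tau_m(S_{n+d})/\tau_m(S_n)$, so it does not pin down the denominator. You need a Hayman (or Moser--Wyman) lower bound $\tau_m(S_n)/n!\gg e^{P_+(\rho)}\rho^{-n}n^{-1/2}$ at the positive saddle $\rho$ of $e^{P_+(z)}z^{-n}$. This is legitimate, since $e^{P_+}$ is admissible. It is cleanest to run the Cauchy estimate on $|z|=\rho$ too. With $r=n^{1/m}$ instead, you must check that $P_+(r)-n\log r$ exceeds its minimum by only $O(1)$. This holds because $n^{1/m}-\rho=O(\rho^{1-m/2})$ while the second derivative is of order $\rho^{m-2}$.
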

Proof of Proposition \ref{prop:tall_tors_bd} is deferred to Section \ref{ssec:pf_tall_tors_bd}. It will also reveal that $\chi^{m\tn{-tors}}_{\lambda[n]}$ has a series expansion in the powers of $n^{-1}$. The thick tuples will generally form error terms, and will be controlled using the following result of \cites[Proposition 2.5]{fuchsian_gps_coverings_Riemann_surfaces_subgp_growth_random_quots_walks_LiebeckShalev04}[Proposition 4.2]{Poisson_Dirichlet_distribution_rand_Belyi_surfaces_Gamburd06}.
\begin{lemma}\label{lmm:LSG}
For any fixed $b\in\mathbb{N},s>0$ we have
\begin{equation*}
\sum_{\lambda\in\Lambda(n,b)}\chi_\lambda(1)^{-s}=O_b(n^{-sb})\qquad\text{as}\qquad n\to\infty.
\end{equation*}
\end{lemma}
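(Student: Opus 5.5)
Although this lemma is quoted from the literature, I would prove it directly with elementary dimension bounds. The sum splits into a ``nearly one-row'' range and a ``genuinely thick'' range.

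Since $\chi_{\lambda^\vee}(1)=\chi_\lambda(1)$ and $\Lambda(n,b)$ is closed under transposition, it is enough to bound the sum over $\lambda\in\Lambda(n,b)$ with $\lambda_1\geqslant\lambda^\vee_1$. This loses a factor of at most $2$. Write $k=n-\lambda_1$ for the number of boxes outside the first row, and let $\mu\vdash k$ be $\lambda$ with its first row deleted. Thickness gives $k\geqslant b$.

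\textbf{Key combinatorial bound.} For $k\leqslant n/2$ I claim $\chi_\lambda(1)\geqslant\binom{n-k}{k}$, as follows.
\begin{itemize}
\item Place the labels $1,\dots,k$ in the first $k$ boxes of the first row. This is possible because $\lambda_1=n-k\geqslant k$.
\item Choose any $k$-subset $A\subseteq[k+1,n]$ and fill the shape $\mu$ with $A$ as a standard tableau.
\item Put the remaining labels, in increasing order, into the rest of the first row.
\end{itemize}
All boxes below the first row lie in columns $\leqslant k$, and those columns are topped by the labels $1,\dots,k$. Every label in $A$ exceeds $k$, so the column conditions hold. The row conditions hold by construction. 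This gives $\chi_\lambda(1)\geqslant\binom{n-k}{k}\dim\mu\geqslant\binom{n-k}{k}$.

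\textbf{Small and intermediate $k$.} For $b\leqslant k\leqslant\varepsilon n$, with $\varepsilon$ a small fixed constant, we have $\binom{n-k}{k}\geqslant\big(\tfrac{n-2k}{k}\big)^k\geqslant\big(\tfrac{n}{2k}\big)^k$. There are at most $p(k)\leqslant e^{C\sqrt k}$ partitions with a given $k$. So this range contributes at most
\begin{equation*}
\sum_{k\geqslant b}e^{C\sqrt k}\left(\tfrac{2k}{n}\right)^{sk}.
\end{equation*}
The $k=b$ term is $O_b(n^{-sb})$. For $b<k\leqslant\sqrt n$ each term is at most $e^{C\sqrt k}(2n^{-1/2})^{sk}$, and their sum is $O(n^{-s(b+1)/2})$. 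For $\sqrt n<k\leqslant\varepsilon n$ the terms are at most $e^{C\sqrt k}(2\varepsilon)^{sk}$, which is super-polynomially small once $\varepsilon$ is chosen so that $(2\varepsilon)^s<e^{-1}$, say.

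The exponent $s(b+1)/2$ is not automatically $\geqslant sb$. This is the main obstacle: making the passage from the first term to the tail tight. I would handle it by comparing consecutive terms instead, using the sharper bound $\binom{n-k}{k}\geqslant\binom{n-b}{b}\cdot\big(\tfrac{n-2k}{k}\big)^{k-b}$. Each extra box then costs a factor $O(k/n)$, uniformly for $k\leqslant\varepsilon n$. With $p(k)$ subexponential, the whole range $b\leqslant k\leqslant\varepsilon n$ sums to $O_b(n^{-sb})$.

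\textbf{Remaining range} $\lambda_1,\lambda^\vee_1\leqslant(1-\varepsilon)n$. Here I would invoke a standard exponential lower bound $\chi_\lambda(1)\geqslant c_\varepsilon^{\,n}$ with $c_\varepsilon>1$. This follows, for example, from the hook length formula: every hook is at most $n$, and the hooks are bounded away from $n$ for many boxes when no row or column is long. Equivalently, one can iterate the combinatorial bound above along rows. Combined with $p(n)\leqslant e^{C\sqrt n}$, this range contributes $O(e^{-c'n})$, which is negligible. Adding the ranges proves Lemma~\ref{lmm:LSG}.
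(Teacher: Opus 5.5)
\textbf{Verdict.} The paper gives no proof of this lemma; it quotes it from Liebeck--Shalev (Proposition 2.5) and Gamburd (Proposition 4.2). Your proposal is a correct, essentially self-contained route along standard lines, provided you cite one external bound (discussed below) and fix one false inequality.

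\textbf{Comparison with the paper.} The architecture of your argument is sound. After the transposition reduction, the thickness condition is just $k=n-\lambda_1\geqslant b$. Your tableau construction correctly gives $\chi_\lambda(1)\geqslant\binom{n-k}{k}\chi_\mu(1)$ for $k\leqslant n/2$. Comparing each $k$ with $k=b$, rather than bounding all $k\leqslant\sqrt n$ uniformly, is the right way to reach the sharp exponent $sb$. What this buys over the citation is transparency. It shows the leading contribution comes from partitions with exactly $b$ boxes off the first row or column, whose dimensions are of order $n^b$, while the genuinely thick bulk is super-polynomially small. The price is that the bulk range still rests on an external input.

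\textbf{First repair: the comparison inequality.} The bound $\binom{n-k}{k}\geqslant\binom{n-b}{b}\bigl(\tfrac{n-2k}{k}\bigr)^{k-b}$ is false as stated. For $b=2$, $k=3$ it reduces to $(n-4)(n-5)\geqslant(n-2)(n-6)$, which fails for every $n\geqslant9$. Use the exact ratio instead:
$\binom{n-j-1}{j+1}/\binom{n-j}{j}=\tfrac{(n-2j)(n-2j-1)}{(j+1)(n-j)}\geqslant\tfrac{(n-2k)^2}{kn}$ for $j<k$.
For $k\leqslant\varepsilon n$ each extra box then costs at most $\delta=\varepsilon/(1-2\varepsilon)^2<1$. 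Since $\sum_{k\geqslant b}e^{C\sqrt k}\,\delta^{s(k-b)}=O_b(1)$, your conclusion for this range survives.

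\textbf{Second repair: the exponential bound.} The bound $\chi_\lambda(1)\geqslant c_\varepsilon^{\,n}$ for $\lambda_1,\lambda_1^\vee\leqslant(1-\varepsilon)n$ is true and a known estimate on character degrees of $S_n$. However, neither justification you sketch proves it.
\begin{itemize}
\item Iterating your tableau construction along rows gives a product of binomials equal to $1$ for $\lambda=(n/2,n/2)$. The dimension of that shape is a Catalan number, of size $2^n$ up to a polynomial factor.
\item Bounding all hooks by $n$ only gives $\chi_\lambda(1)\geqslant n!/n^n$, which is exponentially small. Even the sharper bound $h\leqslant n/2+1$, valid for $(n/2,n/2)$, yields only about $(2/e)^n$.
\end{itemize}
So this step must be cited, or proved by a genuinely finer hook-length or combinatorial argument. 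With that citation and the corrected ratio bound above, your proof is complete.
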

\begin{corollary}\label{corr:zeta_asymptotics}
Suppose $\Sigma$ is an orbifold with $\cho(\Sigma)<0$ and cone points of order $m_1,\dots,m_k$. Let $\zeta^{S_n}$ be the ``zeta constant'' from Lemma \ref{lmm:Mednykh}. Then
\begin{equation*}
\zeta^{S_n}\to1+\mathbbm{1}_{\{\text{all }m_i\text{ odd}\}}\qquad\text{as}\qquad n\to\infty.
\end{equation*}
\end{corollary}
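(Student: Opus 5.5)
The plan is to split the sum defining $\zeta^{S_n}$ according to the partition (\ref{eq:stable_thick}) of $\{\lambda\vdash n\}$ into thick diagrams $\Lambda(n,b)$, stable diagrams $\lambda[n]$ and transposed stable diagrams $\lambda[n]^\vee$, with $b$ a large constant fixed in advance. Write the summand as $\chi_V(1)^{\cht(\Sigma)-k}\prod_i\chi_V^{m_i\tn{-tors}}$. The factor $\nu(V)^g$ is identically $1$, since irreducible representations of $S_n$ have Frobenius--Schur indicator $1$ (Section~\ref{sec:rep_Sn}).

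Lemma~\ref{lmm:tors_avg_char} bounds the summand in absolute value by $\chi_V(1)^{s}$ with
\begin{equation*}
s=\cht(\Sigma)-k+\sum_i\tfrac{1}{m_i}+k\varepsilon=\cho(\Sigma)+k\varepsilon.
\end{equation*}
Choosing $\varepsilon$ small gives $s<0$, since $\cho(\Sigma)<0$.

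\textbf{Thick diagrams.} By Lemma~\ref{lmm:LSG} their contribution is $O_b(n^{-|s|b})\to0$.

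\textbf{Stable diagrams, $\lambda\vdash j$ with $0<j<b$.} By (\ref{eq:growing_row_dim}), $\chi_{\lambda[n]}(1)\asymp n^{j}$, so each summand is $O(n^{js})\to0$. There are finitely many such $\lambda$, so the total tends to $0$. The trivial diagram $\lambda=\emptyset$, i.e.\ $(n)$, has dimension $1$ and all averaged characters equal to $1$, so it contributes exactly $1$.

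\textbf{Transposed stable diagrams.} These are $\lambda[n]^\vee=\tn{sgn}\otimes\lambda[n]$, so $\chi_{\lambda[n]^\vee}(x)=\tn{sgn}(x)\chi_{\lambda[n]}(x)$. When $\lambda\neq\emptyset$, Lemma~\ref{lmm:tors_avg_char} still applies and the same dimension estimate gives decay. Hence only the sign representation $(1^n)$ needs care, where one must show $\chi_{\tn{sgn}}^{m\tn{-tors}}\to\mathbbm{1}_{\{m\text{ odd}\}}$.
\begin{itemize}
\item For odd $m$, every element of order dividing $m$ is an even permutation, so $\chi_{\tn{sgn}}^{m\tn{-tors}}=1$.
\item For even $m$, note that $(1^n)=(\emptyset^\vee[n])^\vee$. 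Proposition~\ref{prop:tall_tors_bd} with $\lambda=\emptyset$ then gives $\chi^{m\tn{-tors}}_{\tn{sgn}}=O(e^{-cn^{1/m}})\to0$. The other factors are bounded by $1$ in absolute value, so the whole summand tends to $0$.
\end{itemize}
The sign representation therefore contributes $\prod_i\mathbbm{1}_{\{m_i\text{ odd}\}}+o(1)$.

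Summing the pieces gives $\zeta^{S_n}\to1+\mathbbm{1}_{\{\text{all }m_i\text{ odd}\}}$. The main subtlety is that the bounds are uniform only for $n$ large: $b$ and $\varepsilon$ must be fixed first, and then the finitely many stable families are handled one at a time. The substantive input, the even-$m$ sign case, is supplied by Proposition~\ref{prop:tall_tors_bd}.
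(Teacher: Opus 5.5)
Your argument is correct and matches the paper's proof: Frobenius--Schur indicators are $1$; Lemma~\ref{lmm:tors_avg_char} together with Lemma~\ref{lmm:LSG} disposes of the bulk of the sum; and the sign representation is handled by the parity argument for odd $m$ and by Proposition~\ref{prop:tall_tors_bd} for even $m$. The only difference is that the paper takes $b=1$, so that $\Lambda(n,1)$ already contains every partition except $(n)$ and $(1^n)$, and Lemma~\ref{lmm:LSG} handles all remaining terms at once; your separate treatment of the nontrivial stable and transposed-stable families is harmless but unnecessary.
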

\begin{proof}
Recall from Section~\ref{sec:rep_Sn} that the Frobenius--Schur indicator of irreducible $S_n$-representation is 1, so we can ignore $\nu(V)$ in the non-orientable formula. The set of partitions of $n$ consists of the subset $\Lambda(n,1)$ and two other partitions: single row $(n)$, which corresponds to the trivial representation of $S_n$, and single column $(1^n)$, which corresponds to the sign representation. Therefore
\begin{equation*}
\zeta^{S_n}=1+\prod_{i=1}^k\chi_\tn{sgn}^{m_i\tn{-tors}}+\sum_{\lambda\in\Lambda(n,1)}\chi_\lambda(1)^{\chi^\tn{top}(\Sigma)-k}\prod_{i=1}^k\chi_\lambda^{m_i\tn{-tors}}.
\end{equation*}
If all $m_i$ are odd, then $\chi_\tn{sgn}^{m_i\tn{-tors}}=1$, so the second summand is 1. Otherwise if $m_i$ is even, then Proposition \ref{prop:tall_tors_bd} (applied to the empty partition) tells us that $\chi_\tn{sgn}^{m_i\tn{-tors}}$ converges to 0, so the second summand vanishes in the limit. The remaining terms can be bounded with Lemma \ref{lmm:tors_avg_char} as
\begin{equation*}
\left|\sum_{\lambda\in\Lambda(n,1)}\chi_\lambda(1)^{\chi^\tn{top}(\Sigma)-k}\prod_{i=1}^k\chi_\lambda^{m_i\tn{-tors}}\right|\leqslant\sum_{\lambda\in\Lambda(n,1)}\chi_\lambda(1)^{\cho(\Sigma)+\varepsilon}
\end{equation*}
for some small $\varepsilon$ and sufficiently large $n$. By Lemma \ref{lmm:LSG}, this converges to 0.
\end{proof}

\smallskip

Finally, we will need to control the matrix coefficients of corner permutations. We achieve this using Lemma \ref{lmm:MP_skew_bd}, which is just an adaptation of \cite[Proposition 4.4]{asymptotic_stats_random_covering_surfaces_MageePuder23} to our notation.
\begin{lemma}\label{lmm:MP_skew_bd}
Suppose $\eta\subseteq\lambda$ are partitions, and $p,q\in\mathbb{N}$ with $q+|\eta|\geqslant p+|\lambda|$. Then for $n\geqslant |\lambda|+|\eta|+q+(p-q)^2$ and any $w,w'\in\tn{Tab}(\lambda[n-p]\setminus\eta[n-q])$ and $\gamma\in S_{[n-q+1,n-p]}$ we have
\begin{equation*}
|\langle w',\gamma w\rangle|\leqslant\left(\frac{(p-q)^2}{n-q-|\eta|-|\lambda|}\right)^{\frac{1}{2}\#(\tn{bot}(w')\vartriangle\gamma\cdot\tn{bot}(w))},
\end{equation*}
where $\tn{bot}$ is the set of numbers outside the top row of the skew-tableau, and $\vartriangle$ stands for the symmetric difference.
\end{lemma}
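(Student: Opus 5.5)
The plan is to reduce to \cite[Proposition 4.4]{asymptotic_stats_random_covering_surfaces_MageePuder23} by translating notation; I do not intend to reprove the matrix-coefficient estimate. In that result one has stable diagrams $\lambda[N]\supseteq\eta[N']$ and the skew representation $\lambda[N]\setminus\eta[N']$ of a symmetric group on a block of consecutive integers. It bounds matrix coefficients of a permutation in the Gelfand--Tsetlin basis by a power of $(\text{block size})^2/(\text{ambient size})$. The exponent counts the mismatch between the sets of entries lying outside the first row.

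Set $N=n-p$ and $N'=n-q$. The skew-diagram $\lambda[n-p]\setminus\eta[n-q]$ is then filled with the numbers $[n-q+1,n-p]$, and the number of boxes is $q-p+|\lambda|-|\eta|\ge0$, which is what the hypothesis $q+|\eta|\ge p+|\lambda|$ guarantees. For the containment $\eta[n-q]\subseteq\lambda[n-p]$ we need $\eta\subseteq\lambda$ together with $n-q-|\eta|\ge\lambda_1$. The latter follows from the lower bound on $n$.

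I would then check that the action of $\gamma\in S_{[n-q+1,n-p]}$ on $\lambda[n-p]\setminus\eta[n-q]$ in Proposition~\ref{prop:repSn_OV} matches the Young orthogonal form used by Magee--Puder. Here $\langle\cdot,\cdot\rangle$ is the inner product making the Gelfand--Tsetlin basis orthonormal, i.e.\ the form $B$ from (\ref{eq:repSn_bilinear_form}). Since all the matrices involved are real, signs and conjugations play no role.

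Second, I would match the parameters. The block size $p-q$ in their statement corresponds to the number of moved letters, which is $|q-p|$, so it enters squared as $(p-q)^2$. The denominator is their lower bound on the length of the first row of $\eta[n-q]$, namely $n-q-|\eta|$; I would weaken this to $n-q-|\eta|-|\lambda|$ so that the base is uniform. The constraint $n\ge|\lambda|+|\eta|+q+(p-q)^2$ makes the base at most $1$ and the denominator positive, so the claimed bound is at least as strong as the cited form. The exponent $\tfrac12\#(\mathrm{bot}(w')\vartriangle\gamma\cdot\mathrm{bot}(w))$ is their exponent, with bot the set of numbers outside the top row.

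The main obstacle is the convention mismatch. The paper's stable diagrams $\lambda[n]$ add a top row, following Bowman--De Visscher--Orellana, whereas Magee--Puder's $\lambda^+$ or $\lambda[n]$ conventions may instead index by the diagram below the first row or use contents of the opposite sign. I would therefore verify carefully, via the content formula in Proposition~\ref{prop:repSn_OV}, that "first row" corresponds to the long row under both conventions. If the signs are flipped, transposing through the sign intertwiner $\varphi$ preserves absolute values of matrix coefficients, which resolves it.
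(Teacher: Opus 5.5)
Your plan matches the paper's: the paper does not prove this lemma but states it as \cite[Proposition 4.4]{asymptotic_stats_random_covering_surfaces_MageePuder23} rewritten in the notation $\lambda[n-p]\setminus\eta[n-q]$, so a correct dictionary is all that is required. Your dictionary has two slips to fix. First, the skew diagram has $q-p$ boxes in total, matching the $q-p$ letters moved by $\gamma$; of these, $q-p+|\eta|-|\lambda|$ lie in the moving first row, and the hypothesis $q+|\eta|\geqslant p+|\lambda|$ is exactly the nonnegativity of this count, i.e.\ first-row containment. Second, replacing a denominator by the smaller $n-q-|\eta|-|\lambda|$ makes your stated bound weaker than the cited one, hence implied by it, not ``at least as strong''.
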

The Young skew-diagram $\lambda[n-p]\setminus\eta[n-q]$ contains a ``moving'' row of $q-p+|\eta|-|\lambda|$ boxes in the first row and the skew-diagram $\lambda\setminus\eta$ below it. By abuse of notation in Lemma~\ref{lmm:MP_skew_bd} we wrote $w,w'$ both for tableaux and their vectors, in accordance with Section~\ref{sec:rep_Sn}. We will also use a crude bound on traces of $\mathfrak{P}$-graphs.
\begin{proposition}\label{prop:univ_Pg_bd}
Let $\lambda_f\vdash n-\mathfrak{f}_f,\mu_e\vdash n-\mathfrak{e}_e,\eta\vdash n-\mathfrak{v}$. Then
\begin{equation*}
\left|\tr\mathfrak{P}(\vec\lambda\setminus\eta,\vec\mu\setminus\eta,\vec\gamma)\right|\leqslant\prod_{e\in E(\Sigma)}(\mathfrak{v}-\mathfrak{e}_e)!\cdot\prod_{e\hookrightarrow f}(\mathfrak{e}_e-\mathfrak{f}_f)!
\end{equation*}
for every choice of $\gamma_c\in S_{[n-\mathfrak{v}+1,n-\mathfrak{f}_f]}$.
\end{proposition}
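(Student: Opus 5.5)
We will bound $|\tr\mathfrak{P}|$ by expanding every wire in the Gelfand--Tsetlin basis and using the triangle inequality. The estimate is then a count of basis paths, combined with the fact that every coupon has matrix entries of absolute value at most one in an orthonormal basis.

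First we identify the dimensions of the wires. In $\mathfrak{P}(\vec\lambda\setminus\eta,\vec\mu\setminus\eta,\vec\gamma)$ there are three kinds of wire:
\begin{itemize}
\item face wires, coloured $\lambda_f\setminus\eta$;
\item edge-parallel wires, coloured $\lambda_f\setminus\mu_e$, one for each incidence $e\hookrightarrow f$;
\item cross wires, coloured $\mu_e\setminus\eta$.
\end{itemize}
The face wires are potentially huge, since their dimension is comparable to that of a representation of $S_{[n-\mathfrak{v}+1,n-\mathfrak{f}_f]}$. The other two kinds are small. The key observation is that $\dim(\mu_e\setminus\eta)=\#\tn{Tab}(\mu_e\setminus\eta)$, and this is at most the number of ways to fill $\mathfrak{v}-\mathfrak{e}_e$ boxes with distinct labels, namely $(\mathfrak{v}-\mathfrak{e}_e)!$. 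Likewise $\dim(\lambda_f\setminus\mu_e)\leqslant(\mathfrak{e}_e-\mathfrak{f}_f)!$.

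The strategy is to cut the graph along the small wires only. We use Example~\ref{ex:resolution_id}, inserting $\sum_i e_i\otimes e_i^*$ on every cross wire and every edge-parallel wire. With $B$ as in \eqref{eq:repSn_bilinear_form}, the Gelfand--Tsetlin basis is orthonormal, so the $B^{\pm1}$ coupons of Figure~\ref{sfig:Pg_e_no} are identities in this basis and can be absorbed into the cuts.

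After cutting, $\tr\mathfrak{P}$ becomes a sum over choices of basis tableaux on all small wires. The number of terms is at most
\begin{equation*}
\prod_e\dim(\mu_e\setminus\eta)\cdot\prod_{e\hookrightarrow f}\dim(\lambda_f\setminus\mu_e).
\end{equation*}
This count depends on how many cross wires each edge carries: either two cross wires per edge, or one if we cut only one of the two. We will need to check this. Cutting a single cross wire per edge should suffice, because once a small wire is cut, the remainder factorises into face loops.

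We then bound each term. Fix basis tableaux on all the cut wires. What remains is, for each face $f$, a cycle consisting of the corner coupons $\gamma_c$ together with $P$ and $I$ coupons whose cut legs have been fixed. Using Proposition~\ref{prop:repSn_OV}, $I(u\otimes w)=u\sqcup w$ and $P$ is its adjoint (a coordinate projection), both in orthonormal bases. Hence at each corner the composite sandwiching $\gamma_c$ is a matrix coefficient of the form $\langle u\sqcup w',\gamma_c(u'\sqcup w)\rangle$. Here $\gamma_c$ is a unitary operator, so each such coefficient has modulus at most $1$, and the product around the face is a product of coefficients, each of modulus at most one.

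The main obstacle is making sure that, after cutting, each face cycle really reduces to a product of single matrix coefficients, rather than to a trace over the large wire $\lambda_f\setminus\eta$, which could be as big as its dimension. The point to verify is that along each face cycle every large-wire segment runs from an $I$ coupon to a $P$ coupon, possibly through a corner coupon $\gamma_c$. Once the small legs are fixed to basis vectors, $I$ maps into a one-dimensional span $\mathbb{C}\,(u\sqcup w)$ and $P$ extracts a single coordinate. So no free summation over the large wire survives, and only rank-one pieces appear.

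Checking this against Figures~\ref{sfig:Pg_spx} and~\ref{sfig:Pg_orb} and handling the orbigon loop in the same way gives $|\text{term}|\leqslant1$. Multiplying this by the number of terms yields the claimed bound.
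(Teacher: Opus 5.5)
Your strategy is the paper's. You resolve the identity in the orthonormal Gelfand--Tsetlin basis on every wire not adjacent to a corner coupon, read each corner piece as a matrix coefficient $\langle u_+\sqcup w_+,\gamma_c(u_-\sqcup w_-)\rangle$ of a unitary, turn the $B^{\pm1}$ coupons into the constraint $u^L=u^R$, and apply the triangle inequality.

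The gap is exactly the point you left open. Each edge carries two cross wires coloured $\mu_e\setminus\eta$, one at each end. In Figure~\ref{sfig:Pg_edge} these are $P^{\lambda_L}_\mu\to I^{\lambda_R}_\mu$ at the initial end and $P^{\lambda_R}_\mu\to I^{\lambda_L}_\mu$ at the terminal end. Cutting all small wires therefore produces one label $u_{v\hookrightarrow e}$ for each of the two vertex--edge incidences, and the number of terms is $\prod_e\dim(\mu_e\setminus\eta)^2\cdot\prod_{e\hookrightarrow f}\dim(\lambda_f\setminus\mu_e)$.

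Your repair, cutting only one cross wire per edge, is not supported by the reason you give. The uncut cross wire still joins a corner of $f_L$ to a corner of $f_R$, so the remainder does not split into face loops. Moreover, at the $P$ and $I$ coupons at its two ends only one small leg is pinned, so your rank-one step ($I$ lands in $\mathbb{C}(u\sqcup w)$, $P$ reads off one coordinate) is unavailable there. As written, the argument proves the inequality only with each $(\mathfrak{v}-\mathfrak{e}_e)!$ squared. The paper cuts the same wires and also sums over $u_{v\hookrightarrow e}$, so its proof read literally gives the same squared factor. This is immaterial downstream, where only $\tr\mathfrak{P}=O(1)$ is used, in (\ref{eq:cut_Pg_tr_bd}) and for the tall terms.

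The missing idea that recovers the stated constant is to bound by contractions instead of rank-one pieces.
\begin{itemize}
\item Pin only the edge-parallel labels $w$. The corner piece at $c$ becomes the operator $A_c=J_+^*\,\gamma_c\,J_-\colon\mu_{e_-}\setminus\eta\to\mu_{e_+}\setminus\eta$, where $J_\pm\colon u\mapsto u\sqcup w_{e_\pm\hookrightarrow f}$.
\item Each $J_\pm$ sends distinct basis tableaux to distinct basis tableaux of $\lambda_f\setminus\eta$, so it is an isometry. Hence $\|A_c\|\leqslant1$.
\item The cross wires chain these operators around the link of the vertex of $\Sigma$. At edges of the type in Figure~\ref{sfig:Pg_e_no}, the $B^{\pm1}$ coupons are identity matrices in this real orthonormal basis, so the chain passes through a transpose $A_c^{T}$, which has the same norm.
\item Now cut one cross wire per edge. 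Every remaining chain contributes $|\langle u',A_{c_k}\cdots A_{c_1}u\rangle|\leqslant1$, and the number of terms is exactly the stated bound.
\end{itemize}
Since that link is a single circle, even one cut in total suffices, giving the stronger bound $|\tr\mathfrak{P}|\leqslant\min_e(\mathfrak{v}-\mathfrak{e}_e)!\cdot\prod_{e\hookrightarrow f}(\mathfrak{e}_e-\mathfrak{f}_f)!$.
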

\begin{proof}
Recall from Section \ref{sec:rep_Sn} that standard Young tableaux of a given shape form an orthonormal basis for the representation corresponding to the diagram. Apply the equation from the Example~\ref{ex:resolution_id} (with this basis as $\{e_i\}_i$) to all wires not adjacent to a corner coupon (i.e. the ones parallel to edges and coloured with $\lambda_f\setminus\mu_e$, and those crossing edges and coloured with $\mu_e\setminus\eta$). In the orientable case, doing this cuts the $\mathfrak{P}$-graph into a disjoint union of graphs of the form
\begin{equation}\label{eq:mat_coeff}
\begin{tikzpicture}[baseline=0]
\node[draw](c)at(0,0){$\gamma_c$};\draw($(c.south east)-(.07,0)$)--($(c.north east)-(.07,0)$);
\node(p)at(-1.5,0){$P^\lambda_{\mu_+}$};\draw(-1.8,-.7)--(-1.8,.7)--(-1.15,.7)--(-1.15,-.7)--(-1.8,-.7)(-1.22,-.7)--(-1.22,.7);
\node(i)at(1.5,0){$I^\lambda_{\mu_-}$};\draw(1.8,-.7)--(1.8,.7)--(1.15,.7)--(1.15,-.7)--(1.8,-.7)(1.73,-.7)--(1.73,.7);
\node[draw](b)at(3.8,-.5){$w_{e_-\hookrightarrow f}$};\draw($(b.south east)-(.07,0)$)--($(b.north east)-(.07,0)$);
\node[draw](d)at(-3.8,-.5){$w_{e_+\hookrightarrow f}^*$};\draw($(d.south east)-(.07,0)$)--($(d.north east)-(.07,0)$);
\node[draw](in)at(3.7,.5){$u_{v\hookrightarrow e_-}$};\draw($(in.south east)-(.07,0)$)--($(in.north east)-(.07,0)$);
\node[draw](out)at(-3.7,.5){$u_{v\hookrightarrow e_+}^*$};\draw($(out.south east)-(.07,0)$)--($(out.north east)-(.07,0)$);
\draw[->](-1.8,-.5)--(d);\draw[->](b)--(1.8,-.5);\draw[->](in)--(1.8,.5);\draw[->](-1.8,.5)--(out);\draw[->](1.15,0)--(c);\draw[->](c)--(-1.15,0);
\node at(-.7,.2){$\lambda\!\setminus\!\eta$};\node at(.7,.2){$\lambda\!\setminus\!\eta$};\node at(-2.5,-.25){$\lambda\!\setminus\!\mu_+$};\node at(2.5,-.25){$\lambda\!\setminus\!\mu_-$};\node at(-2.5,.7){$\mu_+\!\setminus\!\eta$};\node at(2.5,.7){$\mu_-\!\setminus\!\eta$};
\end{tikzpicture}
\end{equation}
for all possible choices of $w_{e\hookrightarrow f}\in\tn{Tab}(\lambda_f\setminus\mu_e)$ and $u_{v\hookrightarrow e}\in\tn{Tab}(\mu_e\setminus\eta)$. Action of $\gamma_c$ is unitary, so each matrix coefficient of the form \ref{eq:mat_coeff} is at most 1 in absolute value. Since the sizes of bases are bounded by $\#\tn{Tab}(\mu_e\setminus\eta)\leqslant(\mathfrak{v}-\mathfrak{e}_e)!$ and $\#\tn{Tab}(\lambda_f\setminus\mu_e)\leqslant(\mathfrak{e}_e-\mathfrak{f}_f)!$, the desired bound follows from the triangle inequality.

In the non-orientable case, cutting the $\mathfrak{P}$-graph at edges whose neighbourhood looks like on the LHS of Figure~\ref{sfig:Pg_e_no} will additionally produce graphs of the form
\begin{equation}\label{eq:cut_B_matcoef}
\begin{tikzpicture}[baseline=0]
\node[draw](l)at(-2,0){$u_{v\hookrightarrow e}^{(L)}$};\draw($(l.south west)+(.07,0)$)--($(l.north west)+(.07,0)$);
\node[draw](b)at(0,0){$B$};\draw($(b.south west)+(.07,0)$)--($(b.north west)+(.07,0)$);
\node[draw](r)at(2,0){$u_{v\hookrightarrow e}^{(R)}$};\draw($(r.south east)-(.07,0)$)--($(r.north east)-(.07,0)$);
\draw[->](l)--(b);\draw[->](r)--(b);
\node[font=\small]at(-1,.2){$\mu_e\!\setminus\!\eta$};\node[font=\small]at(1,.2){$\mu_e\!\setminus\!\eta$};
\end{tikzpicture}
\qquad=\qquad
\begin{cases}1&\text{if }u^{(L)}=u^{(R)}\\0&\text{if }u^{(L)}\neq u^{(R)}\end{cases}
\end{equation}
and ones with the central coupon coloured $B^{-1}$ and the direction of wires reversed. They evaluate to the model $S_{[n-\mathfrak{v}+1,n-\mathfrak{e}_e]}$-invariant bilinear form from equation (\ref{eq:repSn_bilinear_form}) in Section~\ref{sec:rep_Sn}. This just enforces the condition $u_{v\hookrightarrow e}^L=u_{v\hookrightarrow e}^R$, and the summation over bases becomes the same as in the orientable case.
\end{proof}

Finally, we will need a technical combinatorial result, whose proof we defer to Section~\ref{ssec:pf_prod_sgn_compat}.
\begin{proposition}\label{prop:prod_sgn_compat}
If all cone points of $\Sigma$ have odd order, and $\vec\gamma$ is a tuple of $T$-matching permutations, then $\prod_{c\in C(\Sigma)}\tn{sgn}(\gamma_c)=1$.
\end{proposition}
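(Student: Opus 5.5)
We need to show $\prod_c \mathrm{sgn}(\gamma_c)=1$ for $T$-matching permutations when all cone orders are odd. The plan is to reduce to a count of cycles of bilaterally matching permutations and then use the face relations of $\Sigma$.

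First, the product is independent of the numbering $\vec{\mathcal{J}}$. Changing $\mathcal{J}_\iota$ by a permutation $\pi$ of its range conjugates $\gamma_c$ on one side and multiplies it on the other by $\pi^{\pm1}$. Each incidence $\iota$ (vertex-edge or edge-face) is adjacent to exactly two corners, one on each side of $\iota$ (exiting and entering). So $\pi$ enters the product once as $\pi$ and once as $\pi^{-1}$, up to conjugation, and the total sign is unchanged. Hence we may choose numberings freely.

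Second, I would compare $\vec\gamma$ with the tuple $\gamma''_c=\gamma_c\sqcup\beta_c$ from the proof of Proposition~\ref{prop:count_F}. There, $\vec\gamma''$ was obtained from the bilaterally $T$-matching $\vec\gamma'$ by $\gamma''_c=\delta_{e_+\hookrightarrow f}\gamma'_c\delta_{e_-\hookrightarrow f}^{-1}$. By the same pairing argument, $\prod_c\mathrm{sgn}\gamma''_c=\prod_c\mathrm{sgn}\gamma'_c$: each $\delta_\iota$ appears once with each sign exponent. Therefore
\[
\prod_c\mathrm{sgn}\gamma_c=\prod_c\mathrm{sgn}\gamma'_c\cdot\prod_c\mathrm{sgn}\beta_c .
\]

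For the $\beta$ factor, group the corners by faces $f$ of $\Sigma$. For an orbigon $f$ with cone order $m$ odd, $\beta_c^m=1$, so every cycle of $\beta_c$ has odd length and $\beta_c$ is even. For a simplex $f$ I expect to need that $\prod_{c\in C(f)}\mathrm{sgn}\beta_c=1$; this follows because $W_f(\vec\beta)=1$ means the three corner permutations compose to the identity, up to inverses, which do not change sign. This step needs care: $W_f$ is written in edges, whereas $\beta$ lives on corners. I would check that the cyclic composition of the three corner maps around $f$ is the identity on covered sides, since going around the interior of a face of $T$ returns to the starting side.

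Third, the same face-by-face argument should apply to $\vec\gamma'$. The bilateral matching includes all corners, actual and phantom. The $\gamma'_c$ are exactly the corner-traversal maps on the full links of vertices of $T$, now including hanging half-edges, where we treat a hanging half-edge as returning along the phantom cell. I claim that for each simplex $f$, the composition of the three $\gamma'_c$ in cyclic order around $f$ is a product of cycles corresponding to lifts of $f$ through vertices of $T$. I expect this to be the main obstacle: phantom lifts of $f$ may close up only after several passes, or may not close up at all, because a phantom triangle leaves $T$.

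To get around this, I would instead embed $T$ into some finite covering $\whS$ in which the phantom structure completes, possibly after enlarging $n$. I would then use invariance under relabeling to identify $\vec\gamma'$ with the restrictions of the monodromies $\varphi$ of $\whS$, conjugated as in~\eqref{eq:bicoset}. Then $\prod_c\mathrm{sgn}\gamma'_c$ becomes a product of $\mathrm{sgn}\varphi(e)^{\pm1}$ over the corners. Each edge occurs at two corners of each adjacent face, hence an even number of times in total, and this product is $1$. The immersion might not embed in any covering; in that case I would first perform the local completion, checking that this changes the sign product only by even permutations.
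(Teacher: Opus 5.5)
Your first two steps are fine. The second is a genuinely different way to dispose of the faces. The paper deletes faces of $T$ one at a time, extending $\gamma_c$ by a fixed point, or by an $\mathfrak{s}_f$-cycle that is even because $\mathfrak{s}_f\mid m$ is odd. You instead peel off all faces at once through $\gamma''_c=\gamma_c\sqcup\beta_c$ from the proof of Proposition~\ref{prop:count_F}, and the $\beta$-factor is trivial for the reasons you give. A small correction to your first step: in the non-orientable case both corners flanking a half-edge can be ``entering'', but this is harmless since signs only see parity.

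The gap is your third step, $\prod_c\tn{sgn}\gamma'_c=1$ for bilaterally $T$-matching $\vec\gamma'$, which you leave unproved.
\begin{itemize}
\item The face-by-face route cannot work. The product of $\tn{sgn}\gamma'_c$ over the three corners of a single simplex can be $-1$. Cancellation only happens between the two corners flanking each half-edge, and these generally lie in different faces.
\item The covering route fails as stated. First, $T$ need not embed in any covering, e.g.\ a circle spelling $w^2$ for a null-homotopic non-backtracking $w$. Second, the ``local completion'' for that case is never specified. Third, even when an embedding exists, $\gamma'_c$ is not a product of monodromies $\varphi(e)^{\pm1}$. A corner traversal stays at a vertex of $T$, whereas $\varphi(e)$ moves between vertices and does not even restrict to a permutation of the labels of $V(T)$.
\end{itemize}

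The missing observation is purely combinatorial and needs no covering. Fix a vertex--edge incidence $\iota$ of $\Sigma$. Since $p$ is injective on links, every $v'\in V(T)$ carries exactly one half-edge above $\iota$, actual or hanging. So the bilateral numbering gives a bijection $A_\iota\colon V(T)\to[n-\mathfrak{v}+1,n]$. Bilateral matching then says exactly $\gamma'_c=A_{\iota_+(c)}\circ A_{\iota_-(c)}^{-1}$, where $\iota_\pm(c)$ are the two half-edges flanking $c$.

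Fix any bijection $\mathcal{I}\colon V(T)\to[n-\mathfrak{v}+1,n]$ and put $\pi_\iota\defeq A_\iota\circ\mathcal{I}^{-1}$. Then $\gamma'_c=\pi_{\iota_+(c)}\pi_{\iota_-(c)}^{-1}$. Here $\pi_\iota$ is the inverse of the corresponding $\alpha^{(i)}_e$ or $\alpha^{(t)}_e$ from (\ref{eq:alpha_beta}), and this is just the identity $\gamma_c=(\alpha_+)^{-1}\alpha_-$ of Figure~\ref{fig:join_at_corner}. Each $\iota$ flanks exactly two corners of $\Sigma$, so $\prod_c\tn{sgn}\gamma'_c=\prod_\iota\tn{sgn}(\pi_\iota)^2=1$. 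This holds for every immersed $T$, with no parity hypothesis.

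Equivalently, you can allow arbitrary relabellings of all half-edges above each $\iota$ in your first step. Labelling each half-edge by its vertex then makes every $\gamma'_c$ the identity. This is exactly how the paper concludes, after cutting all edges of $T$ into hanging half-edges.
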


\subsection{Proof of Theorem \ref{thm:E_combi_emb}}\label{ssec:pf_E_combi_emb}

Here we finally prove the master Theorem~\ref{thm:E_combi_emb} on frequency of embeddings, by analysing the asymptotics of the exact formula from Theorem~\ref{thm:count_tangles}. The asymptotics is driven by tuples of stable Young diagrams. Controlling them is the main challenge, and will be achieved with the proposition below.
\begin{proposition}\label{prop:long_bd}
Fix some partitions $\eta^0$, $\mu^0_e$ for $e\in E(\Sigma)$, and $\lambda_f^0$ for $f\in F(\Sigma)$, such that $\eta^0\subseteq\mu^0_e$ and $\mu^0_e\subseteq\lambda^0_f$ whenever $e\hookrightarrow f$, and such that at least one is non-empty. For $n\in\mathbb{N},n\geqslant\mathfrak{v}+2\max_f|\lambda^0_f|$, define $\eta\defeq\eta^0[n-\mathfrak{v}],\mu_e\defeq\mu^0_e[n-\mathfrak{e}_e],\lambda_f\defeq\lambda^0[n-\mathfrak{f}_f]$, and let $\gamma_c\in S_{[n-\mathfrak{v}+1,n-\mathfrak{f}_{f(c)}]}$ be a tuple of $T$-matching permutations (we suppress dependence on $n$ for brevity). Then
\begin{equation*}
\left|R(\eta,\vec\mu,\vec\lambda)\cdot\tr\mathfrak{P}\left(\vec\lambda\setminus\eta,\vec\mu\setminus\eta,\vec\gamma\right)\right|=O\left(C\cdot n^{\max\{\md(T),\cho(\Sigma)\}+\varepsilon}\right)
\end{equation*}
for every $\varepsilon>0$.
\end{proposition}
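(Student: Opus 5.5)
The plan is to estimate the two factors separately: $R$ contributes an explicit power of $n$ determined by the sizes of the fixed partitions, and $\tr\mathfrak{P}$ is expanded in Gelfand--Tsetlin bases so that Lemma~\ref{lmm:MP_skew_bd} gives decay for every ``mismatch'' of labels. The exponent then becomes a combinatorial quantity on $\partial T$, to be compared with $\md(T)$ and $\cho(\Sigma)$.

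\textbf{The factor $R$.} By (\ref{eq:growing_row_dim}) we have $\chi_\eta(1)\asymp n^{|\eta^0|}$, $\chi_{\mu_e}(1)\asymp n^{|\mu^0_e|}$ and $\chi_{\lambda_f}(1)\asymp n^{|\lambda^0_f|}$. For orbigons, Lemma~\ref{lmm:tors_avg_char} bounds $|\chi^{m\tn{-tors}}_{\lambda_f}|$ by $n^{|\lambda^0_f|/m+\varepsilon}$. Hence
\begin{equation*}
|R|=O\Big(n^{\,|\eta^0|+\sum_{f\in\Fspx}|\lambda^0_f|+\sum_{m,f\in\Forb}|\lambda^0_f|/m-\sum_e|\mu^0_e|+\varepsilon}\Big).
\end{equation*}

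\textbf{The factor $\tr\mathfrak{P}$.} As in the proof of Proposition~\ref{prop:univ_Pg_bd}, I cut every wire not adjacent to a corner coupon by inserting the resolution of identity from Example~\ref{ex:resolution_id}. In the non-orientable case the $B$-coupons only force equality of tableaux, via (\ref{eq:cut_B_matcoef}). This writes the trace as a sum, over at most $\prod(\mathfrak{v}-\mathfrak{e}_e)!\prod(\mathfrak{e}_e-\mathfrak{f}_f)!=O(1)$ choices of skew tableaux $w_{e\hookrightarrow f}$ and $u_{v\hookrightarrow e}$, of products of corner matrix coefficients (\ref{eq:mat_coeff}). Each of these is a single coefficient $\langle w',\gamma_c w\rangle$ in a skew representation $\lambda_f\setminus\eta$ of exactly the shape in Lemma~\ref{lmm:MP_skew_bd}. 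The lemma bounds it by $n^{-\frac12\#(\tn{bot}(w')\vartriangle\gamma_c\tn{bot}(w))}$, up to constants. The labels in $[n-\mathfrak{v}+1,n-\mathfrak{f}_f]$ are exactly the numberings $\vec{\mathcal{J}}$ of hanging half-edges and exposed sides, and $\gamma_c$ is the corner traversal $\mathcal{T}_c$. So each term of the expansion determines a boundary piece $\mathcal{P}\subseteq\partial T$: the boundary elements whose label sits outside the top row. In the corresponding tableau, a label coming from $u_{v\hookrightarrow e}$ is a hanging half-edge and a label coming from $w_{e\hookrightarrow f}$ is an exposed side. Each adjacency in $\partial T$ between $\mathcal{P}$ and its complement is a symmetric-difference element at the corresponding corner. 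The total decay is therefore $n^{-\frac12\cdot 2\chi(\mathcal{P})}=n^{-\chi(\mathcal{P})}$, which is exactly the $-\chi(\mathcal{P})$ term in Definition~\ref{def:dft}.

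\textbf{Combining.} The number of bottom labels in each skew tableau is $|\lambda^0_f\setminus\mu^0_e|$ or $|\mu^0_e\setminus\eta^0|$. So the counts $\mathfrak{e}_\tn{spx}(\mathcal{P})$, $\mathfrak{h}_\tn{spx}(\mathcal{P})$, $\mathfrak{e}^{(m)}_\tn{orb}(\mathcal{P})$ and $\mathfrak{h}^{(m)}_\tn{orb}(\mathcal{P})$ are linear expressions in the differences of the box counts $|\eta^0|,|\mu^0_e|,|\lambda^0_f|$. Substituting them, the total exponent should split as
\begin{equation*}
\Big(\dft(\mathcal{P})+\chi(\mathcal{P})\Big)-\chi(\mathcal{P})\quad+\quad|\eta^0|\cdot\big(\text{an Euler-characteristic sum over }\Sigma\big).
\end{equation*}
The second summand is handled using (\ref{eq:cho_combi}) and $\#V(\Sigma)=1$: it equals $|\eta^0|\cho(\Sigma)\leqslant\cho(\Sigma)$ when $\eta^0\neq\emptyset$, because $\cho(\Sigma)<0$.

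Two cases follow. If $\mathcal{P}\neq\emptyset$, the exponent is at most $\dft(\mathcal{P})\leqslant\md(T)$. If $\mathcal{P}=\emptyset$, then all skew parts not accounted for by $\eta^0$ contribute nonpositively. Nonemptiness of some partition forces either $\eta^0\neq\emptyset$, which gives exponent at most $\cho(\Sigma)$, or a strictly negative face or edge contribution. Either way the exponent is at most $\max\{\md(T),\cho(\Sigma)\}+\varepsilon$.

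\textbf{Main obstacle.} The delicate step is the bookkeeping that turns the linear expression in box counts into the specific weights $\tfrac13$, $\tfrac16$, $\tfrac1m$ and $\tfrac13-\tfrac1{2m}$ of Definition~\ref{def:dft}. I expect this not to be an identity term by term. Instead, the exponent is a linear function on a polytope, namely the admissible distributions of bottom labels across the three corners of each simplex and around each orbigon, subject to the containments $\eta^0\subseteq\mu^0_e\subseteq\lambda^0_f$. Its maximum should be attained when each simplex spreads a unit evenly over its three corners, and this is where the $\tfrac13$, $\tfrac16$ come from. I would first verify the bound for a single simplex and a single orbigon as local LP problems, and then sum over faces. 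The risk is that this local optimization does not decouple cleanly between neighbouring faces, in which case a global convexity argument would be needed.
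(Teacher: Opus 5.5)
Your outline follows the paper's proof step for step: bound $|R|=O(n^{K_R+\varepsilon})$ via (\ref{eq:growing_row_dim}) and Lemma~\ref{lmm:tors_avg_char}; cut $\mathfrak{P}$ into corner coefficients; apply Lemma~\ref{lmm:MP_skew_bd}; and take the boundary piece $\mathcal{P}(\vec w,\vec u)$ of labels off the top row, whose corner mismatches give decay exactly $n^{-\chi(\mathcal{P})}$.

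The gap is the step you flag yourself, and there your diagnosis is wrong. No optimisation over distributions of labels is needed, because the bookkeeping is an exact identity for every single term.

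\begin{itemize}
\item Every skew tableau of shape $\lambda_f\setminus\mu_e$ has exactly $|\lambda^0_f|-|\mu^0_e|$ entries off the top row, and every one of shape $\mu_e\setminus\eta$ has exactly $|\mu^0_e|-|\eta^0|$. So $\mathfrak{e}_{\tn{spx}}(\mathcal{P})$, $\mathfrak{h}_{\tn{spx}}(\mathcal{P})$, $\mathfrak{e}^{(m)}_{\tn{orb}}(\mathcal{P})$ and $\mathfrak{h}^{(m)}_{\tn{orb}}(\mathcal{P})$ are the same for all $(\vec w,\vec u)$. Hanging half-edges are counted twice per edge, once at each end.
\item Only $\chi(\mathcal{P})$ depends on the term, and it enters $\dft(\mathcal{P})$ and the decay exponent with the same coefficient.
\item With the weights of Definition~\ref{def:dft}, $|\lambda^0_f|$ gets $3\cdot\tfrac13=1$ for a simplex and $\tfrac1m$ for an $m$-orbigon.
\item $|\mu^0_e|$ gets $-\tfrac13-\tfrac13-2\cdot\tfrac16=-1$ for an edge between two simplices, and $-\tfrac13-\tfrac1m-2(\tfrac13-\tfrac1{2m})=-1$ for the edge of an orbigon.
\item $|\eta^0|$ gets $\tfrac{E_2}{3}+\sum_{\text{orbigons}}(\tfrac23-\tfrac1m)=1-\cho(\Sigma)$, where $E_2$ is the number of edges between two simplices. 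This uses $2E_2=3\#\Fspx(\Sigma)-\#\{\text{orbigons}\}$, $\#V(\Sigma)=1$ and (\ref{eq:cho_combi}).
\end{itemize}

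Hence the weighted count equals $K_R-\cho(\Sigma)|\eta^0|$ exactly. Equivalently, $K_R-\chi(\mathcal{P})=\dft(\mathcal{P})+\cho(\Sigma)|\eta^0|$ for every term, which is precisely the splitting you guessed. The local LP and any global convexity argument are unnecessary.

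Your case analysis also needs correcting. Since the bottom counts are fixed by the partitions, $\mathcal{P}=\emptyset$ happens only when every skew shape is empty, i.e.\ $\lambda^0_f=\mu^0_e=\eta^0$ throughout. The non-emptiness hypothesis then forces $\eta^0\neq\emptyset$, and the exponent is exactly $\cho(\Sigma)|\eta^0|\leqslant\cho(\Sigma)$.

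Your alternative branch, a strictly negative face or edge contribution with $\eta^0=\emptyset$, cannot occur. It would not have sufficed anyway: a merely negative exponent need not lie below $\max\{\md(T),\cho(\Sigma)\}$. When $\mathcal{P}\neq\emptyset$, the exponent is $\dft(\mathcal{P})+\cho(\Sigma)|\eta^0|\leqslant\md(T)$ because $\cho(\Sigma)<0$, as you say.
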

With Proposition~\ref{prop:long_bd} and auxiliary results in hand we are ready to deduce the master Theorem~\ref{thm:E_combi_emb} from Theorem~\ref{thm:count_tangles}.
\begin{proof}[Proof of Theorem \ref{thm:E_combi_emb}]
By definition, we have
\begin{equation*}
\mathbb{E}_n\left[\#\{T\hookrightarrow\whS\}\right]=\frac{\#\{(\whS,\iota)|\iota\colon T\hookrightarrow\whS\in\covers\}}{\#\covers}.
\end{equation*}
We have calculated the numerator in Theorem \ref{thm:count_tangles} and the denominator in Lemma \ref{lmm:Mednykh}. Substituting these gives
\begin{align}\begin{split}\label{eq:ratio_formulae}
\mathbb{E}_n\left[\#\{T\hookrightarrow\whS\}\right]=&\frac{Q(n)}{(n!)^{1-\chi^\tn{top}(\Sigma)}\prod_i\tau_{m_i}(S_n)}\cdot\frac{1}{\zeta^{S_n}}\times\\
&\times\sum_{(\eta,\vec\mu,\vec\lambda)\in\mathcal{Y}(n)}R(\eta,\vec\mu,\vec\lambda)\tr\mathfrak{P}(\vec\lambda\setminus\eta,\vec\mu\setminus\eta,\vec\gamma)
\end{split}\end{align}
for a $T$-matching tuple of corner permutations $\vec\gamma$. The first fraction in front of the sum simplifies to
\begin{equation}\label{eq:asymptotic_drive}
\frac{n^{\underline{\mathfrak{v}}}\prod_{f\in\Fspx(\Sigma)}n^{\underline{\mathfrak{f}_f}}}{\prod_{e\in E(\Sigma)}n^{\underline{\mathfrak{e}_e}}}\cdot\prod_\iterorb\frac{n^{\underline{\mathfrak{f}_f}}\tau_m(S_{n-\mathfrak{f}_f})}{\tau_m(S_n)}
\end{equation}
Substituting the growth rate of the fraction of $\tau_m$'s from Lemma~\ref{lmm:Sn_tors_growth}, we see that the expression (\ref{eq:asymptotic_drive}) asymptotically behaves like $n^K$, where the exponent is
\begin{equation*}
K=\mathfrak{v}-\sum_e\mathfrak{e}_{e\in E(\Sigma)}+\sum_{f\in\Fspx(\Sigma)}\mathfrak{f}_f+\sum_\iterorb\frac{\mathfrak{f}_f}{m}=\cho(T),
\end{equation*}
the second equality being a repetition of equation (\ref{eq:cho_combi}). This is the leading asymptotics we were looking for. By Corollary \ref{corr:zeta_asymptotics}, $\zeta^{S_n}$ converges to $1+\mathbbm{1}_{\{\text{all }m_i\text{ odd}\}}$.

We now focus on the sum in the second line of equation (\ref{eq:ratio_formulae}). Let $b_0\in\mathbb{N}$ be a parameter whose exact value will be specified later. For $n\geqslant2b_0+\mathfrak{v}$ the summands in equation (\ref{eq:ratio_formulae}) can be partitioned, according to the length of the first row and first column of $\eta$ as in equation (\ref{eq:stable_thick}), into three groups
\begin{equation*}
\mathcal{Y}(n)=\mathcal{Y}^\tn{long}(n)\sqcup\mathcal{Y}^\tn{tall}(n)\sqcup\mathcal{Y}^\tn{thick}(n)
\end{equation*}
where
\begin{align*}
\mathcal{Y}^\tn{long}(n)&\defeq\{(\eta,\vec\mu,\vec\lambda)\ |\ \eta\text{ has}<b_0\text{ boxes outside the first row}\}\\
\mathcal{Y}^\tn{tall}(n)&\defeq\{(\eta,\vec\mu,\vec\lambda)\ |\ \eta\text{ has}<b_0\text{ boxes outside the first column}\}\\
\mathcal{Y}^\tn{thick}(n)&\defeq\{(\eta,\vec\mu,\vec\lambda)\ |\ \eta\in\Lambda(n-\mathfrak{v},b_0)\}.
\end{align*}

Let us first deal with the sum over $\mathcal{Y}^\tn{thick}$, which is the error term. Fix a small $\varepsilon>0$. Consider a tuple $(\eta,\vec\mu,\vec\lambda)\in\mathcal{Y}^\tn{thick}(n)$ of Young diagrams. We have $\chi_{\mu_e}(1)\geqslant\chi_\eta(1)$, since $\eta\subseteq\mu_e$. Applying this observation and Lemma \ref{lmm:tors_avg_char} implies that
\begin{equation*}
\left|R(\eta,\vec\mu,\vec\lambda)\right|\leqslant\chi_\eta(1)^{1-\#E(\Sigma)}\prod_{f\in\Fspx(\Sigma)}\chi_{\lambda_f}(1)\prod_\iterorb\chi_{\lambda_f}(1)^{\frac{1}{m}+\varepsilon}.
\end{equation*}
The Young diagram $\lambda_f$ is obtained by stacking $\mathfrak{v}-\mathfrak{f}_f$ boxes on top of $\eta$. By Proposition~\ref{prop:repSn_OV} and Frobenius reciprocity, the $S_{n-\mathfrak{f}_f}$-representation $\lambda_f$ is contained in the representation induced from $\eta$, so
\begin{equation*}
\chi_{\lambda_f}(1)\leqslant\dim\tn{Ind}^{S_{n-\mathfrak{f}_f}}_{S_{n-\mathfrak{v}}}\eta=[S_{n-\mathfrak{f}_f}:S_{n-\mathfrak{v}}]\cdot\dim\eta\leqslant n^{\mathfrak{v}-\mathfrak{f}_f}\chi_\eta(1).
\end{equation*}
Together these bounds imply, for a larger but still infinitesimal $\varepsilon$ and some $K_1$,
\begin{equation}\label{eq:bd_B}
\left|R(\eta,\vec\mu,\vec\lambda)\right|\leqslant n^{K_1}\cdot\chi_\eta(1)^{\cho(\Sigma)+\varepsilon}.
\end{equation}
Applying Proposition \ref{prop:univ_Pg_bd} to a ``thick'' term tells us that
\begin{equation}\label{eq:cut_Pg_tr_bd}
\tr\mathfrak{P}(\vec\lambda\setminus\eta,\vec\mu\setminus\eta,\vec\gamma)=O(1).
\end{equation}
Let us group all of the summands from $\mathcal{Y}^\tn{thick}(n)$ according to the value of $\eta$. Since there are at most $k+1$ ways to stack a new box on a diagram consisting of $k$ boxes, each partition $\eta$ belongs to at most $n^{K_2}$ tuples in $\mathcal{Y}^\tn{thick}(n)$, for some $K_2$ depending on $T$ but not $n$. Combining this observation with estimates (\ref{eq:bd_B}, \ref{eq:cut_Pg_tr_bd}) gives
\begin{equation}\label{eq:final_error_bd}
\sum_{(\eta,\vec\mu,\vec\lambda)\in\mathcal{Y}^\tn{thick}(n)}\hspace*{-15pt}R(\eta,\vec\mu,\vec\lambda)\tr\mathfrak{P}(\vec\lambda\setminus\eta,\vec\mu\setminus\eta,\vec\gamma)=O\left(n^{K_1+K_2}\hspace*{-15pt}\sum_{\eta\in\Lambda(n-\mathfrak{v},b_0)}\hspace*{-15pt}\chi_\eta(1)^{\cho(\Sigma)+\varepsilon}\right).
\end{equation}
We are ready to specify the value of $b_0$. We pick it so that $b_0(\cho(\Sigma)+\varepsilon)<-K_1-K_2$. Then bound (\ref{eq:final_error_bd}) and Lemma \ref{lmm:LSG} together guarantee that the contribution of ``thick'' terms becomes negligible as $n$ grows.

Second, let us consider the ``tall'' terms. Note that elements of $\mathcal{Y}^\tn{tall}(n)$ are just the transposes of the ``long'' tuples in $\mathcal{Y}^\tn{long}(n)$. On the level of representations, transposing corresponds to tensoring with the 1-dimensional sign representation (see Section~\ref{sec:rep_Sn}). This does not change the dimension, and if all $m_i$ are odd then $m_i$-torsion permutations are even, so
\begin{equation*}
R(\eta^\vee,\vec\mu^\vee,\vec\lambda^\vee)=R(\eta,\vec\mu,\vec\lambda).
\end{equation*}
The colour of each corner coupon (as a linear operator) is multiplied by the sign of its permutation, so if all $m_i$ are odd then
\begin{align*}
\tr\mathfrak{P}\left((\vec\lambda\setminus\eta)^\vee,(\vec\mu\setminus\eta)^\vee,\vec\gamma\right)&=\tr\mathfrak{P}(\vec\lambda\setminus\eta,\vec\mu\setminus\eta,\vec\gamma)\cdot\prod_{c\in C(\Sigma)}\tn{sgn}(\gamma_c)=\\
&=\tr\mathfrak{P}(\vec\lambda\setminus\eta,\vec\mu\setminus\eta,\vec\gamma),
\end{align*}
where the last equality is Proposition~\ref{prop:prod_sgn_compat}. Therefore, if all $m_i$ are odd then the contribution of $\mathcal{Y}^\tn{tall}(n)$ to equation (\ref{eq:ratio_formulae}) is the same as the contribution of $\mathcal{Y}^\tn{long}(n)$. Otherwise, if at least one $m_i$ is even, then Proposition \ref{prop:tall_tors_bd} tells us that the corresponding $\chi_{\lambda_f}^{m_i\tn{-tors}}=O(\exp(-cn^{\frac{1}{m_i}}))$, which dominates the at-most polynomial growth of other factors in $R$ (equation (\ref{eq:growing_row_dim})) and uniformly bounded $\tr\mathfrak{P}$ (Proposition \ref{prop:univ_Pg_bd}). Therefore, in this case all ``tall'' terms converge to 0. Note that this conditional doubling precisely cancels the effect of $\tfrac{1}{\zeta^{S_n}}$.

Finally, let us deal with the ``long'' diagrams. These are the ones contributing to the asymptotics. Elements of $\mathcal{Y}^\tn{long}(n)$ are
\begin{equation*}
\left(\eta^0[n-\mathfrak{v}],\left(\mu^0_e[n-\mathfrak{e}_e]\right)_{e\in E(\Sigma)},\left(\lambda^0_f[n-\mathfrak{f}_f]\right)_{f\in F(\Sigma)}\right)
\end{equation*}
where $(\eta^0,\vec{\mu^0},\vec{\lambda^0})$ range over some finite set independent of $n$. One of them has a simple description: if all $\eta^0=\mu^0_e=\lambda^0_f=\emptyset$ are empty, then every Young diagram in the corresponding tuple in $\mathcal{Y}^\tn{long}(n)$ is a single row of boxes, and by Section~\ref{sec:rep_Sn} all the representations colouring wires of the $\mathfrak{P}$-graph are trivial. Therefore for this tuple we have $R=\tr\mathfrak{P}=1$. The contribution of all other tuples in $\mathcal{Y}^\tn{long}(n)$ is bounded by Proposition~\ref{prop:long_bd} as $O(n^{\max\{\md(T),\cho(\Sigma)\}+\varepsilon})$. This tells us that
\begin{equation*}
\mathbb{E}_n\left[\#\{T\hookrightarrow\whS\}\right]=n^{\cho(T)}\left(1+o(1)\right)\left(1+O\left(n^{\max\{\md(T),\cho(\Sigma)\}+\varepsilon}\right)\right),
\end{equation*}
which implies both statements of Theorem~\ref{thm:E_combi_emb}.
\end{proof}

It remains to prove Proposition~\ref{prop:long_bd}. The strategy is similar to ``cutting'' the $\mathfrak{P}$-graph into corner graphs that we did in the proof of Proposition~\ref{prop:univ_Pg_bd}. However, this time we need more control on the resulting matrix coefficients, which we achieve with Lemma~\ref{lmm:MP_skew_bd}.
\begin{proof}[Proof of Proposition \ref{prop:long_bd}]
First, let us understand the growth of $R$. From equation (\ref{eq:growing_row_dim}) it follows that $\dim\eta$, $\dim\mu_e$, and $\dim\lambda_f$ asymptotically grow at the rates $n^{|\eta^0|}$, $n^{|\mu^0_e|}$, and $n^{|\lambda^0_f|}$ respectively. Together with the bound on $|\chi^{m\tn{-tors}}_{\lambda_f}|$ from Lemma~\ref{lmm:tors_avg_char}, this implies that
\begin{equation}\label{eq:B_powerlaw}
R(\eta,\vec\mu,\vec\lambda)=O(n^{K_R+\varepsilon}),
\end{equation}
where
\begin{equation}\label{eq:B_exponent}
K_R=|\eta^0|-\sum_{e\in E(\Sigma)}|\mu^0_e|+\sum_{f\in\Fspx(\Sigma)}|\lambda^0_f|+\sum_\iterorb\frac{|\lambda^0_f|}{m}.
\end{equation}

Second, let us bound the trace of the $\mathfrak{P}$-graph. As in the proof of Proposition \ref{prop:univ_Pg_bd}, use the main identity of Example~\ref{ex:resolution_id} to cut it into a product of ``corner graphs'' from equation (\ref{eq:mat_coeff}). Writing this in symbols gives us the formula
\begin{align}\begin{split}\label{eq:cut_into_corners}
\tr\mathfrak{P}(\vec\lambda\setminus\eta,\vec\mu\setminus\eta,\vec\gamma)&=\sum_{\tiny\begin{matrix}w_{e\hookrightarrow f}\!\in\!\tn{Tab}(\lambda_f\!\setminus\!\mu_e)\\u_{v\hookrightarrow e}\!\in\!\tn{Tab}(\mu_e\!\setminus\!\eta)\end{matrix}}\mathcal{M}\left(\vec{w},\vec{u}\right)\\
\text{where}\qquad\mathcal{M}\left(\vec{w},\vec{u}\right)&=\prod_{e_+\corner e_-}\left\langle u_{v\hookrightarrow e_+}\otimes w_{e_+\hookrightarrow f},\gamma_c\cdot(u_{v\hookrightarrow e_-}\otimes w_{e_-\hookrightarrow f})\right\rangle.
\end{split}\end{align}
In the non-orientable case of equation (\ref{eq:cut_into_corners}), we got rid of the coupons coloured with the invariant bilinear forms $B$, by evaluating them using equation (\ref{eq:cut_B_matcoef}) from the proof of Proposition \ref{prop:univ_Pg_bd}. Lemma \ref{lmm:MP_skew_bd} tells us that
\begin{equation}\label{eq:M_powerlaw}
\mathcal{M}(\vec{w},\vec{u})=O(n^{-K_\mathcal{M}(\vec{w},\vec{u})}),
\end{equation}
where
\begin{equation}\label{eq:corner_mat_coeff_decay}
K_\mathcal{M}(\vec{w},\vec{u})=\frac{1}{2}\hspace*{-5pt}\sum_{e_+\corner e_-}\hspace*{-5pt}\#\left(\tn{bot}\left(u_{v\hookrightarrow e_+}\sqcup w_{e_+\hookrightarrow f}\right)\vartriangle\gamma_c\cdot\tn{bot}\left(u_{v\hookrightarrow e_-}\sqcup w_{e_-\hookrightarrow f}\right)\right).
\end{equation}
Our $\mathcal{M},\tr\mathfrak{P}$ play the respective roles of $\mathcal{M},\Upsilon_n$ from \cite[Proposition 5.8]{asymptotic_stats_random_covering_surfaces_MageePuder23}.

Third, we relate $K_R$ and $K_\mathcal{M}$ to the combinatorics of $T$. Recall $\gamma_c$ are $T$-matching some edge labelling $\vec{\mathcal{J}}$. For each tuple $(\vec{w},\vec{u})$ of basis vectors let $\mathcal{P}(\vec{w},\vec{u})$ be the boundary piece consisting of those exposed sides and hanging half-edges whose $\vec{\mathcal{J}}$-label is outside the first row of the appropriate $\vec{w}$ or $\vec{u}$. Formally,
\begin{equation*}
\mathcal{P}\left(\vec{w},\vec{u}\right)\defeq\bigcup_{\iota\colon e\hookrightarrow f}\mathcal{J}_\iota^{-1}(\tn{bot}(w_\iota))\cup\bigcup_{\iota\colon v\hookrightarrow e}\mathcal{J}_\iota^{-1}(\tn{bot}(u_\iota))\subseteq\partial T.
\end{equation*}
Our construction of $\mathcal{P}$ parallels \cite[\S 5.6]{asymptotic_stats_random_covering_surfaces_MageePuder23}. The quantity $K_\mathcal{M}(\vec{w},\vec{u})$ counts half of the number of adjacencies between sides and half-edges in $\mathcal{P}(\vec{w},\vec{u})$ and outside of it, so equation (\ref{eq:corner_mat_coeff_decay}) can be expressed succinctly as
\begin{equation}\label{eq:corner_mc_decay_piece}
K_\mathcal{M}=\chi(\mathcal{P}).
\end{equation}

By construction, the piece $\mathcal{P}(\vec{w},\vec{u})$ has $\#\tn{bot}(\lambda_f\setminus\mu_e)=|\lambda^0_f\setminus\mu^0_e|$ exposed sides lifting an inclusion $e\hookrightarrow f$ in $\Sigma$, and $\#\tn{bot}(\mu_e\setminus\eta)=|\mu^0_e\setminus\eta^0|$ hanging half-edges above an inclusion $v\hookrightarrow e$. Therefore, with the notation from Definition~\ref{def:dft}, the counts of exposed sides and hanging half-edges in $\mathcal{P}(\vec{w},\vec{u})$ are
\begin{align*}
\mathfrak{e}_\tn{spx}(\mathcal{P})&=\sum_{\tiny\begin{matrix}e\hookrightarrow f\\f\in\Fspx(\Sigma)\end{matrix}}|\lambda^0_f|-|\mu^0_e|\\
\mathfrak{h}_\tn{spx}(\mathcal{P})&=2\sum_{e\tn{ touching 2 simplices}}|\mu^0_e|-|\eta^0|\\
\mathfrak{e}^{(m)}_\tn{orb}(\mathcal{P})&=\sum_{\tiny\begin{matrix}f\in\Forb(\Sigma)\\e=\partial f\end{matrix}}|\lambda^0_f|-|\mu^0_e|\\
\mathfrak{h}^{(m)}_\tn{orb}(\mathcal{P})&=2\sum_{e\tn{ bordering }m\tn{-orbigon}}|\mu^0_e|-|\eta^0|
\end{align*}
where the factors of 2 in the half-edges arise because every edge has two ends. Summing the above equations with appropriate weights and simplifying yields
\begin{align}\begin{split}\label{eq:almost_dft_P}
\frac{\mathfrak{e}_\tn{spx}(\mathcal{P})}{3}-&\frac{\mathfrak{h}_\tn{spx}(\mathcal{P})}{6}+\sum_m\left(\frac{\mathfrak{e}^{(m)}_\tn{orb}(\mathcal{P})}{m}-(\tfrac{1}{3}-\tfrac{1}{2m})\mathfrak{h}^{(m)}_\tn{orb}(\mathcal{P})\right)=\\
=&K_R+C\cdot|\eta^0|,\quad\text{where}\\
C=&\left(\!\frac{\#\{e\!\in\!E(\Sigma)\mid e\text{ touches }2\text{ simplices}\}}{3}-1+\hspace*{-10pt}\sum_\iterorb\left(\frac{2}{3}-\frac{1}{m}\right)\!\right),
\end{split}\end{align}
and $K_R$ is the quantity from equation (\ref{eq:B_exponent}). Let us simplify the last term. From the convention from Section~\ref{sec:simplicial}, $\#\{e\in E\mid e\text{ touches an orbigon}\}=\sum_m\#\Forb$ and $\#V=1$. Every edge has 2 sides, so
\begin{equation*}
2\#\{e\in E(\Sigma)\mid e\text{ touches 2 simplices}\}=3\#\Fspx(\Sigma)-\sum_m\#\Forb(\Sigma).
\end{equation*}
Putting these observations together with equation (\ref{eq:cho_combi}) we get $C=-\cho(\Sigma)$ in equation (\ref{eq:almost_dft_P}). Hence, combining equations (\ref{eq:corner_mc_decay_piece}, \ref{eq:almost_dft_P}) we obtain
\begin{equation*}
K_R-K_\mathcal{M}\left(\vec{w},\vec{u}\right)=\dft\left(\mathcal{P}\left(\vec{w},\vec{u}\right)\right)+\cho(\Sigma)|\eta^0|.
\end{equation*}

If some $\lambda_f^0\setminus\mu^0_e$ or $\mu^0_e\setminus\eta^0$ is nonempty, then $\mathcal{P}(\vec{w},\vec{u})$ is a nonempty piece, so $K_R-K_\mathcal{M}\leqslant\dft(\mathcal{P})\leqslant\md(T)$. Otherwise, $\lambda^0_f=\mu^0_e=\eta^0$ and it contains at least one box (we assumed not all are empty), so $K_R-K_\mathcal{M}\leqslant\max\{0,\md(T)\}+\cho(\Sigma)$. In either case we have
\begin{equation}\label{eq:total_exp_bd}
K_R-K_\mathcal{M}(\vec{w},\vec{u})\leqslant\max\{\md(T),\cho(\Sigma)\}.
\end{equation}
Together equations (\ref{eq:B_powerlaw}), (\ref{eq:M_powerlaw}), and (\ref{eq:total_exp_bd}) complete the proof of Proposition~\ref{prop:long_bd}.
\end{proof}

\section{Resolutions}\label{sec:resolutions}

The main goal of this section is to prove Proposition~\ref{prop:resolution}, which asserts the existence of resolutions with controlled boundary defects. Its proof is completed in Section~\ref{ssec:construct_resolution}, and rests on a measure of boundary length from Section~\ref{ssec:bdry_len} and combinatorial negative curvature from Section~\ref{ssec:combi_negcurv}. Afterwards in Section~\ref{ssec:pfs_aux_combi} we reuse these tools to complete the proofs of Proposition~\ref{prop:no_cap} and Lemma~\ref{lmm:no_dft_subgp}, two results stated in Section~\ref{ssec:res_main_pf} that guarantee predictable behaviour of complexes whose boundary defect is small.

Let us sketch the strategy for constructing resolutions. It follows the same basic idea as the growing process in \cite[Definition 2.10]{asymptotic_stats_random_covering_surfaces_MageePuder23}. An immersion $T\looparrowright\whS$ of a complex $T$ in a covering $\whS$ is an embedding of some quotient of $T$. Hence, the first kind of operation we use is the enumeration of quotients (note the collection of all quotients already satisfies Definition~\ref{def:resolution}, but with no control on boundaries). The problem is that quotienting a complex by an equivalence relation on vertices may create new boundary pieces with positive defect (or $\partial T$ may contain such to begin with). When this happens, our master probabilistic Theorem~\ref{thm:E_combi_emb} will give a suboptimal asymptotics. The remedy is to cover positive-defect pieces by attaching to them the adjacent phantom faces. This is our second kind of operation, and is illustrated in Figure~\ref{fig:spackling}. A resolution is obtained by exhaustively repeating these two kinds of moves.

The key problem is the halting of this algorithm. In Section~\ref{ssec:bdry_len} we come up with a measure of boundary length that, usually, shortens under both operations. This ensures termination and suffices to prove Proposition~\ref{prop:resolution} for many orbifolds. To complete the proof in full generality, in Section~\ref{ssec:combi_negcurv} we investigate combinatorial negative curvature in the spirit of \cite[\S 2.1]{sectional_curvature_cpct_cores_local_quasiconvexity_Wise04}. Combining these two ideas in Section~\ref{ssec:construct_resolution} produces an improved invariant, which upper-bounds $\cho+\md$ and can be reduced whenever the complex has boundary pieces with nonnegative defect.

\begin{figure}[h]
\begin{subfigure}{.45\textwidth}
\centering
\begin{tikzpicture}
\fill[lightgray](0,0)--(1,0)--(0,1);\draw[thick,red,->-](1,0)--(0,1);\draw[thick,teal,->>-](0,0)--(0,1);\draw[thick,blue,->>>-](0,0)--(1,0);
\draw[thick,teal,->>-](-1,0)--(-1,1);\draw[thick,blue,->>>-](-1,0)--(0,0);\draw[thick,blue,->>>-](-1,1)--(0,1);
\begin{scope}[>={Latex[length=4,width=4]},thick,red]\draw[>-](1.3,-.3)--(1,0);\draw[>-](.3,-.3)--(0,0);\end{scope}
\begin{scope}[>={Latex[length=4,width=4,sep=-2pt]Latex[length=4,width=4]},thick,teal]\draw[>->](1,-.4)--(1,.4);\draw[>-](0,-.4)--(0,0);\end{scope}
\draw[>={Latex[length=4,width=4,sep=-2pt]Latex[length=4,width=4,sep=-2pt]Latex[length=4,width=4]},thick,blue,->](1,0)--+(.4,0);
\filldraw(-1,0)circle(.04)(-1,1)circle(.04)(0,0)circle(.04)(0,1)circle(.04)(1,0)circle(.04);
\draw[dash pattern={on .5pt off 1pt}](.15,1)--(.85,.3)--
++(0,.25)--++(.3,0)--++(0,-.4)--++(.4,0)--++(0,-.3)--++(-.25,0)--++(.15,-.15)--++(-.15,-.15)--++(-.15,.15)--++(0,-.25)--++(-.3,0)--++(0,.4)--(.3,-.15)--
++(.15,-.15)--++(-.15,-.15)--++(-.15,.15)--++(0,-.25)--++(-.3,0)--++(0,.4)--(-1,-.15);
\end{tikzpicture}
\caption{A boundary piece $\mathcal{P}$ of $T$, topologically a segment, containing 3 exposed sides and 6 hanging half-edges. We have $\dft(\mathcal{P})=-1$.}
\end{subfigure}
\hfill
\begin{subfigure}{.45\textwidth}
\centering
\begin{tikzpicture}
\fill[lightgray](0,0)--(1,0)--(0,1);
\fill[lightgray](-1,0)--(1,0)--(0,1)--(1,1)--(2,0)--(2,-1)--(0,-1);
\begin{scope}[thick,red]\draw[->-](1,0)--(0,1);\draw[->-](2,0)--(1,1);\draw[->-](0,-1)--(-1,0);\draw[->-](1,-1)--(0,0);\draw[->-](2,-1)--(1,0);\end{scope}
\begin{scope}[thick,teal]\draw[->>-](-1,0)--(-1,1);\draw[->>-](0,0)--(0,1);\draw[->>-](1,0)--(1,1);\draw[->>-](0,-1)--(0,0);\draw[->>-](1,-1)--(1,0);\draw[->>-](2,-1)--(2,0);\end{scope}
\begin{scope}[thick,blue]\draw[->>>-](-1,1)--(0,1);\draw[->>>-](0,1)--(1,1);\draw[->>>-](-1,0)--(0,0);\draw[->>>-](0,0)--(1,0);\draw[->>>-](1,0)--(2,0);\draw[->>>-](0,-1)--(1,-1);\draw[->>>-](1,-1)--(2,-1);\end{scope}
\filldraw(-1,0)circle(.04)(-1,1)circle(.04)(0,-1)circle(.04)(0,0)circle(.04)(0,1)circle(.04)(1,-1)circle(.04)(1,0)circle(.04)(1,1)circle(.04)(2,-1)circle(.04)(2,0)circle(.04);
\end{tikzpicture}
\caption{Addition of the phantom cells touching $P$. There are 5 new vertices, 12 edges, and 7 faces.}
\end{subfigure}
\caption{Result of growing phantom faces on an example boundary piece $\mathcal{P}$ of the complex $T$ from Figure \ref{fig:immersion}. All immersions of $T$ into coverings of $\Sigma$ uniquely extend to the added cells. This particular $\mathcal{P}$ would not need covering in the actual algorithm ($\dft(\mathcal{P})<0$), but serves for illustration.}
\label{fig:spackling}
\end{figure}

Let us set up some vocabulary that will be needed. To deal with phenomena on the boundary we will split it into smaller parts called slots.
\begin{definition}\label{def:site}
A \emph{slot} of $T$ is a fan of phantom corners on $\partial T$ anchored at a common vertex, separated only by hanging half-edges and delimited by exposed sides. A \emph{simplicial slot} is a fan of adjacent phantom corners of simplices, 2- or 3-orbigons, delimited by exposed sides or corners of phantom $m$-orbigons with $m\geqslant4$.

Formally, a slot $\check{v}$ is a vertex $v\in V(T)$ together with a connected component of the complement of $p\left(\tn{lk}_T(v)\right)$ inside the simplicial circle $\tn{lk}_\Sigma(p(v))$, where $\tn{lk}$ is the link of a vertex in a cell complex. A simplicial slot is formally a vertex $v$ and a component of the complement of $p\left(\tn{lk}_T(v)\right)$ in the link $\tn{lk}_{\Sigma\setminus\bigcup_{m\geqslant4}\Forb(\Sigma)}(p(v))$ of $p(v)$ in the branched $\Delta$-complex obtained from $\Sigma$ by removing $m$-orbigons with $m\geqslant4$.
\end{definition}
For illustration of slots and simplicial slots on an example complex see Figures~\ref{sfig:slot}, \ref{sfig:spx_slot}. Important quantities will admit expressions as sums over slots, including the defect, a notion of boundary curvature, and the change in boundary length after growing phantom faces. In the course of the proofs, we will make a distinction between simplex-facing and orbigon-facing exposed sides. The latter will be grouped into arcs as below.
\begin{definition}\label{def:orbiarc}
An \emph{$m$-orbiarc} is a contiguous subset of $\partial T$ consisting of adjacent exposed sides facing a phantom $m$-orbigon with $m\geqslant4$ and not separated by any hanging half-edges. An $m$-orbiarc containing at least $\lfloor\tfrac{5m}{6}\rfloor-\mathbbm{1}_{\{m=11\}}$ edges is called a \emph{long orbiarc}.
\end{definition}
Orbiarcs are marked on an example complex in Figure~\ref{sfig:orbiarc}.

\begin{figure}[h]
\begin{subfigure}{.45\textwidth}
\centering
\begin{tikzpicture}
\filldraw[lightgray](0,0)--(-2,0)--(-1,-1)(0,0)--(2,0)--(1,-1);
\begin{scope}[red]\draw[->-](0,0)--(2,0);\draw[->-](-1,-1)--(-2,0);
\begin{scope}[-{Latex[length=4,width=4]}]\draw(-2,0)--(-2.5,-.5);\draw(2,0)--(1.5,.5);\draw(1,-1)--(.5,-1.5);\end{scope}
\begin{scope}[{Latex[length=4,width=4]}-]\draw(-1.6,-1)--(-1,-1);\draw(.5,.5)--(0,0);\draw(1.5,-1.5)--(1,-1);\end{scope}\end{scope}
\begin{scope}[teal]\draw[->>-](-2,0)--(0,0);\draw[->>-](2,0)--(1,-1);
\begin{scope}[>={Latex[length=4,width=4,sep=-1pt]Latex[length=4,width=4]}]
\draw[->](0,0)--(-.5,.5);\draw[->](-1,-1)--(-1.5,-1.5);\draw[->](1,-1)--(1.6,-1);\draw[>-](-1.5,.5)--(-2,0);\draw[>-](-.5,-1.5)--(-1,-1);\draw[>-](2.5,-.5)--(2,0);
\end{scope}\end{scope}
\begin{scope}[blue]\draw[->>>-](1,-1)--(0,0);\draw[->>>-](0,0)--(-1,-1);
\begin{scope}[>={Latex[length=4,width=4,sep=-1pt]Latex[length=4,width=4,sep=-1pt]Latex[length=4,width=4]}]
\draw[->](-2,0)--(-2.5,.5);\draw[->](-1,-1)--(-.4,-1);\draw[->](2,0)--(2.6,0);\draw[>-](-2.6,0)--(-2,0);\draw[>-](2.5,.5)--(2,0);\draw[>-](.4,-1)--(1,-1);
\end{scope}\end{scope}
\filldraw(-2,0)circle(1pt)(0,0)circle(1pt)(2,0)circle(1pt)(-1,-1)circle(1pt)(1,-1)circle(1pt);
\end{tikzpicture}
\caption{An immersed complex $T$, consisting of two simplices (grey) sharing a vertex, and the hanging half-edges on $\partial T$.}
\label{sfig:oa_slot_example}
\end{subfigure}
\hfill
\begin{subfigure}{.45\textwidth}
\centering
\begin{tikzpicture}
\filldraw[lightgray](0,0)--(-2,0)--(-1,-1)(0,0)--(2,0)--(1,-1);
\begin{scope}[red]\draw[->-](0,0)--(2,0);\draw[->-](-1,-1)--(-2,0);\end{scope}
\begin{scope}[teal]\draw[->>-](-2,0)--(0,0);\draw[->>-](2,0)--(1,-1);\end{scope}
\begin{scope}[blue]\draw[->>>-](1,-1)--(0,0);\draw[->>>-](0,0)--(-1,-1);\end{scope}
\filldraw(-2,0)circle(1pt)(0,0)circle(1pt)(2,0)circle(1pt)(-1,-1)circle(1pt)(1,-1)circle(1pt);
\draw[double](-1.8,.2)--(-.2,.2)(.2,.2)--(1.8,.2)(2.2,-.2)--(1.2,-1.2)(-2.2,-.2)--(-1.2,-1.2)(-.8,-1.2)--(-.2,-.6)to[out=45,in=135](.2,-.6)--(.8,-1.2);
\end{tikzpicture}
\caption{In this particular example, all exposed sides face phantom orbigons. Hence $\partial T$ consists of 5 orbiarcs, marked with double segments. Four have 1 edge and one has 2 edges ({\color{blue}3-tipped} sides are not separated by any hanging half-edges).}
\label{sfig:orbiarc}
\end{subfigure}
\begin{subfigure}{.45\textwidth}
\centering
\begin{tikzpicture}
\filldraw[lightgray](.42,0)arc(0:180:.42)(.3,-.3)arc(-45:-135:.42)--(0,0)(-1.58,0)arc(0:315:.42)--(-2,0)(-.7,-.7)arc(45:-224:.42)--(-1,-1)(1.58,0)arc(180:-135:.42)--(2,0)(1.3,-.7)arc(45:-224:.42)--(1,-1);
\begin{scope}[red]\draw[->-](0,0)--(2,0);\draw[->-](-1,-1)--(-2,0);\end{scope}
\begin{scope}[teal]\draw[->>-](-2,0)--(0,0);\draw[->>-](2,0)--(1,-1);\end{scope}
\begin{scope}[blue]\draw[->>>-](1,-1)--(0,0);\draw[->>>-](0,0)--(-1,-1);\end{scope}
\filldraw(-2,0)circle(1pt)(0,0)circle(1pt)(2,0)circle(1pt)(-1,-1)circle(1pt)(1,-1)circle(1pt);
\end{tikzpicture}
\caption{Complex $T$ from Figure~\ref{sfig:oa_slot_example} has 6 boundary slots, marked with grey circular sectors. Four have 5 phantom corners, one has 3, and one has 1 phantom corner. Note there are two slots anchored at the middle vertex.}
\label{sfig:slot}
\end{subfigure}
\hfill
\begin{subfigure}{.45\textwidth}
\centering
\begin{tikzpicture}
\filldraw[lightgray](.4,.4)arc(45:135:.57)--(0,0)(-1.6,.4)arc(45:135:.57)--(-2,0)(-2.57,0)arc(180:225:.57)--(-2,0)(-1.57,-1)arc(180:225:.57)--(-1,-1)(-.43,-1)arc(0:-45:.57)--(-1,-1)(2.4,.4)arc(45:135:.57)--(2,0)(2.4,-.4)arc(-45:0:.57)--(2,0)(.6,-1.4)arc(225:180:.57)--(1,-1)(1.4,-1.4)arc(-45:0:.57)--(1,-1);
\begin{scope}[red]\draw[->-](0,0)--(2,0);\draw[->-](-1,-1)--(-2,0);
\begin{scope}[-{Latex[length=4,width=4]}]\draw(-2,0)--(-2.5,-.5);\draw(2,0)--(1.5,.5);\draw(1,-1)--(.5,-1.5);\end{scope}
\begin{scope}[{Latex[length=4,width=4]}-]\draw(-1.6,-1)--(-1,-1);\draw(.5,.5)--(0,0);\draw(1.5,-1.5)--(1,-1);\end{scope}\end{scope}
\begin{scope}[teal]\draw[->>-](-2,0)--(0,0);\draw[->>-](2,0)--(1,-1);
\begin{scope}[>={Latex[length=4,width=4,sep=-1pt]Latex[length=4,width=4]}]
\draw[->](0,0)--(-.5,.5);\draw[->](-1,-1)--(-1.5,-1.5);\draw[->](1,-1)--(1.6,-1);\draw[>-](-1.5,.5)--(-2,0);\draw[>-](-.5,-1.5)--(-1,-1);\draw[>-](2.5,-.5)--(2,0);
\end{scope}\end{scope}
\begin{scope}[blue]\draw[->>>-](1,-1)--(0,0);\draw[->>>-](0,0)--(-1,-1);
\begin{scope}[>={Latex[length=4,width=4,sep=-1pt]Latex[length=4,width=4,sep=-1pt]Latex[length=4,width=4]}]
\draw[->](-2,0)--(-2.5,.5);\draw[->](-1,-1)--(-.4,-1);\draw[->](2,0)--(2.6,0);\draw[>-](-2.6,0)--(-2,0);\draw[>-](2.5,.5)--(2,0);\draw[>-](.4,-1)--(1,-1);
\end{scope}\end{scope}
\filldraw(-2,0)circle(1pt)(0,0)circle(1pt)(2,0)circle(1pt)(-1,-1)circle(1pt)(1,-1)circle(1pt);
\end{tikzpicture}
\caption{Complex $T$ from Figure~\ref{sfig:oa_slot_example} has 9 simplicial boundary slots (grey circular sectors), assuming $m_1,m_2,m_3\geqslant4$. In this particular example, all simplicial boundary slots have 1 phantom corner.}
\label{sfig:spx_slot}
\end{subfigure}
\caption{Slots and orbiarcs of an example complex immersed into $S^2$ with 3 cone points triangulated as in Figure~\ref{sfig:orbi_S2}.}
\end{figure}

\subsection{Boundary length}\label{ssec:bdry_len}

In this section we introduce the boundary length $\ell(\partial T)$, which is a measure of complexity of $\partial T$. We prove that complexes whose boundary cannot be shortened by growing faces often have negative maximal defect. This is sufficient to prove Proposition~\ref{prop:resolution} for many orbifolds.

We set the length of each exposed side facing a phantom simplex to 1. Treatment of $m$-orbiarcs (Definition~\ref{def:orbiarc}) is a bit more involved --- we measure them with a certain function $Ł$ depending on $m$ and the number of constituent exposed sides. To ensure desired niceness properties of $\ell$, the function $Ł$ must satisfy a list of conditions stated in detail in Proposition~\ref{prop:orbilen} below.
\begin{proposition}\label{prop:orbilen}
There exists a function $Ł\colon\mathbb{N}_{\geqslant2}\times\mathbb{N}\to\mathbb{R}_{\geqslant0}$ satisfying the following conditions.
\begin{enumerate}
\item For every $m$ we have $Ł(m,0)=0$.
\item\label{itm:orbi_prop_1lip} $\L(m,l+1)\leqslant\L(m,l)+1$
\item If $l\leqslant\lfloor\tfrac{5m}{6}\rfloor-\mathbbm{1}_{\{m=11\}}-1$ then
\begin{enumerate}
\item\label{itm:orbi_prop_gain_usual} $Ł(m,l+2)-Ł(m,l)\leqslant3-\tfrac{6(l+1)}{m}$, unless $m\in[2,12]$ and $l\geqslant m-3$
\item\label{itm:orbi_prop_gain_except} if $m\!\in\![4,\!12]$ and $l\!\geqslant\!m\!-\!3$, then $(m\!-\!l)\!-\!Ł(m,l)\!\leqslant\!5\!-\!\tfrac{6(l+1)}{m}\!+\!\tfrac{\mathbbm{1}_{\{m=5,l=3\}}}{5}$
\item\label{itm:orbi_prop_gain_2der} $2Ł(m,l+1)-Ł(m,l)-Ł(m,l+2)\leqslant6-\tfrac{6l}{m}$.
\end{enumerate}
\item\label{itm:orbi_prop_loa} If $l\geqslant\lfloor\tfrac{5m}{6}\rfloor-\mathbbm{1}_{\{m=11\}}$, then $\L(m,l)\geqslant m-l-\tfrac{1}{m}\left(\mathbbm{1}_{\{m\equiv\pm1\bmod(6),m\geqslant7\}}+\mathbbm{1}_{\{m=11\}}\right)$.
\item\label{itm:orbi_prop_subadd} For any $\max\{l_2,l_3\}\leqslant l_1\leqslant l_2+l_3$ we have $Ł(m,l_1)\leqslant Ł(m,l_2)+Ł(m,l_3)$.
\item\label{itm:orbi_prop_circlen_dft} $Ł(m,l)\geqslant\min\{2,\tfrac{2l}{m}\}$
\item\label{itm:orbi_prop_atleast1} If $l\geqslant1$ then $Ł(m,l)\geqslant1$.
\item\label{itm:orbi_prop_denominator} For every $l$ we have $Ł(m,l)\in\tfrac{1}{2m}\mathbb{Z}$.
\end{enumerate}
\end{proposition}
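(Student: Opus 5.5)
The plan is to write down an explicit piecewise formula for $Ł(m,l)$ and verify the conditions one at a time. The guiding picture is that an $m$-orbiarc with $l$ sides bounds part of a phantom $m$-orbigon. Growing the orbigon replaces the $l$ exposed sides with the remaining $m-l$ sides. So for long arcs $Ł$ should be close to $m-l$, meaning the length of the complementary arc, while for short arcs it should be close to $l$.

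I would first try
\begin{equation*}
Ł(m,l)=\min\Bigl\{\,l,\ \max\bigl\{m-l-\tfrac{c_m}{m},\ \min\{2,\tfrac{2l}{m}\}\bigr\}\Bigr\},
\end{equation*}
with a small correction $c_m\in\{0,1,2\}$ chosen to produce the exceptions in condition~(\ref{itm:orbi_prop_loa}). Here $c_m$ would be $1$ when $m\equiv\pm1\bmod 6$ and $m\ge7$, with an extra $1$ when $m=11$. The formula then needs rounding to $\tfrac{1}{2m}\mathbb{Z}$, and I expect it to need patching at small $m$.

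Several conditions are cheap once the shape is fixed:
\begin{itemize}
\item (1), (\ref{itm:orbi_prop_atleast1}) and (\ref{itm:orbi_prop_denominator}) are immediate from the formula and the rounding.
\item (\ref{itm:orbi_prop_circlen_dft}) holds because the floor $\min\{2,2l/m\}$ is built in, and $l\ge 2l/m$ takes care of the first branch.
\item (\ref{itm:orbi_prop_1lip}) holds because each branch has slope at most $1$ in $l$. The branch $m-l$ even decreases, and taking minima and maxima preserves the Lipschitz bound.
\item (\ref{itm:orbi_prop_subadd}) follows from concavity-type behaviour. On the range where $Ł$ is the increasing part $\min\{l,\text{const}\}$ it is subadditive. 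When $l_1$ lands in the decreasing region, $Ł(m,l_1)\le Ł(m,l_2)$ because $l_2\le l_1$ and $Ł(m,l_2)\ge1$.
\end{itemize}

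The substantive work is in the three inequalities (\ref{itm:orbi_prop_gain_usual}), (\ref{itm:orbi_prop_gain_except}) and (\ref{itm:orbi_prop_gain_2der}) for $l$ below the long-orbiarc threshold $\lfloor 5m/6\rfloor$. These compare the change in $Ł$ under lengthening an arc by one or two sides against the defect-type quantity $3-6(l+1)/m$.

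For large $m$, say $m\ge 13$, I would split into two regimes. The first is $l\le m/2$, where $Ł$ is $l$ or saturates at $2$; there the increments are at most $2\le 3-6(l+1)/m$ once $l+1\le m/6$, and otherwise the increment is $0$ on the plateau. The second is the decreasing regime, where the increments are negative. The second difference in (\ref{itm:orbi_prop_gain_2der}) is only large at kinks, where it is bounded by $2$, while the right-hand side $6-6l/m\ge1$ for $l<5m/6$. This still needs care at the kink between the plateau and $m-l$; I would adjust the plateau height there if necessary.

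The main obstacle is the finite list of small orders $m\in[4,12]$, together with the boundary cases $l\ge m-3$ in (\ref{itm:orbi_prop_gain_except}) and the special cases $m=5,l=3$ and $m=11$. There the general formula will likely fail by fractions like $\tfrac1m$. I would handle these by tabulating $Ł(m,l)$ explicitly for $l\le m$, treating each small $m$ as a separate finite check. I expect to have to tune individual values by multiples of $\tfrac{1}{2m}$ until every inequality holds, with (\ref{itm:orbi_prop_loa}) and (\ref{itm:orbi_prop_gain_except}) pulling in opposite directions.
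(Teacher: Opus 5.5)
Your overall plan (write down an explicit formula, then verify the eight conditions directly) is the same as the paper's. But the formula you propose does not satisfy the conditions, and the failure occurs for every $m$, not just small ones.

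For $l\le m/2-1$ your candidate equals $l$, since there $m-l-c_m/m>l$. So its increment over two steps is $2$ in that range. Condition~(3a) only allows $3-\frac{6(l+1)}{m}$, which drops below $2$ as soon as $l+1>m/6$. For instance, $m=60$, $l=20$ gives $2>0.9$, and $m=12$, $l=2$ gives $2>\frac32$; neither is an exempt case.

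Your alternative picture of a plateau at height $2$ does not help. For $l\ge m/2$ the right-hand side of (3a) is negative, so the function must decrease along steps of two all the way to $\lfloor 5m/6\rfloor$. At that point (4) still demands a value of about $m/6$. Summing the forced decrements, the value near $l=m/2$ must be of order $m/3$.

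Separately, your argument for (5) has a faulty step: in a decreasing region, $l_2\le l_1$ does not give $Ł(m,l_1)\le Ł(m,l_2)$.

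The idea you are missing is that (1), (2), (3a) and (4) leave essentially no freedom. Put $L=\lfloor 5m/6\rfloor$. Iterate (3a) in steps of two from any $l_0\equiv L\pmod 2$, and use $Ł(m,l_0)\le l_0$ (from (1) and (2)). This gives
\[
Ł(m,L)\le l_0+\frac{3(L-l_0)(m-L-l_0)}{2m},
\]
and the right-hand side is minimised at $l_0=m/6$.

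For $m=6k\ge12$ and $l_0=k$, the right-hand side equals $k=m-L$, which is exactly the lower bound from (4). So every inequality in the chain must be an equality:
\[
Ł(m,l)=k+\frac{3(l-k)(m-l-k)}{2m}\quad\text{for all } l\equiv k\pmod 2,\ k\le l\le 5k.
\]
The function is therefore forced to equal $l$ up to about $m/6$. After that it follows a parabola peaking near $m/3$ at $l=m/2$ and returning to about $m/6$ at $L$. It is not the tent $\min\{l,m-l\}$.

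This is the paper's formula. Writing $m=6k+r$, it takes:
\begin{itemize}
\item $Ł(m,l)=l$ for $l\le k+1$;
\item $k+\frac{3(l-k)(m-l-k)}{2m}+\frac{3-r}{2m}\mathbbm{1}_{\{l\not\equiv k\bmod(2)\}}$ for $l$ up to $L+1$;
\item a constant close to $k$ beyond that.
\end{itemize}
There are separate cases: $Ł=l$ for $m\in\{2,3\}$, and $Ł=\min\{2,\frac{2l}{m}\}$ only for $m\in[4,12]\cup\{17\}$ with $l\ge m-1$. The paper found this shape with the help of a brute-force numerical search.

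In particular, the $\frac1m$-exceptions in (4) are not something your correction $c_m$ needs to create; your tent satisfies (4) with no correction at all. They record the places where the forced discrete parabola falls short of $m-l$ at the threshold.
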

\begin{proof}
The properties we require from $Ł$ almost specify it uniquely. The exact formula turns out to be $Ł(m,l)=l$ for $m\in\{2,3\}$, $Ł(m,l)=\min\{2,\tfrac{2l}{m}\}$ for $m\in[4,12]\cup\{17\}$ and $l\geqslant m-1$; and for all other arguments
\begin{equation*}
Ł(m,l)\defeq\begin{cases}l&\text{if }l\leqslant k+1\\k+\tfrac{3(l-k)(m-l-k)}{2m}+\tfrac{3-r}{2m}\cdot\mathbbm{1}_{\{l\not\equiv k\bmod(2)\}}&\text{if }k\leqslant l\leqslant\left\lfloor\tfrac{5m}{6}\right\rfloor+1\\k-\mathbbm{1}_{\{r=0\}}+\tfrac{1}{m}(\mathbbm{1}_{\{r=1\}}-\mathbbm{1}_{\{r=5\}})&\text{if }l\geqslant\lfloor\tfrac{5m}{6}\rfloor+1,\end{cases}
\end{equation*}
where $m=6k+r$ with $0\leqslant r\leqslant5$. Knowing the formula, conditions of Proposition~\ref{prop:orbilen} can be verified directly.
\end{proof}
This allows us to formally define the boundary length.
\begin{definition}\label{def:bdry_len}
Suppose $\partial T$ consists of $\mathfrak{e}_\tn{spx}$ exposed sides facing phantom simplices, 2-orbigons or 3-orbigons, and $a+b$ orbiarcs. Suppose the first $a$ orbiarcs are segments and the last $b$ are circles, and the $i$-th one has $l_i$ edges of a phantom $m_i$-orbigon. The \emph{boundary length} of $T$ is
\begin{equation*}
\ell(\partial T)\defeq \mathfrak{e}_\tn{spx}+\sum_{i=1}^aŁ(m_i,l_i)+\sum_{i=a+1}^{a+b}\min\left\{2,\frac{2l_i}{m_i}\right\},
\end{equation*}
where $Ł\colon\mathbb{N}_{\geqslant2}\times\mathbb{N}\to\mathbb{R}_{\geqslant0}$ is the function from Proposition~\ref{prop:orbilen}.
\end{definition}

Proposition~\ref{prop:orbilen} is a bit complicated; its purpose is that each condition will activate in a certain combinatorial configuration to guarantee some desirable property of $\ell$. We can already demonstrate one of them below.
\begin{proposition}\label{prop:quot_short_bdry}
If $T\twoheadrightarrow T'$ is a quotient of immersed complexes (Definition~\ref{def:imm_cplx}), then $\ell(\partial T')\leqslant\ell(\partial T)$.
\end{proposition}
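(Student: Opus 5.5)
We plan to reduce the statement to elementary quotient moves and then track how each term of $\ell$ changes locally. Every quotient $T\twoheadrightarrow T'$ of immersed complexes factors as a finite sequence of elementary identifications. Each one either glues two vertices with the same image in $\Sigma$ and then folds the edges and faces forced by the immersion condition, or identifies a single pair of edges or faces. Since $\ell$ is a sum over boundary components, it suffices to show that $\ell$ does not increase under one such move.

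Under an elementary quotient, boundary elements behave in three ways. They persist, they disappear because a formerly phantom cell becomes actual after gluing (an exposed side becomes covered, or a hanging half-edge becomes an actual edge), or several of them merge into one. No new exposed side is ever created. Every exposed side of $T'$ is the image of exposed sides of $T$, since a side covered in $T$ stays covered in $T'$. Hence the simplex-facing count $\mathfrak{e}_\tn{spx}$ can only drop.

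The work is in the orbiarc terms. An $m$-orbiarc of $T'$ is a maximal run of exposed sides around one phantom $m$-orbigon, with no separating hanging half-edges. Its preimage in $T$ is a union of sub-arcs of orbiarcs facing the same phantom orbigon, of total length at least its own length. I would compare the two through the following properties of $Ł$ from Proposition~\ref{prop:orbilen}.
\begin{itemize}
\item Monotonicity (1-Lipschitz, condition~\ref{itm:orbi_prop_1lip}) together with subadditivity (condition~\ref{itm:orbi_prop_subadd}) handles merging. Two arcs of lengths $l_2,l_3$ that glue into an arc of length $l_1$ with $\max\{l_2,l_3\}\leqslant l_1\leqslant l_2+l_3$ satisfy $Ł(m,l_1)\leqslant Ł(m,l_2)+Ł(m,l_3)$. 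Overlap means $l_1<l_2+l_3$, and arcs only grow by absorption.
\item Shrinking of an arc, when some of its sides become covered, must decrease $Ł$. This needs monotonicity of $Ł$ in $l$, which I expect to read off from the explicit formula.
\item When an arc closes up into a circle, its contribution becomes $\min\{2,2l/m\}$. Condition~\ref{itm:orbi_prop_circlen_dft} gives $Ł(m,l)\geqslant\min\{2,2l/m\}$, so this is no larger.
\item Hanging half-edges disappearing can join two orbiarcs into one. This is again the subadditivity case.
\end{itemize}

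Wrapping around is a special case. A single segment of $T$ may fold onto itself around an orbigon whose order is divided in $T'$. Here the cone point order changes, so the phantom cell faced in $T'$ is an $m'$-orbigon with $m'\mid m$. One then has to compare $Ł(m,l)$ with $Ł(m',l')$. Rather than handle this directly, I would note that the morphism is over $\Sigma$, so phantom cells are always copies of the same cell of $\Sigma$. The relevant $m$ is therefore fixed, and only the combinatorial gluing of arcs matters.

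The main obstacle is bookkeeping: merged pieces can come from several boundary components and from both sides of a single edge. I would organise it by summing over phantom corners, as in the slot decomposition of Definition~\ref{def:site}, and check each local case.
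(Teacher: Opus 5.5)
Your skeleton matches the paper's proof: simplex-facing sides can only disappear, and each orbiarc of $T'$ is controlled by the orbiarcs of $T$ mapping into it, via conditions~\ref{itm:orbi_prop_subadd} and~\ref{itm:orbi_prop_circlen_dft}. However, your ``shrinking'' step rests on a false property: $Ł$ is not monotone in $l$.

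The non-monotonicity is built into Proposition~\ref{prop:orbilen}. Condition~\ref{itm:orbi_prop_gain_usual} gives $Ł(m,l+2)-Ł(m,l)\leqslant3-\tfrac{6(l+1)}{m}<0$ whenever $m\geqslant13$ and $\tfrac{m}{2}-1<l\leqslant\lfloor\tfrac{5m}{6}\rfloor-1$. Concretely, the explicit formula gives $Ł(60,30)=20$ but $Ł(60,51)=9$. This decrease is exactly what Proposition~\ref{prop:spackling_defect} exploits when it lengthens orbiarcs. Condition~\ref{itm:orbi_prop_1lip} is only a one-sided bound, so it yields no monotonicity; it is also not needed for merging. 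Consequently, no estimate of the form ``the image arc is shorter, hence its $Ł$ is no larger'' is available.

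The missing idea is that a segment never shrinks partially.
\begin{itemize}
\item Suppose one side of an orbiarc $\alpha_i\subseteq\partial T$ becomes covered in $T'$ by an actual face. That face has a corner at the image of the shared vertex, of the same type as the phantom corner separating this side from the next. By injectivity of $T'\looparrowright\Sigma$ on links, this corner is incident to the image of the next edge, so the next side is covered too. Inductively, all of $\alpha_i$ is covered and contributes $0$.
\item The next side around a phantom orbigon is determined by the current one. So if the image of $\alpha_i$ repeats a side, it is periodic and lands on a circular orbiarc.
\end{itemize}

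Hence each surviving $\alpha_i$ maps wholly into a single orbiarc $\alpha$ of $T'$ in one of two ways.
\begin{itemize}
\item The map is injective. Then $l_i\leqslant l$, and your subadditivity argument applies, with its hypotheses holding automatically.
\item The arc wraps onto a circle with $l\leqslant l_i$ sides. This is the only case where length genuinely drops. Here you need $\min\{2,\tfrac{2l}{m}\}\leqslant\min\{2,\tfrac{2l_i}{m}\}\leqslant Ł(m,l_i)$, that is, monotonicity of $\min\{2,\tfrac{2l}{m}\}$ (not of $Ł$) together with condition~\ref{itm:orbi_prop_circlen_dft}.
\end{itemize}

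As written, your circle bullet compares both sides at the same $l$, and your wrapping paragraph ends without any estimate, so the wrapping case is not actually covered. This two-case split is exactly the paper's argument. With it, the reduction to elementary moves is unnecessary. The cone-point worry is also moot: the $m$ of a phantom orbigon is that of the orbigon of $\Sigma$ beneath it, so it never changes.
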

\begin{proof}
The number of exposed sides facing a phantom simplex or 2- or 3-orbigon clearly cannot increase in the quotient, so keeping the notation from Definition~\ref{def:bdry_len} we have $\mathfrak{e}_\tn{spx}'\leqslant\mathfrak{e}_\tn{spx}$. Consider an $m$-orbiarc $\alpha\subseteq\partial T'$ of $l$ exposed sides. It must come from some orbiarcs $\alpha_1,\dots,\alpha_k\subseteq\partial T$ with $l_1,\dots,l_k$ edges respectively (possibly with overlapping images). There are two possibilities.
\begin{itemize}
\item If the images of $\alpha_i$ in $T'$ are all segments, then applying condition~\ref*{itm:orbi_prop_subadd} of Proposition~\ref{prop:orbilen} gives
\begin{equation*}
Ł(m,l)\leqslant Ł(m,l_1)+\dots+Ł(m,l_k).
\end{equation*}
The contribution of $\alpha$ to $\ell(\partial T')$ is either $Ł(m,l)$ if it is a segment or $\min\{2,\tfrac{2l}{m}\}$ if it is a circle. By condition \ref*{itm:orbi_prop_circlen_dft} of Proposition~\ref{prop:orbilen}, both are upper-bounded by $Ł(m,l)$ on the LHS of the inequality above.
\item If the image of some $\alpha_i$ in $T'$ folds into a circle, then $\alpha$ must also be a circle, and $l\leqslant l_i$. The contribution of $\alpha_i$ to $\ell(\partial T)$ is either $Ł(m,l_i)$ or $\min\{2,\tfrac{2l_i}{m}\}$, which again by condition~\ref*{itm:orbi_prop_circlen_dft} of Proposition~\ref{prop:orbilen} is no less than the contribution $\min\{2,\tfrac{2l}{m}\}$ of $\alpha$ to $\ell(\partial T')$.
\end{itemize}
In either case the contribution of $\alpha$ to $\ell(\partial T')$ is no larger than the sum of contributions of $\alpha_i$ to $\ell(\partial T)$. Orbiarcs $\alpha_i$ do not land in any other orbiarc of $T'$, so summing these estimates gives $\ell(\partial T')\leqslant\ell(\partial T)$.
\end{proof}

The real utility of the boundary length as a complexity measure comes from the proposition below. It is a manifestation of the principle that positive defect boundary can be shortened.
\begin{proposition}\label{prop:spackling_defect}
Assume $\partial T$ contains no long orbiarcs, and $\dft(\mathcal{P})\geqslant0$ for some nonempty boundary piece $\mathcal{P}\subseteq\partial T$ that does not contain circular orbiarcs. Then there exists an embedding $T\hookrightarrow T'$ satisfying the following conditions.
\begin{enumerate}
\item\label{itm:spackling_extension} Every immersion $T\looparrowright\whS$ of $T$ in a covering $\whS$ extends uniquely to $T'\looparrowright\whS$.
\item\label{itm:spackling_shorten} $\ell(\partial T')\leqslant\ell(\partial T)+\tfrac{1}{5}\#\{f\in\Forb[5](T'\setminus T)\mid\mathfrak{s}_f=5\}$
\item\label{itm:spackling_p5} Every $f\in\Forb[5](T'\setminus T)$ has at least 3 vertices in $T$.
\item\label{itm:spackling_nontriv} There is at least one face in $T'\setminus T$.
\end{enumerate}
\end{proposition}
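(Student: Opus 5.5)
The construction is a \emph{spackling} move: attach to $T$ the phantom faces adjacent to a well-chosen part of $\mathcal{P}$, as in Figure~\ref{fig:spackling}. Property~\ref*{itm:spackling_extension} comes for free for any such $T'$ built only from phantom cells touching $\partial T$. An immersion $T\looparrowright\whS$ into a covering determines, at each exposed side or hanging half-edge, the actual cell of $\whS$ lying there, and hence the unique extension.

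The plan is to reduce to a single ``bad slot''. First, localise the defect. Write $\dft(\mathcal{P})$ as a sum of contributions over the slots of $T$ meeting $\mathcal{P}$, assigning each phantom corner of $\mathcal{P}$ a curvature-like weight. Each exposed side contributes $\tfrac13$ (or $\tfrac1m$) split between its two end-corners. Each hanging half-edge contributes $-\tfrac16$ (or $-(\tfrac13-\tfrac1{2m})$). Each end of a segment component contributes $-\tfrac12$. Since $\dft(\mathcal{P})\geqslant0$ and $\mathcal{P}$ is nonempty, pigeonhole should give a \emph{simplicial slot} $\check v$ whose local contribution is nonnegative, or an orbiarc with nonnegative contribution. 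In the simplicial case this means $\check v$ has at most one or two phantom simplicial corners between exposed sides, matching the non-locally-3-convex configurations mentioned after Definition~\ref{def:dft}. In the orbigon case it means an $m$-orbiarc together with its flanking hanging half-edges. Circular orbiarcs are excluded by hypothesis, so every orbiarc in $\mathcal{P}$ is a segment and gets charged $Ł$ rather than $\min\{2,\tfrac{2l}{m}\}$.

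Second, perform the local case analysis. In a simplicial slot with $j\in\{1,2\}$ phantom corners, add the $j$ phantom simplices (together with any phantom $2$- or $3$-orbigon corners in the fan). This removes two exposed sides and adds at most $j$ new ones, so $\ell$ does not increase, and it adds at least one face, giving property~\ref*{itm:spackling_nontriv}. If the slot is flanked by an $m$-orbiarc of length $l$, adding the phantom orbigon changes $\ell$ by $(m-l)-Ł(m,l)$ plus the boundary change at the two ends. Alternatively, adding adjacent simplices extends the orbiarc to length $l+1$ or $l+2$; the increments $Ł(m,l+1)-Ł(m,l)$, $Ł(m,l+2)-Ł(m,l)$ and the second difference are what conditions~\ref*{itm:orbi_prop_1lip} and \ref*{itm:orbi_prop_gain_usual}--\ref*{itm:orbi_prop_gain_2der} of Proposition~\ref{prop:orbilen} control. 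In each subcase I compare the nonnegative defect bound (for instance $3-\tfrac{6(l+1)}{m}$ arises as six times the defect allowance of an orbiarc with its flanking half-edges) against these inequalities, and conclude that $\ell$ does not grow. The ``no long orbiarcs'' hypothesis guarantees $l\leqslant\lfloor\tfrac{5m}{6}\rfloor-\mathbbm{1}_{\{m=11\}}-1$, so condition~\ref*{itm:orbi_prop_gain_usual} or \ref*{itm:orbi_prop_gain_except} applies. The exceptional $m=5$, $l=3$ case produces the extra $\tfrac15$ per added pentagon-orbigon in property~\ref*{itm:spackling_shorten}. In that case the orbigon is added only when at least $3$ of its vertices already lie in $T$, which is property~\ref*{itm:spackling_p5}.

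The main obstacle is the localisation step. Nonnegative total defect must be shown to force a single slot or orbiarc at which one of the Proposition~\ref{prop:orbilen} inequalities applies with room to spare. Contributions of mixed simplex/orbigon hanging half-edges are shared between neighbouring slots, so I would first try to choose $\mathcal{P}$ minimal (a single contiguous segment or circle, shrinking while the defect stays $\geqslant0$). I would then argue by discharging weights from hanging half-edges to adjacent exposed sides until every slot has a well-defined local sign. A secondary subtlety is identifications: added phantom cells may coincide with each other or with cells of $T$ near a short fan. I would handle this by taking $T'$ as the pushout, and by using Proposition~\ref{prop:quot_short_bdry} to see that such folding only decreases $\ell$.
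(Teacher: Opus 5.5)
\textbf{The gap is in the localisation step.} Pigeonhole does give a simplicial slot $\check v$ with $D_{\check v}\geqslant0$. But a move at that single slot need not keep $\ell$ from growing, so the bookkeeping cannot be repaired afterwards. There are two failures.

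First, with your own weights, a slot between two simplex-facing exposed sides separated by $h$ simplex/simplex hanging half-edges has $D_{\check v}=\tfrac{2-h}{6}$. This is nonnegative up to $h=2$, i.e.\ three phantom corners, not ``one or two''. Filling those three corners covers two exposed sides and creates three, so $\ell$ increases by $1$. You cannot always find a better slot. The hypothesis allows $\dft(\mathcal P)=0$ with every slot contributing exactly $0$: for instance a boundary circle each of whose slots consists of three phantom simplex corners, so $\dft=\tfrac{\mathfrak e}{3}-\tfrac{2\mathfrak e}{6}=0$. Every proper subpiece of such a circle has negative defect, so your minimal piece is the whole circle, and every single-slot move lengthens the boundary.

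Second, next to an orbiarc even a slot with strictly positive local defect can fail. Take $m=12$, $l=4$, $h=0$. The slot between a simplex-facing side and the orbiarc has $D_{\check v}=-\tfrac16+\tfrac5{24}=\tfrac1{24}>0$. Yet adding its one phantom simplex changes $\ell$ by $-1+1+Ł(12,5)-Ł(12,4)=4-\tfrac72=\tfrac12>0$. Proposition~\ref{prop:orbilen} gives no bound on one-step increments $Ł(m,l+1)-Ł(m,l)$ of the needed form. Its conditions \ref{itm:orbi_prop_gain_usual}--\ref{itm:orbi_prop_gain_2der} control the two-step difference $Ł(m,l+2)-Ł(m,l)$ and the second difference, which a single-slot move never produces. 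Discharging weights between slots does not change this, because the problem is the length change, not the sign of the local defect.

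\textbf{The missing idea: grow along all of $\mathcal P$ at once and use linearity.} Add every phantom simplex and $2$- or $3$-orbigon meeting $\mathcal P$ at an interior vertex. Also add the phantom $m$-orbigons with $4\leqslant m\leqslant12$ touching at least $m-3$ exposed sides of $\mathcal P$. Then an orbiarc interior to $\mathcal P$ gains an edge at each end, and you charge $\tfrac12\bigl(Ł(m,l+2)-Ł(m,l)\bigr)$ to each of its two slots. Condition~\ref{itm:orbi_prop_gain_2der} corrects at ends of $\mathcal P$, and condition~\ref{itm:orbi_prop_gain_except} handles the arcs whose orbigon gets filled.

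You then verify, case by case, the linear inequality
\[
\Delta\ell_{\check v}\leqslant-6D_{\check v}+\tfrac1{10}\#\{\text{new 5-sided 5-orbigons at }\check v\}
\]
at every simplicial slot of $\mathcal P$. The cases are simplex/simplex, orbiarc/orbiarc, simplex/orbiarc, the two kinds of piece ends, and the exceptional arcs with $l\geqslant m-3$. Summing gives $\ell(\partial T')-\ell(\partial T)\leqslant-6\dft(\mathcal P)+\tfrac15\#\{\cdots\}$. So only the total $\dft(\mathcal P)\geqslant0$ is needed, and no slot has to be individually good.

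In your $m=12$, $l=4$ example this accounting gives $\Delta\ell_{\check v}=\tfrac12\bigl(Ł(12,6)-Ł(12,4)\bigr)-\tfrac12=-\tfrac14=-6D_{\check v}$, which is tight. In the circle of three-corner slots the global move leaves $\ell$ unchanged while still adding faces.
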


Note that at this point we can already prove Proposition \ref{prop:resolution} when $\Sigma$ has no cone points of order congruent to 1 or 5 modulo 6. Indeed, under this extra assumption, we can shorten the boundary whenever $\md\geqslant0$: if there are long orbiarcs, then adding the phantom orbigon works by condition~\ref*{itm:orbi_prop_loa} of Proposition~\ref{prop:orbilen} (it covers the $m$-orbiarc of $l$ edges and creates $m-l$ new simplex-facing exposed sides), while in the absence of long orbiarcs we apply Proposition \ref{prop:spackling_defect}. The iteration of this process and enumeration of quotients (which do not increase $\ell$ by Proposition~\ref{prop:quot_short_bdry}) must terminate, either by running out of nonnegative-defect pieces or decreasing $\cho$ arbitrarily much, at which point we obtain the desired resolution. To cover long orbiarcs and apply this argument without restrictions on cone point orders modulo 6 we would need $l\geqslant\lfloor\tfrac{5m}{6}\rfloor-\mathbbm{1}_{\{m=11\}}\Rightarrow Ł(m,l)\geqslant m-l$, but with this strengthening of condition~\ref*{itm:orbi_prop_loa} the function $Ł$ in Proposition~\ref{prop:orbilen} happens to no longer exist.

\smallskip

The rest of this section is devoted to proving Proposition~\ref{prop:spackling_defect}. The strategy is to grow phantom faces on a nonnegative-defect piece, as in Figure~\ref{fig:spackling}. To bound the new boundary length we partition $\partial T$ into simplicial slots (Definition~\ref{def:site}), correspondingly break up the defect and boundary length change as sums of local contributions, and compare the resulting two kinds of local terms. The contributions of each orbiarc and simplex-facing exposed side will be split in half and attributed to the two adjacent simplicial slots.
\begin{proof}[Proof of Proposition \ref{prop:spackling_defect}]
Consider a boundary piece $\emptyset\neq\mathcal{P}\subseteq\partial T$ with $\dft(\mathcal{P})\geqslant0$. Let $T'$ be the immersed subcomplex obtained by adding to $T$ all the phantom simplices, 2-orbigons and 3-orbigons that touch $\mathcal{P}$ at an interior vertex, and phantom $m$-orbigons with $4\leqslant m\leqslant12$ that touch at least $m-3$ exposed sides of $\mathcal{P}$. Such an inclusion $T\hookrightarrow T'$ can be realised as a simple homotopy equivalence followed by Stallings folding, so every immersion $T\looparrowright\whS$ into a covering extends uniquely to $T'$, ensuring the condition~\ref*{itm:spackling_extension} of Proposition~\ref{prop:spackling_defect} holds. Conditions~\ref*{itm:spackling_p5},\ref*{itm:spackling_nontriv} are also straightforward to confirm.

We claim that $T'$ satisfies condition~\ref*{itm:spackling_shorten} too. To prove it we express the defect $\dft(\mathcal{P})=\sum_{\check{v}\in\mathcal{P}}D_{\check{v}}$ and boundary length change $\ell(\partial T')-\ell(\partial T)=\sum_{\check{v}\in\mathcal{P}}\Delta\ell_{\check{v}}$ in terms of simplicial slots $\check{v}\in\mathcal{P}$. There are four possibilities for $\check{v}$, as shown in Figure~\ref{fig:bdry_verts}: either it separates two exposed sides facing phantom simplices, two orbiarcs, lies between a simplex-facing edge and an orbiarc, or is one of two types of ends of $\mathcal{P}$. In each case we will analyse the contribution $D_{\check{v}}$ to $\tn{dft}(\mathcal{P})$ and $\Delta\ell_{\check{v}}$ to boundary length change, and verify that
\begin{equation}\label{eq:len_dft}
\Delta\ell_{\check{v}}\leqslant-6D_{\check{v}}+\tfrac{1}{10}\cdot\#\{\text{newly added }5\text{-sided }5\tn{-orbigons touching }\check{v}\}.
\end{equation}
Summation of inequality (\ref{eq:len_dft}) over $\check{v}$ will then yield condition~\ref*{itm:spackling_shorten} of Proposition~\ref{prop:spackling_defect}.

\begin{figure}[h]
\begin{subfigure}{.45\textwidth}
\centering
\begin{tikzpicture}
\fill[gray!30!white](-3,0)--(-2,1.5)--(0,2.5)--(2,1.5)--(3,0);
\fill[gray](-3,0)--(3,0)..controls(1.5,-.5)..(0,-.3)..controls(-1.5,-.5)..(-3,0);
\draw(-3,0)--(3,0);\filldraw(-3,0)circle(.08)(3,0)circle(.08);\node[draw,isosceles triangle,isosceles triangle apex angle=60,fill=black,inner sep=1.5pt,rotate=90]at(0,0){};
\draw[dashed](0,0)to[bend left](-2,1.5)to[bend left](0,0)--(0,2.5)(0,0)--(2,1.5)--(0,2.5)--(-2,1.5)(-3,0)--(-2,1.5)(2,1.5)--(3,0);
\node[circle,dashed,draw,inner sep=2pt]at(.7,1.3){1};\node[circle,dashed,draw,inner sep=2pt]at(-1,.75){1};
\draw[thick](-2,3)--(-2,-.7)(2,3)--(2,-.7);
\draw[fill=white](-2,1.5)circle(.08)(0,2.5)circle(.08)(2,1.5)circle(.08);
\end{tikzpicture}
\caption{Two adjacent simplex-facing exposed sides. 3-orbigons behave like simplices and 2-orbigons like thick edges.}
\label{sfig:bdry_V_ss}
\end{subfigure}
\hfill
\begin{subfigure}{.45\textwidth}
\centering
\begin{tikzpicture}
\fill[gray!30!white](0,0)--(-.8,1.6)--(0,2)--(.8,1.6);
\fill[gray](-2,-1.91)--(-1.4,-1.91)--(-.4,-1.2)--(0,0)--(.4,-1.2)--(1.4,-1.91)--(2,-1.91)--(2,-2.2)to[out=-150,in=-45](1,-1.8)to[out=135,in=45](-1,-1.8)to[out=-135,in=-30](-2,-2.2);
\draw(-2,-1.91)--(-1.4,-1.91)--(-.4,-1.2)--(0,0)--(.4,-1.2)--(1.4,-1.91)--(2,-1.91);
\draw[thick](-2,2.2)--(-2,-2.5)(2,2.2)--(2,-2.5);
\node[circle,draw,dotted,fill=white]at(-2,0){1};\node[circle,draw,dotted,fill=white]at(2,0){1};
\draw[dotted](-2,1.96)--(-1.6,1.96)--(-.8,1.6)(.8,1.6)--(1.6,1.96)--(2,1.96);\draw[dashed](0,0)--(-.8,1.6)--(0,2)--(0,0)--(.8,1.6)--(0,2);
\filldraw(-1.4,-1.91)circle(.08)(-.4,-1.2)circle(.08)(.4,-1.2)circle(.08)(1.4,-1.91)circle(.08)(-.8,1.6);
\node[draw,isosceles triangle,isosceles triangle apex angle=60,fill=black,inner sep=1.5pt,rotate=90]at(0,0){};
\draw[fill=white](-.8,1.6)circle(.08)(0,2)circle(.08)(.8,1.6)circle(.08);\draw[densely dotted,fill=white,thick](-1.6,1.96)circle(.08)(1.6,1.96)circle(.08);
\node at(-1.5,-1.3){$l_1$};\node at(1.5,-1.3){$l_2$};\node at(-1.5,1.3){$m_1$};\node at(1.5,1.3){$m_2$};
\end{tikzpicture}
\caption{Two adjacent orbiarcs.}
\label{sfig:bdry_V_oo}
\end{subfigure}
\begin{subfigure}{.45\textwidth}
\centering
\begin{tikzpicture}
\fill[gray!30!white](0,0)--(-.4,1.2)--(.8,2)--(1.8,1.5)--(2.5,0);
\fill[gray](-2,-2)to[out=-30,in=-135](-.9,-2)to[out=45,in=-110](.2,-.5)to[out=70,in=-145](2.5,0)--(0,0)--(-.4,-1.2)--(-1.4,-1.91)--(-2,-1.91);
\draw(-2,-1.91)--(-1.4,-1.91)--(-.4,-1.2)--(0,0)--(2.5,0);
\draw[thick](-2,2.2)--(-2,-2.2)(1.8,2.2)--(1.8,-2.2);\node[circle,dotted,draw,fill=white]at(-2,0){1};
\draw[dashed](.8,2)--(1.8,1.5)--(0,0)--(.8,2)--(-.4,1.2)--(0,0)(1.8,1.5)--(2.5,0);\draw[dotted](-.4,1.2)--(-1.4,1.91)--(-2,1.91);
\filldraw(-1.4,-1.91)circle(.08)(-.4,-1.2)circle(.08)(2.5,0)circle(.08);
\node[draw,isosceles triangle,isosceles triangle apex angle=60,fill=black,inner sep=1.5pt,rotate=90]at(0,0){};
\draw[fill=white](-.4,1.2)circle(.08)(.8,2)circle(.08)(1.8,1.5)circle(.08);\draw[densely dotted,fill=white,thick](-1.4,1.91)circle(.08);
\node at(-1.5,-1.3){$l$};\node at(-1.5,1.3){$m$};
\end{tikzpicture}
\caption{Simplex-facing exposed side adjacent to an $m$-orbiarc.}
\label{sfig:bdry_V_os}
\end{subfigure}
\hfill
\begin{subfigure}{.45\textwidth}
\centering
\begin{tikzpicture}
\fill[gray!30!white](0,0)--(3,0)--(2,1.5);
\fill[gray](0,0)--(3,0)..controls(1,-.3)..(0,-.3);
\draw(0,0)--(3,0);
\draw[thick](0,2.5)--(0,-.5)(2,2.5)--(2,-.5);
\draw[dotted](0,2.1)--(.7,2.1)--(2,1.5)(0,0)--(.7,2.1);
\draw[dashed](0,0)--(2,1.5)(2,1.5)--(3,0);
\filldraw(3,0)circle(.08);\node[draw,isosceles triangle,isosceles triangle apex angle=60,fill=black,inner sep=1.5pt,rotate=90]at(0,0){};
\draw[fill=white](2,1.5)circle(.08);\draw[densely dotted,fill=white,thick](.7,2.1)circle(.08);
\end{tikzpicture}
\caption{End of the boundary piece, at a simplex-facing exposed side.}
\label{sfig:bdry_V_end}
\end{subfigure}
\caption{Types of simplicial slots in $\mathcal{P}$. Light grey faces, dashed edges, and empty vertices will be added to $T$ in order to obtain $T'$. The dotted cells describe the further phantom neighbourhood and will not be added. Filled vertices are already present in $T$, and the dark grey represents the interior of $T$. The triangle marks the vertex at which the simplicial slot $\check{v}$ is anchored, and thick vertical solid lines demarcate the region attributed to $\check{v}$ for calculations of $D$ and $\Delta\ell$.}
\label{fig:bdry_verts}
\end{figure}

First, consider a simplicial slot $\check{v}$ separating two exposed sides that face phantom simplices, as shown in Figure~\ref{sfig:bdry_V_ss}. Suppose that, in addition to the two halves of exposed sides, $\check{v}$ touches $t$ phantom 2-orbigons and $h$ hanging half-edges that separate simplices or 3-orbigons ($t=1,h=2$ in the example in Figure~\ref{sfig:bdry_V_ss}). From Definition~\ref{def:dft} of boundary defect we see that the contribution of $\check{v}$ to $\dft(\mathcal{P})$ is
\begin{equation*}
D_{\check{v}}=\tfrac{1}{3}+2t\cdot(-\tfrac{1}{12})+h\cdot(-\tfrac{1}{6})=\frac{2-t-h}{6}
\end{equation*}
If $t=h=0$, then $\check{v}$ consists of a single corner of a phantom simplex $f$ incident to both exposed sides; adding such $f$ creates a new exposed side but covers two existing ones, so $T'\defeq T\cup f$ witnesses Proposition~\ref{prop:spackling_defect}. Otherwise, adding the adjacent phantom faces creates $t+h-1$ new exposed sides on the other side of simplices and 3-orbigons, but covers the two halves of exposed sides currently present. Each new exposed side increases the boundary length by at most 1 (condition \ref*{itm:orbi_prop_1lip} from Proposition \ref{prop:orbilen}), so the contribution of $\check{v}$ to $\ell(\partial T')-\ell(\partial T)$ is at most
\begin{equation*}
\Delta\ell_{\check{v}}\leqslant t+h-2.
\end{equation*}
Combining the above two inequalities confirms the bound (\ref{eq:len_dft}) in this case.

Second, consider a simplicial slot $\check{v}$ separating two $m_i$-orbiarcs of $l_1,l_2$ exposed sides for $i=1,2$, as in Figure \ref{sfig:bdry_V_oo} (we allow $l_i=0$, since we are working with simplicial slots). Suppose $\check{v}$ touches $h$ phantom 2-orbigons or half-edges incident to simplices or 3-orbigons ($h=1$ in the example in Figure \ref{sfig:bdry_V_oo}). Summing these half-edges, the two halves of orbiarcs, and two hanging half-edges on the boundary of phantom orbigons gives the total contribution of $\check{v}$ to $\dft(\mathcal{P})$ equal
\begin{equation*}
D_{\check{v}}=-\frac{h}{6}+\frac{l_1+1}{2m_1}+\frac{l_2+1}{2m_2}-\frac{2}{3}.
\end{equation*}
Adding to $T$ the phantom simplices, 2- and 3-orbigons adjacent to $\check{v}$ lengthens the adjacent orbiarcs by two (since they gain one edge on the other side too, unless $\mathcal{P}$ ends there which will be corrected later), and creates $h+1$ other new exposed sides. Therefore, the contribution of $\check{v}$ to $\ell(\partial T')-\ell(\partial T)$ is
\begin{equation*}
\Delta\ell_{\check{v}}=\frac{Ł(m_1,l_1+2)-Ł(m_1,l_1)}{2}+\frac{Ł(m_2,l_2+2)-Ł(m_2,l_2)}{2}+h+1.
\end{equation*}
The boundary $\partial T$ contains no long orbiarcs, so $l_i\leqslant\lfloor\tfrac{5m_i}{6}\rfloor-\mathbbm{1}_{\{m_i=11\}}-1$. Unless $m_i\leqslant12\wedge l_i\geqslant m_i-3$ for some $i$, from condition \ref*{itm:orbi_prop_gain_usual} of Proposition \ref{prop:orbilen} we obtain inequality (\ref{eq:len_dft}) in this case.

Third, consider a slot $\check{v}$ separating an exposed side facing a phantom simplex from an $m$-orbiarc of $l$ exposed sides, as in Figure \ref{sfig:bdry_V_os}. Suppose it touches $h$ missing 2-orbigons or other hanging half-edges, apart from the half-edge bordering the $m$-orbigon ($h=2$ in the example in Figure \ref{sfig:bdry_V_os}). From Definition~\ref{def:dft} we see that $\check{v}$ contributes
\begin{equation}\label{eq:dft_contrib_oa_spx}
D_{\check{v}}=-\frac{h+1}{6}+\frac{l+1}{2m}
\end{equation}
to the defect. Growing the phantom simplices lengthens the orbiarc by two edges, introduces $h$ other new exposed sides, and covers half of the current simplex-facing exposed side, so the local change in the boundary length is
\begin{equation*}
\Delta\ell_{\check{v}}=\frac{Ł(m,l+2)-Ł(m,l)}{2}+h-\frac{1}{2}.
\end{equation*}
Again, unless there is a long orbiarc or $m\leqslant12\wedge l\geqslant m-3$, we have that condition \ref*{itm:orbi_prop_gain_usual} of Proposition \ref{prop:orbilen} implies the inequality (\ref{eq:len_dft}) in this case.

Fourth, $\mathcal{P}$ may contain an end of one of two kinds: either at a simplex-facing exposed side or at an orbiarc. Consider $\check{v}$ in the former subcase, as shown in Figure \ref{sfig:bdry_V_end}. Half-edges enter the defect with negative weights, so the contribution of $\check{v}$ to $\dft(\mathcal{P})$ is no larger than that of the adjacent half of an exposed side together with the piece end, i.e. $D_{\check{v}}\leqslant\frac{1}{6}-\tfrac{1}{2}=-\tfrac{1}{3}$. Adding to $T$ the adjacent simplex creates a new exposed side but covers the adjacent half of a current exposed side, so the contribution of $\check{v}$ to $\ell(\partial T')-\ell(\partial T)$ is $\Delta\ell_{\check{v}}=\frac{1}{2}$. Combining these two confirms (\ref{eq:len_dft}).

The other subcase is the end lying on an $m$-orbiarc $\alpha$, say with $l$ exposed sides. Then to the slot $\check{v}$ we attribute half of the $\alpha$'s defect contribution $\tfrac{l}{m}$, and $-\tfrac{1}{2}$ from a piece end, for a total of $D_{\check{v}}=\tfrac{l}{2m}-\tfrac{1}{2}$. In the growing operation, $\alpha$ is unaffected near $\check{v}$ unless $m\leqslant12\wedge l\geqslant m-3$, so it lengthens from $l$ to $l+1$ exposed sides. But at the other side of $\alpha$ we added a term $\tfrac{1}{2}(Ł(m,l+2)-Ł(m,l))$ to $\Delta\ell$, so now we have to subtract it. Overall, the contribution of $\check{v}$ to boundary length change is
\begin{equation*}
\Delta\ell_{\check{v}}=Ł(m,l+1)-Ł(m,l)-\frac{Ł(m,l+2)-Ł(m,l)}{2}.
\end{equation*}
In this case inequality (\ref{eq:len_dft}) follows from condition~\ref*{itm:orbi_prop_gain_2der} of Proposition~\ref{prop:orbilen}.

Finally, let us consider the exceptional case of an $m$-orbiarc with $m\in[4,12]$ containing $l\in[m-3,\lfloor\tfrac{5m}{6}\rfloor-\mathbbm{1}_{\{m=11\}}-1]$ sides of edges. Up until now we had $\Delta\ell\leqslant-6D$ and no $m$-orbigons with $m\geqslant4$ added, but this time the corrections are necessary. Let $\check{v}$ be a slot touching such an arc, as illustrated in Figure~\ref{fig:53arc} (in the most complicated case $m=5$). There are three subcases: either $\check{v}$ separates the orbiarc from a simplex-facing side, or separates two orbiarcs, or is an end of $\mathcal{P}$.

The first subcase is drawn in Figure~\ref{sfig:53arc_int}. Instead of lengthening the orbiarc by 2 edges as before, we will close it by covering it with the phantom $m$-orbigon, thus introducing $m-2-l$ new simplex-facing exposed sides. Consequently, the term $Ł(m,l+2)$ describing a part of the ``new'' boundary length $\ell(\partial T')$ has to be replaced by $m-2-l$, leading to
\begin{equation*}
\Delta\ell_{\check{v}}=\frac{m-2-l}{2}-\frac{Ł(m,l)}{2}+h-\frac{1}{2}.
\end{equation*}
The defect contribution $D_{\check{v}}$ is the same as in the earlier non-exceptional (\ref{eq:dft_contrib_oa_spx}). Instead of condition~\ref*{itm:orbi_prop_gain_usual} of Proposition~\ref{prop:orbilen} we need to use the condition~\ref*{itm:orbi_prop_gain_except}, which leads to
\begin{equation}\label{eq:dft_len_exceptional}
\Delta\ell_{\check{v}}\leqslant-6D_{\check{v}}+\frac{\mathbbm{1}_{\{(m,l)=(5,3)\text{ adjacent to }\check{v}\}}}{10},
\end{equation}
implying inequality (\ref{eq:len_dft}). The same reasoning confirms (\ref{eq:dft_len_exceptional}) when $\check{v}$ lies between two orbiarcs, except that each orbiarc gets its own term $\mathbbm{1}_{\{(m,l)=(5,3)\}}$.

In the third subcase of $\check{v}$ lying at the end of piece $\mathcal{P}$, as in Figure~\ref{sfig:53arc_end}, we have $D_{\check{v}}=\tfrac{1}{2}\cdot\frac{l}{m}-\tfrac{1}{2}=-\tfrac{m-l}{2m}$ as with the usual piece end. Growing the phantom $m$-orbigon covers the orbiarc (half of which is attributed to $\check{v}$), but creates new simplex-facing exposed sides. Either $l=m-2$ and we create 1 new side adjacent to $\check{v}$ (this is the situation drawn in Figure~\ref{sfig:53arc_end}), or $l=m-3$ and $\tfrac{3}{2}$ new sides are attributed to $\check{v}$; in total $\Delta\ell_{\check{v}}=\tfrac{m-l}{2}-\tfrac{1}{2}Ł(m,l)$. Again condition~\ref*{itm:orbi_prop_gain_except} of Proposition~\ref{prop:orbilen} implies $\Delta\ell_{\check{v}}<-6D_{\check{v}}$, which is stronger than inequality (\ref{eq:dft_len_exceptional}). Note that in both cases, if $m=5$ then the newly added phantom 5-orbigon had at least 3 vertices already present in $T$, in accordance with condition~\ref*{itm:spackling_p5} of Proposition~\ref{prop:spackling_defect}.

\begin{figure}[h]
\begin{subfigure}{.48\textwidth}
\centering
\begin{tikzpicture}
\fill[gray!30!white](-1.5,1.2)--(-3,0)--(-2.4,-1.4)--(-.6,-1.4)--(0,0)--(2,0)--(1,1.2)--(-.1,1.8);
\fill[gray](-3,0)to[out=-80,in=150](-2.5,-1.6)to[out=-30,in=-150](-.5,-1.6)to[out=30,in=-135](.2,-.3)to[out=45,in=-150](2,0)--(0,0)--(-.6,-1.4)--(-2.4,-1.4)--(-3,0);
\draw(-3,0)--(-2.4,-1.4)--(-.6,-1.4)--(0,0)--(2,0);
\draw[dashed](0,0)--(-1.5,1.2)--(-.1,1.8)--(0,0)--(1,1.2)--(-.1,1.8)(1,1.2)--(2,0)(-3,0)--(-1.5,1.2);
\draw[thick](-1.5,1.9)--(-1.5,-2.1)(1,1.9)--(1,-2.1);
\filldraw(-3,0)circle(.08)(-2.4,-1.4)circle(.08)(-.6,-1.4)circle(.08)(2,0)circle(.08);
\draw[fill=white](-1.5,1.2)circle(.08)(-.1,1.8)circle(.08)(1,1.2)circle(.08);
\node[circle,dashed,draw,fill=gray!30!white,inner sep=3pt]at(-1.5,-.2){1};
\node[draw,isosceles triangle,isosceles triangle apex angle=60,fill=black,inner sep=1.5pt,rotate=90]at(0,0){};
\end{tikzpicture}
\caption{Inside the piece, and bordering an exposed side facing a phantom simplex.}
\label{sfig:53arc_int}
\end{subfigure}
\hfill
\begin{subfigure}{.48\textwidth}
\centering
\begin{tikzpicture}
\fill[gray!30!white](1.5,1.2)--(3,0)--(2.4,-1.4)--(.6,-1.4)--(0,0);
\fill[gray](0,0)to[out=-80,in=150](.5,-1.6)to[out=-30,in=-150](2.5,-1.6)to[out=30,in=-100](3,0)--(2.4,-1.4)--(.6,-1.4)--(0,0);
\draw[thick](0,1.7)--(0,-2.1)(1.5,1.7)--(1.5,-2.1);
\draw[dashed](0,0)--(1.5,1.2)--(3,0);
\draw(0,0)--(.6,-1.4)--(2.4,-1.4)--(3,0);
\filldraw(.6,-1.4)circle(.08)(2.4,-1.4)circle(.08)(3,0)circle(.08);
\draw[fill=white](1.5,1.2)circle(.08);
\node[circle,dashed,draw,fill=gray!30!white,inner sep=3pt]at(1.5,-.2){1};
\node[draw,isosceles triangle,isosceles triangle apex angle=60,fill=black,inner sep=1.5pt,rotate=90]at(0,0){};
\end{tikzpicture}
\caption{At the piece end.}
\label{sfig:53arc_end}
\end{subfigure}
\caption{Possibilities for the exceptional configuration $m\leqslant12,l\geqslant m-3$ in a boundary piece. The pictures show the ``most complicated'' case $m=5$. Markings of vertices, edges and faces have the same meaning as in Figure~\ref{fig:bdry_verts}.}
\label{fig:53arc}
\end{figure}

\smallskip

This exhausts all possibilities for $\check{v}$. In every case we confirmed inequality (\ref{eq:len_dft}), saying that the change in boundary length is at most $-6$ times the contribution to defect, plus the correction for 5-orbigons. Summing it over all $\check{v}\in\mathcal{P}$ gives
\begin{align*}
\ell(\partial T')-\ell(\partial T)&=\sum_{\check{v}\in\mathcal{P}}\Delta\ell_{\check{v}}\leqslant\\
&\leqslant-6\sum_{\check{v}\in\mathcal{P}}D_{\check{v}}+\tfrac{1}{10}\sum_{\check{v}\in\mathcal{P}}\#\{\tn{added }5\tn{-orbigons touching }\check{v}\}=\\
&=-6\cdot\dft(\mathcal{P})+\frac{1}{5}\#\{f\in\Forb[5](T'\setminus T)\mid f\text{ is }5\text{-sided}\}.
\end{align*}
We assumed that $\dft(\mathcal{P})\geqslant0$, which proves condition~\ref*{itm:spackling_shorten} of Proposition \ref{prop:spackling_defect}.
\end{proof}

\subsection{Combinatorial negative curvature}\label{ssec:combi_negcurv}

In this section we explore the negative-curvature-like properties of immersed complexes. We prove a discrete variant of Gauss--Bonnet formula; compared to \cite[Theorem 2.3]{sectional_curvature_cpct_cores_local_quasiconvexity_Wise04} our version is specific to surfaces but accounts for cone points. From it we deduce a bound for $\cho$ that will be important for constructing resolutions.

The key concept is an \emph{angle structure}, a function $\angle\colon C(T)\to\mathbb{R}$ assigning a number to every corner of a complex $T$. The difference between the sum of angles obtained from $\angle$ and its value expected from Euclidean geometry is a combinatorial analogue of curvature (in our normalisation the full angle has measure 1). If $f$ is a simplex, then its curvature equals
\begin{equation*}
\kappa(f)\defeq-\frac{1}{2}+\sum_{c\in C(f)}\angle c,
\end{equation*}
and if $f$ is an $m$-orbigon with $\mathfrak{s}_f$ sides and a cone point of order $o=\tfrac{m}{\mathfrak{s}_f}$, then
\begin{equation*}
\kappa(f)\defeq\frac{1}{o}-\frac{\mathfrak{s}_f}{2}+\sum_{c\in C(f)}\angle c.
\end{equation*}
If $\Sigma$ is a branched $\Delta$-complex structure on an orbifold, then the curvature of the (unique according to the convention from Section~\ref{sec:simplicial}) vertex is
\begin{equation*}
\xi\defeq1-\sum_{c\in C(\Sigma)}\angle c.
\end{equation*}
The curvature of a boundary slot $\check{v}\in\partial T$ is the deviation from the straight angle
\begin{equation}\label{eq:curv_site}
\kappa(\check{v})\defeq-\frac{1}{2}+\sum_{c\in C(\check{v})}\angle c.
\end{equation}

As with the continuous analogue, the total curvature equals the orbifold Euler characteristic, independently of the angle structure.
\begin{lemma}(Combinatorial Gauss--Bonnet)\label{lmm:combi_GB}
Let $\angle\colon C(\Sigma)\to\mathbb{R}$ be an angle structure on $\Sigma$. By abuse of notation, denote by $\angle=\angle\circ p\colon C(T)\to\mathbb{R}$ its pullback to $T$. If $T$ has no isolated vertices then
\begin{equation*}
\cho(T)=\sum_{f\in F(T)}\kappa(f)+\xi\cdot\#V(T)+\sum_{\check{v}\in\partial T}\kappa(\check{v}).
\end{equation*}
\end{lemma}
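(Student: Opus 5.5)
The plan is a double-counting argument: expand the right-hand side and regroup angles by corner, showing that every angle appears with total coefficient zero and only the constant terms survive. By \eqref{eq:cho_combi} applied to $T$,
\begin{equation*}
\cho(T)=\#V(T)-\#E(T)+\#\Fspx(T)+\sum_{\iterorb[m]}\frac{\mathfrak{s}_f}{m},
\end{equation*}
where the last sum runs over orbigons of $T$ (with $\frac{\mathfrak{s}_f}{m}=\frac{1}{o_f}$ for the cone order $o_f$). So it suffices to show that the right-hand side of Lemma~\ref{lmm:combi_GB} equals this expression.

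First I would organise corners of $\Sigma$ at its unique vertex. Since $T\looparrowright\Sigma$ is an immersion, the link of each vertex $v\in V(T)$ maps injectively into the circle $\tn{lk}_\Sigma(p(v))$. Every corner of $\Sigma$ then lifts at $v$ to exactly one corner, which is either an actual corner of $T$ or a phantom corner. Phantom corners are grouped into slots, and the slots at $v$ are exactly the complementary components, since $v$ is not isolated. Hence the term $\xi\cdot\#V(T)=\#V(T)-\sum_v\sum_{c\in C(\Sigma)}\angle c$ contributes $\#V(T)$ minus the sum of angles over all actual corners of $T$ and all phantom corners in slots.

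Next I would add the face curvatures and slot curvatures. Each actual corner of $T$ appears with coefficient $+1$ in $\kappa$ of its face, and each phantom corner appears with $+1$ in $\kappa$ of its slot. These terms exactly cancel the angle sum subtracted above. What remains are the constants
\begin{equation*}
\#V(T)-\tfrac12\#\Fspx(T)+\sum_{f\ \tn{orbigon}}\Bigl(\tfrac{1}{o_f}-\tfrac{\mathfrak{s}_f}{2}\Bigr)-\tfrac12\#\{\check v\in\partial T\}.
\end{equation*}

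The final step is to show that this constant part equals $\cho(T)$, which amounts to the identity
\begin{equation*}
2\#E(T)=3\#\Fspx(T)+\sum_{f}\mathfrak{s}_f+\#\{\text{slots}\},
\end{equation*}
i.e. counting the two sides of every edge. A side is either covered (counted by the faces) or exposed. I would then show that exposed sides are in bijection with slots, by matching each slot with, say, the exposed side delimiting it in the positive direction. This bookkeeping is where I expect the main obstacle.

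The difficulty is that slots are delimited by exposed sides with hanging half-edges in between, so a slot touching no actual edge at all must be ruled out. That case is excluded by the no-isolated-vertex hypothesis. One also has to handle the degenerate loop situations, where both sides of one edge bound the same slot, or a single slot wraps around. To handle these, I would track each oriented exposed side to the unique slot that follows it in the cyclic order at its initial vertex, giving the required bijection.
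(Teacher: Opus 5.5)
Your proposal is correct and is essentially the paper's argument. The paper redistributes weights locally: each corner of $T$ gives $\angle c$ to its vertex, $-\tfrac14$ to each of its two edges, and $\tfrac13$ or $\tfrac1m$ to its face, while each slot gives its phantom angles to its vertex and $-\tfrac12$ split between its two delimiting exposed sides; this is exactly your angle cancellation together with your side count $2\#E(T)=3\#\Fspx(T)+\sum_f\mathfrak{s}_f+\#\{\text{slots}\}$. For that last count, drop the orientation-dependent bijection, because a ``positive direction'' at vertices need not be coherent along edges when $T$ is non-orientable. Instead double count incidences: each exposed side has two ends, each abutting exactly one slot, and (absent isolated vertices) each slot has two ends, each abutting exactly one exposed side, which is precisely what the paper's $-\tfrac14$ bookkeeping encodes.
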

\begin{proof}
At each corner $c$ of a face of $T$, add
\begin{itemize}
\item $\angle c$ for the vertex of $c$
\item $-\tfrac{1}{4}$ for each of the two edges adjacent to $c$, for a total of $-\tfrac{1}{2}$
\item for the face containing $c$, $\tfrac{1}{3}$ if it is a simplex and $\tfrac{1}{m}$ if it is an $m$-orbigon.
\end{itemize}
At each slot $\check{v}$ add
\begin{itemize}
\item $\sum_{c\in C(\check{v})}\angle c$ for the adjacent vertex (this sum is over phantom corners)
\item $-\tfrac{1}{2}$, spread over the two adjacent exposed sides (which exist unless $\check{v}$ is anchored at an isolated vertex).
\end{itemize}
This counts every vertex with total weight $\sum_{c\in C(\Sigma)}\angle c$, edge with $-1$, simplicial face with $+1$, and each $\mathfrak{s}_f$-sided $m$-orbigon face $f$ with total weight $\tfrac{\mathfrak{s}_f}{m}$. Weights of edges and faces exactly match the coefficients with which they enter $\cho(T)$ in equation (\ref{eq:cho_combi}), so
\begin{align*}
\sum_{c\in C(T)}&\left(\angle c-\frac{1}{2}+\frac{1}{m(c)}\right)+\sum_{\check{v}\in\partial T}\left(-\frac{1}{2}+\sum_{c\in C(\check{v})}\angle c\right)=\\
=&\left(\sum_{c\in C(\Sigma)}\angle c\right)\#V(T)-\#E(T)+\#\Fspx(T)+\sum_{f\in\Forb(T)}\frac{\mathfrak{s}_f}{m}=\\
=&\cho(T)+\left(-1+\sum_{c\in C(\Sigma)}\angle c\right)\#V(T),
\end{align*}
where $m(c)=3$ if $c$ is a corner of a simplex. The second term is the boundary curvature $\Sigma_{\check{v}\in\partial T}\kappa(\check{v})$, and the last term is $-\xi\#V(T)$. The summands in the first term can be grouped by the face they belong to, and since $\sum_{c\in C(f)}\left(\angle c-\tfrac{1}{2}+\tfrac{1}{m(c)}\right)=\kappa(f)$, this recovers the sum of face curvatures.
\end{proof}

To extract useful estimates from Lemma~\ref{lmm:combi_GB} we will need a suitable angle structure. Intuitively, it makes the vertex flat and allocates the negative curvature to faces in a controlled way.
\begin{proposition}\label{prop:nice_angles}
If $\cho(\Sigma)<0$ and $\varepsilon\in\mathbb{R}_+$ is sufficiently small, then there exists an angle structure $\angle\colon C(\Sigma)\to\mathbb{R}$ such that the following hold.
\begin{enumerate}
\item For every corner $c$ we have $\angle c\in[0,\tfrac{1}{2}]$.
\item\label{itm:niceang_v} The angles $\angle c$ sum up to 1.
\item\label{itm:niceang_spx} If $c_1,c_2,c_3$ are the corners of the same simplex, then $\angle c_1\!+\!\angle c_2\!+\!\angle c_3\leqslant\tfrac{1}{2}\!-\!\varepsilon$.
\item\label{itm:niceang_orb} If $c$ is the corner of $f\in\Forb$, then $\angle c\leqslant\tfrac{1}{2}\!-\!\frac{1}{m}-\tfrac{\mathbbm{1}_{\{m\equiv\pm1\bmod(6)\}}}{49}-\varepsilon\mathbbm{1}_{\{m\neq2\}}$.
\item\label{itm:niceang_5} If $c$ is the corner of a 5-orbigon, then $\angle c\leqslant\tfrac{13}{50}-\varepsilon$.
\end{enumerate}
\end{proposition}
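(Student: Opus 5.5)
The plan is to build $\angle$ by first fixing the orbigon corners at their largest allowed values, then spreading the remaining angle budget evenly over the simplicial corners. Under the convention of Section~\ref{sec:simplicial}, $\Sigma$ has one vertex, $\#\Fspx(\Sigma)$ simplices (three corners each) and one monogon per cone point (one corner each). For a cone point of order $m_i$, let $b_i$ be the minimum of the upper bounds in conditions~\ref*{itm:niceang_orb} and~\ref*{itm:niceang_5}, with $\varepsilon$ deleted. Explicitly, $b_i=\tfrac12-\tfrac1{m_i}-\delta_i$, where $\delta_i=\tfrac1{49}$ if $m_i\equiv\pm1\bmod 6$ and $m_i\neq5$, $\delta_5=\tfrac{3}{10}-\tfrac{13}{50}=\tfrac1{25}$, and $\delta_i=0$ otherwise. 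We give the orbigon corner the angle $a_i=b_i-\varepsilon\mathbbm{1}_{\{m_i\neq2\}}-t$, where $t\geqslant0$ is a small parameter used only if the budget below would otherwise be negative. Every simplicial corner gets the same angle $s/3$, where $s\defeq(1-\sum_ia_i)/\#\Fspx(\Sigma)$. Condition~\ref*{itm:niceang_v} then holds by construction, and conditions~\ref*{itm:niceang_orb} and~\ref*{itm:niceang_5} hold directly. Since $a_i\leqslant\tfrac12$, all corners lie in $[0,\tfrac12]$ provided $0\leqslant s\leqslant\tfrac32$. Nonnegativity of $s$ and of the $a_i$ can be arranged by choosing $t\geqslant0$ suitably (lowering $a_i$ only raises $s$). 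Note that $\#\Fspx(\Sigma)\geqslant1$ whenever $\cho(\Sigma)<0$.

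The only real constraint is condition~\ref*{itm:niceang_spx}, namely $s\leqslant\tfrac12-\varepsilon$. This is equivalent to
\begin{equation*}
1\leqslant\left(\tfrac12-\varepsilon\right)\#\Fspx(\Sigma)+\sum_i a_i .
\end{equation*}
We substitute the face counts from Section~\ref{sec:simplicial}: $\#\Fspx=4g-2+k$ in the orientable case and $2g+k-2$ in the non-orientable case. In both cases $\tfrac12\#\Fspx+\sum_i(\tfrac12-\tfrac1{m_i})-1=-\cho(\Sigma)$. So, after letting $\varepsilon\to0$ (and taking $t=0$, which is compatible when the inequality is strict), the requirement becomes the strict inequality
\begin{equation*}
-\cho(\Sigma)>\sum_i\delta_i .
\end{equation*}
Given this, any sufficiently small $\varepsilon$ works.

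The main obstacle is verifying $-\cho(\Sigma)>\sum_i\delta_i$ for every hyperbolic closed orbifold. The $\delta_i$ appear to be tuned so that the extremal cases barely pass. I would split into cases. First, if the underlying surface has $\cht\leqslant0$, then $-\cho\geqslant\sum_i(1-\tfrac1{m_i})$, and each term dominates $\delta_i\leqslant\tfrac1{25}$. Second, for $\mathbb{RP}^2$ (where $-\cho=-1+\sum_i(1-\tfrac1{m_i})$) and for $S^2$ with $k\geqslant4$ cone points, each cone point with $\delta_i>0$ has $m_i\geqslant5$ and contributes at least $\tfrac45$ to $-\cho$. So the slack beyond the minimal configurations easily absorbs $k/25$; the borderline configurations, such as $(2,2,2,3)$ on $S^2$ or $(2,3)$ on $\mathbb{RP}^2$, have all $\delta_i=0$. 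Third, the remaining case is triangle orbifolds $(p,q,r)$ on $S^2$, where $-\cho=1-\tfrac1p-\tfrac1q-\tfrac1r$. Here I would check the finitely many near-minimal families by hand: $(2,3,7)$ gives $\tfrac1{42}>\tfrac1{49}$, $(2,4,5)$ gives $\tfrac1{20}>\tfrac1{25}$, $(2,5,5)$ gives $\tfrac1{10}>\tfrac2{25}$, $(3,3,5)$ gives $\tfrac2{15}>\tfrac1{25}$, and $(2,3,11)$ and $(2,3,13)$ also pass. Once $\tfrac1p+\tfrac1q+\tfrac1r$ is bounded away from $1$ (by more than $\tfrac3{25}$, say), the inequality is automatic, since $\sum_i\delta_i\leqslant\tfrac3{25}$. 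This leaves only a short finite list of the form $(2,3,r)$, $(2,4,r)$, $(2,5,5)$, $(3,3,r)$ with small $r$ to check.
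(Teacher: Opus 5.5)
Your proof is correct, and it takes a different route from the paper's.

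The paper uses orbigon corners only when it is forced to. Whenever $\Sigma$ has at least two simplices (non-orientable, genus $\geqslant1$, or at least four cone points), it assigns:
\begin{itemize}
\item $\tfrac16-\tfrac{\varepsilon}{3}$ to the six corners of two simplices;
\item $2\varepsilon$ to one further corner not belonging to a 2-orbigon;
\item $0$ everywhere else.
\end{itemize}
So the orbigon bounds (conditions 4 and 5) are essentially vacuous in that case. Only for spheres with three cone points, where there is a single simplex, does the paper write down explicit orbigon angles, in three subcases. There the tightness at $(2,3,7)$ and $(2,4,5)$ is absorbed into how small $\varepsilon$ must be. For $(2,3,7)$ it needs $3\varepsilon\leqslant\tfrac{1}{294}=\tfrac1{42}-\tfrac1{49}$, which is exactly your margin.

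You instead saturate the orbigon bounds uniformly. Via the identity $\tfrac12\#\Fspx(\Sigma)+\sum_i(\tfrac12-\tfrac1{m_i})-1=-\cho(\Sigma)$, you reduce everything to the single inequality $-\cho(\Sigma)>\sum_i\delta_i$. Summing conditions 2--5 over all corners shows this inequality is also necessary. Your route therefore explains where the constants $\tfrac1{49}$ and $\tfrac{13}{50}$ come from: $(2,3,7)$ and $(2,4,5)$ are the extremal cases. The price is that you must verify the inequality for every orbifold type, including those where the paper's two-simplex assignment makes the statement trivial.

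Your verification is sound. The only triangle orbifolds with $-\cho\leqslant\tfrac3{25}$ are $(2,3,r)$ with $7\leqslant r\leqslant21$, together with $(2,4,5)$, $(2,4,6)$, $(2,4,7)$, $(2,5,5)$ and $(3,3,4)$, and all of them pass.

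One small repair is needed in your construction. Subtracting a common $t$ from every $a_i$ makes the 2-orbigon corners negative as soon as $t>0$, whereas condition 4 forces them to be exactly $0$. It can also drive other small $a_i$, for instance those with $m_i=3$, below zero before $\sum_ia_i$ has dropped to $1$. An example is one cone point of order 3 together with many of order 7. Instead take $a_i=\max\{0,\,b_i-\varepsilon\mathbbm{1}_{\{m_i\neq2\}}-t\}$, or scale all $a_i$ by a common factor in $[0,1]$. Choose the parameter by continuity so that $\sum_ia_i=1$ whenever the unreduced sum exceeds $1$. Then $s=0$ and all five conditions hold.
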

The role of $\varepsilon$ is to make many faces have a small negative $\kappa$, so that the algorithm for finding resolutions in Section~\ref{ssec:construct_resolution} will make nonzero progress at every step.
\begin{proof}[Proof of Proposition~\ref{prop:nice_angles}]
If $\Sigma$ is non-orientable, or orientable of genus at least 1, or has at least 4 cone points, then its triangulation contains at least two simplices. Assumption $\cho(\Sigma)<0$ excludes the nonsingular torus and Klein bottle, $S^2$ with 4 cone points of order 2, and $\mathbb{RP}^2$ with 2 cone points of order 2, so $\Sigma$ has one more corner $c$ not of a 2-orbigon. Assigning $\angle=\tfrac{1}{6}-\tfrac{\varepsilon}{3}$ to those six simplicial corners, $\angle=2\varepsilon$ to $c$ and zero everywhere else satisfies the conditions of Proposition \ref{prop:nice_angles}.

The only remaining case is that of $\Sigma$ being a sphere with three cone points. Then its triangulation consists of one simplex $f_\vartriangle$ and three orbigons $f_1,f_2,f_3$, say of orders $m_1\leqslant m_2\leqslant m_3$ respectively, as drawn in Figure~\ref{sfig:orbi_S2}. At the corners of $f_\vartriangle$ we set $\angle=\tfrac{1}{6}-\tfrac{\varepsilon}{3}$. The condition $\cho(\Sigma)<0$ translates to
\begin{equation*}
\frac{1}{m_1}+\frac{1}{m_2}+\frac{1}{m_3}<1,
\end{equation*}
which forces one of the following cases.
\begin{enumerate}
\item If $m_1\geqslant3$, then $m_3\geqslant4$. We assign $\angle=\tfrac{1}{6}-\varepsilon$ to the corners of $f_1,f_2$ and $\angle=\tfrac{1}{6}+3\varepsilon$ to $f_3$.
\item If $m_1=2,m_2\geqslant4$, then $m_3\geqslant5$. We assign $\angle=0$ to the corner of $f_1$, $\angle=\tfrac{1}{4}-\varepsilon$ to $f_2$, and $\angle=\tfrac{1}{4}+2\varepsilon$ to $f_3$.
\item If $m_1=2,m_2=3$, then $m_3\geqslant7$. We assign $\angle=0$ to the corner of $f_1$, $\angle=\tfrac{1}{6}-\varepsilon$ to $f_2$, and $\angle=\tfrac{1}{3}+2\varepsilon$ to the corner of $f_3$.
\end{enumerate}
Direct verification shows that all conditions of Proposition \ref{prop:nice_angles} hold in each case.
\end{proof}

Combining Lemma~\ref{lmm:combi_GB} with Proposition~\ref{prop:nice_angles} gives the corollary below. The intuition behind it is that a complex with many faces ``contains a lot of negative curvature'', which forces either large negative $\cho$ or a long boundary.
\begin{corollary}\label{cor:cho_bd}
If $T$ has no isolated vertices and $\varepsilon\in\mathbb{R}_+$ is sufficiently small then
\begin{equation*}
\cho(T)\leqslant\frac{\ell(\partial T)}{2}-\frac{1}{25}\sum_{f\in\Forb[5](T)}\hspace*{-10pt}\mathfrak{s}_f-\frac{1}{49}\sum_{\tiny\begin{matrix}m\equiv_6\pm1,m\geqslant7\\f\in\Forb(T)\end{matrix}}\hspace*{-10pt}\mathfrak{s}_f-\varepsilon\sum_{f\in F(T)\setminus\Forb[2](T)}\hspace*{-10pt}\mathfrak{s}_f,
\end{equation*}
where $\ell(\partial T)$ is the boundary length from Definition~\ref{def:bdry_len}.
\end{corollary}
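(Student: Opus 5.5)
We apply the combinatorial Gauss--Bonnet formula (Lemma~\ref{lmm:combi_GB}) to $T$ with the angle structure $\angle$ of Proposition~\ref{prop:nice_angles}, pulled back along $p$. Condition~\ref*{itm:niceang_v} gives $\xi=0$, so the vertex term vanishes and
\begin{equation*}
\cho(T)=\sum_{f\in F(T)}\kappa(f)+\sum_{\check{v}\in\partial T}\kappa(\check{v}).
\end{equation*}
The plan is to bound the face sum by the negative terms on the right-hand side of Corollary~\ref{cor:cho_bd}, and the slot sum by $\tfrac{\ell(\partial T)}{2}$.

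\textbf{Faces.} A simplex of $T$ has $\kappa(f)\leqslant-\varepsilon$ by condition~\ref*{itm:niceang_spx}, and $\mathfrak{s}_f=3$. For a face $f$ mapping to an $m$-orbigon with $\mathfrak{s}_f$ sides and cone order $o=m/\mathfrak{s}_f$, all $\mathfrak{s}_f$ corners map to the unique corner of the image orbigon. Hence
\begin{equation*}
\kappa(f)=\mathfrak{s}_f\left(\tfrac{1}{m}-\tfrac12+\angle c\right).
\end{equation*}
Condition~\ref*{itm:niceang_orb} bounds this by $-\mathfrak{s}_f\bigl(\tfrac{1}{49}\mathbbm{1}_{\{m\equiv\pm1\bmod 6\}}+\varepsilon\mathbbm{1}_{\{m\neq2\}}\bigr)$. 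For $m=5$, condition~\ref*{itm:niceang_5} gives $\kappa(f)\leqslant\mathfrak{s}_f(\tfrac15-\tfrac12+\tfrac{13}{50}-\varepsilon)=-\mathfrak{s}_f(\tfrac{1}{25}+\varepsilon)$. After shrinking $\varepsilon$ if needed, this reproduces every face term of the corollary. The simplex coefficient $3\varepsilon$ against $\varepsilon$ only helps.

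\textbf{Boundary.} This is the main obstacle. We must show $\sum_{\check v}\kappa(\check v)\leqslant\tfrac12\ell(\partial T)$. Since $\kappa(\check v)=-\tfrac12+\sum\angle c$, the key is to split $-\tfrac12$ as $-\tfrac14$ for each adjacent exposed side. It then suffices to compare, boundary component by boundary component, the total phantom angle with $\tfrac12$ times the length plus the number of exposed sides. The angle contributions are organised by the phantom faces facing the boundary, as follows.

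\begin{enumerate}
\item A simplex-facing exposed side contributes $1$ to $\ell$. Its phantom simplex has angle sum at most $\tfrac12$ by condition~\ref*{itm:niceang_spx}, and these corners are visible only in the slots at its ends. A 2- or 3-orbigon is treated the same way, since its angle is below $\tfrac12-\tfrac1m$.
\item Phantom corners between hanging half-edges are bounded crudely. Each corner lies in exactly one slot, and the angles at the vertex sum to $1$.
\item An $m$-orbiarc of $l$ sides with $m\geqslant4$ sees $l+1$ phantom corners of the same orbigon, each of angle at most $\tfrac12-\tfrac1m$. Its allowance is $\tfrac12Ł(m,l)+\tfrac{l}{4}$ in the segment case, and the corresponding $\min\{2,\tfrac{2l}{m}\}$ expression in the circle case.
\end{enumerate}

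I expect to verify these using conditions~\ref*{itm:orbi_prop_circlen_dft} and~\ref*{itm:orbi_prop_atleast1} of Proposition~\ref{prop:orbilen} together with the explicit formula for $Ł$. Because $Ł(m,l)\geqslant\min\{2,2l/m\}$, the long orbiarc regime is handled by the orbigon angle bound $\tfrac{\mathfrak{s}}{2}-\tfrac{\mathfrak{s}}{m}$ summed around.

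The delicate part is organising this charging so that no corner is counted twice. I would handle that by distributing each phantom face's angle among the slots in which its corners appear, then checking the resulting inequality per slot type, mirroring the four slot types of Figure~\ref{fig:bdry_verts}.
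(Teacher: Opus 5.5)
Your setup and face estimates agree with the paper: $\xi=0$ by condition~\ref*{itm:niceang_v}, and $\kappa(f)=\mathfrak{s}_f(\tfrac1m-\tfrac12+\angle c)$ for orbigon faces. There is one small slip. A simplex has $\mathfrak{s}_f=3$, so the corollary needs $\kappa(f)\leqslant-3\varepsilon$, while condition~\ref*{itm:niceang_spx} gives only $-\varepsilon$. This does not ``only help''; it is fixed by using Proposition~\ref{prop:nice_angles} with $3\varepsilon$ in place of $\varepsilon$.

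The genuine gap is the boundary bound $\sum_{\check v}\kappa(\check v)\leqslant\tfrac12\ell(\partial T)$, which you identify as the crux but do not prove. Your face-by-face charging is left open exactly where you flag it: the end corners of an orbiarc sit in the same slots as the corners you bound ``crudely''. The one explicit allowance you give is also wrong. Each exposed side contributes $\tfrac14$ to each of its two end slots, so an $l$-sided orbiarc brings $\tfrac l2$, not $\tfrac l4$. With allowance $\tfrac12\tn{\L}(m,l)+\tfrac l4$ the per-orbiarc inequality fails. For example, on the sphere with cone orders $2,3,1000$ the orbigon corner has angle $\tfrac13+2\varepsilon$. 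An orbiarc with $l=999$ then sees total angle $1000(\tfrac13+2\varepsilon)$, which exceeds $\tfrac12\tn{\L}(1000,999)+\tfrac{999}{4}=332.75$.

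The missing idea is to bound slot by slot rather than face by face.
\begin{itemize}
\item A slot consists of distinct corners of $\Sigma$, and the angles are nonnegative with total $1$. So $\kappa(\check v)\leqslant\tfrac12$ for every slot.
\item A slot interior to an orbiarc is a single corner of angle at most $\tfrac12$, so there $\kappa(\check v)\leqslant0$.
\end{itemize}
Hence $\sum_{\check v}\kappa(\check v)\leqslant\tfrac12\#A$, where $A$ is the set of slots not interior to an orbiarc.

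It remains to show $\#A\leqslant\ell(\partial T)$. Since $T$ has no isolated vertices, slots and exposed sides alternate around each boundary component. So consecutive elements of $A$ are separated by exactly one of the following:
\begin{itemize}
\item a simplex-facing exposed side, of length $1$;
\item a nonempty segment orbiarc, of length at least $1$ by condition~\ref*{itm:orbi_prop_atleast1} of Proposition~\ref{prop:orbilen}.
\end{itemize}
Circular orbiarcs contain no element of $A$. This gives $\#A\leqslant\ell(\partial T)$ with no double counting and no need for the explicit formula for $\tn{\L}$.
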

\begin{proof}
Let $\angle\colon C(\Sigma)\to\mathbb{R}$ be the angle structure we just constructed in Proposition \ref{prop:nice_angles}. Condition~\ref*{itm:niceang_v} of Proposition~\ref{prop:nice_angles} translates to the vertex being flat, i.e. $\xi=0$. For faces we obtain from Proposition~\ref{prop:nice_angles} that
\begin{itemize}
\item $\kappa(f)\leqslant-\tfrac{1}{25}-\varepsilon$ when $f\in F(\Sigma)$ is a 5-orbigon (with one side according to the convention from Section~\ref{sec:simplicial}), by condition~\ref*{itm:niceang_5}
\item $\kappa(f)\leqslant-\tfrac{1}{49}-\varepsilon$ when $f\in\Forb(\Sigma)$ with any other $m\equiv\pm1\bmod(6)$, by condition~\ref*{itm:niceang_orb}
\item $\kappa(f)=0$ for $f\in\Forb[2](\Sigma)$, again by \ref*{itm:niceang_orb}
\item $\kappa\leqslant-\varepsilon$ for any other face, by \ref*{itm:niceang_spx} and \ref*{itm:niceang_orb}.
\end{itemize}
Therefore combinatorial Gauss--Bonnet~\ref{lmm:combi_GB} gives
\begin{align}\begin{split}\label{eq:GB_application}
\cho(T)\leqslant&-\frac{1}{25}\sum_{f\in\Forb[5](T)}\hspace*{-10pt}\mathfrak{s}_f-\frac{1}{49}\sum_{\tiny\begin{matrix}m\equiv_6\pm1,m\geqslant7\\f\in\Forb(T)\end{matrix}}\mathfrak{s}_f-\\
&-\varepsilon\sum_{f\in F(T)\setminus\Forb[2](T)}\hspace*{-10pt}\mathfrak{s}_f+\sum_{\check{v}\in\partial T}\kappa(\check{v}).
\end{split}\end{align}
The sums over faces are exactly the term we want in Corollary \ref{cor:cho_bd}, so it only remains to bound the boundary contribution.

Since the angles are non-negative and sum to 1 over all corners of $\Sigma$, we have $\kappa(\check{v})\leqslant\tfrac{1}{2}$ for every slot $\check{v}$. Furthermore, if $\check{v}$ is a single phantom corner of an orbigon, then $\kappa(\check{v})\leqslant0$ since all angles are at most $\tfrac{1}{2}$. Overall we obtain
\begin{align}\begin{split}\label{eq:bdry_curv_bd}
&\sum_{\check{v}\in\partial T}\kappa(\check{v})\leqslant\frac{1}{2}\#A\\
\text{where}\quad&A=\{\check{v}\in\partial T\mid\check{v}\text{ not in the interior of an orbiarc}\}.
\end{split}\end{align}

The elements of the set $A$ from inequality (\ref{eq:bdry_curv_bd}) sit on circular boundary components, and every consecutive two are separated by an exposed side facing a phantom simplex or a non-empty orbiarc. The former have length 1, and the latter have length at least 1 by condition~\ref*{itm:orbi_prop_atleast1} of Proposition~\ref{prop:orbilen}. Therefore the total boundary length is at least the size of $A$, i.e. $\#A\leqslant\ell(\partial T)$. Combining this with equations (\ref{eq:GB_application}, \ref{eq:bdry_curv_bd}) completes the proof.
\end{proof}

\subsection{Constructing resolutions}\label{ssec:construct_resolution}

In this section we combine the boundary length from Section~\ref{ssec:bdry_len} with combinatorial curvature estimates from Section~\ref{ssec:combi_negcurv} to produce a new invariant we call score. We show that it enjoys two properties: first (Proposition~\ref{prop:resolve_defect}), a complex with nonnegative maximal defect can be grown in a way that decreases the score and preserves immersions into coverings, and second (Proposition~\ref{prop:score_bd}), score is an upper bound for the quantity $\cho+\md$ that appears in our master Theorem~\ref{thm:E_combi_emb}. Finally we describe an algorithm for constructing resolutions by descent on the score, and thus prove Proposition~\ref{prop:resolution}.
\begin{definition}\label{def:score}
The \emph{score} of an immersed complex $T\looparrowright\Sigma$ is
\begin{align*}
\tn{score}(T)\defeq&\ \ell(\partial T)+\#\{v\in V(T)\mid v\text{ isolated}\}-\\
&-\frac{1}{25}\sum_{f\in\Forb[5](T)}\hspace*{-10pt}\mathfrak{s}_f-\frac{1}{49}\sum_{\tiny\begin{matrix}m\equiv_6\pm1,m\geqslant7\\f\in\Forb(T)\end{matrix}}\mathfrak{s}_f-\varepsilon\hspace*{-10pt}\sum_{f\in F(T)\setminus\Forb[2](T)}\hspace*{-10pt}\mathfrak{s}_f,
\end{align*}
where $\varepsilon\in\mathbb{Q}_+$ is sufficiently small for Corollary~\ref{cor:cho_bd} to hold.
\end{definition}

We will need to rule out some silly reasons that preclude $T$ from embedding into any covering of $\Sigma$.
\begin{definition}\label{def:nonpato}
Immersed complex $T$ is \emph{non-pathological} when
\begin{enumerate}
\item every $m$-orbiarc with $m$ exposed sides is a circle
\item every circular $m$-orbiarc with $l$ exposed sides satisfies $l|m$.
\end{enumerate}
\end{definition}

The first key result of this section shows that any complex with nonnegative boundary defect can be replaced with complexes of smaller score while preserving the set of its embeddings into coverings of $\Sigma$. We will use it in the proof of Proposition~\ref{prop:resolution} as the descent step to remove complexes with problematic boundaries from a resolution.
\begin{proposition}\label{prop:resolve_defect}
If $\md(T)\geqslant0$ and $T$ is non-pathological, then there exists an embedding $T\hookrightarrow T'$ satisfying the following conditions.
\begin{enumerate}
\item Every immersion $T\looparrowright\whS$ of $T$ into a covering $\whS$ extends uniquely to an immersion $T'\looparrowright\whS$ of $T'$.
\item All quotients $T'\twoheadrightarrow T''$ in which $T$ embeds satisfy $\tn{score}(T'')<\tn{score}(T)$ or $\md(T'')<0$.
\end{enumerate}
\end{proposition}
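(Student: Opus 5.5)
We will construct $T'$ by case analysis on the boundary of $T$, using Proposition~\ref{prop:spackling_defect} as the main growth step and a direct orbigon-attachment step for long orbiarcs. The score is built so that attaching faces lowers the face terms (each added non-2-orbigon contributes at least $-\varepsilon\mathfrak{s}_f$, 5-orbigons $-\tfrac{1}{25}\mathfrak{s}_f$, orbigons of order $\pm1\bmod 6$ at least $-\tfrac{1}{49}\mathfrak{s}_f$). Quotients do not increase $\ell$ by Proposition~\ref{prop:quot_short_bdry}, and they only identify faces of $T'$ that are not already in $T$. So the strategy is: find $T\hookrightarrow T'$ with $\tn{score}(T')<\tn{score}(T)$ by a margin that survives passing to any quotient $T''$ into which $T$ still embeds.

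\textbf{Case A: $\partial T$ contains a long $m$-orbiarc} with $l$ sides. Non-pathology rules out $l=m$ on a segment, and a circular orbiarc with $l\mid m$ can be closed by a branched orbigon with cone point of order $m/l$. We let $T'=T\cup f$ with $f$ the phantom $m$-orbigon. Any immersion into a covering extends uniquely because the orbigon around the orbiarc is forced in $\whS$. This replaces $Ł(m,l)$ by $m-l$ simplex-facing sides. By condition~\ref*{itm:orbi_prop_loa} of Proposition~\ref{prop:orbilen}, $\ell$ grows by at most $\tfrac{1}{m}(\mathbbm{1}_{\{m\equiv\pm1 \bmod 6,\,m\ge7\}}+\mathbbm{1}_{\{m=11\}})$. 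For such $m$ the face term of $f$ contributes $-\tfrac{1}{49}\mathfrak{s}_f-\varepsilon\mathfrak{s}_f$ with $\mathfrak{s}_f\ge1$. We must check that $\tfrac{2}{m}<\tfrac{1}{49}$ fails for small $m$, so $m\in\{7,11,13,\dots\}$ needs care. We expect the $\tfrac{1}{49}$ constant to be tuned precisely for this, perhaps using that $\mathfrak{s}_f=m$ when the orbigon is unbranched. For other $m$ the net change is at most $-\varepsilon$.

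\textbf{Case B: no long orbiarcs.} Since $\md(T)\ge0$, we pick a nonnegative-defect piece $\mathcal{P}$. If it contains circular orbiarcs, we first argue that removing them does not lower the defect below $0$, or else handle circular orbiarcs directly since they have $l\mid m$ and are closable as in Case A. We then apply Proposition~\ref{prop:spackling_defect}. This gives $\ell(\partial T')\le\ell(\partial T)+\tfrac15\#\{\text{new 5-sided 5-orbigons}\}$ and at least one new face. Each new 5-sided 5-orbigon lowers the score face term by $\tfrac{5}{25}=\tfrac15$, which exactly cancels the excess. The $\varepsilon$-terms of the new faces then give a strict decrease, except when all new faces are 2-orbigons. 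In that case we must show that $\ell$ strictly drops, likely via the $t=h=0$ subcase or a sharpened form of inequality~(\ref{eq:len_dft}). Isolated vertices of $T$ that get covered also lower the isolated-vertex count.

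\textbf{Passing to quotients.} This is the step we expect to be the main obstacle. Given $T'\twoheadrightarrow T''$ with $T\hookrightarrow T''$, Proposition~\ref{prop:quot_short_bdry} controls $\ell$, but new faces may be identified with each other, reducing their negative face contributions. Their images cannot merge with faces of $T$, since the immersion of $T$ would then not extend uniquely to $T'$. We would argue that the merged images still contain at least one new face. For 5-orbigons, we would use condition~\ref*{itm:spackling_p5} of Proposition~\ref{prop:spackling_defect}: each has at least 3 vertices in $T$, where the quotient is injective. This keeps enough distinct 5-sided 5-orbigons, or enough shortening of $\ell$ from the identification, to preserve the $\tfrac15$ cancellation. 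If identifications instead destroy the boundary piece structure so that the accounting fails, we fall back on showing that $\md(T'')<0$. This is the reason the statement allows the alternative.
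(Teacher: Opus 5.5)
There is a genuine gap. Your outline follows the paper's route: split on long orbiarcs, attach the phantom orbigon in that case, otherwise use Proposition~\ref{prop:spackling_defect}, and control quotients via Proposition~\ref{prop:quot_short_bdry}, injectivity of $F(T)\to F(T'')$, and the pigeonhole argument for 5-orbigons. However, the two places where the score must actually be shown to drop are left as expectations.

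\textbf{Case A.} The missing observation is this. Once circular orbiarcs have been closed, a long orbiarc is a segment containing $l+1>\tfrac m2$ distinct vertices of $f$. Hence the attached orbigon is the unbranched one with $\mathfrak{s}_f=m$. It also stays $m$-sided, and is not a face of $T$, in every quotient in which $T$ embeds: folding it would identify two vertices of the arc. So the face term is $-\tfrac{m}{49}-m\varepsilon$, not something with merely $\mathfrak{s}_f\geqslant1$. By condition~\ref*{itm:orbi_prop_loa} of Proposition~\ref{prop:orbilen}, the $\ell$-increase is at most $\tfrac1m(1+\mathbbm{1}_{\{m=11\}})$. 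The face term wins because $m\geqslant7$ gives $\tfrac{m}{49}\geqslant\tfrac1m$, with equality at $m=7$ where strictness comes from $-m\varepsilon$, and $11^2>98$ handles $m=11$. Comparing $\tfrac2m$ with $\tfrac1{49}$, as you propose, cannot close this case.

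\textbf{Case B.} The subcase where all new faces are 2-orbigons is left open. Your fallback, ``show $\md(T'')<0$ when the accounting fails'', is not an argument: nothing controls the defect of an arbitrary quotient, and the accounting in fact never fails.

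The alternative $\md(T'')<0$ has a specific origin in the paper's proof, namely a preprocessing step before Proposition~\ref{prop:spackling_defect} is invoked:
\begin{enumerate}
\item Strip isolated vertices.
\item Close circular orbiarcs. All their vertices lie in $T$, and $\ell$ strictly drops.
\item Attach any phantom 2-orbigon $f$ facing an exposed side.
\end{enumerate}
The third move need not lower the score. For a bigon with only one edge in $T$, $\ell$ loses one unit-length side and gains one, and 2-orbigons are excluded from the $\varepsilon$-term. So if $\md(T\cup f)<0$, one stops with $T'=T\cup f$. Since $T\cup f$ has no new vertices, its only quotient in which $T$ embeds is itself, and this is where the alternative in the statement is used. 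Otherwise one continues with $T\cup f$.

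After this preprocessing no exposed side faces a phantom 2-orbigon. Therefore any 2-orbigon added by the construction in Proposition~\ref{prop:spackling_defect} shares its slot with a phantom simplex that is also added, and the $\varepsilon$-term gives strict decrease. Your own suggestion, showing that $\ell$ strictly drops when only 2-orbigons are added, can be made to work, since each such 2-orbigon would then be bounded entirely by exposed sides of $T$. But it requires opening that construction, which you did not do.

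\textbf{Minor point.} New faces cannot merge with faces of $T$, but not because of unique extension. The correct reason is that each new face has a vertex $v\in V(T)$, and vertex-injectivity of $T\to T''$ pins $v$. Two distinct faces of $T'$ would then have the same corner at $v$, contradicting that $T'$ is immersed.
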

\begin{proof}
First we take care of three nuisance possibilities. First, if $\partial T$ contains a circular $m$-orbiarc of $l$ exposed sides, then $l|m$ so the phantom orbigon can be added to $T$ at this orbiarc. This operation decreases $\ell(\partial T)$ and hence the score, and preserves immersions into coverings, so we are done. Second, if $\partial T$ contains an exposed side facing a phantom 2-orbigon $f$, then $T_1\defeq T\cup f$ is also non-pathological; if $\md(T_1)<0$ then we are done (the only quotient in which $T$ embeds is $T_1$ itself), and otherwise if $\md(T_1)\geqslant0$ then we can proceed with the proof for $T_1$ and compose the resulting $T_1\hookrightarrow T'$ with $T\hookrightarrow T_1$. Third, if $v\in V(T)$ is isolated then we run the argument with $T\setminus\{v\}$. Therefore, without loss of generality we assume that $\partial T$ contains no circular orbiarcs or exposed sides facing a phantom 2-orbigon, and no isolated vertices. We can now move on to the core of the proof. There are two cases, depending on whether $\partial T$ contains a long orbiarc.

The first case is when $\partial T$ contains no long orbiarcs. Then Proposition~\ref{prop:spackling_defect} produces a non-trivial embedding $T\hookrightarrow T'$ that already satisfies the first condition. Additionally
\begin{equation*}
\ell(\partial T')\leqslant\ell(\partial T)+\frac{1}{5}\#\{f\in\Forb[5](T'\setminus T)\mid\mathfrak{s}_f=5\},
\end{equation*}
every $f\in\Forb[5](T'\setminus T)$ has at least 3 vertices in $T$, and $F(T'\setminus T)\neq\emptyset$. We claim that these properties imply the second condition too, and hence $T'$ witnesses Proposition~\ref{prop:resolve_defect}.

Proving the second condition is easier when $\Sigma$ has no cone points of order 5. Consider a quotient immersed complex $T'\twoheadrightarrow T''$ such that $T$ embeds in $T''$. We asserted in Proposition~\ref{prop:quot_short_bdry} that boundary length does not increase under quotients, so $\ell(\partial T'')\leqslant\ell(\partial T')$, which in turn is at most $\ell(\partial T)$ by the construction of $T'$. The function $F(T)\to F(T'')$ induced by $T\hookrightarrow T''$ is an injection preserving the number of sides, so the sums of $\mathfrak{s}_f$ over faces of $T''$ are no smaller than the respective sums over faces of $T$. Moreover, $F(T''\setminus T)$ is nonempty and cannot contain exclusively 2-orbigons, so the sum over $F\setminus\Forb[2]$ is strictly larger for $T''$ than for $T$. Combining these estimates gives $\tn{score}(T'')<\tn{score}(T)$.

When $\Sigma$ has cone points of order 5 we need to adjust the argument. From Proposition~\ref{prop:quot_short_bdry} we still have $\ell(\partial T'')\leqslant\ell(\partial T')$. If any two $f_1\neq f_2\in\Forb[5](T'\setminus T)$ were identified in $T''$, then by the pigeonhole principle some vertex $v_1\in f_1\cap T$ would be identified in $T''$ with some $v_2\in f_2\cap T$, which is impossible if $T$ embeds in $T''$. This means that the set $\Forb[5](T'\setminus T)$ injects into $\Forb[5](T''\setminus T)$. This cancels the correction to $\ell(\partial T')$ and gives
\begin{equation*}
\ell(\partial T'')-\frac{1}{25}\sum_{f\in\Forb[5](T''\setminus T)}\mathfrak{s}_f\leqslant\ell(\partial T).
\end{equation*}
Adding to the above inequality the sums over $\Forb[5](T)$ and other faces for $m\neq5$ establishes $\tn{score}(T'')<\tn{score}(T)$.

The second case is when $\partial T$ contains a long orbiarc, say with $l$ edges. Let $T'$ be obtained by filling in the missing $m$-orbigon $f$. Because the long orbiarc contains more than half of all vertices of $f$, the only quotient of $T'$ in which $T$ embeds is $T'$ itself. Adding $f$ covers a boundary arc of length $Ł(m,l)$ but creates $m-l$ new sides of edges, so condition \ref*{itm:orbi_prop_loa} of Proposition~\ref{prop:orbilen} implies that
\begin{equation*}
\ell(\partial T')\leqslant\ell(\partial T)+\frac{1}{m}\left(\mathbbm{1}_{\{m\equiv\pm1\bmod(6),m\geqslant7\}}+\mathbbm{1}_{\{m=11\}}\right).
\end{equation*}
Again, more than half of $V(f)$ were already present in $T$, so $f$ must have $m$ edges ($\mathfrak{s}_f=m$) and a cone point of order 1. Therefore, the sums over faces in the score of $T'$ gain a new summand equal
\begin{equation*}
-\frac{m}{49}\cdot\mathbbm{1}_{\{m\equiv\pm1\bmod(6),m\geqslant7\}}-m\varepsilon.
\end{equation*}
This compensates for the potential increase in boundary length, so $\tn{score}(T')<\tn{score}(T)$.
\end{proof}

The second key result of this section is that score upper-bounds the exponent of expected number of embeddings from Theorem~\ref{thm:E_combi_emb}. It will appear in the proof of Proposition~\ref{prop:resolution} as the stopping condition.
\begin{proposition}\label{prop:score_bd}
If $T$ is non-pathological, then
\begin{equation*}
\cho(T)+\max\{0,\md(T)\}\leqslant\tn{score}(T).
\end{equation*}
\end{proposition}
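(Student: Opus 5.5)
The plan is to combine Corollary~\ref{cor:cho_bd}, which controls $\cho(T)$ by half the boundary length minus the curvature terms, with a separate bound $\md(T)\leqslant\tfrac{1}{2}\ell(\partial T)$ on the defect. Adding the two gives
\begin{equation*}
\cho(T)+\max\{0,\md(T)\}\leqslant\tfrac{1}{2}\ell(\partial T)-(\text{face terms})+\tfrac{1}{2}\ell(\partial T),
\end{equation*}
which is exactly $\tn{score}(T)$ once isolated vertices are dealt with. In the case $\md(T)<0$ only Corollary~\ref{cor:cho_bd} is needed, together with $\ell\geqslant0$.

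\textbf{Step 1: isolated vertices.} Write $T=T_0\sqcup\{v_1,\dots,v_r\}$ with $T_0$ having no isolated vertices. Then $\cho(T)=\cho(T_0)+r$ and $\tn{score}(T)=\tn{score}(T_0)+r$.

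An isolated vertex carries a circular boundary component made of $h$ hanging half-edges. Counting the half-edges around the vertex link of the standard triangulation of $\Sigma$, its defect is $-\tfrac{1}{6}h_\tn{spx}-\sum(\tfrac{1}{3}-\tfrac{1}{2m})h_\tn{orb}$, which should be negative. Its subpieces are segments, so their defect is at most $-1$. A piece mixing $T_0$-boundary with isolated-vertex boundary then has defect at most that of its $T_0$-part. Hence $\max\{0,\md(T)\}\leqslant\max\{0,\md(T_0)\}$, and we may assume $T$ has no isolated vertices.

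\textbf{Step 2: per-piece defect bound.} The goal is $\dft(\mathcal{P})\leqslant\tfrac{1}{2}\ell(\partial T)$ for every nonempty piece $\mathcal{P}$, under non-pathology.

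Since hanging half-edges and segment ends contribute negatively, it suffices to bound the positive part $\tfrac{1}{3}\mathfrak{e}_\tn{spx}+\sum_m\tfrac{1}{m}\mathfrak{e}^{(m)}_\tn{orb}$ and subtract what the half-edges and ends force. Each simplex-facing side contributes $\tfrac{1}{3}\leqslant\tfrac{1}{2}$ of its length $1$. For 2- and 3-orbigons the length is also $1$ per side, and the contributions are $\tfrac{1}{2}$ and $\tfrac{1}{3}$.

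For an orbiarc with $l$ sides facing an $m$-orbigon, $m\geqslant4$, the contribution is $\tfrac{l}{m}$.
\begin{itemize}
\item If the orbiarc is a segment, it is delimited by hanging half-edges, each subtracting at least $\tfrac{1}{3}-\tfrac{1}{2m}$; otherwise the piece ends there and $\chi$ subtracts. I would attribute half of each neighbouring negative term to the orbiarc and check that $\tfrac{l}{m}$ minus these is at most $\tfrac{1}{2}Ł(m,l)$. The inputs are condition~\ref*{itm:orbi_prop_circlen_dft} of Proposition~\ref{prop:orbilen}, namely $Ł(m,l)\geqslant\min\{2,\tfrac{2l}{m}\}$, and non-pathology, which forces $l<m$ for segments so that $\tfrac{l}{m}<1$.
\item If the orbiarc is circular, then by non-pathology $l\mid m$, and its contribution is $\tfrac{l}{m}=\tfrac{1}{2}\min\{2,\tfrac{2l}{m}\}$, which is exactly half its length.
\end{itemize}
Summing over the constituents of $\mathcal{P}$ gives $\dft(\mathcal{P})\leqslant\tfrac{1}{2}\ell(\mathcal{P})\leqslant\tfrac{1}{2}\ell(\partial T)$.

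\textbf{Main obstacle.} The hard part is making Step 2's bookkeeping sharp at segment orbiarcs. There the crude bound $\tfrac{l}{m}\leqslant\tfrac{1}{2}Ł(m,l)$ needs $Ł(m,l)\geqslant\tfrac{2l}{m}$ even for $l$ close to $m$, and that is only guaranteed through condition~\ref*{itm:orbi_prop_circlen_dft} when $\tfrac{2l}{m}\leqslant2$. It also has to be checked that a circular boundary component made of a single closed orbiarc with $l=m$ behaves correctly.

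If the slack from the adjacent hanging half-edges is insufficient, I would fall back on the local slot-by-slot comparison used in Proposition~\ref{prop:spackling_defect}. That would give $D_{\check v}\leqslant\tfrac{1}{2}\cdot(\text{length attributed to }\check v)$ using the explicit formula for $Ł$, and the sum over slots would yield the same bound.
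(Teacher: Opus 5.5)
Your proposal is correct and follows the paper's proof: strip isolated vertices (your additivity argument showing they cannot raise $\max\{0,\md(T)\}$ is more careful than the paper's one-line remark), apply Corollary~\ref{cor:cho_bd}, and prove $\md(T)\leqslant\tfrac{1}{2}\ell(\partial T)$ piece by piece using non-pathology.

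The step you flag as the main obstacle is not one, so no slot-by-slot fallback is needed. Non-pathology gives $l\leqslant m-1$ for segment orbiarcs, hence $\min\{2,\tfrac{2l}{m}\}=\tfrac{2l}{m}$. Condition~\ref*{itm:orbi_prop_circlen_dft} of Proposition~\ref{prop:orbilen} then yields $\tfrac{l}{m}\leqslant\tfrac{1}{2}Ł(m,l)$ with every negative term discarded. If $\mathcal{P}$ meets only $l'$ sides of a maximal orbiarc, compare with the full arc via $\tfrac{l'}{m}\leqslant\tfrac{l}{m}\leqslant\tfrac{1}{2}Ł(m,l)$, since $Ł(m,\cdot)$ is not monotone. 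The paper instead keeps the two end terms, bounding the segment's contribution by $\tfrac{l+1}{m}-\tfrac{2}{3}\leqslant\tfrac{1}{3}$, and closes with condition~\ref*{itm:orbi_prop_atleast1}.
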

\begin{proof}
Addition of an isolated vertex increases both sides of the inequality by 1, so without loss of generality we can assume $T$ does not have any. Corollary~\ref{cor:cho_bd} gives an upper bound on $\cho(T)$, with which it is sufficient to prove
\begin{equation}\label{eq:dft_bdrylen}
\md(T)\leqslant\frac{\ell(\partial T)}{2}.
\end{equation}
This is what we do in the rest of the proof.

Consider a boundary piece $\mathcal{P}\subseteq\partial T$. Suppose $\mathcal{P}$ contains $a+b$ orbiarcs, $i$-th of which consists of $l_i$ edges of a phantom $m_i$-orbigon, such that the first $a$ are segments and the last $b$ are circles. The $i$-th segment orbiarc has at its end either a hanging half-edge (contributing $-\tfrac{1}{3}+\tfrac{1}{2m_i}$ to the defect) or an end of the piece (contributing $-\tfrac{1}{2}$). Therefore, $i$-th segment orbiarc contributes at most $\tfrac{l_i+1}{m_i}-\tfrac{2}{3}$ to $\dft(\mathcal{P})$. All other hanging half-edges enter the defect with negative weights, so ignoring them bounds the defect as
\begin{equation}\label{eq:dft_arcwise_bd}
\dft(\mathcal{P})\leqslant\frac{\mathfrak{e}_\tn{spx}(\mathcal{P})}{3}+\sum_{i=1}^a\left(\frac{l_i+1}{m_i}-\frac{2}{3}\right)+\sum_{i=a+1}^{a+b}\frac{l_i}{m_i}.
\end{equation}
By the assumption of $T$ being non-pathological, we know that $l_i\leqslant m_i-1$ for $1\leqslant i\leqslant a$. Therefore, by condition~\ref*{itm:orbi_prop_atleast1} of Proposition~\ref{prop:orbilen} for $1\leqslant i\leqslant a$ we have
\begin{equation*}
\frac{l_i+1}{m_i}-\frac{2}{3}\leqslant\frac{1}{3}\leqslant\frac{Ł(m_i,l_i)}{2}.
\end{equation*}
Again by $T$ being non-pathological, for $a<i\leqslant a+b$ we have $l_i\leqslant m_i$, so
\begin{equation*}
\frac{l_i}{m_i}\leqslant\frac{1}{2}\cdot\min\left\{2,\frac{2l_i}{m_i}\right\}.
\end{equation*}
Plugging these estimates into inequality (\ref{eq:dft_arcwise_bd}) proves $\dft(\mathcal{P})\leqslant\tfrac{\ell(\partial T)}{2}$. Maximising over $\mathcal{P}$ proves inequality (\ref{eq:dft_bdrylen}).
\end{proof}

Now, we are ready to describe an algorithm for finding resolutions and prove Proposition \ref{prop:resolution}. We will use the score from Definition \ref{def:score} as a complexity measure for immersed complexes. The algorithm is a descent on complexity, with descent step applying Proposition \ref{prop:resolve_defect} to complexes having $\md\geqslant0$. Proposition \ref{prop:score_bd} provides the stopping condition and guarantees correctness once it is reached.
\begin{proof}[Proof of Proposition \ref{prop:resolution}]
Enumerate all equivalence relations on the vertices $V(T)$ of $T$, Stallings-fold the resulting quotients, and ignore all pathological ones in the sense of Definition~\ref{def:nonpato}. Denote the resulting set of morphisms of immersed complexes by $\mathcal{R}=\{T\twoheadrightarrow W\}$. Every immersion of $T$ in a covering of $\Sigma$ is an embedding of a quotient, so it factors through a unique element of $\mathcal{R}$, and $\mathcal{R}$ is a finite resolution (but yet we have no control on boundaries of its elements).

We will now modify $\mathcal{R}$ to ensure the condition on maximal defect. While $\mathcal{R}$ contains a morphism $q\colon T\looparrowright W$ with $\md(W)\geqslant0$ and $\tn{score}(W)\geqslant K$ execute the following step. Apply Proposition~\ref{prop:resolve_defect} to obtain an embedding $\iota\colon W\hookrightarrow W'$. Let $\{r_i\colon W'\twoheadrightarrow W''_i\}_{i\in I}$ be all the (finitely many) Stallings-folded non-pathological quotients of $W'$ in which $W$ embeds. Modify the resolution $\mathcal{R}$ as
\begin{equation}\label{eq:resolution_algo_step}
\mathcal{R}\qquad\longmapsto\qquad\mathcal{R}\ \cup\ \{r_i\circ\iota\circ q\colon T\looparrowright W''_i\mid i\in I\}\ \setminus\ \{(q,W)\}
\end{equation}
by replacing the bad-boundary immersed complex $(q,W)$ with quotients of $W'$. Proposition~\ref{prop:resolve_defect} guaranteed that every embedding $W\hookrightarrow\whS$ in a covering extends to an immersion $W'\looparrowright\whS$, which in turn factors uniquely as one of the morphisms $r_i$ followed with an embedding of $W''_i$ into $\whS$. Therefore, $\mathcal{R}$ remains a resolution after the operation (\ref{eq:resolution_algo_step}).

It remains to prove that this process terminates. The resolution $\mathcal{R}$ contains only non-pathological complexes throughout the whole process. By Proposition~\ref{prop:resolve_defect}, the immersed complexes $W''_i$ added in (\ref{eq:resolution_algo_step}) have score strictly smaller than the score of removed $W$, or $\md<0$. The score takes discrete values (condition~\ref*{itm:orbi_prop_denominator} of Proposition~\ref{prop:orbilen}, and we chose $\varepsilon\in\mathbb{Q}$ in Definition~\ref{def:score}), so decreases by at least some fixed amount. Therefore, after finitely many steps all complexes in $\mathcal{R}$ have negative defect or score smaller than $K$. Then Proposition~\ref{prop:score_bd} guarantees that complexes in the latter group have $\cho+\max\{0,\md\}<K$.
\end{proof}

\subsection{Proofs of Proposition~\ref{prop:no_cap} and Lemma~\ref{lmm:no_dft_subgp}}\label{ssec:pfs_aux_combi}

Equipped with the tools of combinatorial negative curvature from Section~\ref{ssec:combi_negcurv}, we are ready to complete two proofs deferred from Section~\ref{ssec:res_main_pf} --- of Lemma~\ref{lmm:no_dft_subgp} and Proposition~\ref{prop:no_cap}. Recall that they guarantee predictable behaviour of immersed complexes under embeddings, conditional on controlled boundary defect and mimicking geodesic convexity.

To prove Proposition~\ref{prop:no_cap} we will need a special case of Gauss--Bonnet formula~\ref{lmm:combi_GB}, in the form that relates Euler characteristic to defect. We pick an angle structure that makes the faces flat and concentrates the negative curvature at the vertex (which is the opposite to the situation in Proposition~\ref{prop:nice_angles}).
\begin{corollary}[of Lemma~\ref{lmm:combi_GB}]\label{cor:cho_V_dft}
We have
\begin{equation*}
\cho(T)=\cho(\Sigma)\cdot\#V(T)-\dft(\partial T).
\end{equation*}
\end{corollary}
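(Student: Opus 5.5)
We plan to apply the combinatorial Gauss--Bonnet formula (Lemma~\ref{lmm:combi_GB}) with an angle structure chosen so that every face of $\Sigma$ (and hence of $T$) is flat. Concretely, for a corner $c$ of a simplex we set $\angle c=\tfrac16$. For the unique corner of an $m$-orbigon (a monogon with cone point of order $m$, by the convention of Section~\ref{sec:simplicial}) we set $\angle c=\tfrac12-\tfrac1m$. These choices give $\kappa(f)=0$ for every face of $\Sigma$.

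For a face $f$ of $T$ lying over an $m$-orbigon, with $\mathfrak{s}_f$ sides and order $o=m/\mathfrak{s}_f$, the pulled-back curvature is
\begin{equation*}
\frac1o-\frac{\mathfrak{s}_f}2+\mathfrak{s}_f\left(\frac12-\frac1m\right)=0,
\end{equation*}
so all faces of $T$ are flat too. The vertex curvature is then $\xi=1-\sum_c\angle c=1-\tfrac{3\#\Fspx(\Sigma)}{6}-\sum_{m,f}(\tfrac12-\tfrac1m)$. Using $\#V(\Sigma)=1$, the edge count $2\#E=3\#\Fspx+\sum_m\#\Forb$, and equation~(\ref{eq:cho_combi}), we will check that $\xi=\cho(\Sigma)$. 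Lemma~\ref{lmm:combi_GB} then gives
\begin{equation*}
\cho(T)=\cho(\Sigma)\#V(T)+\sum_{\check v\in\partial T}\kappa(\check v),
\end{equation*}
and it remains to prove $\sum_{\check v}\kappa(\check v)=-\dft(\partial T)$, where $\partial T$ is regarded as a piece with $\chi(\partial T)=0$.

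For this, we distribute each slot's curvature $-\tfrac12+\sum_{c\in C(\check v)}\angle c$ onto boundary elements, exactly as in the bookkeeping of the proof of Lemma~\ref{lmm:combi_GB}. The term $-\tfrac12$ is split as $-\tfrac14$ to each of the two delimiting exposed sides. Each phantom corner angle is split in half between the two boundary elements flanking that corner. An exposed side facing a phantom simplex then receives $-\tfrac14\cdot2+\tfrac1{12}\cdot2=-\tfrac13$. An exposed side facing an $m$-orbigon receives $-\tfrac12+2\cdot\tfrac12(\tfrac12-\tfrac1m)\cdot\ldots$; here we must count carefully, since a one-sided orbigon's corner touches its edge from both ends, and the target value is $-\tfrac1m$. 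A hanging half-edge between two phantom simplices receives $\tfrac1{12}+\tfrac1{12}=\tfrac16$. A half-edge between a simplex and an $m$-orbigon receives $\tfrac1{12}+\tfrac12(\tfrac12-\tfrac1m)=\tfrac13-\tfrac1{2m}$. These are exactly the negatives of the coefficients in Definition~\ref{def:dft}. Half-edges adjacent to 2-orbigons need a check against the conventions of that definition.

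The main obstacle is this local matching at orbigon-facing sides and at phantom orbigon corners. An $m$-orbigon corner in $\Sigma$ sits between the two ends of the same edge, so its angle $\tfrac12-\tfrac1m$ must be split as $\tfrac14-\tfrac1{2m}$ to each side, with the pieces landing on an exposed side or on a hanging half-edge as appropriate. We will verify case by case, following Figure~\ref{fig:bdry_verts}, that the totals reproduce $\tfrac{\mathfrak{e}^{(m)}_\tn{orb}}m$ and $(\tfrac13-\tfrac1{2m})\mathfrak{h}^{(m)}_\tn{orb}$. We expect the weights in Definition~\ref{def:dft} were designed precisely for this.
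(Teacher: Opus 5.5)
Your proposal is correct and is essentially the paper's proof: the same flat-face angle structure ($\tfrac16$ at simplex corners, $\tfrac12-\tfrac1m$ at $m$-orbigon corners), followed by redistributing each slot's curvature onto the adjacent exposed sides and hanging half-edges, which is exactly the paper's common-refinement bookkeeping (weights $-\tfrac16$, $-\tfrac1{2m}$, $\tfrac1{12}$, $\tfrac14-\tfrac1{2m}$ for an exposed side or hanging half-edge meeting a phantom simplex or $m$-orbigon corner). The case you left open closes as $-\tfrac12+2\cdot\tfrac12\bigl(\tfrac12-\tfrac1m\bigr)=-\tfrac1m$ once flanking incidences are counted with multiplicity, which also covers a loop edge whose orbigon-facing exposed side flanks the same phantom corner from both ends; your direct computation of $\xi=\cho(\Sigma)$ from $2\#E(\Sigma)=3\#\Fspx(\Sigma)+\sum_m\#\Forb(\Sigma)$ is only a cosmetic alternative to the paper's application of Lemma~\ref{lmm:combi_GB} to $\Sigma$ itself.
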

\begin{proof}
Choose an angle structure on $\Sigma$ by setting $\angle(c)=\tfrac{1}{6}$ when $c$ is a corner of a simplex and $\angle(c)=\tfrac{1}{2}-\tfrac{1}{m}$ when $c$ is a corner of an $m$-orbigon. This way $\kappa(f)=0$ for every $f\in F(\Sigma)$, so by Lemma~\ref{lmm:combi_GB} we must have $\xi=\cho(\Sigma)$ at the vertex.

For a slot $\check{v}$ define $\kappa'(\check{v})$ as the sum of
\begin{itemize}
\item $-\tfrac{1}{6}$ for each exposed side adjacent to $\check{v}$ and facing a phantom simplex
\item $-\tfrac{1}{2m}$ for each adjacent exposed side facing a phantom $m$-orbigon
\item $\tfrac{1}{6}$ for each adjacent hanging half-edge separating two phantom simplices
\item $\tfrac{1}{3}-\tfrac{1}{2m}$ for each adjacent hanging half-edge separating a simplex and $m$-orbigon.
\end{itemize}
Then by Definition~\ref{def:dft} we have $\dft(\partial T)=-\sum_{\check{v}\in\partial T}\kappa'(\check{v})$. Note that the contribution of each exposed side to $\dft$ was split in half and attributed to two adjacent slots, similarly as in the proof of Proposition~\ref{prop:spackling_defect}.

Corollary~\ref{cor:cho_V_dft} will follow from Lemma~\ref{lmm:combi_GB} once we establish $\kappa'(\check{v})=\kappa(\check{v})$, where $\kappa(\check{v})$ is the slot curvature induced by $\angle$ according to equation (\ref{eq:curv_site}). The first quantity $\kappa'(\check{v})$ depends on exposed sides and hanging half-edges adjacent to $\check{v}$, while the second $\kappa(\check{v})$ involves the corners of $\check{v}$. The combinatorial objects involved in one sum are interlaced with those in the other, so to prove equality we find a common refinement. Namely, consider all incidences between hanging half-edges or exposed sides and phantom faces, and add
\begin{itemize}
\item $-\tfrac{1}{6}$ for an inclusion of exposed side in a phantom simplex
\item $-\tfrac{1}{2m}$ for an inclusion of exposed side in a phantom $m$-orbigon
\item $\tfrac{1}{12}$ for an inclusion of hanging half-edge in a phantom simplex
\item $\tfrac{1}{4}-\tfrac{1}{2m}$ for inclusion of hanging half-edge in a phantom $m$-orbigon.
\end{itemize}
Grouping such summands by edges gives $\kappa'(\check{v})$, while grouping them by phantom corners gives $\kappa(\check{v})$. Therefore indeed we have $\kappa'=\kappa$.
\end{proof}
\begin{remark}\label{rmk:inner_dft}
Perhaps a more illuminating way to state Corollary~\ref{cor:cho_V_dft} would be to write it as
\begin{equation*}
\cho(T)=\cho(\Sigma)\cdot\#\left\{\begin{matrix}\text{vertices of }T\\\text{not on the boundary}\end{matrix}\right\}+\left(\begin{matrix}\text{inner-facing}\\\text{boundary defect}\end{matrix}\right).
\end{equation*}
It would also simplify the upcoming proof of Proposition~\ref{prop:no_cap}. Essentially, what will happen there is that when a component $\gamma$ of $\partial T$ is capped with a finite complex $C$, then $\dft(\gamma)$ becomes the inner-facing defect of $C$, and hence an upper bound for $\cho(C)$. However, we stick with the current phrasing of Corollary~\ref{cor:cho_V_dft} to avoid the complication of rigorously defining the ``inner-facing'' boundary defect.
\end{remark}

Recall that for an embedding $T\hookrightarrow\whS$ of a complex into a covering, Proposition~\ref{prop:no_cap} bounded the difference $\cho(\Sigma)-\cho(T)$ in terms of $\md(T)$, and provided a sufficient condition for $T$ to be a deformation retract. Now we are ready to deduce it from Corollary~\ref{cor:cho_V_dft}.
\begin{proof}[Proof of Proposition~\ref{prop:no_cap}]
Take an embedding $T\hookrightarrow\whS$. First we deal with statement~\ref*{itm:nocap_general}. Let $W$ be the union of $T$ and the finite connected components of its complement in $\whS$. Then every (open) connected component of $\whS\setminus W$ is infinite, so its orbifold Euler characteristic is nonpositive (as a topological surface it has at least one funnel or is of infinite type). Therefore
\begin{equation}\label{eq:cho_covering_subcplx}
\cho(\whS)\leqslant\cho(W).
\end{equation}
We need to relate the characteristics of $T$ and $W$ to $\partial T$. Applying Corollary~\ref{cor:cho_V_dft} to $T$ and $W$ and taking the difference yields
\begin{equation*}
\cho(W)-\cho(T)=\cho(\Sigma)\cdot\#V(W\setminus T)+\dft(\partial T)-\dft(\partial W)
\end{equation*}
The first term on the RHS is nonpositive. Each boundary component of $T$ either remains a boundary component of $W$ in its entirety, or is completely covered in $W$. Therefore
\begin{equation*}
\dft(\partial T)-\dft(\partial W)=\dft(\mathcal{P}),
\end{equation*}
where the boundary piece $\mathcal{P}$ is the union of those components of $\partial T$ that touch a finite component of $\whS\setminus T$. By definition we have
\begin{equation}\label{eq:dft_diff_dft}
\max\{0,\md(T)\}\geqslant\dft(\mathcal{P}),
\end{equation}
zero appearing because $\mathcal{P}$ may be empty. Combining the above inequalities gives $\cho(\whS)\leqslant\cho(T)+\max\{0,\md(T)\}$, which is precisely the statement~\ref*{itm:nocap_general}.

To prove statement~\ref*{itm:nocap_nodft} we need to analyse when the above become equalities, under the additional assumption $\md(T)<0$. Equality in (\ref{eq:dft_diff_dft}) tells us that the collection $\mathcal{P}$ of components of $\partial T$ covered by $W$ must have defect 0, so by assumption it is empty. This means that $W=T$, or equivalently all components of $\whS\setminus T$ are infinite. Equality in (\ref{eq:cho_covering_subcplx}) requires all of them to be topological annuli, i.e. funnels. These deformation retract onto their respective boundaries, so $\whS$ deformation retracts onto $T$.
\end{proof}

\begin{remark}
By ``complement component'' in the proof of Proposition~\ref{prop:no_cap} we meant the open component and not its closure. This is because we allow $T$ to contain dangling contractible subcomplexes attached to the boundary at a single vertex, analogously to leaves of a tree or spurs in \cite[Definition 2.4]{sectional_curvature_cpct_cores_local_quasiconvexity_Wise04}. When these are present, $\partial T$ is only a circle when treated as a ``slight push-off'' away from $T$ into $\whS\setminus T$, but not when thought of as a subcomplex of $T$. Consequently, Euler characteristic is additive if we work with open complement components, but not necessarily with their closures.
\end{remark}

We move on to the proof of Lemma~\ref{lmm:no_dft_subgp}. Recall that it promised a few convexity-like properties for complexes with $\md<0$, including being a subcomplex of some covering space of $\Sigma$ (akin to sub-cover from \cite[Definition 2.1]{local_stats_random_perms_free_prods_PuderZimhoni24}), and $\pi_1$-injectivity of the immersion.
\begin{proof}[Proof of Lemma~\ref{lmm:no_dft_subgp}]
We will prove that if $\md(T)<0$ then $T$ embeds in $\sfrac{\widetilde\Sigma}{p_*(\pio T)}$ as a deformation retract. The other statements in Lemma~\ref{lmm:no_dft_subgp} are easy consequences.

First we deal with embedding. Our master counting result, Theorem~\ref{thm:E_combi_emb}, tells us that the expected number of embeddings of $T$ in a degree-$n$ covering is $n^{\cho(T)}(1+o(1))$. It is positive for sufficiently large $n$, so there exists a finite-degree covering $\whS\to\Sigma$ in which $T$ embeds. Then the fundamental group $\pio\whS$ must contain $p_*(\pio T)$, so $\whS$ is covered by $\sfrac{\widetilde\Sigma}{p_*(\pio T)}$. This forms a diagram
\begin{equation*}\begin{tikzcd}
T\arrow[r,{Glyph[glyph math command=looparrowleft, swap]}->]&\sfrac{\widetilde\Sigma}{p_*(\pio T)}\arrow[r]&\whS
\end{tikzcd}\end{equation*}
where the first map is our immersion of interest, second is a covering, and the composition is injective on vertices. But then the first map must be vertex-injective too, i.e. be an embedding.

Second, we establish the deformation retraction. We just proved that the morphism $T\to\sfrac{\widetilde\Sigma}{p_*(\pio T)}$ is an embedding, so can be considered as an inclusion of a subcomplex. By definition, it induces a surjection on fundamental groups. This means that any connected component of the complement of $T$ can only meet $T$ at a single boundary component, and by classification of surfaces its underlying topological surface must be a disk or an annulus. In the former case it cannot contain more than 2 cone points, and in the latter case it must be non-singular. A disk with 2 cone points is impossible if $\cho(\Sigma)<0$. Therefore, a component of $\left(\sfrac{\widetilde\Sigma}{p_*(\pio T)}\right)\setminus T$ can only be a funnel or a disk with zero or one cone point. Hence
\begin{equation*}
\cho\left(\sfrac{\widetilde\Sigma}{p_*(\pio T)}\right)\geqslant\cho(T).
\end{equation*}
From statement~\ref*{itm:nocap_general} of Proposition~\ref{prop:no_cap} we obtain the inequality in the opposite direction, so both Euler characteristics must be equal. Then statement~\ref*{itm:nocap_nodft} of Proposition~\ref{prop:no_cap} implies that $T$ is a deformation retract (the last argument is equivalent to Proposition~\ref{prop:no_cap} ruling out disk and once-singular disk complement components and leaving only funnels).
\end{proof}

\section{Proofs of auxiliary results}

\subsection{Lemma \ref{lmm:change_sum_tors}}\label{ssec:pf_change_sum_tors}

A slightly more general statement is true.
\begin{lemma}\label{lmm:change_sum_irreps}
Let $G$ be a finite group, $U$ an irreducible representation of a subgroup $H\leq G$, $x\in G$ commute with $H$, and $C\subseteq G$ be a conjugacy class in $G$. Then
\begin{equation*}
\sum_{V\in\tn{Irr}(G)}\#C\cdot\chi_V(C)\cdot\tr\left(x\curvearrowright\Hom[H]{U}{V}\right)=[G:H]\sum_{y\in H\cap xC}\chi_U(y).
\end{equation*}
\end{lemma}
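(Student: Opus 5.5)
The plan is to expand everything into a double sum over group elements and use column orthogonality for $G$. First, express the trace on the multiplicity space. Since $x$ commutes with $H$, the operator $\rho_V(x)$ is $H$-equivariant. Hence $x$ acts on $\Hom[H]{U}{V}$ by post-composition. Using the isotypic decomposition (\ref{eq:isotypic}), the $U$-isotypic projector is $P=\frac{\chi_U(1)}{\#H}\sum_{h\in H}\overline{\chi_U(h)}\,h$. The component is $U\otimes\Hom[H]{U}{V}$, and on it $x$ acts as $\mathrm{id}_U\otimes(x\curvearrowright\Hom[H]{U}{V})$. Therefore
\begin{equation*}
\tr\left(x\curvearrowright\Hom[H]{U}{V}\right)=\frac{1}{\chi_U(1)}\tr_V(xP)=\frac{1}{\#H}\sum_{h\in H}\overline{\chi_U(h)}\,\chi_V(xh).
\end{equation*}
This uses that $xP$ commutes with $P$ and acts as claimed on the isotypic block.

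Next, substitute this into the left-hand side and exchange the order of summation:
\begin{equation*}
\frac{\#C}{\#H}\sum_{h\in H}\overline{\chi_U(h)}\sum_{V\in\tn{Irr}(G)}\chi_V(C)\chi_V(xh).
\end{equation*}
Column orthogonality gives $\sum_V\chi_V(g)\overline{\chi_V(g')}=\#C_G(g)\,\mathbbm{1}_{g\sim g'}$. Before applying it I must handle the missing complex conjugate. Two fixes are available: replace $\chi_V(C)$ by $\overline{\chi_V(C^{-1})}$, or reindex $V\mapsto V^*$. Either way the inner sum becomes $\frac{\#G}{\#C}\mathbbm{1}_{\{xh\in C^{\pm1}\}}$.

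I expect this conjugation bookkeeping to be the main obstacle, that is, getting the inverse to land correctly on the right-hand side. The statement reads $y\in H\cap xC$ with $\chi_U(y)$, not $\overline{\chi_U}$. So I reindex $h\mapsto h^{-1}$ and use $\overline{\chi_U(h)}=\chi_U(h^{-1})$. The condition $xh^{-1}\in C^{-1}$ is equivalent to $hx^{-1}\in C$. Since $x$ commutes with $h$, this is in turn equivalent to $h\in xC$ (conjugacy classes are closed under conjugation, and $x^{-1}h$ versus $hx^{-1}$ are fine because $x$ and $h$ commute). The total then collapses to $\frac{\#G}{\#H}\sum_{y\in H\cap xC}\chi_U(y)=[G:H]\sum_{y\in H\cap xC}\chi_U(y)$, as claimed. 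I will double-check the direction of $x$ versus $x^{-1}$ against the definition of the action on $\Hom[H]{U}{V}$, adjusting by $V\mapsto V^*$ if needed.

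Finally, deduce Lemma~\ref{lmm:change_sum_tors} by summing over the conjugacy classes $C$ contained in $\{g:g^m=1\}$. On the left, $\sum_C\#C\chi_V(C)=\tau_m(G)\chi_V^{m\tn{-tors}}$. On the right, $y\in xC$ for some such $C$ means $(x^{-1}y)^m=1$, i.e.\ $y^m=x^m$, using commutation once more. This is where the precise direction ($x^{-1}y$ versus $xy$) matters. Since $\tau_m$-sets are closed under inversion, either convention yields $y^m=x^m$, so the final check is harmless.
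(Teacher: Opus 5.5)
Your proof is correct and is essentially the paper's argument read in the opposite direction. Your formula $\tr\left(x\curvearrowright\Hom[H]{U}{V}\right)=\frac{1}{\#H}\sum_{h\in H}\overline{\chi_U(h)}\,\chi_V(xh)$ is exactly the paper's identity (\ref{eq:filter_Hrep}), and your column-orthogonality step, with $\chi_V(C)=\overline{\chi_V(C^{-1})}$ followed by $h\mapsto h^{-1}$, correctly produces the condition $h\in xC$.

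The only difference is in the $G$-side step. The paper does not quote column orthogonality but derives what it needs inline: it expands $\chi_{\tn{reg}}(y^{-1}xz)=\sum_V\chi_V(1)\chi_V(y^{-1}xz)$ and applies Schur's lemma to the class sum, $\chi_V(1)\sum_{z\in C}z=\#C\,\chi_V(C)\,\tn{id}_V$. The inverse is then built into the condition $y^{-1}xz=1$, so the conjugation bookkeeping you flagged never arises. Your alternative fix of reindexing $V\mapsto V^*$ would not help, since it only replaces the sum by its complex conjugate, but you do not need it.
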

Here $\chi_V(C)$ is the value of $\chi_V$ on any element of $C$, which makes sense because characters are constant on conjugacy classes.
\begin{proof}[Proof of Lemma \ref{lmm:change_sum_tors}]
Sum the main equality of Lemma \ref{lmm:change_sum_irreps} over all conjugacy classes $C$ consisting of $m$-torsion elements.
\end{proof}
\begin{proof}[Proof of Lemma \ref{lmm:change_sum_irreps}]
Start by
\begin{align}\begin{split}\label{eq:double_summation_lhs}
\frac{1}{\#H}\sum_{\tiny\begin{matrix}y\in H\\z\in C\end{matrix}}&\chi_U(y)\sum_{V\in\tn{Irr}(G)}\chi_V(1)\chi_V(y^{-1}xz)=\\
&=\frac{1}{\#H}\sum_{\tiny\begin{matrix}y\in H\\z\in C\end{matrix}}\chi_U(y)\chi_\tn{reg}(y^{-1}xz)=\\
&=\frac{\#G}{\#H}\sum_{\tiny\begin{matrix}y\in H\\z\in C\\y=xz\end{matrix}}\chi_U(y)=[G:H]\sum_{y\in H\cap xC}\chi_U(y),
\end{split}\end{align}
where $\chi_\tn{reg}$ is the character of the regular representation. This yields the RHS in Lemma~\ref{lmm:change_sum_irreps}. On the other hand, changing the order of summation on the LHS of equation (\ref{eq:double_summation_lhs}) gives
\begin{align}\begin{split}\label{eq:summed_ccl}
\sum_{V\in\tn{Irr}(G)}&\chi_V\left(\left(\frac{1}{\#H}\sum_{y\in H}\chi_U(y)y^{-1}\right)x\left(\chi_V(1)\sum_{z\in C}z\right)\right)=\\
&=\sum_{V\in\tn{Irr}(G)}\#C\cdot\chi_V(C)\cdot\chi_V\left(\left(\frac{1}{\#H}\sum_{y\in H}\chi_U(y)y^{-1}\right)x\right)
\end{split}\end{align}
where to perform the summation over $C$ we used that $\chi_V(1)\sum_{z\in C}z=\#C\chi_V(C)\tn{id}_V$ as endomorphisms of $V$ (which follows from Schur's Lemma). To simplify (\ref{eq:summed_ccl}) further, recall there is a $H\cdot Z_G(H)$-equivariant decomposition
\begin{equation*}
V=\bigoplus_{U'\in\tn{Irr}(H)}U'\otimes\Hom[H]{U'}{V},
\end{equation*}
which tells us that
\begin{align}\begin{split}\label{eq:filter_Hrep}
\chi_V&\left(\left(\frac{1}{\#H}\sum_{y\in H}\chi_U(y)y^{-1}\right)x\right)=\\
&=\sum_{U'\in\tn{Irr}(H)}\chi_{U'}\left(\frac{1}{\#H}\sum_{y\in H}\chi_U(y)y^{-1}\right)\cdot\tr\left(x\curvearrowright\Hom[H]{U'}{V}\right)=\\
&=\tr\left(x\curvearrowright\Hom[H]{U}{V}\right),
\end{split}\end{align}
where in the last equality we used the orthonormality of irreducible characters. Combining equations (\ref{eq:double_summation_lhs}, \ref{eq:summed_ccl}, \ref{eq:filter_Hrep}) gives the desired equality in Lemma \ref{lmm:change_sum_irreps}.
\end{proof}

\subsection{Proposition \ref{prop:prod_sgn_compat}}\label{ssec:pf_prod_sgn_compat}

Denote our quantity of interest by $S(T,\vec\gamma)\defeq\prod_{c\in C(\Sigma)}\tn{sgn}(\gamma_c)$. We will reduce $T$ and the corner permutations into more manageable data, in a way that preserves $S$.
\begin{proof}[Proof of Proposition~\ref{prop:prod_sgn_compat}]
In the first step we get rid of the faces of $T$. Consider $f\in\Fspx(T)$. Removing $f$ from $T$ creates 3 new exposed sides, and assigning to each of them the numeric label $n-\mathfrak{f}_{p(f)}+1$ extends the boundary numbering to $\partial(T\setminus f)$. The permutations $(T\setminus f)$-matching this new numbering differ from the old $\vec\gamma$ only at the three corners $c\in C(p(f))$, where they are extensions of $\gamma_c\in S_{[n-\mathfrak{v}+1,n-\mathfrak{f}_{p(f)}]}$ by the identity on the 1-element set $\{n-\mathfrak{f}_{p(f)}+1\}$. This way all signs are the same. Similarly, removing $f\in\Forb(T)$ from $T$ creates a new circular boundary $m$-orbiarc with $\mathfrak{s}_f$ exposed sides. We arbitrarily number the new sides with $[n-\mathfrak{f}_{p(f)}+1,n-\mathfrak{f}_{p(f)}+\mathfrak{s}_f]$, and the new $(T\setminus f)$-matching permutations differ from old $\vec\gamma$ only at the unique corner $c$ of $p(f)$, where we extend $\gamma_c$ by an $\mathfrak{s}_f$-cycle. Since $\mathfrak{s}_f|m$, if $m$ is odd then this cycle is an even permutation, so again all signs are the same. By repeating these operations, without loss of generality we can assume that $T$ is a graph.

In the second step we get rid of edges. Suppose we pick a new numbering at a side of an edge $e\in E(\Sigma)$ adjacent to corners $c_\pm$. This affects the corner permutations by pre- or postmultiplying $\gamma_{c_\pm}$ by $\delta^{\pm1}$, where $\delta\in S_{[n-\mathfrak{e}_e+1,n]}$ converts between new and old numeric labels (we used a similar argument in Figure~\ref{fig:tr_not_affected} to prove equation (\ref{eq:tr_not_affected})). Therefore $\gamma_{c_\pm}$ either both keep or both change their signs, so $S$ is preserved. This means $S$ is independent of the numbering of exposed sides, so let us pick one where the numeric labels are the same on both sides of each edge. If we now cut $e\in E(T)$ into two hanging half-edges, and number them with the numeric label of sides of $e$, then the $T$-matching permutations will not change, because they will see the same numbers. By repeating this at every edge, without loss of generality we can assume that $T$ is a disjoint union of vertices.

In the third step, suppose we change the numeric labels of hanging half-edges in $H_\iota$ for some incidence $\iota\colon v\hookrightarrow e$ lying between corners $c_\pm$. Similarly as in the second step, this multiplies $\gamma_{c_\pm}$ by some permutation of $[n-\mathfrak{v}+1,n]$ or its inverse, so the product of signs of $\gamma_{c_\pm}$ and hence $S$ is preserved. This means that $S$ is the same for any numbering of hanging half-edges. Choosing it to be a function of the vertex makes all $T$-matching permutations be the identity, so $S=1$.
\end{proof}

\subsection{Proposition \ref{prop:tall_tors_bd}}\label{ssec:pf_tall_tors_bd}

In the proof below we use the standard notation $\langle U,V\rangle\defeq\dim\Hom[G]{U}{V}$ for the multiplicity of a $G$-representation $U$ in $V$.
\begin{proof}[Proof of Proposition~\ref{prop:tall_tors_bd}]
Let us relate $(\lambda^\vee[n])^\vee$ to something more computation-friendly. Consider the $S_n$-representation
\begin{equation*}
V_n\defeq\tn{Ind}^{S_n}_{S_{n-|\lambda|}\times S_{[n-|\lambda|+1,n]}}\tn{sgn}\otimes\lambda
\end{equation*}
obtained by induction from the sign representation of $S_{n-|\lambda|}$ tensored with the representation $\lambda$ of $S_{[n-|\lambda|+1,n]}\cong S_{|\lambda|}$. We claim that $V_n$ is a direct sum of $(\lambda^\vee[n])^\vee$ and a bounded number of $((\lambda')^\vee[n])^\vee$ with $|\lambda'|<|\lambda|$, i.e.
\begin{equation}\label{eq:find_in_ind}
V_n=(\lambda^\vee[n])^\vee\oplus\bigoplus_{\lambda'}((\lambda')^\vee[n])^\vee.
\end{equation}
Indeed, Frobenius reciprocity for an $S_n$-representation $\mu$ says
\begin{equation}\label{eq:Frob_reciprocity}
\left\langle V_n,\mu\right\rangle=\left\langle\tn{sgn}\otimes\lambda,\tn{Res}^{S_n}_{S_{n-|\lambda|}\times S_{[n-|\lambda|+1,n]}}\mu\right\rangle.
\end{equation}
By equation (\ref{eq:branching_rule}) from Section \ref{sec:rep_Sn}, RHS is zero unless $\mu$ consists of a column $n-|\lambda|$ boxes with some skew-diagram $\mu'$ of size $|\lambda|$ attached to it, in which case it equals $\langle\lambda,\mu'\rangle$. If $\mu'$ is entirely contained outside of the first column, then we must have $\mu'=\lambda$. Otherwise $\mu'$ has at least one box in the first column below the $n-|\lambda|$ boxes corresponding to the sign representation, so it has less than $|\lambda|$ outside the first column, and $\mu=((\lambda')^\vee[n])^\vee$ for some $\lambda'$ with strictly fewer boxes than $\lambda$. This confirms (\ref{eq:find_in_ind}).

Using the decomposition (\ref{eq:find_in_ind}), we can express the traces as
\begin{equation}\label{eq:tr_ind_sum_smaller}
\tr\left(\frac{1}{\tau_m(S_n)}\sum_{\tiny\begin{matrix}\sigma\in S_n\\\sigma^m=1\end{matrix}}\sigma\curvearrowright V_n\right)=\chi_{(\lambda^\vee[n])^\vee}^{m\tn{-tors}}+\sum_{\lambda'}\chi_{((\lambda')^\vee[n])^\vee}^{m\tn{-tors}},
\end{equation}
where the sum over $\lambda'$ contains boundedly many terms, all with $|\lambda'|<|\lambda|$. We will now use the knowledge of the LHS to apply induction on $|\lambda|$.

The base case is the empty partition $\lambda=\emptyset\vdash0$. Then $(\lambda^\vee)[n]^\vee$ is a single column of $n$ boxes, which by Section~\ref{sec:rep_Sn} corresponds to the sign representation of $S_n$. Since
\begin{equation}\label{eq:sgn_mtors}
\chi_\tn{sgn}^{m\tn{-tors}}=\frac{2\tau_m(A_{n-|\lambda|})-\tau_m(S_{n-|\lambda|})}{\tau_m(S_{n-|\lambda|})},
\end{equation}
it follows from \cite[Lemma 2.18]{fuchsian_gps_coverings_Riemann_surfaces_subgp_growth_random_quots_walks_LiebeckShalev04} that $\chi^{m\tn{-tors}}_\tn{sgn}$ decays as $O(\exp(-c(n-|\lambda|)^{\frac{1}{m}}))$ for some $c>0$.

As the induction step, we need to prove a statement analogous to Proposition \ref{prop:tall_tors_bd} but for the induced representation $V_n$ instead of $(\lambda^\vee)[n]^\vee$. This will give the decay rate of the LHS of equation (\ref{eq:tr_ind_sum_smaller}), and since the decay of each summand coming from $\lambda'$ is controlled by the induction hypothesis, the term $\chi^{m\tn{-tors}}_{(\lambda^\vee[n])^\vee}$ will also have to decay at the specified rate. The Frobenius formula for the character of an induced representation can be stated, for $x\in G$, $H\leq G$ and $H\curvearrowright U$, in the form
\begin{equation*}
\tr\left(x\curvearrowright\tn{Ind}^G_HU\right)=\frac{1}{\#H}\sum_{y\in H}\tr\left(y\curvearrowright U\right)\cdot\sum_{z\in G}\mathbbm{1}_{\{z^{-1}yz=x\}}.
\end{equation*}
Applying it to $G=S_n$, $H=S_{n-|\lambda|}\times S_{[n-|\lambda|+1,n]}$, $U=\tn{sgn}\otimes\lambda$, and summing over $m$-torsion elements gives
\begin{equation}\label{eq:factor_tr_mtors}
\tr\left(\sum_{\tiny\begin{matrix}\sigma\in S_n\\\sigma^m=1\end{matrix}}\hspace*{-5pt}\sigma\curvearrowright V_n\right)=\frac{n!}{(n-|\lambda|)!|\lambda|!}\sum_{\tiny\sigma\in S_{n-|\lambda|}\times S_{[n-|\lambda|+1,n]}}\hspace*{-20pt}\tr\left(\sigma\curvearrowright\tn{sgn}\otimes\lambda\right)\!\cdot\!\mathbbm{1}_{\{\sigma^m=1\}}.
\end{equation}
This equation can also be obtained by applying Lemma~\ref{lmm:change_sum_tors} with $H,G,U$ as above and $x=1$, and using (\ref{eq:Frob_reciprocity}). Equation (\ref{eq:factor_tr_mtors}) can be rewritten as
\begin{align*}
\chi_{V_n}^{m\tn{-tors}}=&\frac{1}{\tau_m(S_n)}\begin{pmatrix}n\\|\lambda|\end{pmatrix}\tr\left(\sum_{\tiny\begin{matrix}\sigma\in S_{n-|\lambda|}\\\sigma^m=1\end{matrix}}\sigma\curvearrowright\tn{sgn}\right)\tr\left(\sum_{\tiny\begin{matrix}\sigma\in S_{[n-|\lambda|+1,n]}\\\sigma^m=1\end{matrix}}\sigma\curvearrowright\lambda\right)=\\
=&\begin{pmatrix}n\\|\lambda|\end{pmatrix}\frac{\tau_m(S_{n-|\lambda|})}{\tau_m(S_n)}\cdot\chi_\tn{sgn}^{m\tn{-tors}}\cdot\tau_m(S_{|\lambda|})\chi_\lambda^{m\tn{-tors}}
\end{align*}
where $\chi_\tn{sgn}^{m\tn{-tors}}$ is the average sign over $m$-torsion elements in $S_{n-|\lambda|}$ (i.e. sgn in the last line is a representation of $S_{n-|\lambda|}$ not $S_n$). We argued around equation (\ref{eq:sgn_mtors}) that $\chi^{m\tn{-tors}}_\tn{sgn}$ decays like the exponential of $-cn^{\frac{1}{m}}$. The other factors are at most polynomial in $n$ (see Lemma~\ref{lmm:Sn_tors_growth} for the ratio of $\tau_m$'s), so the whole expression converges to 0 at such rate (with some smaller but still positive constant $c$).
\end{proof}

\subsection*{Disclosure of computational assistance}

Formulating and proving Proposition~\ref{prop:orbilen} were aided by a numerical brute-force search. LLMs have been used for adversarial reading of this manuscript.

\printbibliography

\end{document}